\def\DeclareSymbol#1#2#3{%
	\expandafter\gdef\csname MH@symb@#1\endcsname{%
	\tikz[baseline=#2,scale=0.15,draw=symbols,line join=round]{#3}}%
	\expandafter\gdef\csname MH@symb@#1s\endcsname{\scalebox{0.75}{%
	\tikz[baseline=#2,scale=0.15,draw=symbols,line join=round]{#3}}}%
	\expandafter\gdef\csname MH@symb@#1ss\endcsname{\scalebox{0.65}{%
	\tikz[baseline=#2,scale=0.15,draw=symbols,line join=round]{#3}}}%
	}
\def\<#1>{\ifthenelse{\boolean{mmode}}{\mathchoice{\csname MH@symb@#1\endcsname}{\csname MH@symb@#1\endcsname}{\csname MH@symb@#1s\endcsname}{\csname MH@symb@#1ss\endcsname}}{\csname MH@symb@#1\endcsname}}
\DeclareMathAlphabet{\mcb}{U}{BOONDOX-calo}{m}{n}
\SetMathAlphabet{\mcb}{bold}{U}{BOONDOX-calo}{b}{n}
\definecolor{connection}{rgb}{0.7,0.1,0.1}
\tikzset{
root/.style={circle,fill=black!50,inner sep=0pt, minimum size=3mm},
        dot/.style={circle,fill=black,inner sep=0pt, minimum size=1.2mm},
        sdot/.style={circle,fill=black,inner sep=0pt,minimum size=.5mm},
        dotred/.style={circle,fill=black!50,inner sep=0pt, minimum size=2mm},
        var/.style={circle,fill=black!10,draw=black,inner sep=0pt, minimum size=3mm},
        kernel/.style={semithick,shorten >=2pt,shorten <=2pt},
        kernel1/.style={draw=black,thick},
        kernels/.style={snake=zigzag,shorten >=2pt,shorten <=2pt,segment amplitude=1pt,segment length=4pt,line before snake=2pt,line after snake=5pt,},
        rho/.style={densely dashed,semithick,shorten >=2pt,shorten <=2pt},
           testfcn/.style={dotted,semithick,shorten >=2pt,shorten <=2pt},
           tau/.style={circle,inner sep=1pt,draw=black,fill=white,text=black,thin},
        renorm/.style={shape=circle,fill=white,inner sep=1pt},
        labl/.style={shape=rectangle,fill=white,inner sep=1pt},
        xic/.style={very thin,circle,fill=symbols,draw=black,inner sep=0pt,minimum size=1.2mm},
        xi/.style={very thin,circle,fill=blue!10,draw=black,inner sep=0pt,minimum size=1.2mm},
        xix/.style={crosscircle,fill=blue!10,draw=black,inner sep=0pt,minimum size=1.2mm},
	xib/.style={very thin,circle,fill=blue!10,draw=black,inner sep=0pt,minimum size=1.6mm},
	xie/.style={very thin,circle,fill=green!50!black,draw=black,inner sep=0pt,minimum size=1.6mm},
	xid/.style={very thin,circle,fill=symbols,draw=black,inner sep=0pt,minimum size=1.6mm},
	xibx/.style={crosscircle,fill=blue!10,draw=black,inner sep=0pt,minimum size=1.6mm},
	kernels2/.style={very thick,draw=connection,segment length=12pt},
	not/.style={thin,circle,fill=symbols,draw=connection,fill=connection,inner sep=0pt,minimum size=0.5mm},
	>=stealth,
  }
\newtheorem{example}[lemma]{Example}
\newtheorem{exercise}[lemma]{Exercise}
\newcommand{\End}{\textnormal{End}}
\newcommand{\reg}{\textnormal{reg}}
\newcommand{\ex}{\textnormal{ex}}
\newcommand{\mcO}{\mathcal{O}}
\newcommand{\mcN}{\mathcal{N}}
\newcommand{\mcY}{\mathcal{Y}}
\newcommand{\mcZ}{\mathcal{Z}}
\newcommand{\mcM}{\mathcal{M}}
\newcommand{\mcQ}{\mathcal{Q}}
\newcommand{\mcI}{\mathcal{I}}
\newcommand{\mcT}{\mathcal{T}}
\newcommand{\mcC}{\mathcal{C}}
\newcommand{\mcP}{\mathcal{P}}
\newcommand{\mcD}{\mathcal{D}}
\newcommand{\mcH}{\mathcal{H}}
\newcommand{\mcF}{\mathcal{F}}
\newcommand{\mcR}{\mathcal{R}}
\newcommand{\mcB}{\mathcal{B}}
\newcommand{\mcL}{\mathcal{L}}
\newcommand{\mcV}{\mathcal{V}}
\newcommand{\mcA}{\mathcal{A}}
\newcommand{\mcE}{\mathcal{E}}
\newcommand{\cbF}{\mcb{F}}
\def\R{\mathbb{R}}
\def\N{\mathbb{N}}
\newcommand{\F}{\mathbf{F}}
\newcommand{\e}{\mathbf{e}}
\newcommand{\bfPi}{\mathbf{\Pi}}
\newcommand{\id}{\textnormal{id}}
\newcommand{\GL}{\mathbf{GL}}
\newcommand{\bone}{\mathbf{1}}
\newcommand{\mfp}{\mathfrak{p}}
\newcommand{\mfo}{\mathfrak{o}}
\newcommand{\mfn}{\mathfrak{n}}
\newcommand{\mfe}{\mathfrak{e}}
\newcommand{\mfG}{\mathfrak G}
\newcommand{\mfN}{\mathfrak{N}}
\newcommand{\mfB}{\mathfrak{B}}
\newcommand{\s}{\mathfrak s}
\newcommand{\mfL}{\mathfrak L}
\newcommand{\mft}{\mathfrak t}
\newcommand{\mfT}{\mathfrak{T}}
\newcommand{\mfF}{\mathfrak{F}}
\newcommand{\mfK}{\mathfrak K}
\newcommand{\mrd}{\mathop{}\!\mathrm{d}}
\newcommand\restr{\mathord{\upharpoonright}}
\newcommand{\fork}{\begin{tikzpicture}  [scale=0.2,baseline=-0.1cm]
\node [dot] (uu) at (-2, 0) {};
\node [dot] (m) at (-2, -1) {};
\draw (m) to (uu);
\node [dot] (uur) at (-1,1) {};
\draw (uu) to  (uur);
\node [dot] (uul) at (-3,1) {};
\draw (uu) to  (uul);
\node [dot] (uum) at (-2,1) {};
\draw (uu) to (uum);
\draw[decorate,decoration={zigzag,segment length=0.5mm, amplitude=.3mm},thick] (uul) to  (-3,2);
\draw[decorate,decoration={zigzag,segment length=0.5mm, amplitude=.3mm},thick] (uum) to  (-2,2);
\draw[decorate,decoration={zigzag,segment length=0.5mm, amplitude=.3mm},thick] (uur) to  (-1,2);
\end{tikzpicture}}
\newcommand{\she}{\begin{tikzpicture}  [scale=0.2,baseline=0.1cm]
\node [dot] (root) at (0, 0) {};
\node [dot] (um) at (0, 1) {};
\draw (root) to (um);
\draw[decorate,decoration={zigzag,segment length=0.5mm, amplitude=.3mm},thick] (um) to  (0,2);
\end{tikzpicture}}
\newcommand{\shesq}{\begin{tikzpicture}  [scale=0.2,baseline=0.1cm]
\node [dot] (root) at (0, 0) {};
\node [dot] (ul) at (-1, 1) {};
\node [dot] (ur) at (0, 1) {};
\draw[decorate,decoration={zigzag,segment length=0.5mm, amplitude=.3mm},thick] (ul) to  (-1,2);
\draw[decorate,decoration={zigzag,segment length=0.5mm, amplitude=.3mm},thick] (ur) to  (0,2);
\draw (root) to  (ul);
\draw (root) to  (ur);
\end{tikzpicture}}
\newcommand{\shecube}{\begin{tikzpicture}  [scale=0.2,baseline=0.1cm]
\node [dot] (root) at (0, 0) {};
\node [dot] (ul) at (-1, 1) {};
\node [dot] (um) at (0, 1) {};
\node [dot] (ur) at (1, 1) {};
\draw[decorate,decoration={zigzag,segment length=0.5mm, amplitude=.3mm},thick] (ul) to  (-1,2);
\draw[decorate,decoration={zigzag,segment length=0.5mm, amplitude=.3mm},thick] (um) to  (0,2);
\draw[decorate,decoration={zigzag,segment length=0.5mm, amplitude=.3mm},thick] (ur) to  (1,2);
\draw (root) to  (ul);
\draw (root) to  (ur);
\draw (root) to  (um);
\end{tikzpicture}}
\newcommand{\intshe}{\begin{tikzpicture}  [scale=0.2,baseline=0.2cm]
\node [dot] (root) at (0, 0) {};
\node [dot] (u) at (0, 1) {};
\node [dot] (ur) at (1, 2) {};
\draw[decorate,decoration={zigzag,segment length=0.5mm, amplitude=.3mm},thick] (ur) to  (1,3);
\draw (root) to  (u);
\draw (u) to  (ur);
\end{tikzpicture}}
\newcommand{\intsheext}{\begin{tikzpicture}  [scale=0.2,baseline=0.2cm]
\node [dot] (root) at (0, 0) {};
\node [dot,label={[yshift=-0.1cm,xshift=0.2cm]135:{\scriptsize $\bone^{p}$}}] (u) at (0, 1) {};
\node [dot] (ur) at (1, 2) {};
\draw[decorate,decoration={zigzag,segment length=0.5mm, amplitude=.3mm},thick] (ur) to  (1,3);
\draw (root) to  (u);
\draw (u) to  (ur);
\end{tikzpicture}}
\newcommand{\intone}{\begin{tikzpicture}  [scale=0.2]
\node [dot] (root) at (0, 0) {};
\node [dot] (u) at (0, 1) {};
\draw (root) to  (u);
\end{tikzpicture}}
\newcommand{\intonesq}{\begin{tikzpicture}  [scale=0.2]
\node [dot,red] (root) at (0, 0) {};
\node [dot,red] (ur) at (1, 1) {};
\node [dot,red] (ul) at (-1, 1) {};
\draw[red] (root) to  (ur);
\draw[red] (root) to  (ul);
\end{tikzpicture}}
\newcommand{\graftone}{\begin{tikzpicture}  [scale=0.2,baseline=0.1cm]
\node [dot] (root) at (0, 0) {};
\node [dot] (u) at (1, 1) {};
\draw (root) to  (u);
\node [dot,red] (root1) at (-1, 1) {};
\node [dot,red] (ur) at (0, 2) {};
\node [dot,red] (ul) at (-2, 2) {};
\draw[red] (root1) to  (ur);
\draw[red] (root1) to  (ul);
\draw (root) to (root1);
\end{tikzpicture}}
\newcommand{\grafttwo}{\begin{tikzpicture}  [scale=0.2,baseline=0.2cm]
\node [dot] (root) at (0, 0) {};
\node [dot] (u) at (0, 1) {};
\draw (root) to  (u);
\node [dot,red] (root1) at (0, 2) {};
\node [dot,red] (ur) at (1, 3) {};
\node [dot,red] (ul) at (-1, 3) {};
\draw[red] (root1) to  (ur);
\draw[red] (root1) to  (ul);
\draw (u) to (root1);
\end{tikzpicture}}
\newcommand{\intoneext}{\begin{tikzpicture}  [scale=0.2,baseline=0.1cm]
\node [dot] (root) at (0, 0) {};
\node [dot,label={[yshift=-0.1cm]90:{\scriptsize $\bone^{q}$}}] (u) at (0, 1) {};
\draw (root) to  (u);
\end{tikzpicture}}
\newcommand{\intshek}{\begin{tikzpicture}  [scale=0.2,baseline=0.2cm]
\node [dot] (root) at (0, 0) {};
\node [dot] (u) at (0, 1) {};
\node [dot] (ur) at (1, 2) {};
\draw[decorate,decoration={zigzag,segment length=0.5mm, amplitude=.3mm},thick] (ur) to  (1,3);
\draw (root) to node[ midway,left]{\scriptsize $k$} (u);
\draw (u) to  (ur);
\end{tikzpicture}}
\newcommand{\intonek}{\begin{tikzpicture}  [scale=0.2,baseline=0cm]
\node [dot] (root) at (0, 0) {};
\node [dot] (u) at (0, 1) {};
\draw (root) to node[ midway,left]{\scriptsize $k$} (u);
\end{tikzpicture}}
\newcommand{\bigtree}{\begin{tikzpicture}  [scale=0.2,baseline=-0.1cm]
\node [dot] (uu) at (-2, 0) {};
\node [dot] (m) at (-2, -1) {};
\draw (m) to (uu);
\node [dot] (uur) at (-1,1) {};
\draw (uu) to  (uur);
\node [dot] (uul) at (-3,1) {};
\draw (uu) to  (uul);
\node [dot] (uum) at (-2,1) {};
\draw (uu) to (uum);
\draw[decorate,decoration={zigzag,segment length=0.5mm, amplitude=.3mm},thick] (uul) to  (-3,2);
\draw[decorate,decoration={zigzag,segment length=0.5mm, amplitude=.3mm},thick] (uum) to  (-2,2);
\draw[decorate,decoration={zigzag,segment length=0.5mm, amplitude=.3mm},thick] (uur) to  (-1,2);
\node [dot] (ul) at (-3, 0) {};
\node [dot] (ur) at (-1, 0) {};
\draw (m) to (ul);
\draw (m) to (ur);
\draw[decorate,decoration={zigzag,segment length=0.5mm, amplitude=.3mm},thick] (ul) to  (-4,1);
\draw[decorate,decoration={zigzag,segment length=0.5mm, amplitude=.3mm},thick] (ur) to  (0,1);
\end{tikzpicture}}
\newcommand{\noise}{\begin{tikzpicture}  [scale=0.2]
\node [dot] (root) at (0, 0) {};
\draw[decorate,decoration={zigzag,segment length=0.5mm, amplitude=.3mm},thick] (root) to  (0,1);
\end{tikzpicture}}
\colorlet{ng}{green!50!black}
\newcommand{\bigtreecolour}{\begin{tikzpicture}  [scale=0.2,baseline=-0.2cm]
\node [dot, ng] (uu) at (-2, 0) {};
\node [dot, red,label={[yshift=0.1cm]-90:{\scriptsize $X^a$}}] (m) at (-2, -1) {};
\draw (m) to (uu);
\node [dot,ng] (uur) at (-1,1) {};
\draw[ng] (uu) to  (uur);
\node [dot] (uul) at (-3,1) {};
\draw (uu) to  (uul);
\node [dot,ng] (uum) at (-2,1) {};
\draw[ng] (uu) to (uum);
\draw[decorate,decoration={zigzag,segment length=0.5mm, amplitude=.3mm},thick] (uul) to  (-3,2);
\draw[ng,decorate,decoration={zigzag,segment length=0.5mm, amplitude=.3mm},thick] (uum) to  (-2,2);
\draw[ng,decorate,decoration={zigzag,segment length=0.5mm, amplitude=.3mm},thick] (uur) to  (-1,2);
\node [dot,red] (ul) at (-3, 0) {};
\node [dot,red] (ur) at (-1, 0) {};
\draw[red] (m) to (ul);
\draw[red] (m) to (ur);
\draw[red,decorate,decoration={zigzag,segment length=0.5mm, amplitude=.3mm},thick] (ul) to  (-4,1);
\draw[red,decorate,decoration={zigzag,segment length=0.5mm, amplitude=.3mm},thick] (ur) to  (0,1);
\end{tikzpicture}}
\newcommand{\shesqred}{\begin{tikzpicture}  [scale=0.2,baseline=0.1cm]
\node [dot,red] (root) at (0, 0) {};
\node [dot,red] (ul) at (-1, 1) {};
\node [dot,red] (ur) at (1, 1) {};
\draw[red,decorate,decoration={zigzag,segment length=0.5mm, amplitude=.3mm},thick] (ul) to  (-2,2);
\draw[red,decorate,decoration={zigzag,segment length=0.5mm, amplitude=.3mm},thick] (ur) to  (2,2);
\draw[red] (root) to  (ul);
\draw[red] (root) to  (ur);
\end{tikzpicture}}
\newcommand{\shesqgreen}{\begin{tikzpicture}  [scale=0.2,baseline=0.1cm]
\node [dot,ng] (root) at (0, 0) {};
\node [dot,ng] (ur) at (1, 1) {};
\node [dot,ng] (um) at (0, 1) {};
\draw[ng,decorate,decoration={zigzag,segment length=0.5mm, amplitude=.3mm},thick] (ur) to  (1,2);
\draw[ng,decorate,decoration={zigzag,segment length=0.5mm, amplitude=.3mm},thick] (um) to  (0,2);
\draw[ng] (root) to  (ur);
\draw[ng] (root) to  (um);
\end{tikzpicture}}
\newcommand{\intshepa}{\begin{tikzpicture}  [scale=0.2,baseline=0.1cm]
\node [dot,label={[yshift=0.1cm]-90:{\scriptsize $\bone^{p}X^{a-b}$}}] (root) at (0, 0) {};
\node [dot,label={[yshift=-1mm,xshift=-1mm]45:{\scriptsize $\bone^{q}$}}] (u) at (0, 1) {};
\node [dot] (ul) at (-1, 2) {};
\draw[decorate,decoration={zigzag,segment length=0.5mm, amplitude=.3mm},thick] (ul) to  (-1,3);
\draw (root) to  (u);
\draw (u) to node[sloped,midway,below]{\scriptsize $m$} (ul);
\end{tikzpicture}}
\newcommand{\shesqgreenm}{\begin{tikzpicture}  [scale=0.2,baseline=0.1cm]
\node [dot,ng,label={[yshift=0.1cm]-90:{\scriptsize $X^m$}}] (root) at (0, 0) {};
\node [dot,ng] (ur) at (1, 1) {};
\node [dot,ng] (um) at (0, 1) {};
\draw[ng,decorate,decoration={zigzag,segment length=0.5mm, amplitude=.3mm},thick] (ur) to  (1,2);
\draw[ng,decorate,decoration={zigzag,segment length=0.5mm, amplitude=.3mm},thick] (um) to  (0,2);
\draw[ng] (root) to  (ur);
\draw[ng] (root) to  (um);
\end{tikzpicture}}
\newcommand{\shesqredb}{\begin{tikzpicture}  [scale=0.2,baseline=0.1cm]
\node [dot,red,label={[yshift=0.1cm]-90:{\scriptsize $X^{b}$}}] (root) at (0, 0) {};
\node [dot,red] (ul) at (-1, 1) {};
\node [dot,red] (ur) at (1, 1) {};
\draw[red,decorate,decoration={zigzag,segment length=0.5mm, amplitude=.3mm},thick] (ul) to  (-2,2);
\draw[red,decorate,decoration={zigzag,segment length=0.5mm, amplitude=.3mm},thick] (ur) to  (2,2);
\draw[red] (root) to  (ul);
\draw[red] (root) to  (ur);
\end{tikzpicture}}
\newcommand{\Span}{\mathrm{span}}
\newcommand*\fprod{\mathpalette\fprod@{.5}}
\newcommand*\fprod@[2]{\mathbin{\vcenter{\hbox{\scalebox{#2}{$\m@th#1\bullet$}}}}}
\newcommand{\Deltam}{\Delta^{\!-}}
\newcommand{\Deltap}{\Delta^{\!+}}
\newcommand{\Deltapm}{\Delta^{\!\pm}}
\newcommand\graft{\curvearrowright}
\newcommand\hgraft{\hat\curvearrowright}
\newcommand\bgraft{\bar\curvearrowright}
\renewcommand\root{\mathrm{root}}
\newcommand\poly{\mathrm{poly}}
\newcommand\nonroot{\mathrm{non\textnormal{-}root}}
\def\fancynorm#1{{\talloblong #1 \talloblong}}
\begin{document}
	
	\date{\today}
	\title{Hopf and pre-Lie algebras in regularity structures}
	\author{Ilya Chevyrev}
	\institute{The University of Edinburgh, \email{ichevyrev@gmail.com}}
	\date{\today}
	\titleindent=0.65cm
	
	\maketitle
	
\begin{abstract}
These lecture notes aim to present the algebraic theory of regularity structures as developed in~\cite{Hairer14,BHZ19,BCCH21}. The main aim of this theory is to build a systematic approach to renormalisation of singular SPDEs; together with complementary analytic results, these works give a general solution theory for a wide class of semilinear parabolic singular SPDEs in the subcritical regime. We demonstrate how `positive' and `negative' renormalisation can be described using interacting Hopf algebras, and how the renormalised non-linearities in SPDEs can be computed. For the latter, of crucial importance is a pre-Lie structure on non-linearities on which the negative renormalisation group acts through pre-Lie morphisms. To show the main aspects of these results without introducing many notations and assumptions, we focus on a special case of the general theory in which there is only one equation and one noise. These lectures notes are an expansion of the material presented at a minicourse with the same title at the Master Class and Workshop ``Higher Structures Emerging from Renormalisation'' at the ESI, Vienna, in November 2021.
\end{abstract}
	
\setcounter{tocdepth}{2}
	\tableofcontents	
	
\section{Introduction}\label{lec1}

These lecture notes aim to present the main algebraic structures which appear in the theory of regularity structures designed to solve singular stochastic partial differential equations (SPDEs) of the form
\begin{equation}\label{eq:SPDE}
(\partial_t -\mcL) u=  F(u,\nabla u,\ldots, \xi,\nabla\xi,\ldots)\;.
\end{equation}
Here $u$ is a function (or distribution) 
defined on $\R_+\times \R^{d-1}$
taking values in $\R^n$, $\partial_t-\mcL$ is a parabolic operator (the $t$-coordinate is the $d$-coordinate),
and $\xi$ is a noise term, e.g. space or space-time white noise.

A number of equations of interest in stochastic analysis and quantum field theory are of this form and have been solved using regularity structures.
These include stochastic (ordinary) differential equations (SDEs), in which case regularity structures specialises to a form of rough path theory~\cite{Lyons,FV10, FrizHairer20},
the KPZ equation~\cite{Hairer13,BGHZ21}, the Parabolic Anderson Model (PAM)~\cite{Hairer14,CFG17}, the $\Phi^4_d$ equations~\cite{DPD2,ChandraWeber17,MoinatWeber20},
the dynamic sine-Gordon model~\cite{HairerShen16,CHS18}
and the stochastic Yang--Mills equations~\cite{CCHS20,CCHS22,Chevyrev22}.

These notes are based on material primarily from~\cite{Hairer14,BHZ19,BCCH21}, although there are some differences in conventions, results, and proofs that we try to point out along the way.
These works, together with~\cite{CH16}, which focuses to the analytic aspects of renormalisation,
provide a general way to solve a wide class subcritical, semilinear, parabolic SPDEs of the form~\eqref{eq:SPDE}.
We will not go into the full generality of this theory, focusing primarily on the case that $u$ is an $\R$-valued function (or distribution)
and there is only one $\R$-valued noise $\xi$.
The general theory, however, is able to handle systems of equations with multiple noises and taking values in vector spaces (see in particular~\cite[Sec.~5]{CCHS20} for an extension to vector-valued noises and solutions based on a functorial construction).

Furthermore, a number of works have
extended or provided alternative proofs for the theory developed in~\cite{Hairer14,BHZ19,BCCH21}, including treating quasilinear equations~\cite{GH19,Gerencser20}, avoiding so-called extended decorations~\cite{BB21a,BB21b} (see Section~\ref{sec:neg_renorm}),
finding a relation to deformations of pre-Lie products~\cite{BM20},
and extending the framework to settings with inhomogeneous noises~\cite{GHM22}.

We also mention that several alternative approaches exist to solve equations of the form~\eqref{eq:SPDE}.
This includes the theory of paracontrolled distributions~\cite{GIP15}, developed at the same time as regularity structures,
and the renormalisation group method of~\cite{Kupiainen2016} (see also~\cite{Duch21,Duch22}).
An alternative theory has been initiated in~\cite{OW} and developed further in~\cite{Otto_18, Otto_21,Otto_21II,Otto_22} that bears similarity to regularity structures with some advantages in treating certain equations (e.g. quasilinear equations).

The general algebraic machinery developed in~\cite{Hairer14,BHZ19,BCCH21}
is strongly motivated by B-series appearing in numerical analysis.
The reader will in particular see a resemblance between the Butcher group~\cite{Butcher72,hairer74} (the origins of which date back to Cayley)
and the work of Connes--Kreimer~\cite{CK98, CK2000,CK2001} on renormalisation in quantum field theory (although both regularity structures and the work of Connes--Kreimer deal with renormalisation, the link with the Connes--Kreimer Hopf algebra is rather through Butcher series than through QFT).
In particular, a crucial role is played by
interacting Hopf algebras,
which is an algebraic structure identified in~\cite{CEFM11}. See also~\cite{CHV10} for a description of this interaction at the level of groups.
The B-series based on decorated trees identified in~\cite{BCCH21} have since been applied to numerical schemes for dispersive PDEs~\cite{Bruned_Schratz_22}.

\begin{remark}
Since we wish to emphasise the algebraic aspects,
we will not be too careful about analytic / probabilistic considerations of actually solving such equations,
e.g. we will hardly speak about the regularity / integrability of boundary conditions, which are of course important when analytically solving for $u$ in~\eqref{eq:SPDE}.
We will always assume the spatial component is periodic,
but see~\cite{MateMartin19,MateMartin21,HairerPardoux21} for results on other types of boundary conditions.
\end{remark}

\subsection{Motivation: SPDEs and power counting}
\label{sec:motivation}

When trying to solve~\eqref{eq:SPDE} in the singular regime, the main problem which appears is
\textit{multiplication of distributions}.
In the rest of this subsection, we discuss this problem, how it ties into the equation,
and potential ways to solve it.
A brief outline of the structure of the lecture notes is given at the end of this subsection.

We start by recalling a classical result from harmonic analysis, often referred to as Young's product theorem~\cite{Young36}.
\begin{proposition}\label{prop:Young}
Let $\alpha,\beta \in \R$. Denote by $C^\alpha=C^\alpha(\R^d)$ the H{\"o}lder--Besov space of periodic distributions on $\R^d$ of regularity $\alpha$.
Then multiplication $C^\infty\times C^\infty \to C^\infty$ extends to a continuous bilinear map $C^\alpha\times C^\beta\to C^{\alpha\wedge \beta}$,
if and only if $\alpha+\beta>0$.
\end{proposition}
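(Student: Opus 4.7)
The plan is to establish the positive direction via a Littlewood--Paley / Bony paraproduct decomposition, and the negative direction by exhibiting an explicit sequence of smooth functions along which the bilinear map fails to be bounded.

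First I would fix a dyadic partition of unity and, for $f \in C^\alpha$ and $g\in C^\beta$, write the Littlewood--Paley blocks $\Delta_k f$ and $\Delta_k g$, using the equivalent characterisation $\norm{\Delta_k f}_\infty \lesssim 2^{-k\alpha}$ for $k \ge 0$ (and similarly for $g$). Formally the product is
\begin{equation*}
fg = \sum_{j,k \ge -1} \Delta_j f \cdot \Delta_k g,
\end{equation*}
and the standard trick is Bony's decomposition into three pieces:
\begin{equation*}
fg = T_f g + T_g f + R(f,g),
\end{equation*}
where $T_f g = \sum_k S_{k-2} f \cdot \Delta_k g$ is the ``low-times-high'' paraproduct, $T_g f$ is the symmetric one, and $R(f,g) = \sum_{|j-k|\le 1} \Delta_j f \cdot \Delta_k g$ is the resonant term.

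Next I would check the three pieces separately. The two paraproducts are always well-defined distributions: for $T_f g$, each summand is spectrally supported in an annulus of size $\sim 2^k$, so the series is essentially already a Littlewood--Paley decomposition, and the bound $\norm{S_{k-2} f}_\infty \lesssim 2^{-k(\alpha\wedge 0)}$ combined with $\norm{\Delta_k g}_\infty \lesssim 2^{-k\beta}$ shows $T_f g \in C^\beta$ (and in fact in $C^{(\alpha\wedge 0)+\beta}$) with no restriction on $\alpha+\beta$; similarly $T_g f \in C^\alpha$. The resonant term is the delicate one: each dyadic block $\Delta_j f \cdot \Delta_k g$ with $|j-k|\le 1$ is spectrally supported in a \emph{ball} of radius $\sim 2^k$, so one estimates $\norm{\Delta_\ell R(f,g)}_\infty \lesssim \sum_{k\ge \ell} 2^{-k(\alpha+\beta)}$, and this geometric series converges precisely when $\alpha+\beta>0$, giving $R(f,g) \in C^{\alpha+\beta}$. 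Combining, the full product lies in $C^{\alpha\wedge \beta}$ (since $\alpha+\beta \ge \alpha\wedge \beta$ under our hypothesis, the resonant part is even better-behaved), and continuity is immediate from the linear estimates on each piece.

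For the converse, I would argue that if $\alpha+\beta\le 0$ then no continuous extension to all of $C^\alpha\times C^\beta$ can exist, by exhibiting a sequence of smooth pairs $(f_n,g_n)$ bounded in $C^\alpha\times C^\beta$ for which $f_n g_n$ diverges in any $C^\gamma$. A standard choice is highly oscillatory wavepackets of opposite frequencies: take $f_n(x)= 2^{-n\alpha} \psi(x) e^{i 2^n x\cdot \mathbf{e}_1}$ and $g_n(x)= 2^{-n\beta} \psi(x) e^{-i 2^n x\cdot \mathbf{e}_1}$ for a fixed bump $\psi$; then $\norm{f_n}_{C^\alpha}$ and $\norm{g_n}_{C^\beta}$ stay uniformly bounded while $f_n g_n = 2^{-n(\alpha+\beta)} \psi^2$ blows up in any norm when $\alpha+\beta<0$ (the marginal case $\alpha+\beta=0$ can be handled by a logarithmic refinement using a summed family of such wavepackets). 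This rules out continuity.

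The main obstacle is the resonant term: it is the only place where the hypothesis $\alpha+\beta>0$ is actually used, and one must be careful that the individual summands are spectrally supported in \emph{balls} rather than \emph{annuli}, so only the \emph{sum} of their norms controls $\Delta_\ell R(f,g)$, producing the summability condition on $\alpha+\beta$. Everything else is bookkeeping with Bernstein-type inequalities.
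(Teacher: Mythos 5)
The paper states this as a classical fact of harmonic analysis (citing Young~\cite{Young36}) and does not prove it, so there is no internal proof to compare against. Your approach is correct and is in fact the standard modern proof: the Bony decomposition $fg = T_f g + T_g f + R(f,g)$, Bernstein/Littlewood--Paley bounds showing the two paraproducts lie in $C^{(\alpha\wedge 0)+\beta}$ and $C^{(\beta\wedge 0)+\alpha}$ unconditionally, the observation that the resonant piece is the only place the summability condition $\alpha+\beta>0$ enters (because its dyadic blocks are supported in balls, not annuli, so $\Delta_\ell R(f,g)$ receives contributions from all $k\gtrsim \ell$), and the oscillatory counterexample for the converse. Two small points worth tightening: the parenthetical ``$T_fg\in C^\beta$ (and in fact in $C^{(\alpha\wedge 0)+\beta}$)'' reads as though $C^{(\alpha\wedge 0)+\beta}\subset C^\beta$, which only holds for $\alpha\ge 0$; when $\alpha<0$ the correct inclusion is $T_fg\in C^{\alpha+\beta}$ and one then checks, as you do implicitly, that all three pieces land in $C^{\alpha\wedge\beta}$ because $\alpha+\beta>0$ forces the larger of $\alpha,\beta$ to be positive. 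Also, since the paper's $C^\alpha$ is a space of periodic distributions, the counterexample is cleaner without the bump $\psi$: take $f_n=2^{-n\alpha}e^{i2^nx_1}$ and $g_n=2^{-n\beta}e^{-i2^nx_1}$ on the torus, so that $f_ng_n=2^{-n(\alpha+\beta)}$ is a constant blowing up in every $C^\gamma$; the borderline case $\alpha+\beta=0$ is handled exactly as you indicate, by summing finitely many such modes with slowly decaying coefficients to produce a logarithmic divergence in the zero-frequency part while keeping the $C^\alpha$, $C^\beta$ norms uniformly bounded.
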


\begin{notation}
We will use $\alpha-$ to denote a number slightly below $\alpha\in\R$, i.e. $\alpha- = \alpha-\kappa$ for $\kappa>0$ arbitrarily small but fixed.
This will make power-counting much simpler.
\end{notation}
To see how to apply Proposition~\ref{prop:Young} to SPDEs,
consider first the ODE with multiplicative noise
\begin{equation}\label{eq:SDE}
\partial_t u = f(u)\xi\;,
\end{equation}
posed on $u\colon \R\to \R^n$, where $\xi\colon \R \to \R^m$ and $f\colon\R^n\to L(\R^m,\R^n)$ is smooth.
We often think of $\xi = \frac{\mrd W}{\mrd t}$,
where $W$ is the `driving path'.

Assume $\xi\in C^{\alpha}$ for some $\alpha\in\R$.
One expects $u$ to have at best one more derivative than $\xi$, i.e. we can try to solve for $u\in C^{\alpha+1}\Rightarrow f(u)\in C^{\alpha+1}$.
Therefore we can use Proposition~\ref{prop:Young} to make analytic sense of the product $f(u)\xi$ if and only if $\alpha+\alpha+1>0$,
i.e. $\alpha>-\frac12$.
In this case, we can indeed solve for~\eqref{eq:SDE} with $u\in C^{\alpha+1}$ --
a variant of this result was shown
by Lyons~\cite{Lyons-Young}
as a precursor to his introduction of rough paths~\cite{Lyons}.
In particular, there is a deterministic solution theory for SDEs driven by fractional Brownian motion with Hurst parameter $H>\frac12$ since sample paths of this process are almost surely in
$C^{H-}$.

However, this deterministic analysis comes short of solving~\eqref{eq:SDE} for $\xi=\frac{\mrd W}{\mrd t}$ where $W$ is a sample paths of Brownian motion as we have $\xi\in C^{\alpha}$ with $\alpha<-\frac12$ in this case.

\begin{remark}\label{rem:integral_ill-defined}
One may wonder if H{\"o}lder--Besov spaces $C^\alpha$ are not the correct function space to use -- perhaps if we looked at a different space (e.g. some more general Orlicz or Besov space) we can recover continuity of the product, even in the regime of Brownian motion.
Unfortunately, another result of Lyons~\cite{TerryPath} is that there exists no Banach space containing smooth paths and allowing to approximate almost every realisation of Brownian motion to which $(y,x)\mapsto \int_0^1 y_t \mrd x_t$ extends as a continuous bilinear map (see also~\cite[Sec.~1.5.1]{MR2314753}).
Since $\int_0^1 y_t \mrd x_t$ is a simple example of an ODE driven by $(x,y)$,
this suggests that no amount of deterministic \textit{linear} functional analysis is able to recover It{\^o} calculus.
(See~\cite{Chevyrev22Heat} for an ill-posedness result of a similar flavour for a non-linear heat equation with rough initial conditions.)
\end{remark}

\begin{remark}
In case $n=1$, one can actually solve for $u$ as $u_t = e^{X_t f}(u_0)$, where $X_t = \int_0^t \xi_s\mrd s$ and $e^{xf}$ for $x\in \R$ denotes the flow for the ODE $\dot y_t = xf(y)$.
Hence the product $f(u)\xi = f(e^{X_t f}(u_0))\partial_t X_t = \partial_t \exp(X_t f)(u_0)$
is perfectly well-defined.
This explicit solution formula is the reason SDEs with a $1$-dimensional driver (or more generally when the drivers are paired with commuting vector fields)
are stable under the uniform norm on driving paths.
\end{remark}

\subsubsection{Stochastic quantisation of \texorpdfstring{$\Phi^4_{d-1}$}{Phi4}}
Another famous example, this time with a multi-dimensional domain, is the $\Phi^4_{d-1}$ stochastic quantisation equation
\begin{equation}\label{eq:Phi4d}
(\partial_t-\Delta) u = -u^3 + \xi\;,\qquad u\colon \R_+\times \R^{d-1}\to \R\;,
\end{equation}
where $\xi$ is a (space-time) white noise on $\R^d$.
At this point, let us recall the Schauder estimates for parabolic equations and the Kolmogorov-type theorem on the regularity of white noise.\footnote{These results require us to work with the parabolic rather than Euclidean scaling, see the start of Section~\ref{subsec:subcrit}}
\begin{theorem}[Schauder estimates]
If $(\partial_t-\Delta)v=\zeta$, where $\zeta\in C^\alpha$,
then $v\in C^{\alpha+2}$.
\end{theorem}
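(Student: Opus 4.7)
The plan is to represent $v$ through the heat kernel and transfer regularity scale-by-scale using a dyadic decomposition in parabolic scaling. Writing $G(t,x) = \mathbf{1}_{t>0}(4\pi t)^{-(d-1)/2}\exp(-|x|^2/4t)$ for the fundamental solution of $\partial_t - \Delta$, one has $v = G \ast \zeta$ (at least modulo a smooth remainder coming from initial data, which we ignore since periodicity / compact support will let us localise). The key feature is the parabolic scaling $G(\lambda^2 t,\lambda x) = \lambda^{-(d-1)} G(t,x)$ together with smoothness away from the origin, so that $G$ looks like a homogeneous kernel of parabolic degree $-(d-1)$.

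First I would decompose $G$ into dyadic pieces. Fix a smooth partition of unity $1 = \sum_{n\geq 0}\rho_n$ in the parabolic norm $\|(t,x)\|_\s := \max(|t|^{1/2},|x|)$, with $\rho_n$ supported in an annulus $\{\|z\|_\s \sim 2^{-n}\}$ for $n\geq 1$ and $\rho_0$ supported in $\{\|z\|_\s \gtrsim 1\}$, and set $K_n := \rho_n G$. Then $G = \sum_{n\geq 0} K_n$, each $K_n$ is smooth, supported at parabolic scale $2^{-n}$, and by the scaling of $G$ satisfies
\[
|D^k K_n(z)| \leq C_k\, 2^{n(d-1+|k|_\s)}\mathbf{1}_{\|z\|_\s\leq 2^{-n}},
\]
where $|k|_\s$ is the parabolic order of the multi-index $k$ (time derivatives count twice). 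Normalising, one sees that $K_n(z-\cdot)$ is, up to the factor $2^{-2n}$, a parabolically rescaled test function at scale $2^{-n}$ centred at $z$.

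Next I would test against $\zeta$. For $\alpha<0$, the definition of $C^\alpha$ via duality gives $|\langle \zeta,\varphi_z^{2^{-n}}\rangle|\lesssim 2^{-n\alpha}$ for any fixed bump $\varphi$; combined with the renormalisation factor above this yields
\[
|(K_n \ast \zeta)(z)| \lesssim 2^{-n(\alpha+2)},
\]
and similarly $|D^k(K_n\ast\zeta)(z)|\lesssim 2^{-n(\alpha+2-|k|_\s)}$. Summing the series then shows pointwise bounds corresponding to membership in $C^{\alpha+2}$: for $\alpha+2\in (0,1)$ one bounds $v(z)-v(z')$ with $\|z-z'\|_\s\leq\lambda$ by splitting the sum at the scale $n_\lambda$ with $2^{-n_\lambda}\sim\lambda$, bounding fine scales $n>n_\lambda$ directly and coarse scales $n\leq n_\lambda$ using the mean value theorem and the derivative bound. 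For $\alpha+2\geq 1$ one iterates the same argument after differentiating the representation, using that $D^k G$ still satisfies the analogous scaling identity. For $\alpha\geq 0$ one replaces the duality testing by the classical Hölder increment bound and otherwise runs the identical dyadic argument.

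The main technical obstacle is the critical / integer cases, where $\alpha+2\in\N$ and the naive bounds only give a logarithmic loss; this is where the $C^\alpha$ spaces must be interpreted as Hölder--Zygmund spaces (defined by second differences rather than first), and one must verify that the dyadic sum delivers exactly this stronger estimate. A secondary but purely bookkeeping obstacle is the correct parabolic counting of dimensions and the handling of the non-compactly supported heat kernel for large $\|z\|_\s$, which is dealt with by absorbing the tail into the smooth piece $K_0$ and using exponential decay of $G$ in $x$ to truncate in time.
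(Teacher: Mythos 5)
The paper records the Schauder estimate as a classical background fact — it gives a footnote pointing at the parabolic scaling but provides no proof — so there is no proof in the text to compare your argument against. Your dyadic decomposition of the parabolic Green's function, with the renormalisation $K_n(z-\cdot) \approx 2^{-2n}\varphi^{2^{-n}}_z$, testing of $\zeta\in C^\alpha$ against these rescaled bumps, and the split of the sum at the scale $n_\lambda\sim -\log_2\|z-z'\|_\s$ (fine scales estimated directly, coarse scales via a parabolic mean-value argument), is the standard way this estimate is established in the regularity-structures literature (cf.\ the Schauder-type lemmas in~\cite{Hairer14}). You have also correctly flagged the two genuine subtleties: the integer thresholds $\alpha+2\in\N$, which require interpreting $C^\gamma$ as a H\"older--Zygmund/Besov scale (which is indeed how the paper's $C^\alpha$ spaces are defined), and the fact that $v$ is only determined up to caloric functions, so the statement must be read modulo smooth remainders or for the particular solution $G\ast\zeta$. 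With those caveats made explicit, the proposal is sound; the one step worth spelling out when you write it up in full is the parabolic mean-value inequality at coarse scales, where a time increment of size $\lambda^2$ must be paired with $\partial_t K_n\ast\zeta$ (weight $2$) and a spatial increment of size $\lambda$ with $\nabla_x K_n\ast\zeta$ (weight $1$), both of which sum to the desired $\lambda^{\alpha+2}$ exactly because $\Deltam$-free power counting bookkeeping works out — this is straightforward but is where a sign error or miscount of $|\s|$ could creep in.
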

\begin{theorem}[Kolmogorov regularity]
Almost surely $\xi\in C^{-\frac d2-\frac12-}$.
\end{theorem}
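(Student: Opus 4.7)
The plan is to combine the Gaussianity of $\xi$ with a distributional Kolmogorov-type continuity criterion. Recall that for $\alpha<0$, membership of a distribution $\zeta$ in $C^\alpha$ is equivalent to the bound $|\langle \zeta,\varphi^\lambda_x\rangle|\lesssim \lambda^\alpha$, uniform over $x$ in compact sets, $\lambda\in(0,1]$, and $\varphi$ ranging over a bounded set of smooth test functions supported in a unit parabolic ball, where $\varphi^\lambda_x$ denotes the parabolic rescaling at scale $\lambda$ about $x$.

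The key computation is elementary. Under parabolic scaling $\mathfrak{s}=(2,1,\ldots,1)$ on $\R\times\R^{d-1}$, the weighted dimension is $|\mathfrak{s}|=d+1$, and a direct change of variables gives $\|\varphi^\lambda_x\|_{L^2}^2 = \lambda^{-(d+1)}\|\varphi\|_{L^2}^2$. Since $\langle \xi,\varphi^\lambda_x\rangle$ is centred Gaussian with this variance, Wick's formula yields $\E[|\langle\xi,\varphi^\lambda_x\rangle|^{2p}]\lesssim \lambda^{-p(d+1)}$ for every integer $p\ge 1$, with implicit constant depending only on $p$ and $\|\varphi\|_{L^2}$.

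From here I would invoke the standard distributional Kolmogorov criterion: replace the continuum of test functions by a countable dyadic wavelet family of sufficient regularity and decay, apply Chebyshev's inequality with the moment bound above at each dyadic scale $\lambda=2^{-n}$ and each dyadic base point, and conclude via Borel--Cantelli. Taking $p$ arbitrarily large absorbs the polynomial loss from the union bounds over positions and wavelets, producing the almost-sure bound $|\langle\xi,\varphi^{2^{-n}}_x\rangle|\lesssim 2^{n((d+1)/2+\kappa)}$ for arbitrarily small $\kappa>0$. A standard chaining argument then extends this to arbitrary $\lambda\in(0,1]$ and arbitrary admissible test functions, giving $\xi\in C^{-(d+1)/2-\kappa}=C^{-d/2-1/2-}$ almost surely.

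The main obstacle is the deterministic passage from moment bounds at countably many scale/position pairs to a uniform bound over the continuum of test functions defining the $C^\alpha$ norm; this is the content of the Kolmogorov criterion and requires a wavelet (or reproducing-kernel) decomposition whose regularity and decay are tuned to the power-counting. The probabilistic input is only the Gaussian moment bound, which is one line from Wick's formula.
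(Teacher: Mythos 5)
The paper states this theorem as a recalled background fact and provides no proof, so there is nothing internal to compare against. Your sketch is the standard argument (and the one in, e.g., \cite{Hairer14}, Prop.~9.5, and the rough-path literature): the variance computation $\|\phi^\lambda_x\|_{L^2}^2=\lambda^{-|\s|}\|\phi\|_{L^2}^2$ with $|\s|=d+1$ under parabolic scaling is correct, the Gaussian moment bound follows, and the passage from dyadic scales and points to the full continuum via a compactly supported wavelet basis plus Borel--Cantelli is exactly how the distributional Kolmogorov criterion is implemented. One small point worth making explicit: the wavelets must be chosen with $C^r$ regularity for $r$ strictly exceeding $-\alpha=(d+1)/2+\kappa$ so that the reconstruction step (summing over wavelet scales) converges in the dual pairing defining $C^\alpha$; with that in place the argument is complete.
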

Applying these results, we see that we can at best solve for $u$ in $C^{\frac32-\frac d2-}$.
If $d=2$, we can therefore solve for $u$ in $C^{1/2-}$,
but if $d\geq 3$, the cube $u^3$
does not make analytic sense.
We will now explore this example in more detail as motivation for the later sections.

\subsubsection{\texorpdfstring{$d=3$}{d=3}: Da Prato--Debussche trick}\label{subsubsec:DPD}

If $d=3$, we can actually give a meaning to the solution of~\eqref{eq:Phi4d} using the so called Da Prato--Debussche trick~\cite{DPD2}.
Consider $v$ and $X$ such that
\[
(\partial_t -\Delta)v = \xi\;,\qquad u=v+X\;.
\]
Then Schauder estimates imply $v\in C^{0-}$ and $X$ solves
\begin{equation}\label{eq:X_equation}
(\partial_t -\Delta)X = -(v+X)^3 = -(v^3 + 3v^2X+3vX^2 + X^3)\;.
\end{equation}
We notice that, if we can make sense of $v^3$ and $v^2$ as functions, say, in $C^{0-}$,
then we expect $X$ to be in $C^{2-}$, which renders the products $v^2X$ and $vX^2$ well-posed (with plenty of room to spare).
Luckily, Wick renormalisation indeed allows us to give a sensible definition to $v^3$ and $v^2$.

\begin{proposition}\label{prop:Wick}
Let $v_\eps=v*\delta_\eps$ for $\eps>0$, where $\delta_\eps$ is a smooth mollifier converging to the Dirac delta as $\eps\to 0$.
Then there exist deterministic constants $C_\eps\in \R$ for $\eps>0$ and random space-time distributions $v^{:2:}$, $v^{:3:}$ (called Wick powers)
such that
$v^2_\eps-C_\eps \to v^{:2:}$
and $v^3-3C_\eps v \to v^{:3:}$
in $C^{0-}$ in probability
as $\eps\to 0$.
Moreover,  $C_\eps\to \infty$ as $\eps\to0$.
\end{proposition}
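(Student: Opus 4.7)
The plan is to exploit the Gaussianity of $v$. Writing $v = P * \xi$ where $P$ is the (truncated, periodised) space-time heat kernel, we see that $v_\eps = P_\eps * \xi$ is a stationary mean-zero Gaussian field (up to transient effects from the initial condition, which can be ignored for local statements) with covariance
\[
K_\eps(z-z') = \E[v_\eps(z)v_\eps(z')] = \int P_\eps(z-w)P_\eps(z'-w)\mrd w\;.
\]
We set $C_\eps := K_\eps(0)$ and define the Wick powers as $v^{:2:}_\eps := v_\eps^2 - C_\eps$ and $v^{:3:}_\eps := v_\eps^3 - 3C_\eps v_\eps$. These are precisely the projections of $v_\eps^2$ and $v_\eps^3$ onto the second and third homogeneous Wiener chaos of $\xi$, so they are orthogonal to all lower chaoses.

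To show that $v^{:n:}_\eps$ converges in $C^{0-}$ in probability (for $n\in\{2,3\}$), the strategy is to establish a Kolmogorov-type estimate in Besov spaces for the tested distribution $\langle v^{:n:}_\eps - v^{:n:}_{\eps'},\varphi^\lambda_z\rangle$, where $\varphi^\lambda_z$ is a test function rescaled by $\lambda>0$ around $z$. Because each random variable in the family lives in a fixed Wiener chaos, Gaussian hypercontractivity (Nelson's estimate) gives equivalence of all $L^p$ norms with $L^2$ for $p<\infty$, so it suffices to bound the second moment. Wick's theorem yields
\[
\E\bigl[v^{:n:}_{\eps_1}(z)\, v^{:n:}_{\eps_2}(z')\bigr] \;=\; n!\, K_{\eps_1,\eps_2}(z-z')^n\;,
\]
where $K_{\eps_1,\eps_2}$ is the mixed covariance. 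Combined with the parabolic Schauder analysis of $P$, which implies $K_\eps$ is uniformly bounded by a kernel with on-diagonal behaviour $|z|_\mathfrak{s}^{-(d-3)}$ (with a logarithmic divergence when $d=3$), this reduces the problem to a concrete power-counting check: one needs $K^n$ to be integrable against $\varphi^\lambda_z\otimes\varphi^\lambda_{z'}$ with rate $\lambda^{2n\kappa}$ for $\kappa>0$ arbitrarily small. For $d=3$ and $n\in\{2,3\}$ this integrability holds with room to spare, giving $v^{:n:}_\eps\in C^{0-}$ uniformly in $\eps$. The Cauchy property in $\eps$ follows from the analogous computation applied to $v^{:n:}_\eps - v^{:n:}_{\eps'}$, which produces a kernel that is controlled by the same bound and vanishes pointwise as $\eps,\eps'\to 0$, so a dominated-convergence argument on the power-counting integral yields convergence in $L^2$ (hence in all $L^p$, hence in probability in $C^{0-}$).

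Finally, to verify $C_\eps\to\infty$ one just computes $C_\eps=(P_\eps*P_\eps)(0) = \int P_\eps(w)^2 \mrd w$ (up to lower-order terms). Since $P(w)^2$ has parabolic size $|w|_\mathfrak{s}^{-4}$, this integral lies exactly at the borderline of divergence in parabolic dimension~$4$ and grows logarithmically as $\eps\to 0$. The main obstacle in the whole argument is the power-counting step for $K^n$: it is the input that decides for which~$d$ the Wick renormalisation alone suffices (here $d=3$, $n\leq 3$), and foreshadows the combinatorial bookkeeping of Feynman-diagram type integrals that the rest of the notes will eventually encode algebraically.
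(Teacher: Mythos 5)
The paper does not actually prove Proposition~\ref{prop:Wick}: it is quoted as background from the Da~Prato--Debussche theory~\cite{DPD2} and used as input for the rest of Section~\ref{subsubsec:DPD}, so there is no in-paper proof to compare against. Your sketch is the canonical argument and all the substantive steps are correct: identifying $v^{:n:}_\eps$ with the $n$-th Wiener-chaos projection (Hermite polynomial) of $v_\eps^n$, reducing $L^p$ to $L^2$ via Nelson hypercontractivity, computing $\E[v^{:n:}_{\eps_1}(z)v^{:n:}_{\eps_2}(z')] = n!\,K_{\eps_1,\eps_2}(z-z')^n$, the power counting $K(z)\sim |z|_\s^{-(d-3)}$ (logarithmic at $d=3$, hence $K^n$ integrable for $n\le 3$), the dominated-convergence argument for the Cauchy property, and the logarithmic blow-up of $C_\eps = \int P_\eps^2$. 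You also correctly read $v^3 - 3C_\eps v$ in the statement as $v_\eps^3 - 3C_\eps v_\eps$, which appears to be a typo in the proposition.
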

The idea is now to replace $v^3$ and $v^2$ in~\eqref{eq:X_equation} by $v^{:3:}$ and $v^{:2:}$ respectively.
As indicated above, we are then able to solve for $X$ in $C^{2-}$ and \textit{define} the solution $u$ to~\eqref{eq:Phi4d} as $u\eqdef v+X \in C^{0-}$.
Furthermore, we can obtain the following approximation result.
\begin{theorem}\label{thm:DPD}
Following the notation from Proposition~\ref{prop:Wick},
let $\xi_\eps=\xi*\delta_\eps$.
Then the solutions to
\[
(\partial_t-\Delta)u_\eps = -u_\eps^3 + 3C_\eps u_\eps + \xi_\eps
\]
converge to a random space-time distribution $u$ in $C^{0-}$ in probability as $\eps\to0$.
\end{theorem}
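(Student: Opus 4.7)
The plan is to carry out the Da Prato--Debussche decomposition at the mollified level. Let $v_\eps$ solve $(\partial_t-\Delta)v_\eps = \xi_\eps$ with the same initial condition as $v$, and write $u_\eps = v_\eps + X_\eps$. A direct computation using the equation for $v_\eps$ gives
\begin{equation*}
(\partial_t-\Delta) X_\eps = -(v_\eps^3 - 3C_\eps v_\eps) - 3(v_\eps^2 - C_\eps) X_\eps - 3 v_\eps X_\eps^2 - X_\eps^3\;.
\end{equation*}
This is the key point: the counterterm $3C_\eps u_\eps$ has been tailored exactly so that the diverging constant is absorbed into the Wick powers $v_\eps^{:2:}=v_\eps^2-C_\eps$ and $v_\eps^{:3:}=v_\eps^3-3C_\eps v_\eps$ from Proposition~\ref{prop:Wick}, leaving an equation for $X_\eps$ with no divergent coefficient.

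Next, I would view this as a deterministic problem for $X$ parametrised by an abstract triple $(w,w_2,w_3)\in (C^{0-})^3$ in place of $(v_\eps, v_\eps^{:2:}, v_\eps^{:3:})$, namely
\begin{equation*}
X = \mcI\!\left[-w_3 - 3 w_2 X - 3 w X^2 - X^3\right]\;,
\end{equation*}
where $\mcI$ denotes heat-kernel convolution with the prescribed initial data. The ansatz is $X\in C^{2-}$ on $[0,T]\times\T^{d-1}$; then by Proposition~\ref{prop:Young} every product on the right is well defined, since summing the regularities always yields $0+2>0$, and Schauder estimates bring the right-hand side back into $C^{2-}$. A standard contraction argument in a ball of $C^{2-}$ produces, for some $T>0$ depending only on the $C^{0-}$ norms of the data, a unique local mild solution, and subtracting two copies shows that the solution map $\Phi\colon (w,w_2,w_3)\mapsto X$ is locally Lipschitz in the relevant topologies.

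The theorem then follows by composing this deterministic map with the probabilistic input of Proposition~\ref{prop:Wick}: the triples $(v_\eps, v_\eps^2-C_\eps, v_\eps^3-3C_\eps v_\eps)$ converge in probability in $(C^{0-})^3$ to $(v, v^{:2:}, v^{:3:})$, so continuity of $\Phi$ gives $X_\eps\to X$ in probability in $C^{2-}$ on $[0,T]\times\T^{d-1}$, and hence $u_\eps=v_\eps+X_\eps \to u:=v+X$ in probability in $C^{0-}$.

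The main obstacle I anticipate is upgrading the local-in-time statement to all times: the fixed-point argument only yields a (random) $T>0$ depending on the size of the driving triple, and $X_\eps$ could in principle blow up. To rule this out one must exploit the good sign of $-X_\eps^3$ to derive an a priori $L^p$ bound on $X_\eps$, uniform in $\eps$, on any prescribed time interval, and then iterate the local argument. This step is genuinely analytic rather than algebraic, and I would defer its details to~\cite{DPD2}.
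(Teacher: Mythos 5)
Your argument is correct and reconstructs the standard Da Prato--Debussche proof that the paper sketches in the text surrounding Theorem~\ref{thm:DPD} (and for which it defers to~\cite{DPD2}): the mollified decomposition $u_\eps = v_\eps + X_\eps$ absorbs the counterterm into the Wick powers, the equation for $X_\eps$ becomes a fixed point in $C^{2-}$ depending locally Lipschitz-continuously on the triple $(v_\eps, v_\eps^{:2:}, v_\eps^{:3:})\in (C^{0-})^3$, and convergence in probability of that triple yields convergence of $u_\eps$. You have also correctly identified the global-in-time step, exploiting the damping from $-X^3$, as the one genuinely analytic point to be filled in, which is outside the algebraic scope of these notes.
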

Note that since $C_\eps \to \infty$, our $u$ does not really solve a PDE. It is instead obtained as a limit of solutions to \textit{renormalised} PDEs,
and is a continuous function not of $\xi$ alone, but of the triple $(\xi,v^{:2:},v^{:3:})\in C^{2-}\times C^{0-}\times C^{0-}$.
The distributions $v^{:2:}$ and $v^{:3:}$ are of course measurable functions of $\xi$ (at least once a suitable choice for $C_\eps$ is fixed),
but they are not \textit{continuous} functions.

\subsubsection{\texorpdfstring{$d=4$}{d=4}}
\label{subsubsec:d4}

Let us now consider~\eqref{eq:Phi4d} with $d=4$.
This time $v\in C^{-1/2-}$ and the Wick powers $v^{:3:}$ and $v^{:2:}$ are more singular:
the same result as~\ref{prop:Wick} holds \textit{except} that $v^{:3:}\in C^{-3/2-}$ and $v^{:2:}\in C^{-1-}$.
Performing the Da Prato--Debussche trick again, we see that $X$ is at best in $C^{1/2-}$,
which renders the products $v^{:2:}X$ and $vX^2$ ill-posed.
At this point, it is tempting to try the trick again by singling out the worst term in the equation for $X$:
define $w$ and $Y$ by
\[
(\partial_t -\Delta) w = v^{:3:}\;,\qquad X=w+Y\;.
\]
Then $Y$ solves
\begin{equation}\label{eq:Y_equation}
(\partial_t-\Delta) Y = -(3v^{:2:}(w+Y) + 3v(w+Y)^2 + (w+Y)^3)
\end{equation}
and Schauder estimates imply that $w\in C^{1/2-}$.
Furthermore, the product $v^{:2:}w$ is an explicit function of $\xi$, much like $v^{:2:}$ and $v^{:3:}$,
and there is good hope we can make sense of it (and the other products of $v$ and $w$) through stochastic analysis.
Indeed, after another renormalisation, one can make sense of $v^{:2:}w\in C^{-1-}$ (which is natural since $v^{:2:}\in C^{-1-}$ and multiplication by a more regular function should not increase regularity).

We therefore expect $Y \in C^{1-}$ so it seems we have gained something.
However, the product $v^{:2:}Y$ is still (borderline) ill-posed.
Let us try the trick one more time, defining
\[
(\partial_t-\Delta)a = v^{:2:}w\;,\qquad Y = a+Z\;.
\]
Then $a\in C^{1-}$ and $Z$ solves
\[
(\partial_t-\Delta)Z = -(3v^{:2:}(a+Z) + \ldots)
\]
where $\ldots$ denotes further terms.
We see that, even if we can make sense of $v^{:2:}a$ again, we expect its regularity to be at best that of $v^{:2:}$, which $C^{-1-}$.
This again implies $v^{:2:}Z$ is ill-defined.
It is not difficult to see that further iterations of the Da Prato--Debussche trick will not remove this problem, so we are seemingly stuck.
At this point, let us make an important remark.

\begin{remark}\label{rem:motivation}
If we could make sense of $v^{:2:}w$ in $C^{-1/2-}$ instead of $C^{-1-}$ which would correspond roughly to multiplication behaving like $C^\alpha\times C^\beta\to C^{\alpha+\beta}$ instead of $C^{\alpha}\times C^{\beta}\to C^{\alpha\wedge \beta}$,
then we would be in much better shape.
Indeed, after making sense of the remaining products of $w$ and $v$, we should be able to solve for $Y$ in 
$C^{3/2-}$, with the most singular product $v^{:2:}Y$ now well-defined by Proposition~\ref{prop:Young}.

Of course multiplication does not behave like $C^\alpha\times C^\beta\to C^{\alpha+\beta}$.
However, if instead of $w$ we consider the family of functions
\[
\{w_x\}_{x\in \R^d}\;,\qquad w_x = w-w(x)\;,
\]
then we can show that the product $ v^{:2:}w_x$, again after renormalisation, behaves like it were in $C^{-1/2-}$
\textit{locally around $x$}\footnote{One
needs to test against approximations of the Dirac delta centred at $x$ to make this precise, see Definition~\ref{def:pre_model}.}
This parallels the (obvious) fact that if $f$ and $g$ are functions such that $|f(y)| \leq A|y-x|^\alpha$ and $|g(y)|\leq B|y-x|^\beta$ for some $\alpha,\beta\in\R$,
then $|(fg)(y)|\leq AB |y-x|^{\alpha+\beta}$.

The idea now is that we should try to describe $Y$ \textit{locally} around every point $x$ as a function of the finite collection of explicit stochastic objects $v^{:2:}w_x, vw_x, \ldots$ that appear on the right-hand side of~\eqref{eq:Y_equation},
plus a remainder which behaves better locally around $x$.
One should then be able to make sense of the all the singular products and solve for $Y$.
Our final solution $u$ to~\eqref{eq:Phi4d} would then be defined as $u=v+w + Y$.
\end{remark}
The above remark is far from precise, but one of the achievements of regularity structures is to make this argument rigorous.
Indeed, the following result is shown in~\cite[Theorem~1.15]{Hairer14} which extends Theorem~\ref{thm:DPD}
to $d=4$.
\begin{theorem}\label{thm:Phi43}
Let $\xi_\eps=\xi*\delta_\eps$ be a mollification of $\xi$.
Then there exist deterministic constants $\bar C_\eps\in \R$
such that
solutions to
\[
(\partial_t-\Delta)u_\eps = - u_\eps^3 + \bar C_\eps u_\eps + \xi_\eps
\]
converge to a random space-time distribution $u$ in $C^{-1/2-}$ in probability as $\eps\to 0$.
\end{theorem}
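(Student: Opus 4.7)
The plan is to combine iterated Da Prato--Debussche splittings with the local-description idea flagged in Remark~\ref{rem:motivation}, along the lines of the theory of regularity structures. I would set $u = v + w + Y$, where $(\partial_t - \Delta) v = \xi$ and $(\partial_t - \Delta) w = -v^{:3:}$, and substitute into~\eqref{eq:Phi4d} to derive a fixed-point equation for $Y$ whose right-hand side involves the products $v^{:2:} Y$, $v Y^2$, $v^{:2:} w$, $v w^2$, $v w Y$, and so on. Several of these (most notably $v^{:2:} Y$) are classically ill-posed and, as observed in Section~\ref{subsubsec:d4}, cannot be tamed by further iterations of the trick.

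The first analytic ingredient is to construct a full collection of stochastic enhancements: the Wick powers $v^{:2:}$, $v^{:3:}$ from Proposition~\ref{prop:Wick}, together with all the iterated objects arising from the right-hand side, such as $v^{:2:} w$, $v(v^{:2:} w)$, $v w^2$, $v^{:2:} w^2$, and their recentered versions $v^{:2:}(w - w(x))$ indexed by a base point $x$. Each is defined as a limit in probability of the analogous expression built from $v_\eps = v * \delta_\eps$ after subtracting a deterministic counterterm; convergence comes from uniform second-moment bounds in Wiener chaos, a Kolmogorov-type criterion, and hypercontractivity to upgrade to higher moments. The constant $\bar C_\eps$ in the statement is to be read off from the subtractions associated to the specific sub-diagrams that feed back into the equation, producing the logarithmic divergence characteristic of $d = 4$.

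Next, I would replace $Y$ by a \emph{modelled distribution}, a function $x \mapsto (Y_\tau(x))_\tau$ whose coordinates $\tau$ index the finite list of stochastic building blocks just constructed (including a symbol $\bone$ playing the role of the constant $1$). Locally around $x$, one interprets $Y$ as $\sum_\tau Y_\tau(x) \Pi_x \tau$, with $\Pi_x \tau$ denoting the corresponding recentered stochastic object near $x$. Because the recentered $\Pi_x \tau$ satisfy a product-of-H\"older bound with the sum of local exponents rather than the minimum, multiplication of modelled distributions makes sense locally even when the classical product does not, and can be patched into a global distribution via a reconstruction theorem. The fixed-point equation for the coefficient vector $(Y_\tau)_\tau$ is then solved in a weighted H\"older space of modelled distributions using the Schauder estimate (which raises local regularity by $2$) combined with boundedness of multiplication by the constructed stochastic data; short-time contraction yields a local solution.

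Finally, I would identify $u_\eps$ with the solution driven by the smooth lift of $\xi_\eps$ after applying a specific renormalisation group element, and show that this element coincides, up to the prefactor $\bar C_\eps$, with the one implementing the subtractions from the second step. Continuity of the solution map in the driving stochastic data, together with convergence of the renormalised data, then yields convergence in probability of $u_\eps$ to a limit $u \in C^{-1/2-}$. The main obstacle is this last coherence: all of the relevant stochastic trees must converge under a \emph{single} choice of counterterms, and the corresponding renormalisation must act on the equation precisely as the shift $u_\eps \mapsto u_\eps + \bar C_\eps u_\eps$. This is exactly the interplay between positive and negative renormalisation that the lecture notes will develop through interacting Hopf algebras.
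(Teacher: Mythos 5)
The lecture notes themselves do not prove this theorem: it is stated as motivation and the proof is cited to \cite[Theorem~1.15]{Hairer14}, with the remainder of the notes developing the algebraic machinery that underlies such arguments. Your proposal is a correct, if schematic, outline of the regularity-structures strategy used in that reference, and it invokes precisely the ingredients (pre-models, recentring, reconstruction, the action of the negative renormalisation group on the equation) that the notes go on to formalise in Sections~\ref{sec:pos_renorm}--\ref{sec:renorm_SPDE}. Two small remarks. First, the decomposition $u = v + w + Y$ with $Y$ a modelled distribution is a pedagogically convenient hybrid of the Da Prato--Debussche splitting and the abstract formalism; in the proof of \cite{Hairer14} one works directly with a modelled distribution $U$ for $u$, with $v$, $w$, and the higher corrections appearing as coefficients of the tree expansion rather than as a preliminary subtraction, and the recentring then applies to the stochastic objects $\Pi_x\tau$ rather than to $Y$ itself --- your version is a valid equivalent but the bookkeeping is slightly less aligned with what follows. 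Second, the constant $\bar C_\eps$ in $d=4$ combines the $\eps^{-1}$ divergence already present in the Wick subtraction with a new logarithmic divergence from the second-order (sunset) diagram; the paper's remark immediately after the theorem emphasises precisely that $\bar C_\eps \neq 3C_\eps$, so your phrase ``producing the logarithmic divergence characteristic of $d=4$'' describes the genuinely new contribution but not the whole constant.
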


\begin{remark}
Unlike in Theorem~\ref{thm:DPD}, the constant $\bar C_\eps$ in Theorem~\ref{thm:canonical_model} is \textit{not} the constant $3C_\eps$ obtained from the Wick renormalisation $v^2_\eps - C_\eps \to v^{:2:}$.
\end{remark}
As one may expect, proving such a result requires some algebraic considerations.
First, it is not clear how the distributions $v^{:2:}w_x$ and $v^{:2:}w_y$ relate to each other for different points $x,y$
and what information we need about them to solve for $Y$.
Second, it is not clear how the renormalisation of singular products $v^{2} \mapsto v^{:2:}$ akin to Proposition~\ref{prop:Wick} (assuming it can be done at all!)
interacts with the recentring procedure $v^{:2:}w \mapsto v^{:2:}w_x$.
Third, the existence and computation of the renormalisation constant $\bar C_\eps$ is not clear (even for $d=3$, if $3C_\eps$ in Proposition~\ref{prop:Wick} happened to be
$4C_\eps$,
then a statement like in Theorem~\ref{thm:DPD} would be false as one would require an additional non-local renormalisation term $C_\eps v_\eps$).

In the rest of these lecture notes, we will
show the kind of algebraic structures which appear in the solutions of these issues.
We will specifically discuss
\begin{itemize}
\item positive renormalisation (Section~\ref{sec:pos_renorm}), which addresses the recentring $w\mapsto w_x$, $v^{:2:}w \mapsto v^{:2:}w_x$,  etc.

\item negative renormalisation (Section~\ref{sec:neg_renorm}), which addresses the transformations $v^{2}_\eps\mapsto v^{:2:}_\eps \eqdef v^2-C_\eps$ etc.
and how they interact with positive renormalisation, and

\item renormalisation of SPDEs (Section~\ref{sec:renorm_SPDE}), where we identify the counterterms $\bar C_\eps u_\eps$ etc. for a general equation.
\end{itemize}

Sections~\ref{sec:pos_renorm} and~\ref{sec:neg_renorm} are primarily based on~\cite{Hairer14,BHZ19} and Section~\ref{sec:renorm_SPDE} is based on~\cite{BCCH21}.
In the rest of this section, we set up some preliminary definitions and results used in the sequel.

\subsection{Picard iterations and trees}

A potential way to solve~\eqref{eq:SPDE} is to first consider smooth $\xi$ and rewrite the equation in mild formulation as
\[
u = G u_0 + G*(F(u,\nabla u,\ldots \xi,\nabla \xi,\ldots))\;,
\]
where $G=(\partial_t-\mcL)^{-1}$ is the Green's function of $\partial_t -\mcL$, $*$ denotes space-time convolution, and $(Gu_0)(t,\cdot) = e^{t\mcL}u_0$ is the harmonic extension of the initial condition.

It is then natural to define the map
\begin{equ}
\mcM(u)\eqdef G u_0 + G*(F(u,\nabla u,\ldots \xi,\nabla\xi,\ldots))\;,
\end{equ}
so that the solution to~\eqref{eq:SPDE} is a fixed point $\mcM(u)=u$.
Under favourable conditions, namely if $\mcM$ is a contraction, the iterations $u^{(0)}\eqdef0$, $u^{(n)} = \mcM(u^{(n-1)})$ converge to a unique fixed point $u$ as $n\to\infty$.

Writing $F(u,\xi)$ instead of $F(u,\nabla u,\ldots \xi,\nabla \xi,\ldots)$,
let us \textit{formally} Taylor expand
\begin{equ}
F(u,\xi) = \sum_{\alpha} \frac{D^\alpha F(0)}{\alpha!}(u,\xi)^k\;.
\end{equ}
Then each iteration
\[
u^{(0)}=0\;,\qquad u^{(1)}=Gu_0\;,\qquad u^{(2)}=Gu_0 + G(F(u^{(1)},\xi))\;,\qquad \ldots
\]
can be formally written as a series of functions which are multilinear in the jets of $Gu_0$ and $\xi$, and involve iterated convolutions with $G$.
For example, one of the functions in $u^{(3)}$ could be a multiple of $G(Gu_0 (G*\partial_i\xi)^2\xi)$.

This naturally leads to a collection of `abstract symbols' (trees) which will be used as placeholders for these functions.
This abstraction helps to detach the terms that appear in this `formal' fixed point with the concrete functions these trees will represent.
% (such as $v^{:2:}w$ in Section~\ref{subsubsec:d4}).

\begin{definition}\label{def:types}
Let $\mfL = \{\Xi,\mcI\}$ be a $2$-element set which we call the set of \textit{types},
and define $\mcE = \mfL\times \N^d$, which we call the set of \textit{edge types}, where $\N=\{0,1,\ldots\}$.
We will often write $(\mft,p)\in\mcE$ as $\mft_p$
and $(\mft,0)=\mft_0\in\mcE$ as just $\mft$.
\end{definition}
One should think of $\Xi$ and $\mcI$ above as formal symbols corresponding to noise and integration respectively.
\begin{definition}\label{def:trees_mfT}
Consider a tuple $\tau= (T,\mfn,\mfe)$ where
\begin{itemize}
\item $T$ is a rooted tree, i.e. a connected graph without cycles, with node set $N_T=N_\tau$, edge set $E_T=E_\tau$,
and a distinguished node $\rho=\rho_\tau\in N_T$ called the root;

\item $\mfe\colon E_T\to \mcE$, which we call the edge decoration;

\item $\mfn\colon N_T\to\N^d$, which we call the polynomial decoration.
\end{itemize}
We call the edges $e\in E_T$ with $\mfe(e) \in \{\mcI\}\times\N^d$ (resp. $\mfe(e) \in\{\Xi\}\times\N^d$) \textit{kernel} edges (resp. \textit{noise} edges).
We write $x\leq y$ to mean that $x$ is on the unique path from $y$ to $\rho$. 
If $(x,y)\in E_T$ is a noise edge with $x\leq y$, we call $y$ a \textit{noise node}.
 
Let $\mfT$\label{def:mfT} denote the set of all such tuples $\tau=T^\mfn_\mfe\eqdef (T,\mfn,\mfe)$ for which, if $y$ is a noise node, then
\begin{itemize}
\item $y$ is a leaf, i.e. $y$ has degree $1$, and

\item $\mfn(y)=0$.
\end{itemize}
We denote $\CV = \Span_\R(\mfT)$.\label{CV_page_ref}
\end{definition}

When we draw trees, we will always place the root at the bottom.
When important, we will write the decorations next to edges or nodes.
For example, the following trees are in $\mfT$:
\begin{equ} 
\begin{tikzpicture}  [scale=0.8,baseline=-0.3cm] 
    \node [dot,label=-90:{\scriptsize $X^k$}] (m) at (-2, -1) {};
    \node [dot,label=90:{\scriptsize $X^n$}] (ml) at (-1, 0) {};
    \node [dot] (new) at (-3, 0) {};
    \draw (m) to node[sloped, midway,below]{\scriptsize $\CI_q$} (ml);
    \draw (m) to node[sloped, midway,below] {\scriptsize $\Xi_p$} (new);
\end{tikzpicture}
\;,\qquad
\begin{tikzpicture}  [scale=0.8,baseline=-0.3cm]
    \node [dot] (uu) at (-1.5, 0) {};
    \node [dot,label=-90:{\scriptsize $X^k$}] (m) at (-2, -1) {};
    \node [dot] (ml) at (-0.5, 0) {};
    \node [dot] (new) at (-3, 0) {};
    \draw (m) to node[sloped, midway,below]{\scriptsize $\Xi_r$} (ml);
\draw (m) to node[midway,left]{\scriptsize $\Xi_q$} (uu);
    \draw (m) to node[sloped, midway,below] {\scriptsize $\Xi_p$} (new);
\end{tikzpicture}
\;,\qquad
\begin{tikzpicture}  [scale=0.8,baseline=-0.3cm]
    \node [dot] (uu) at (-2, 0) {};
    \node [dot,label=-90:{\scriptsize $X^k$}] (m) at (-2, -1) {};
    \node [dot] (ml) at (-1, 0) {};
    \node [dot] (new) at (-3, 0) {};
    \draw (m) to node[sloped, midway,below]{\scriptsize $\mcI_r$} (ml);
\draw (m) to  (uu);
\node [dot] (uur) at (-2,1) {};
\node [dot] (uul) at (-3,1) {};
\node [dot] (uurr) at (-1,1) {};
\draw (uu) to (uur);
\draw (uu) to (uul);
\draw (ml) to (uurr);
    \draw (m) to node[sloped, midway,below] {\scriptsize $\Xi_p$} (new);
\end{tikzpicture}
\;,
\end{equ}
where unmarked edges have decoration $\mcI_0$ and unmarked nodes have decoration $X^0$.
However, for $k\neq 0$, the following two trees are not in $\mfT$ because the first has a noise node $y$ with $\mfn(y)=k\neq 0$, while the second has a noise node $y$ that is not a leaf:
\begin{equ}
\begin{tikzpicture}  [scale=0.8,baseline=-0.3cm]
    \node [dot] (m) at (-2, -1) {};
    \node [dot,label=90:{\scriptsize $X^k$}] (ml) at (-1, 0) {};
    \node [dot] (new) at (-3, 0) {};
    \draw (m) to node[sloped, midway,below]{\scriptsize $\Xi_r$} (ml);
    \draw (m) to node[sloped, midway,below] {\scriptsize $\Xi_p$} (new);
\end{tikzpicture}
\;,\qquad
\begin{tikzpicture}  [scale=0.8,baseline=-0.3cm]
    \node [dot] (uu) at (-2, 0) {};
    \node [dot,label=-90:{\scriptsize $X^k$}] (m) at (-2, -1) {};
    \node [dot] (ml) at (-1, 0) {};
    \node [dot] (new) at (-3, 0) {};
    \draw (m) to  (ml);
\draw (m) to  (uu);
\node [dot] (uul) at (-3,1) {};
\draw (new) to (uul);
    \draw (m) to node[sloped, midway,below] {\scriptsize $\Xi_p$} (new);
\end{tikzpicture}
\;.
\end{equ}

\begin{remark}
These trees are \textit{combinatorial}, meaning that
we do not place an order on the edges leaving a node;
we thus identify any two trees which differ by a graph isomorphism that preserves the roots and decorations.
For example, the following two trees are treated as equal:
\begin{equ} 
\begin{tikzpicture}  [scale=0.8,baseline=-0.3cm] 
    \node [dot,label=-90:{\scriptsize $X^k$}] (m) at (-2, -1) {};
    \node [dot,label=90:{\scriptsize $X^n$}] (ml) at (-1, 0) {};
    \node [dot] (new) at (-3, 0) {};
    \draw (m) to node[sloped, midway,below]{\scriptsize $\CI_q$} (ml);
    \draw (m) to node[sloped, midway,below] {\scriptsize $\Xi_p$} (new);
\end{tikzpicture}
\;,\qquad
\begin{tikzpicture}  [scale=0.8,baseline=-0.3cm] 
    \node [dot,label=-90:{\scriptsize $X^k$}] (m) at (-2, -1) {};
    \node [dot] (ml) at (-1, 0) {};
    \node [dot,label=90:{\scriptsize $X^n$}] (new) at (-3, 0) {};
    \draw (m) to node[sloped, midway,below]{\scriptsize $\Xi_p$} (ml);
    \draw (m) to node[sloped, midway,below] {\scriptsize $\CI_q$} (new);
\end{tikzpicture}
\;.
\end{equ}
\end{remark}
\begin{convention}[Multisets vs. functions]\label{rem:multisets}
For a set $A$,
we will frequently identify functions in $\N^A$
with multisets of $A$ in the obvious way.
In particular, the zero element $0\in\N^A$ corresponds to the empty multiset $\emptyset$.
Likewise $k\in \N^d$ will often be treated as a multiset of $[d]\eqdef\{1,\ldots, d\}$\label{[d]_page_ref}.
\end{convention}
We will write every tree $\tau\in\mfT$ symbolically as follows.
Let $\mfN \subset \N^{\N^d}$ denote the set of functions $\N^d\to \N$ with finite support.\label{mfN_page_ref}
According to Remark~\ref{rem:multisets},
we will implicitly treat every $L\in\mfN$ as a multiset of $\N^d$ with finitely many elements.
We then write every $\tau\in\mfT$ in one of the following two ways
\begin{equ}\label{eq:general_tree}
\tau=  X^k \Xi_{L}\prod_{j\in J} \mcI_{m_j}[\tau_j] = X^k \prod_{l\in L}\Xi_{l}\prod_{j\in J} \mcI_{m_j}[\tau_j]\;,
\end{equ}
where $k, m_j \in \N^d$, $L\in\mfN$, and $\tau_j\in\mfT$.
Here $k=\mfn(\rho_\tau)$ is the polynomial decoration at the root,
$L$ is the multiset such that there are $\# L$ noise edges incident on $\rho_\tau$ and have edge decorations $\{(\Xi,l)\}_{l\in L}$,
and $J$ is a finite (possibly empty) index set (say $J\subset \N$ for concreteness)
such that there are $\# J$ kernel edges
incident on $\rho_\tau$ and have edge decorations  $\{(\mcI,m_j)\}_{j\in J}$ and terminate respectively on the roots of the trees $\{\tau_j\}_{j\in  J}\subset\mfT$.
This representation is unique up to changing the index set $J$ in a way that the multiset $\{(m_j,\tau_j)\}_{j\in J}\subset\N^d\times\mfT$ remains unchanged.

\begin{example}
\begin{equ}
\begin{tikzpicture}  [scale=0.8,baseline=-0.3cm]
    \node [dot,label=90:{\scriptsize $X^b$}] (uu) at (-2, 0) {};
    \node [dot,label=-90:{\scriptsize $X^a$}] (m) at (-2, -1) {};
    \node [dot] (ml) at (0, 0) {};
    \node [dot] (new) at (-4, 0) {};
    \draw (m) to node[sloped, midway,below]{\scriptsize $\Xi_\beta$} (ml);
\draw (m) to node[midway,left]{\scriptsize $\mcI_p$} (uu);
\node [dot] (uur) at (-1,1) {};
\node [dot,label=90:{\scriptsize $X^c$}] (uul) at (-3,1) {};
%\node [dot] (uurr) at (-1,1) {};
\draw (uu) to node[sloped, midway,below]{\scriptsize $\Xi_\gamma$} (uur);
\draw (uu) to node[sloped, midway,below]{\scriptsize $\mcI_q$} (uul);
%\draw (ml) to (uurr);
    \draw (m) to node[sloped, midway,below] {\scriptsize $\Xi_\alpha$} (new);
\end{tikzpicture}
\quad = \quad
X^a \Xi_\alpha \Xi_\beta \mcI_p[X^b\Xi_\gamma \mcI_q[X^c]]\;.
\end{equ}
\end{example}
Notation~\eqref{eq:general_tree} is suggestive of the fact that there is a natural associative and commutative product on $\mfT$.
Indeed, we define for $\tau,\bar\tau\in\mfT$ the \textit{tree product} $\tau\bar \tau\in\mfT$ by joining the roots of $\tau$ and $\bar\tau$
and adding the polynomial decorations at the roots (all other decorations remain unchanged).
In symbols, the tree product is written
\begin{equ}
\Big(X^k \Xi_L \prod_{j\in J} \mcI_{m_j} [\tau_j]\Big)
\Big(X^{\bar k} \Xi_{\bar L} \prod_{j\in \bar J} \mcI_{m_j} [\tau_j]\Big)
\eqdef X^{k+\bar k} \Xi_{L+\bar L}\prod_{j\in J\sqcup \bar J} \mcI_{m_j} [\tau_j]\;,
\end{equ}
where we suppose $J,\bar J$ are disjoint.
For example,
\begin{equ} 
\begin{tikzpicture}  [scale=0.8,baseline=-0.3cm] 
    \node [dot,label=-90:{\scriptsize $X^k$}] (m) at (-2, -1) {};
    \node [dot,label=90:{\scriptsize $X^n$}] (ml) at (-1, 0) {};
    \node [dot] (new) at (-3, 0) {};
    \draw (m) to node[sloped, midway,below]{\scriptsize $\CI_q$} (ml);
    \draw (m) to node[sloped, midway,below] {\scriptsize $\Xi_p$} (new);
\end{tikzpicture}
\quad
\begin{tikzpicture}  [scale=0.8,baseline=-0.3cm]
    \node [dot] (uu) at (-2, 0) {};
    \node [dot,label=-90:{\scriptsize $X^m$}] (m) at (-2, -1) {};
    \node [dot] (ml) at (-1, 0) {};
%    \node [dot] (new) at (-3, 0) {};
    \draw (m) to node[sloped, midway,below]{\scriptsize $\Xi_r$} (ml);
\draw (m) to  (uu);
\node [dot] (uur) at (-1.5,1) {};
\node [dot] (uul) at (-2.5,1) {};
%\node [dot] (uurr) at (-1,1) {};
\draw (uu) to (uur);
\draw (uu) to (uul);
%\draw (ml) to (uurr);
%    \draw (m) to node[sloped, midway,below] {\scriptsize $\Xi_q$} (new);
\end{tikzpicture}
\quad = \quad 
\begin{tikzpicture}  [scale=0.8,baseline=-0.3cm]
    \node [dot] (uu) at (-1, 0) {};
    \node [dot,label=-90:{\scriptsize $X^{k+m}$}] (m) at (-2, -1) {};
    \node [dot] (ml) at (0, 0) {};
    \node [dot] (new) at (-4, 0) {};
    \draw (m) to node[sloped, midway,below]{\scriptsize $\Xi_r$} (ml);
\draw (m) to  (uu);
\node [dot] (uur) at (-0.5,1) {};
\node [dot] (uul) at (-1.5,1) {};
%\node [dot] (uurr) at (0,1) {};
\draw (uu) to (uur);
\draw (uu) to (uul);
%\draw (ml) to (uurr);
\node [dot,label=90:{\scriptsize $X^n$}] (poly) at (-2, 0) {};
\draw (m) to node[ midway,left] {\scriptsize $\mcI_q$} (poly);
    \draw (m) to node[sloped, midway,below] {\scriptsize $\Xi_p$} (new);
\end{tikzpicture}
\end{equ}
Note that the tree product is additive in the number of edges but not the number of nodes.

The unit under the product is $\bone\in\mfT$,\label{bone_page_ref} the tree with one node $\rho$ (the root) and zero polynomial decoration $\mfn(\rho)=0$.
We will correspondingly identify $\bone = X^0 = \Xi_\emptyset$ (but $\bone \neq \Xi_{\{0\}}$!) and  $\bone=\prod_{j\in J} \mcI_{m_j}[\tau_j]$ if $J$ is empty.
The products in~\eqref{eq:general_tree} should thus be interpreted as tree products.
For example, $X^k$ will denote the tree with one node $\rho$ and polynomial decoration $\mfn(\rho)=k$.
The notation $
\Xi_L = \prod_{l\in L} \Xi_l
$
in~\eqref{eq:general_tree}
is consistent with the interpretation of the product as a tree product.
Here we use the shorthand $\Xi_l=\Xi_{\{l\}}$ to denote the tree
having a single noise edge with decoration $(\Xi,l)\in\mcE$ (in particular $\Xi_0\neq\Xi_\emptyset=\bone$ under this notation).
\begin{remark}
Given a smooth function $\xi\in C^\infty(\R^d,\R)$,
there is a canonical way to map every tree in $\mfT$ to a smooth function (see $\bfPi$ in Section~\ref{sec:pos_renorm}).
The tree $X^k$ corresponds to a polynomial $x^k$ (appearing, e.g. in the jet of $Gu_0$).
The tree $\Xi_{L}$ for $L\in\mfN$ corresponds to the function
\begin{equ}\label{eq:noise_description}
\prod_{l \in L} D^l \xi
\end{equ}
(we are identifying here $L$ with a finite multiset of $\N^d$).
The tree $\mcI_{m_j}[\tau_j]$ corresponds to the convolution of $D^{m_j} G$ with the function corresponding to $\tau_j$.
Finally, the tree product corresponds to the product of functions.
See, however, Section~\ref{sec:neg_renorm} where we reinterpret (renormalise) products.
\end{remark}
Most trees in $\mfT$ will not be useful in solving~\eqref{eq:SPDE}.
For example, if $\xi$ appears in $F$ in an affine way, then products of the form~\eqref{eq:noise_description} corresponding to $\Xi_L$ with $\#L>1$ never appear in the Picard iteration.

The way we select the relevant trees is through the notion of \textit{rules}.

\subsection{Rules}

\begin{definition}\label{def:rule}
A \textit{rule} $R$ is a subset $R\subset \N^{\mcE}$, i.e. a subset of the set of multisets of edge types.
We say that $R$ is \textit{normal} if $A\subset B$ and $B\in R$ implies $A\in R$.
\end{definition}

\begin{definition}\label{def:conform}
For a tree $\tau = T^{\mfn}_{\mfe}\in\mfT$ and $x\in N_\tau$,
define the multiset
\begin{equ}
\CN(x) \eqdef \{\mfe(x,y) \,:\, (x,y)\in E_\tau\,,\, x\leq y\} \in \N^{\CE}\;.
\end{equ}
We also denote
\begin{equ}\label{eq:mcN}
\mcN(\tau) \eqdef \mcN(\rho_\tau) = \{\Xi_{l}\,:\,l\in L\}
\sqcup \{\mcI_{m_j} \,:\, j \in J\}\;,
\end{equ}
where the final equality assumes that $\tau$ is of the form~\eqref{eq:general_tree}.

We say that a tree $\tau\in\mfT$ \textit{strongly conforms} to a rule $R$
if $\mcN(x)\in R$ for all $x\in N_\tau$.
We denote by $\mfT_\circ = \mfT_\circ(R)$ the set of trees that strongly conform to $R$.
\end{definition}
Equivalently, $\tau$ strongly conforms to $R$ if $\CN(\tau)\in R$ and, when written as~\eqref{eq:general_tree}, every $\tau_j$ for $j\in J$ strongly conforms to $R$.
\begin{remark}
Our definition of a rule is a simplified version of~\cite[Def.~5.7]{BHZ19} -- therein a rule $R$ is a map from the set of types $\mfL$ into subsets of $\N^\mcE$, and our definition of a rule is simply $R(\mcI)$ in the notation of~\cite{{BHZ19}}.
We can make this simplification because we only consider two types, namely $\Xi$ and $\mcI$,
and every noise node of a tree $\tau\in\mfT$ is a leaf by assumption.
\end{remark}

\begin{example}[Trivial cases]
\begin{itemize}
\item If $R$ is empty, then $\mfT_\circ$ is also empty since $\CN(x)$ is never an element of $R$ for any $x\in N_\tau$ and $\tau\in\mfT$.

\item If $R$ does not contain $0\in \N^{\mcE}$, then $\mfT_\circ$ is again empty because every $\tau\in\mfT$ contains at least one $x\in N_\tau$ for which $\CN(x) = 0$ (i.e. $x$ is a leaf).

\item If $R$ is the singleton multiset $R = \{0\}$, then no trees in $\mfT$ that contain an edge strongly conform to $R$, and thus $\mfT_\circ=\{X^k\,:\, k\in \N^d\}$.
\end{itemize}
\end{example}

The idea behind a rule $R$ is that the trees appearing in the Picard iterations outlined above should all strongly conform to $R$.
The following definition makes this precise.

\begin{definition}\label{def:obey}
A \textit{non-linearity} $F$ is a smooth map $F\colon \R^{\mcE}\to \R$
which depends on only finite many components in $\mcE$.
Given a non-linearity $F$, we say that $F$ \textit{obeys} $R$ if
\[
\{o_1,\ldots,o_n\}\notin R \quad \Rightarrow \quad D_{o_1}\ldots D_{o_n} F = 0\;.
\]
\end{definition}
Note that $\R^\mcE$ is the space of all possible `jets' of pairs of functions $(u,\xi)$ at a given point.
\begin{remark}\label{rem:BCCH21_diff_1}
In~\cite{BCCH21}, non-linearities are treated only as functions of the solution $u$ to~\eqref{eq:SPDE} and were explicitly polynomial in the noise; see also Remark~\ref{rem:BCCH21_diff_2}.
We take a more general approach here and allow (at the algebraic level) an arbitrary dependence on $\xi$ as in~\eqref{eq:SPDE} -- this approach was taken in~\cite{CCHS20} and treats the solution and the noise on a more equal footing.
However, conditions that we later impose (subcriticality) do effectively enforce polynomial dependence on the noise.
\end{remark}

Given a non-linearity $F$, we can always find a `minimal' normal rule such that $F$ obeys $R$.
This is most easily illustrated through examples
(a general construction can be found in~\cite[p.~942]{BCCH21}).

\begin{example}\label{ex:Phi43_rule}
Consider again the dynamical $\Phi^4_{d-1}$ equation~\eqref{eq:Phi4d} posed on $\R^{d}$
\[
(\partial_t -\Delta)u = -u^3 + \xi\;.
\]
Define the rule
\[
R = \{ ([\mcI]_\ell)\,,\, (\Xi)\,:\, \ell\in\{0,1,2,3\} \}\;,
\]
where $[\mcI]_\ell$ denotes the multiset with $\ell$ copies of $\mcI$.
Then $R$ is normal and the non-linearity
$F \eqdef \CY_\mcI^3 + \CY_\Xi$ obeys $R$,
where $\CY_o\colon \R^{\CE}\to \R$ for $o\in\CE$ denotes evaluation of the $o$-component.
\end{example}

\begin{example}\label{ex:KPZ_rule}
Consider the generalised KPZ equation posed on $\R^{1+1}$
\[
(\partial_t -\Delta)u = f(u) + g(u)(\partial_x u)^2 + h(u)\partial_x u + k(u)\xi\;,
\]
where $f,g,h,k\colon \R\to \R$ are smooth functions.
Define the rule
\[
R = \{ ([\mcI]_\ell, [\mcI_{1}]_m)\,,\, ([\mcI]_\ell,\Xi) \,:\, \ell\geq 0\,,\,m\in\{0,1,2\}\}
\]
where $\mcI_i$ is shorthand for $\mcI_{e_i}$, where $e_i\in\N^d$ is the function $e_i(i)=1$ and $e_i(j)=0$ for $j\neq i$.
Then $R$ is normal and the non-linearity
$F \eqdef f(\CY_\mcI) + g(\CY_\mcI)(\CY_{\mcI_{1}})^2+ h(\CY_\mcI)\CY_{\mcI_{1}} + k(\CY_\mcI)\CY_\Xi$
obeys $R$.
\end{example}
We will study non-linearities in more detail in Section~\ref{sec:renorm_SPDE}.

\subsection{Subcriticality}\label{subsec:subcrit}

We now come to the notion of subcriticality,
which is central in understanding the kind of equations~\eqref{eq:SPDE} that the theory of regularity structures is able to solve.

Fix henceforth a \textit{scaling} $\s=(\s_1,\ldots,\s_d)\in [1,\infty)^{d}$\label{scaling_page_ref}
and a \textit{degree map} $|\cdot|_\s\colon\mfL\to \R$
for which $|\Xi|_\s<0$ and $|\mcI|_\s>0$.
We think of $|\Xi|_\s$ as the regularity of the noise under the scaling $\s$, i.e. $\xi\in C^{|\Xi|_\s}$ where $C^\alpha= C^\alpha_\s$ is suitably defined, see~\cite[Def.~3.7]{Hairer14},
and $|\mcI|_\s$ is the amount by which $(\partial_t-\mcL)^{-1}$ regularises under this scaling, e.g. $|\mcI|_\s=2$ for $\partial_t-\Delta$ with the natural scaling $(1,\ldots, 1,2)$.
We extend the degree map to each tree $\tau\in\mfT$ of the form~\eqref{eq:general_tree} inductively by\label{deg_page_ref}
\[
|\tau|_\s = |k|_\s + \sum_{l\in L} |\Xi_{l}|_\s  + \sum_{j\in J} (|\tau_j|_{\s} + |\mcI_{m_j}|_\s) \;,
\]
where for $k\in\N^{d}$ we define $|k|_s=\sum_{i=1}^d k_i\s_i$
and for $\mft_k\in\mcE$ we define $|\mft_k|_\s = |\mft|_\s-|k|_\s$.
Remark that $|\tau\sigma|_\s = |\tau|_\s+|\sigma|_\s$ for all $\tau,\sigma\in\mfT$.

\begin{remark}\label{rem:motivation_degree}
At this stage, the motivation behind $|\tau|_\s$ is not completely clear.
It is certainly \textit{not} the regularity of the `canonical' function associated to $\tau$ (see the function $\bfPi$ in Section~\ref{sec:pos_renorm}).
It rather captures how this function behaves locally around a point $x\in\R^{d}$ once a suitable Taylor jet is subtracted, e.g. $|\tau|_\s=-1/2-$ for the tree $\tau$ corresponding to $v^2w$ in
 Remark~\ref{rem:motivation}.
\end{remark}
The kind of equations we are able to address are \textit{subcritical} in the following sense.
\begin{definition}
A rule $R$ is called \textit{subcritical} if there exists a map $\reg\colon \mfL \to \R$ such that
\[
\reg(\Xi)<|\Xi|_\s\;,\quad \reg(\mcI) < |\mcI|_\s +\inf_{N\in R} \reg(N)\;,
\]
where $\reg(N) = \sum_{(\mft,k)\in N} \reg(\mft) - |k|_\s$.
\end{definition}
We should think of $\reg(\Xi)$ and $\reg(\mcI)$ as a lower bound on the regularity of $\xi$ and $u$ respectively.
We now analyse the cases in which the rules defined in Examples~\ref{ex:Phi43_rule} and~\ref{ex:KPZ_rule} are subcritical, where $|\Xi|_\s=-\frac d2 - \frac12-\kappa$ is the white noise regularity, and $|\mcI|_\s=2$ is the gain in regularity of the heat kernel.
We recall here that $\kappa>0$ is a small parameter.
\begin{example}\label{ex:Phi43_subcritical}
Following Example~\ref{ex:Phi43_rule} with $d< 5$,
define $\reg(\Xi)=-\frac{d}{2} - \frac12 -2\kappa$ and $\reg(\mcI)=-\frac12-3\kappa$.
Then evidently $\reg(\Xi)<|\Xi|_\s
=-\frac{d}{2} - \frac12 -\kappa$.
Moreover, the $\inf$ in the definition of $\reg$ for $\mcI$ is achieved for $N=(\Xi)\in R$ (provided $\kappa$ is sufficiently small).
Hence $R$ is subcritical since $\reg(\mcI) < 2+ \reg(\Xi)$.

If the noise had scaling $|\Xi|_\s < -3$
(i.e. $(4+1)$-dimensional white noise)
then the rule is no longer subcritical.
Indeed, on the one hand, subcriticality would imply $\reg(\mcI)<2+\reg(\Xi)<-1$,
while on the other hand $\reg(\mcI) < 2+3\reg(\mcI) \Leftrightarrow \reg(\mcI)>-1$.
\end{example}

\begin{example}
Following Example~\ref{ex:KPZ_rule},
define $\reg(\Xi)=-\frac32-2\kappa$ and $\reg(\mcI)=\frac12-3\kappa$.
Then evidently $\reg(\Xi)<|\Xi|_\s=-\frac32-\kappa$.
Moreover, the $\inf$ in the condition for $\reg(\mcI)$ is achieved again for $N=(\Xi)\in R$ (provided $\kappa$ is sufficiently small).
Hence $R$ is subcritical since $\reg(\mcI) < 2+ \reg(\Xi)=\frac12-2\kappa$.
\end{example}

\begin{exercise}
Show that, if $|\Xi|_\s \leq -2$ in Example~\ref{ex:KPZ_rule},
then the rule is no longer subcritical.
(Note that $|\Xi|_\s < -2$ corresponds to the regularity of $(2+1)$-dimensional white noise.)
\end{exercise}
The main consequence of subcriticality is the following.
\begin{lemma}\label{lem:finite_trees}
Suppose $R$ is subcritical.
Then, for every $\gamma\in\R$, the set $\{\tau\in\mfT_\circ\,:\, |\tau|_\s<\gamma\}$ is finite.
\end{lemma}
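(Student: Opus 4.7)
The plan is to extract a uniform gap $\delta>0$ from subcriticality, prove by structural induction that $|\tau|_\s$ grows at least linearly in the number of edges of $\tau$ while controlling all node decorations, and thereby conclude that $|\tau|_\s < \gamma$ restricts $\tau$ to finitely many possibilities. First I would set $m \eqdef \inf_{N\in R}\reg(N)$; since subcriticality requires $\reg(\mcI) < |\mcI|_\s + m$, this infimum is finite, so $m\in\R$. I would then define
\[
\delta \eqdef \min\bigl(|\Xi|_\s - \reg(\Xi),\; |\mcI|_\s + m - \reg(\mcI)\bigr) > 0.
\]
If $\emptyset\notin R$ then $\mfT_\circ=\emptyset$ and there is nothing to prove, so I may assume $\emptyset\in R$, in which case $\reg(\emptyset)=0$ gives $m\leq 0$.

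Next I would prove, by structural induction on $\tau\in\mfT_\circ$, the key estimate
\[
|\tau|_\s \;\geq\; m \,+\, \delta\, e(\tau) \,+\, \sum_{x\in N_\tau}|\mfn(x)|_\s,
\]
where $e(\tau)$ denotes the number of edges. Writing $\tau = X^k\Xi_L\prod_{j\in J}\mcI_{m_j}[\tau_j]$, each noise edge contributes at least $(\reg(\Xi)-|l|_\s)+\delta$, while each kernel-rooted subtree yields
\[
|\tau_j|_\s + |\mcI|_\s - |m_j|_\s \;\geq\; (\reg(\mcI)-|m_j|_\s) + \delta(1+e(\tau_j)) + \sum_{x\in N_{\tau_j}}|\mfn(x)|_\s,
\]
using the inductive hypothesis together with $|\mcI|_\s \geq \reg(\mcI) - m + \delta$. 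Summing all contributions, the terms $(\reg(\Xi)-|l|_\s)$ and $(\reg(\mcI)-|m_j|_\s)$ reassemble into $\reg(\mcN(\rho_\tau))\geq m$ (by strong conformity), the $\delta$'s pick up exactly one factor per edge of $\tau$, and the node decorations accumulate over all of $N_\tau$, yielding the claim.

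Given the estimate, $|\tau|_\s<\gamma$ forces $e(\tau) < (\gamma-m)/\delta$, so there are boundedly many edges and hence only finitely many underlying combinatorial shapes. The same estimate gives $\sum_x|\mfn(x)|_\s < \gamma-m$, confining each $\mfn(x)\in\N^d$ to a finite set since $|\cdot|_\s$ is coercive on $\N^d$. Using the identity $|\tau|_\s = \sum_x|\mfn(x)|_\s + \sum_e(|\mft_e|_\s - |k_e|_\s)$ together with the bounds on $e(\tau)$, $\sum_x|\mfn(x)|_\s$, and $|\tau|_\s \in [m,\gamma)$, the total $\sum_e|k_e|_\s$ is also bounded, so every edge decoration $(\mft_e,k_e)\in\mcE$ lies in a finite set. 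Combining the shape count with the finitely many decoration choices at each node and edge, $\{\tau\in\mfT_\circ:|\tau|_\s<\gamma\}$ is finite.

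The main obstacle I anticipate is the bookkeeping in the inductive step: the $-m$ produced when bounding $|\mcI|_\s$ in terms of $\reg(\mcI)$ must be absorbed precisely by the $+m$ lower bound on $\reg(\mcN(\rho_{\tau_j}))$ so that a single $m$ survives in the final inequality, and one must carry the full inductive hypothesis (including $\sum_{x\in N_{\tau_j}}|\mfn(x)|_\s$) rather than the weaker $|\tau_j|_\s\geq m+\delta\, e(\tau_j)$, which would be insufficient to control the polynomial decorations at the end.
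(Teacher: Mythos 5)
Your proof is correct, and it follows the standard strategy used in the cited reference \cite[Prop.~5.15]{BHZ19} (the paper itself only gives a citation rather than a proof): extract a uniform subcriticality gap $\delta>0$, and prove by induction on the tree structure the key lower bound $|\tau|_\s \geq m + \delta\,e(\tau) + \sum_{x\in N_\tau}|\mfn(x)|_\s$, which simultaneously bounds the number of edges, the polynomial decorations, and (via the degree identity) the edge decorations. I verified the inductive step: the $-m$ coming from bounding $|\mcI|_\s$ by $\reg(\mcI)-m+\delta$ is cancelled by the $+m$ from the inductive hypothesis on each $\tau_j$, the remaining $\reg(\Xi)-|l|_\s$ and $\reg(\mcI)-|m_j|_\s$ terms reassemble into $\reg(\mcN(\rho_\tau))\geq m$ by strong conformity, and the base case $\tau=X^k$ works because $m\leq 0$ once $\emptyset\in R$. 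The concluding finiteness argument correctly uses that $\s\in[1,\infty)^d$ makes $|\cdot|_\s$ coercive on $\N^d$ so that bounded $\s$-degree confines decorations to a finite set.
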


\begin{proof}
See~\cite[Proposition~5.15]{BHZ19}.
\end{proof}

\begin{definition}\label{def:mcT}
Given a subcritical rule $R$,
the \textit{model space} generated by $R$
is the linear space
\[
\mcT = \Span_\R (\mfT_\circ) \subset \mcV\;.
\]
For $\gamma\in\R$, we denote
\[
\mcT_{\gamma} = \Span_\R \{\tau\in\mfT_\circ\,:\,|\tau|_\s = \gamma\}\;,
\quad
\mcT_{<\gamma} = \Span_\R \{\tau\in\mfT_\circ\,:\,|\tau|_\s < \gamma\}\;.
\]
The set $A \eqdef \{|\tau|_\s \,:\, \tau\in\mfT_\circ\}$ is called the \textit{index set}.\label{A_page_ref}
\end{definition}
By Lemma~\ref{lem:finite_trees}, $\mcT_{<\gamma}$ is a finite dimensional subspace of $\mcT$ for any $\gamma\in\R$.
\section{Positive renormalisation}
\label{sec:pos_renorm}

Fix henceforth a normal subcritical rule $R$ and recall that we have fixed a scaling $\s\in [1,\infty)^d$ and degree map $|\cdot|_\s\colon \mfL\to\R$ in Section~\ref{subsec:subcrit}.

Consider a smooth function $K\colon \R^d\setminus\{0\}\to\R$\label{K_page_ref}
with support in $\{0<|z|<1\}$
and
which agrees with the Green's function $G$ of $\partial_t-\mcL$ on $\{0<|z|\leq \frac12\}$.
We should think of $K*f$ as an approximation to $G*f$ modulo a smooth function such that $K*f(x)$ depends on $f$ only in the unit ball around $x$.
In particular, we should think of convolution by $K$ as increasing regularity by $|\mcI|_\s$.

Given a smooth function $\xi\colon\R^{d}\to \R$,
a straightforward way to associate a function to every tree in $\mfT_\circ$ (even to every tree in $\mfT$) is to define a linear map $\bfPi\colon\mcT \to C^\infty(\R^d,\R)$
recursively by
\begin{equs}\label{eq:canonical1}
\bfPi X^k &= \cdot^k\;, &\qquad\bfPi \Xi_l &= D^l \xi\;,
\\
\bfPi\tau\sigma
&=(\bfPi\tau) (\bfPi\sigma)\;,
&\qquad
\bfPi\mcI_m[\tau] &= D^mK*\bfPi\tau\;,\label{eq:canonical2}
\end{equs}
where $k,l,m\in\N^d$.
Recalling the notation~\eqref{eq:general_tree}, one readily sees that~\eqref{eq:canonical1}-\eqref{eq:canonical2} determine $\bfPi(\tau)$ for every $\tau\in\mfT$ uniquely.
The map $\bfPi$ is called the \textit{canonical lift} of $\xi$.

We now recentre $\bfPi$ around every point $x\in\R^{d}$ and get a new linear map $\Pi_x\colon\mcT\to C^\infty$
such that $|\Pi_x\tau (y)| \lesssim |y-x|_\s^{|\tau|_\s}$.
Following Remarks~\ref{rem:motivation} and~\ref{rem:motivation_degree}, this property interplays nicely with the additive nature of degree since then $(\Pi_x\tau\sigma) (y) \lesssim |y-x|_\s^{|\tau|_\s + |\sigma|_\s}$.
In particular the number of functions vanishing slower than rate $\gamma$
is now finite thanks to subcriticality (Lemma~\ref{lem:finite_trees}).

The definition of $\Pi_x$ is also recursive and is given by
\begin{align*}
\Pi_x X^k&=(\cdot - x)^k\;, \quad \Pi_x \Xi_l = D^l \xi\;,\quad 
\\
\Pi_x\tau\sigma
&=(\Pi_x\tau) (\Pi_x\sigma)\;,
\end{align*}
together with
\begin{align}
\Pi_x\mcI_m[\tau]
&= D^mK*\Pi_x\tau - \sum_{|k|_\s <|\mcI_m\tau|_\s}\frac{(\cdot-x)^k}{k!}  D^{k+m} K*\Pi_x\tau(x)\label{eq:Pi_x_Taylor_def}
\\
&= D^m K*\Pi_x\tau + \sum_{|k|_\s<|\mcI_m\tau|_\s} \frac{\cdot^k}{k!} f_x(\mcI_{k+m}\tau)
\;,\notag
\end{align}
where we use the shorthand $k!=(k_1)!\ldots (k_d)!$ for $k\in\N^d$ and
\begin{equation}\label{eq:pos_char_def}
f_x (\mcI_n\tau)
\eqdef -\sum_{|\ell|_\s<|\mcI_n\tau|_\s}
\frac{(-x)^\ell}{\ell!} D^{n+\ell} K*\Pi_x\tau(x)\;.
\end{equation}
We see that the definition of $\Pi_x$ is the same as that of $\bfPi$ except that we recentred the polynomials around $x$ and subtracted off the Taylor jet of $D^m K*\Pi_x\tau$
to see a decay of order $|\mcI_m \tau|_{\s}$ around $x$.

The point of this section is to describe algebraically the map taking $\bfPi$ to $\Pi_x$ and vice versa.
The following is our first result in this direction.
%, which will be particularly helpful when we need to redefine what we mean by `products' (`negative' renormalisation).

\begin{proposition}
There exists a linear map $F_x\colon\mcT\to\mcT$ such that $\Pi_x = \bfPi F_x$.
Furthermore, for every tree $\tau\in\mfT_\circ$, $F_x\tau$ is a linear combination of trees $\tau_i$
with $\mcN(\tau_i) \subset \mcN(\tau)$,
where $\mcN(\tau)$ is defined in~\eqref{eq:mcN}.
\end{proposition}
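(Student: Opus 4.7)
The plan is to construct $F_x$ by directly mimicking the recursive definition of $\Pi_x$, then read off both claims from the construction. Concretely, I would define $F_x$ on trees inductively so as to force the identity $\bfPi F_x = \Pi_x$ at every step: set
\begin{equs}
F_x X^k &= \sum_{\ell\leq k}\tbinom{k}{\ell}(-x)^{k-\ell} X^\ell\;, &\qquad F_x\Xi_l &= \Xi_l\;,\\
F_x(\tau\sigma) &= (F_x\tau)(F_x\sigma)\;, &\qquad F_x\mcI_m[\tau] &= \mcI_m[F_x\tau] + \sum_{|k|_\s<|\mcI_m\tau|_\s}\frac{f_x(\mcI_{k+m}\tau)}{k!}X^k\;,
\end{equs}
and extend by linearity. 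Here the coefficient $f_x(\mcI_{k+m}\tau)\in\R$ is just the scalar defined by~\eqref{eq:pos_char_def}, and the formula is well-posed by induction on the number of edges (since the degree of $\tau$ is strictly less than that of $\mcI_m[\tau]$, each recursive call is applied to a strict subtree). Writing each $\tau\in\mfT_\circ$ in the form~\eqref{eq:general_tree}, the multiplicativity above together with the formula for $F_x$ on $X^k$, $\Xi_l$, $\mcI_m[\tau]$ determines $F_x\tau$ unambiguously.

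Next I would verify $\bfPi F_x = \Pi_x$ by induction on the number of edges. The cases $X^k$ and $\Xi_l$ are immediate from $\bfPi X^\ell = (\cdot)^\ell$ and $\bfPi\Xi_l=D^l\xi$. For tree products, both $\bfPi$ and $\Pi_x$ are multiplicative (for $\bfPi$ by~\eqref{eq:canonical2}, for $\Pi_x$ by its definition), so the induction goes through. For $\mcI_m[\tau]$, using $\bfPi\mcI_m[\sigma] = D^mK*\bfPi\sigma$ and $\bfPi X^k = (\cdot)^k$, one computes
\[
\bfPi F_x\mcI_m[\tau] = D^mK*\bfPi F_x\tau + \sum_{|k|_\s<|\mcI_m\tau|_\s}\frac{(\cdot)^k}{k!}f_x(\mcI_{k+m}\tau) = D^mK*\Pi_x\tau + \sum_{|k|_\s<|\mcI_m\tau|_\s}\frac{(\cdot)^k}{k!}f_x(\mcI_{k+m}\tau)\;,
\]
which is exactly $\Pi_x\mcI_m[\tau]$ by the second line of the definition.

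For the second claim, observe that every summand appearing in $F_x\tau$ (with $\tau=X^k\Xi_L\prod_j\mcI_{m_j}[\tau_j]$) is a tree product of factors of the form $X^\ell$, $\Xi_l$ with $l\in L$, and either $\mcI_{m_j}[F_x\tau_j]$ or $X^{k_j}$ (for the $j$-th kernel factor). Since the tree product merges roots and unions their outgoing edges, the multiset of root-edges of any such summand is a sub-multiset of $\{\Xi_l:l\in L\}\sqcup\{\mcI_{m_j}:j\in J\}=\mcN(\tau)$. This also shows that each summand strongly conforms to $R$: at the root, normality of $R$ gives that the reduced edge set lies in $R$; at all other nodes the outgoing edges are inherited either from the original $\tau_j$ (which conform by assumption, with the inductive hypothesis applied to the smaller tree $F_x\tau_j$) or from a new leaf $X^{k_j}$ (whose outgoing edge set is empty, and $\emptyset\in R$ by normality as soon as $R$ is non-empty). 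Hence $F_x$ maps $\mcT$ into $\mcT$ with the stated support property. The only genuinely delicate point is bookkeeping for rule-conformity of the trees produced by the Taylor correction terms; once one observes that these terms only \emph{remove} edges at the root (or replace a kernel subtree by a pure polynomial leaf), normality of $R$ closes the argument with no further work.
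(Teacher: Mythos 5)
Your proof is correct and follows essentially the same approach as the paper: define $F_x$ by the same recursion (identity on $\Xi_l$, binomial expansion of $(X-x)^k$ on polynomials, multiplicativity on products, and the Taylor-correction formula on $\mcI_m[\tau]$), then verify $\bfPi F_x=\Pi_x$ by induction on the number of edges. One small inaccuracy in the prose for the second claim: when a factor $\mcI_{m_j}[\tau_j]$ is replaced by $X^{k_j}$ in a Taylor correction term, this does not create a new leaf; under the tree product, $X^{k_j}$ merely increases the polynomial decoration at the root, so no new node appears. The conclusion you draw---that each summand strongly conforms to $R$ by normality and the inductive hypothesis on $F_x\tau_j$, and that the root edge set is a sub-multiset of $\mcN(\tau)$---is correct as stated, and is what the paper's proof invokes implicitly via the phrase ``due to normality of the rule $R$ and the induction hypothesis''.
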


\begin{proof}
Define for $l\in\N^d$ and $i=1,\ldots, d$
\begin{equation}\label{eq:init}
F_x\Xi_l=\Xi_l\;,\quad \text{ and }
\quad
F_xX^i = (X-x)^i\;.
\end{equation}
We now proceed by induction.
Assume $F_x$ is defined on trees $\tau,\sigma$
and satisfies the claimed properties.
Suppose that $\tau\sigma\in\mcT$.
Then
\begin{equation}\label{eq:multiplicative_identities}
\Pi_x \tau\sigma =(\Pi_x\tau) (\Pi_x\sigma) = (\bfPi F_x\tau) (\bfPi F_x\sigma) = \bfPi (F_x\tau) (F_x\sigma)\;.
\end{equation}
So if we set
\begin{equation}\label{eq:mult}
F_x\tau\sigma\eqdef (F_x\tau) (F_x\sigma)\;,
\end{equation}
then we indeed obtain $\Pi_x\tau\sigma = \bfPi F_x(\tau\sigma)$.
Note that $(F_x\tau) (F_x\sigma)\in\mcT$ due to normality of the rule $R$ and the induction hypothesis on $\tau,\sigma$. 
Suppose moreover $\mcI_m[\tau]\in\mcT$.
Then
\begin{align*}
\Pi_x \mcI_m [\tau]
&= D^m K*\Pi_x\tau + \sum_{|k|_\s<|\mcI_m\tau|_\s}\frac{\cdot^k}{k!} f_x(\mcI_{k+m}\tau)
\\
&=
D^m K*\bfPi F_x \tau
+ \sum_{|k|_\s<|\mcI_m\tau|_\s} \bfPi\frac{X^k}{k!} f_x(\mcI_{k+m}\tau)
\\
&=\bfPi \Big(\mcI_m[F_x \tau]
+ \sum_{|k|_\s<|\mcI_{m}\tau|_\s} \frac{X^k}{k!} f_x(\mcI_{k+m}\tau)
\Big)\;.
\end{align*}
So if we set
\begin{equation}\label{eq:integ}
F_x\mcI_m [\tau] = \mcI_m[F_x \tau] +
\sum_{|k|_\s<|\mcI_m\tau|_\s} \frac{X^k}{k!} f_x[\mcI_{k+m}\tau]\;,
\end{equation}
then we indeed obtain $\Pi_x\mcI_m[\tau] = \bfPi F_x\mcI_m[\tau]$.
Observe again that $\mcI_m[F_x\tau]\in\mcT$ due to the induction hypothesis.
Therefore, we can take~\eqref{eq:mult} and~\eqref{eq:integ} as an inductive definition of $F_x$, so that we have $\Pi_x = \bfPi F_x$.
\end{proof}
\begin{remark}
In Section~\ref{sec:neg_renorm}, we will consider functions $\Pi_x$ and $\bfPi$ for which $\bfPi\tau\sigma\neq(\bfPi\tau)(\bfPi\sigma)$ and $\Pi_x\tau\sigma \neq (\Pi_x\tau)(\Pi_x\sigma)$ due to negative renormalisation.
In this case, everything in the above construction will remain intact \textit{except} the middle identities in~\eqref{eq:multiplicative_identities}.
In particular, we will still have a map $F_x$ such that $\bfPi F_x= \Pi_x$
and~\eqref{eq:init},~\eqref{eq:mult}, and~\eqref{eq:integ} will still hold.
\end{remark}
The above proof furthermore has the following important consequences.
\begin{itemize}
\item $F_x$ has the triangular form
\[
F_x\tau = \tau + h\;,\qquad h\in \mcT_{<|\tau|_\s}\;.
\] 
Consequently, $F_x$ is invertible.

\item The coefficients appearing in front of the trees in $h$
are products of numbers of the form $f_x[\mcI_{m}\tau]$ where $\tau\in\mfT_\circ$ and $|\mcI_{m}\tau|_\s>0$.
\end{itemize}
With these remarks, we can rewrite the above construction to make it algebraically more appealing.
Doing so will also reveal a further important property of $F_x$,
namely that the set of all linear maps defined recursively by~\eqref{eq:init},~\eqref{eq:mult} and~\eqref{eq:integ}
forms a group.
\begin{definition}\label{def:mfT_plus}
Let $\mfT_+\subset \mfT$ denote the set of all trees $\tau\in\mfT$ of the form
\begin{equ}
\tau = X^k\prod_{j\in J} \mcI_{m_j}[\tau_j]\;,
\end{equ} 
where $k\in \N^d$, $J$ is a finite index set (possibly empty), and $\tau_j\in\mfT_\circ$ and $|\mcI_{m_j}\tau_j|_\s>0$ for all $j\in J$.
Define $\mcT_+ =\Span_\R\mfT_+$.

Let $G_+$ denote the set of \textit{characters} on $\mcT_+$, i.e. the collection of linear maps $f\colon\mcT_+\to \R$ such that $f(\bone)=1$ and \begin{equation}\label{eq:pos_char_mult}
f(\tau\sigma)=f(\tau)f(\sigma)\;.
\end{equation}
\end{definition}
\begin{remark}\label{rem:pos_char_extension}
Observe that $\mcT_+$ is an (associative) algebra  with a commutative product (in contrast to $\mcT$ which is \textit{not} an algebra).
Furthermore $\mcT_+$ is freely generated by
the set
\[
\mfK_+ = \{X^i\,:\,i=1,\ldots, d\} \sqcup \{\mcI_{m}[\tau]\,:\, m\in\N^{d}\,,\,\tau\in\mfT_\circ\,,\,
|\mcI_{m}[\tau]|_\s>0\}\;.
\]
In particular, given any map $f\colon\mfK_+ \to \R$,
there exists a unique extension of $f$ to an element of $G_+$ by enforcing the multiplicative property~\eqref{eq:pos_char_mult} together with linearity.
\end{remark}
A natural way to achieve both~\eqref{eq:mult} and~\eqref{eq:integ} is to make the ansatz
\[
F_x = (\id \otimes f_x) \Deltap\;,
\]
where $\Deltap \colon \mcT\to \mcT\otimes \mcT_+$\label{Deltap_page_ref}
and $f_x\in G_+$ are suitable linear maps which we now define recursively.

To achieve~\eqref{eq:init}, we set for $l\in\N^d$ and $i=1,\ldots, d$
\begin{equ}\label{eq:Deltap_base_case}
\Deltap \bone = \bone\otimes\bone \;,\qquad \Deltap X^i = X^i\otimes \bone + \bone\otimes X^i\;,\quad \Deltap \Xi_l = \Xi_l\otimes \bone\;,
\end{equ}
together with $f_x(\bone)=1$ and $f_x(X^i)=-x_i$ for $i=1,\ldots, d$.

To achieve~\eqref{eq:mult},
we set
\begin{equ}\label{eq:Deltap_multiplicative}
\Deltap(\tau\sigma)=(\Deltap\tau)(\Deltap\sigma)\;,
\end{equ}
define $f_x[\mcI_m\tau]$ by~\eqref{eq:pos_char_def},
and then extend $f_x$ uniquely to an element of $G_+$ as in Remark~\ref{rem:pos_char_extension}.
Then
\begin{align*}
F_x(\tau\sigma)
&= (\id\otimes f_x)\Deltap(\tau\sigma) = (\id\otimes f_x)\Deltap(\tau)\Deltap(\sigma)
\\
&= (\id\otimes f_x)\Deltap(\tau)(\id\otimes f_x)\Deltap(\sigma) = (F_x\tau) (F_x\sigma)\;,
\end{align*}
where in the third equality we used the multiplicative property of $f_x$.

To achieve~\eqref{eq:integ}, we set
\begin{equ}\label{eq:Deltap_def}
\Deltap \mcI_m\tau = (\mcI_m\otimes \id) \Deltap\tau + \sum_{|k|_\s<|\mcI_m\tau|_\s}\frac{X^k}{k!}\otimes \mcI_{k+m}\tau\;,
\end{equ}
where we interpret $\mcI_m$ as a linear operator $\mcI_m \colon \mcT\to \mcT$ given by $\mcI_m\colon\tau\mapsto \mcI_m\tau$ for all $\tau\in\mfT$.
Then
\begin{align*}
F_x(\mcI_m\tau)
&= (\id\otimes f_x)\Deltap(\mcI_m\tau)
\\
&=
(\id\otimes f_x)(\mcI_m\otimes \id) \Deltap\tau + \sum_{|k|_\s<|\mcI_m\tau|_\s}\frac{X^k}{k!}\otimes f_x\mcI_{k+m}\tau
\\
&=
(\mcI_m\otimes \id)(\id \otimes f_x)\Deltap\tau
+ \sum_{|k|_\s<|\mcI_m\tau|_\s}\frac{X^k}{k!}\otimes f_x\mcI_{k+m}\tau
\\
&=\mcI_m F_x\tau + \sum_{|k|_\s<|\mcI_m\tau|_\s}
\frac{X^k}{k!}\otimes f_x\mcI_{k+m}\tau
\end{align*}
as desired.
\begin{exercise}
Verify that the map $\Deltap$ is well-defined in the sense that indeed $\Deltap$ maps $\mcT$ into $\mcT\otimes \mcT_+$.
\end{exercise}
In Section~\ref{subsec:global_pos}, we will see a global (non-recursive) characterisation of $\Deltap$.

\subsection{Positive Hopf algebra}

We next show that the space of \textit{all} maps appearing in the form $F = (\id\otimes f)\Deltap$,
where $f\in G_+$, is a group.
This will show that $F_x^{-1}$ is also of the form $(\id\otimes f)\Deltap$, as is $F_x^{-1}F_y$, which is an important property for moving from $\Pi_x$ to $\Pi_y$ due to the identity
$\Pi_y =
%\bfPi F_y = \bfPi F_x F_x^{-1}F_y=
\Pi_x F_x^{-1} F_y$.

To achieve this, we equip $\mcT_+$ with a \textit{Hopf} algebra structure.
Then the space of characters on $\mcT_+$ naturally becomes a group.

The form of the coproduct $\Deltap \colon \mcT_+\to\mcT_+\otimes\mcT_+$\label{Deltap_page_ref_2} is analogous to before: we set for $i=1,\ldots, d$ and $\mcI_m\tau\in\mfT_+$
\begin{equs}
\Deltap\bone &= \bone\otimes \bone\;,\quad \Deltap X^i = X^i\otimes \bone + \bone\otimes X^i\;,
\\
\Deltap \mcI_m\tau &= (\pi_+\mcI_m\otimes \id)\Deltap \tau + \sum_{|k|_\s<|\mcI_m\tau|_\s}\frac{X^k}{k!}\otimes \mcI_{m+k} \tau\;,\label{eq:Deltap_Tp}
\end{equs}
where $\pi_+\colon \CV\to\mcT_+$ is the projection which sends to $0$ every tree $\tau\in\mfT\setminus\mfT_+$,
and extend $\Deltap$ multiplicatively to $\mcT_+$, i.e. $\Deltap(\tau\sigma)=(\Deltap\tau)(\Deltap\sigma)$.

\begin{remark}
On the right-hand side of~\eqref{eq:Deltap_Tp},
$\tau$ is in $\mcT$ but not necessarily in $\mcT_+$ and so $\Deltap\tau$ is really what was defined for $\tau\in\mcT$ earlier.
\end{remark}

\begin{remark}\label{rem:abuse_positive}
Although we use the same symbol $\Deltap$ for the two maps $\Deltap\colon\mcT\to\mcT\otimes\mcT_+$
and $\Deltap\colon\mcT_+\to\mcT_+\otimes\mcT_+$,
these maps are actually different, and we should think of $\mcT_+$ and $\mcT$ as \textit{disjoint} for this purpose (rather than both embedded, say, into $\CV$).
This is because $\Deltap\tau$ can have different results depending on whether we interpret $\tau\in\mfT_\circ$ or $\tau\in\mfT_+$, even if $\tau\in\mfT_\circ\cap\mfT_+$
according to Definition~\ref{def:conform} and~\ref{def:mfT_plus}.

For example, suppose $\tau\eqdef\mcI_0[X^k\Xi_0] \in \mfT_+\cap\mfT_\circ$.
Then, if we interpret  $\tau\in\mfT_\circ$,
\begin{equ}
\Deltap\tau= \sum_{n\leq k}\binom kn \mcI_0[{X^n\Xi_0}]\otimes X^{k-n}
\end{equ}
while if we interpret $\tau\in\mfT_+$, then
\begin{equ}
\Deltap\tau= \sum_{n\leq k}\binom kn \pi_+\mcI_0[{X^n\Xi_0}]\otimes X^{k-n}\;,
\end{equ}
and the two expressions will differ if $|\mcI_0[\Xi_0]|_\s\leq0$.
We will always emphasise if $\Deltap$ has domain $\mcT$ or $\mcT_+$, so this abuse of notation will not cause confusion.
\end{remark}

\begin{definition}
For $\tau\in\mfT$, let $\fancynorm{\tau}$\label{kernel_count_page_ref} denote the number kernel edges in $\tau$.
\end{definition}

\begin{exercise}\label{ex:triangular}
\begin{enumerate}[label=(\alph*)]
\item\label{pt:Deltap_decompose_mfT_circ} Show that for all $\tau\in\mfT_\circ$
\begin{equ}
\Deltap\tau = \tau\otimes \bone+\sum_i c_i\tau_i^{(1)}\otimes \tau^{(2)}_i\;,
\end{equ}
where $c_i\in\R$ and $\tau^{(1)}_i\in\mfT_\circ$, $\tau^{(2)}_i\in\mfT_+$ with
\begin{equ}
|\tau_i^{(1)}|_\s<|\tau|_\s\;,\quad
|\tau_i^{(1)}|_\s+|\tau_i^{(2)}|_\s=|\tau|_\s\;,
\quad \text{and} \quad
\fancynorm{\tau^{(1)}_i}+ \fancynorm{\tau^{(2)}_i} = \fancynorm{\tau}\;.
\end{equ}

\item\label{pt:Deltap_decompose_mfT_+} Similarly, show that for all $\tau\in\mfT_+\setminus\{\bone\}$
\begin{equ}\label{eq:Deltap_decompose}
\Deltap\tau = \tau\otimes \bone + \bone\otimes\tau +\sum_i c_i\tau_i^{(1)}\otimes \tau^{(2)}_i\;,
\end{equ}
where $c_i\in\R$, and $\tau^{(1)}_i,\tau^{(2)}_i\in\mfT_+$ satisfy the conditions in~\ref{pt:Deltap_decompose_mfT_circ} and additionally
\begin{equ}
0<|\tau_i^{(1)}|_\s,|\tau_i^{(2)}|_\s<|\tau|_\s\;.
\end{equ}
\end{enumerate}
\end{exercise}

\begin{theorem}\label{thm:pos_Hop_alg}
There exists an involutory algebra morphism $\mcA_+\colon\mcT_+\to\mcT_+$ such that $(\mcT_+,\mcM_+, \Deltap,\mcA_+)$ is a Hopf algebra with antipode $\mcA_+$, unit $\bone$, and counit $\bone^*$, 
where $\mcM_+\colon\mcT_+\otimes\mcT_+\to\mcT_+$ is the product map $\mcM_+\colon\tau\otimes\sigma\mapsto \tau\sigma$
and $\bone^*\colon\CT_+\to\R$ is the linear map for which $\bone^*(\bone)=1$
and $\bone^*(\tau)=0$ for all $\tau\in\mfT_+\setminus\{\bone\}$.
Furthermore, $\mcT$ is a comodule over $\mcT_+$.

More explicitly, we have the identities
\begin{equ}\label{eq:Deltap_coasso}
(\id\otimes\Deltap)\Deltap = (\Deltap\otimes \id)\Deltap \;,
\end{equ}
as maps $\mcH\to \mcH\otimes \mcT_+\otimes\mcT_+$ for $\mcH\in\{\mcT,\mcT_+\}$,
and
\begin{equ}\label{eq:antipode_pos}
\mcM_+(\id\otimes \mcA_+)\Deltap = \mcM_+(\mcA_+\otimes \id)\Deltap = \bone\bone^*
\end{equ}
as maps $\mcT_+\to \mcT_+$.
\end{theorem}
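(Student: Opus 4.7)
The plan is to establish coassociativity, then the antipode and its properties, by induction on tree structure using the recursive definitions of $\Deltap$ together with the triangularity from Exercise~\ref{ex:triangular}.

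For coassociativity, I would first observe that since $\Deltap$ is multiplicative on $\mcT_+$ by~\eqref{eq:Deltap_multiplicative} and both $\id\otimes\Deltap$ and $\Deltap\otimes\id$ are algebra morphisms on the tensor algebra (using the commutative product), it suffices to verify~\eqref{eq:Deltap_coasso} on generators. The cases $\bone$, $X^i$ (where both sides equal $X^i\otimes\bone\otimes\bone + \bone\otimes X^i\otimes\bone + \bone\otimes\bone\otimes X^i$), and $\Xi_l$ are immediate from~\eqref{eq:Deltap_base_case}. The inductive case $\mcI_m\tau$ is the core computation. I would expand both $(\Deltap\otimes\id)\Deltap\mcI_m\tau$ and $(\id\otimes\Deltap)\Deltap\mcI_m\tau$ using~\eqref{eq:Deltap_def}, the inductive hypothesis on $\tau$, and the multiplicative identity $\Deltap\frac{X^k}{k!} = \sum_{l+n=k}\frac{X^l}{l!}\otimes\frac{X^n}{n!}$. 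After collecting terms, the contributions from $(\mcI_m\otimes\id\otimes\id)(\id\otimes\Deltap)\Deltap\tau$ match by induction, and the Taylor-jet corrections on the two sides rearrange via this binomial identity. The same computation handles the $\mcT_+$ case, with $\pi_+$ tracked throughout; since $\pi_+$ commutes with projecting onto elements of positive degree, the proof is identical. Simultaneously this establishes that $\mcT$ is a comodule.

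For the antipode, I would use Exercise~\ref{ex:triangular}(b) to write, for $\tau\in\mfT_+\setminus\{\bone\}$,
\begin{equ}
\Deltap\tau = \tau\otimes\bone + \bone\otimes\tau + \sum_i c_i \tau_i^{(1)}\otimes\tau_i^{(2)}
\end{equ}
with $0<|\tau_i^{(j)}|_\s<|\tau|_\s$. Lemma~\ref{lem:finite_trees} ensures the set of degrees below $|\tau|_\s$ attained by elements of $\mfT_+$ is finite and well-ordered, so I can recursively define $\mcA_+$ on $\mfK_+$ by $\mcA_+(\bone)\eqdef\bone$, $\mcA_+(X^i)\eqdef -X^i$, and for $\mcI_m[\tau]\in\mfK_+$,
\begin{equ}
\mcA_+(\tau) \eqdef -\tau - \sum_i c_i \mcA_+(\tau_i^{(1)})\tau_i^{(2)}\;,
\end{equ}
then extend multiplicatively to $\mcT_+$ using Remark~\ref{rem:pos_char_extension}. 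This construction makes $\mcM_+(\mcA_+\otimes\id)\Deltap = \bone\bone^*$ hold by design on generators, and hence on all of $\mcT_+$ since both sides are algebra morphisms (using commutativity of $\mcT_+$). The other antipode identity $\mcM_+(\id\otimes\mcA_+)\Deltap = \bone\bone^*$ then follows from standard convolution-algebra reasoning: once coassociativity and one-sided inversion in $(\End(\mcT_+),\star)$ are established, the left and right inverses coincide.

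Finally, involutivity $\mcA_+^2 = \id$ and the algebra-morphism property are consequences of commutativity of $\mcT_+$: any antipode on a commutative Hopf algebra is automatically an algebra morphism (not merely an anti-morphism) and satisfies $\mcA_+^2=\id$, which can be verified by checking that $\mcA_+^2$ and $\id$ are both convolution inverses of $\mcA_+$.

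The main obstacle I anticipate is the $\mcI_m\tau$ case of coassociativity: one must carefully match the two layers of truncated Taylor expansions, in particular showing that the constraint $|k|_\s < |\mcI_m\tau|_\s$ appearing in the outer coproduct combines consistently with the constraint $|l|_\s<|\mcI_{m+k}\tau|_\s$ appearing in the inner coproduct, via the substitution $n = k+l$ and the binomial identity for $\Deltap X^n$. Tracking which edges stay in $\mcT$ vs.\ get projected by $\pi_+$ (per Remark~\ref{rem:abuse_positive}) adds some bookkeeping but does not affect the underlying computation.
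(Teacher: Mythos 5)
Your proposal follows the same overall architecture as the paper's proof: verify coassociativity by induction on tree constructors (base cases, multiplicative extension, then the $\mcI_m\tau$ case via the binomial identity for $\Deltap X^p$ and tracking $\pi_+$), then construct the antipode recursively from the triangular form of $\Deltap$, then obtain involutivity and the algebra-morphism property from commutativity. The one substantive step you treat as bookkeeping — extending the truncated sum $\sum_{|k|_\s<|\mcI_m\tau_i^{(1)}|_\s}$ to $\sum_{|k|_\s<|\mcI_m\tau|_\s}$ by noting that $\pi_+$ kills the added terms (the paper's identity~\eqref{eq:tau_i_Deltap}) — is in fact the point where Exercise~\ref{ex:triangular}\ref{pt:Deltap_decompose_mfT_circ} enters the coassociativity proof, so you should treat it as a lemma rather than a footnote.

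There is a genuine gap in your well-foundedness argument for the antipode recursion. You invoke Lemma~\ref{lem:finite_trees} to claim that the degrees attained by $\mfT_+$ below $|\tau|_\s$ form a finite well-ordered set, but that lemma concerns $\mfT_\circ$, not $\mfT_+$, and the claim fails for $\mfT_+$: it contains arbitrary polynomial factors $X^k$ whose degrees are $\N$-combinations of $\s_1,\ldots,\s_d$, which for non-rational scalings can accumulate, so a single induction on degree is not justified. The recursion nevertheless terminates, but for a combinatorial reason: by Exercise~\ref{ex:triangular}\ref{pt:Deltap_decompose_mfT_+} every left factor $\tau_i^{(1)}$ in your formula satisfies $\fancynorm{\tau_i^{(1)}}\leq\fancynorm{\tau}$, and in the equality case $\tau_i^{(2)}=X^m$ with $m\neq 0$, which forces $\tau_i^{(1)}$ to carry strictly less total polynomial decoration (an $\N$-valued quantity) than $\tau$. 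A lexicographic induction on $(\fancynorm{\tau},\text{total polynomial decoration})$ — which is the content of the double induction on $(\fancynorm{\tau},|\tau|_\s)$ in the paper's hint — therefore gives termination without relying on discreteness of the degree set. With that fix your formula $\mcA_+(\tau)=-\tau-\sum_ic_i\mcA_+(\tau_i^{(1)})\tau_i^{(2)}$ and the subsequent reasoning (multiplicative extension, coincidence of one-sided inverses, involutivity from commutativity as in Sweedler) all match the paper.
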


\begin{proof}
We first prove~\eqref{eq:Deltap_coasso} for $\CH=\CT$.
The left- and right-hand sides of~\eqref{eq:Deltap_coasso} are clearly equal when applied to $\bone$, $X^i$ for $i=1,\ldots, d$,
and $\Xi_l$ for $l\in\N^d$.
Furthermore, if they are equal for $\tau, \sigma\in\mfT_\circ$ such that $\tau\sigma\in\mfT_\circ$,
then they are also equal on $\tau\sigma$ since
\begin{equs}
(\id\otimes\Deltap)\Deltap(\tau\sigma) &= (\id\otimes\Deltap)(\Deltap\tau)(\id\otimes\Deltap)(\Deltap\sigma)
\\
&=(\Deltap\otimes \id)(\Deltap\tau)(\Deltap\otimes \id)(\Deltap\sigma)
=(\Deltap\otimes \id)\Deltap(\tau\sigma)\;,
\end{equs}
where we used 
multiplicativity of $\Deltap$ in the first and third equalities.
By induction, it remains only to show that if the left- and right-hand sides of~\eqref{eq:Deltap_coasso} agree on $\tau\in\mfT_\circ$ with $\mcI_m\tau\in\mfT_\circ$, then they agree on $\mcI_m\tau$.
To this end,
\begin{equ}\label{eq:1st_coasso_Deltap}
(\id\otimes\Deltap)\Deltap\mcI_m\tau
= (\id \otimes \Deltap)(\mcI_m\otimes \id)\Deltap \tau + \sum_{|k|_\s<|\mcI_m\tau|_\s} \frac{X^k}{k!}\otimes\Deltap\mcI_{m+k}\tau\;.
\end{equ}
Observe that
\begin{equ}
(\id \otimes \Deltap)(\mcI_m\otimes \id)\Deltap \tau
=
(\mcI_m\otimes \id\otimes \id)(\id \otimes \Deltap)\Deltap \tau\;,
\end{equ}
while the final sum in~\eqref{eq:1st_coasso_Deltap} is equal to
\begin{equ}
\sum_{|k|_\s<|\mcI_m\tau|_\s} \frac{X^k}{k!}\otimes (\pi_+\mcI_{k+m}\otimes \id)\Deltap \tau
+
\sum_{|k|_\s+|\ell|_\s<|\mcI_m\tau|_\s}\frac{X^k}{k!}\otimes \frac{X^\ell}{\ell!}\otimes  \mcI_{m+k+\ell}\tau\;.
\end{equ}
%where we stop writing restrictions in the sums like $|k|_\s<|\mcI_m\tau|_\s$ but keep in mind that $\mcI_p\tau=0$ in the right-most factor whenever $\mcI_p\tau=0$  
On the other hand,
\begin{equ}\label{eq:2nd_coasso_Deltap}
(\Deltap\otimes\id)\Deltap\mcI_m\tau
= (\Deltap\otimes\id)(\mcI_m\otimes \id)\Deltap \tau + \sum_{|p|_\s<|\mcI_m\tau|_\s} \frac{\Deltap X^p}{p!}\otimes\mcI_{m+p}\tau\;.
\end{equ}
Using the identity $\Deltap X^p = \sum_{\ell+k=p}\binom{p}{k}X^k\otimes X^{\ell}$, we see that the final sum in~\eqref{eq:2nd_coasso_Deltap}
is equal to
\begin{equ}
\sum_{|k|_\s+|\ell|_\s<|\mcI_m\tau|_\s}\frac{X^k}{k!}\otimes \frac{X^\ell}{\ell!}\otimes  \mcI_{m+k+\ell}\tau\;.
\end{equ}
Furthermore, since $\Deltap\tau = \tau\otimes \bone+\sum_i c_i\tau_i^{(1)}\otimes \tau^{(2)}_i$ with $|\tau_i|_\s<|\tau|_\s$ (Exercise~\ref{ex:triangular}\ref{pt:Deltap_decompose_mfT_circ}),
it follows that
\begin{equs}\label{eq:tau_i_Deltap}
\Deltap \mcI_m \tau^{(1)}_i
&= (\mcI_m \otimes \id)\Deltap \tau^{(1)}_i + \sum_{|k|_\s<|\mcI_m\tau^{(1)}_i|_\s} \frac{X^k}{k!}\otimes \mcI_{m+k}\tau_i^{(1)}
\\
&= (\mcI_m \otimes \id)\Deltap \tau^{(1)}_i + \sum_{|k|_\s<|\mcI_m\tau|_\s} \frac{X^k}{k!}\otimes \pi_+\mcI_{m+k}\tau_i^{(1)}\;.
\end{equs}
Therefore
\begin{equs}
(\Deltap\otimes\id)(\mcI_m\otimes \id)\Deltap \tau
&=
(\mcI_m\otimes \id\otimes \id)(\Deltap\otimes\id)\Deltap \tau
\\
&\qquad+
\sum_{|k|_\s<|\mcI_m\tau|_\s} \frac{X^k}{k!}\otimes (\pi_+\mcI_{m+k}\otimes \id)\Deltap \tau\;.
\end{equs}
Combining the identities following the expressions~\eqref{eq:1st_coasso_Deltap} and~\eqref{eq:2nd_coasso_Deltap},
and the fact that $(\id\otimes \Deltap)\Deltap\tau=(\Deltap\otimes\id)\Deltap\tau$ by the inductive hypothesis, we obtain
\begin{equ}
(\id\otimes \Deltap)\Deltap\mcI_m\tau=(\Deltap\otimes\id)\Deltap\mcI_m\tau\;,
\end{equ}
which completes the proof of~\eqref{eq:Deltap_coasso} for $\CH=\CT$.
 
To conclude the proof of the theorem, it remains to show that
$\mcT_+$ is a Hopf algebra, i.e.~\eqref{eq:Deltap_coasso} for $\CH=\CT_+$ and the existence of an algebra morphism $\mcA_+$ satisfying~\eqref{eq:antipode_pos} and $\mcA_+\circ\mcA_+=\id$.
The proof of~\eqref{eq:Deltap_coasso} for $\CH=\CT_+$ is similar to (and uses)
the result for $\CH=\CT$, so we do not repeat the details
(the only difference is that, in the step~\eqref{eq:tau_i_Deltap},
one should use the identity \begin{equ}
\Deltap \pi_+ \mcI_m \tau^{(1)}_i
= (\pi_+\mcI_m \otimes \id)\Deltap \tau^{(1)}_i + \sum_{|k|_\s<|\mcI_m\tau^{(1)}_i|_\s} \frac{X^k}{k!}\otimes \mcI_{m+k}\tau^{(1)}_i\;,
\end{equ}
which is again implied by the triangular structure of $\Deltap$ from Exercise~\ref{ex:triangular}\ref{pt:Deltap_decompose_mfT_circ}).

The existence of the antipode $\mcA_+$ follows from Exercise~\ref{ex:antipode_plus} below.
%See~\cite[Theorem 8.16]{Hairer14} for a proof with a slightly different (but equivalent after a chance of basis) definition of $\Deltap$.
\end{proof}

\begin{exercise}\label{ex:antipode_plus}
\begin{enumerate}[label=(\alph*)]
\item\label{pt:antipode_def} Show that there exists an algebra morphism $\mcA_+\colon \mcT_+\to\mcT_+$ such that
\begin{equ}\label{eq:antipode_first_half}
\mcM_+(\mcA_+\otimes \id)\Deltap = \bone\bone^*\;.
\end{equ}
[Hint: define $\mcA_+$ inductively by $\mcA_+\bone=\bone$, $\mcA_+ X^i=-X^i$ for $i=1,\ldots, d$, and by enforcing $\mcA_+(\tau\bar\tau) = \mcA_+(\tau)\mcA_+(\bar\tau)$
together with~\eqref{eq:antipode_first_half}.
The fact that these conditions uniquely determine $\mcA_+$ follows the triangular structure of $\Deltap$ from Exercise~\ref{ex:triangular}\ref{pt:Deltap_decompose_mfT_+}
combined with double
induction on $\fancynorm{\tau}$ and $|\tau|_\s$.]

\item Let $\End(\CT_+)$ denote the set of algebra endomorphisms of $\CT_+$
\begin{equ}
\End(\CT_+) \eqdef \{\psi \in L(\CT_+,\CT_+) \,:\,
\psi(\bone)=\bone\,,\, \psi(\tau\bar\tau)=\psi(\tau)\psi(\bar\tau)\;\; \forall\,\tau,\bar\tau\in\CT_+
\}\;.
\end{equ}
Show that $\End(\CT_+)$ is a monoid with product
\begin{equ}
\psi\star\phi \eqdef \mcM_+(\psi\otimes\phi)\Deltap
\end{equ}
and identity element $\bone\bone^*$.
[Hint: use the fact that $\CT_+$ is a commutative algebra.]
%see~\cite[Chapter~4]{Sweedler}.]

\item Show that, for every $\psi\in \End(\CT_+)$, the map $\psi^{-1}\eqdef \psi\mcA_+\in\End(\CT_+)$ is a left inverse of $\psi$, i.e. that $\psi^{-1}\star \psi = \bone\bone^*$.
In particular, $\mcA_+$ is the left inverse of $\id$.

\item Suppose that $S$ is a monoid in which every element has a left inverse. Show that $S$ is a group.

\item Conclude that $\mcA_+$ is also a right inverse of $\id$ in $\End(\CT_+)$, i.e. that~\eqref{eq:antipode_pos} holds.

\item\label{pt:involutory} Show that $\CM_+[(\mcA_+\circ\mcA_+)\otimes \mcA_+]\Deltap = \bone\bone^*$
and conclude that $\mcA_+\circ\mcA_+=\id$, i.e. that $\mcA_+$ is an involution. 
\end{enumerate}
\end{exercise}
\begin{remark}
In Exercise~\ref{ex:antipode_plus}\ref{pt:antipode_def}, one could equally use the identity $\mcM_+(\id\otimes\mcA_+)\Deltap = \bone\bone^*$ to define $\mcA_+$,
with obvious modifications for the remainder of the exercise.
\end{remark}
\begin{remark}
Exercise~\ref{ex:antipode_plus}\ref{pt:involutory} also follows from the general fact that the antipode of a commutative Hopf algebra is an involution, see~\cite[Prop.~4.0.1]{Sweedler}.
It would be desirable to conclude the existence of $\mcA_+$ likewise from general principles, such as the fact that every
connected graded bialgebra admits a unique antipode turning it into a Hopf algebra.
(The strategy in Exercise~\ref{ex:antipode_plus} is based on the proof of this fact.)
However, it does not appear possible to equip $\CT_+$ with a graded structure under which it is connected, e.g. it is graded but not connected by the edge counting function.
\end{remark}
\begin{exercise}\label{ex:prod_G_+}
Using Theorem~\ref{thm:pos_Hop_alg}, show that $G_+$
is a group with product $f\circ \bar f \eqdef (f\otimes \bar f)\Deltap$
for which the inverse of every $f\in G_+$ is given by $f^{-1}=f\mcA_+$.
Furthermore, show that the map
\begin{equ}
G_+\ni f\mapsto \Gamma_f \eqdef (\id\otimes f)\Deltap\in \GL(\mcT)
\end{equ}
is a representation (a group homomorphism).
\end{exercise} 
\begin{definition}\label{def:reduced_RS}
We call $G_+$, or more precisely its image in $\GL(\mcT)$ under the mapping $f\mapsto \Gamma_f$, the \textit{structure group}.
Recalling the index set $A\subset\R$ from Definition~\ref{def:mcT},
the triple $(A,\mcT,G_+)$ is called the \textit{regularity structure} generated by the rule $R$.
\end{definition}
We now come the analytic statement
which is the
motivation behind the definition of $\Pi_x$.
Since we are primarily concerned with the algebraic theory, we do not provide a complete proof.
\begin{theorem}\label{thm:canonical_model}
For $\xi\in C^\infty(\R^d,\R)$, define $\Pi_x$ as above
and let
\begin{equ}
\Gamma_{xy}\eqdef F_x^{-1}F_y = \Gamma_{f_x^{-1}\circ f_y}\;.
\end{equ}
Fix any norm $|\cdot|_\gamma$ on $\mcT_{\gamma}$ for $\gamma\in A$.
Then, for all $\tau\in\mfT_\circ$, $|\Pi_x\tau(y)|\lesssim |y-x|_\s^{|\tau|_\s}$
and $|\Gamma_{xy}\tau|_{\gamma} \lesssim |x-y|_\s^{|\tau|_\s-\gamma}$ uniformly in $\gamma\leq |\tau|_\s$ and $x,y$ in any compact set of $\R^d$.
\end{theorem}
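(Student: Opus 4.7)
The plan is to establish both bounds simultaneously by induction on the number $\fancynorm{\tau}$ of kernel edges, using the pointwise bound on $\Pi_x\tau$ at each level as an input for the bound on $\Gamma_{xy}$. For the base case $\fancynorm{\tau}=0$, any tree $\tau\in\mfT_\circ$ factors as $X^k\Xi_L$, so $\Pi_x\tau(y)$ is the product of the monomial $(y-x)^k$ (contributing decay $|y-x|_\s^{|k|_\s}$) and of noise factors $D^l\xi$ which are bounded uniformly on the compact set; since $|\tau|_\s\leq|k|_\s$, the desired bound follows. The companion bound on the polynomial sector of $\Gamma_{xy}=(\id\otimes g_{xy})\Deltap$, with $g_{xy}=f_x^{-1}\circ f_y\in G_+$, reduces by multiplicativity to $|g_{xy}(X^i)|=|x_i-y_i|\leq |x-y|_\s^{\s_i}$, which is immediate from the scaled norm.

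For the inductive step on $\Pi_x$, the product case $\tau=\tau_1\tau_2$ is handled by multiplicativity $\Pi_x\tau=(\Pi_x\tau_1)(\Pi_x\tau_2)$ and additivity of $|\cdot|_\s$. The integration case $\tau=\mcI_m[\sigma]$ is the heart of the argument: by construction,
\[
\Pi_x\mcI_m[\sigma](y) = D^mK*\Pi_x\sigma(y) - \sum_{|k|_\s<|\mcI_m\sigma|_\s}\frac{(y-x)^k}{k!}D^{k+m}K*\Pi_x\sigma(x)
\]
is precisely the Taylor remainder at $x$ of $D^mK*\Pi_x\sigma$ truncated at order $|\mcI_m\sigma|_\s$. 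Combining the inductive pointwise bound on $\Pi_x\sigma$ with the fact that $K$ agrees with the Green's function on the unit ball and enjoys standard $\s$-scaled kernel bounds, the multilevel Schauder estimate (see~\cite{Hairer14}) produces the target bound $|y-x|_\s^{|\mcI_m\sigma|_\s}$.

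For the inductive step on $\Gamma_{xy}$, the triangular decomposition of Exercise~\ref{ex:triangular}\ref{pt:Deltap_decompose_mfT_circ} shows that it suffices to bound $|g_{xy}(\sigma)|\lesssim |x-y|_\s^{|\sigma|_\s}$ for the trees $\sigma\in\mfT_+$ appearing in $\Deltap\tau$. The analogous triangular structure on $\mcT_+$ (Exercise~\ref{ex:triangular}\ref{pt:Deltap_decompose_mfT_+}), together with multiplicativity of $g_{xy}$, further reduces the problem to the generators $\sigma=\mcI_m[\tau]$ with $|\mcI_m[\tau]|_\s>0$. Unpacking $g_{xy}(\mcI_m[\tau])=(f_x^{-1}\otimes f_y)\Deltap(\mcI_m[\tau])$ using~\eqref{eq:pos_char_def}, one recognises $g_{xy}(\mcI_m[\tau])$ as another Taylor-type remainder of the same convolution $D^mK*\Pi_x\tau$, centred at $x$ and evaluated at $y$, to which the multilevel Schauder estimate applies together with the pointwise bound on $\Pi_x\tau$ just established.

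The hard part is the multilevel Schauder estimate itself, which converts the pointwise bound $|\Pi_x\sigma(z)|\lesssim|z-x|_\s^{|\sigma|_\s}$ into a Taylor-remainder bound on $D^mK*\Pi_x\sigma$ at the correct rate. This requires a dyadic decomposition of $K$ and delicate bookkeeping when $|\mcI_m\sigma|_\s\in\N$. A secondary subtlety is to identify $g_{xy}(\mcI_m[\tau])$ as precisely the correct Taylor remainder; this depends on carefully tracking the interplay between $f_x^{-1}$, $f_y$ and the two coproducts on $\mcT$ and $\mcT_+$, and ultimately on the coassociativity proved in Theorem~\ref{thm:pos_Hop_alg}.
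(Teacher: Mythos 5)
Your proposal is correct and takes essentially the same approach as the paper, which proves the $\Pi_x$ bound by appealing to multiplicativity and the Taylor-subtraction structure of~\eqref{eq:Pi_x_Taylor_def}, and simply cites the argument on p.~436 of~\cite{Hairer14} for the $\Gamma_{xy}$ bound. You have unpacked what the paper leaves to the reference — the simultaneous induction on $\fancynorm{\tau}$, the reduction of the $\Gamma_{xy}$ bound via the triangular structure of $\Deltap$ and multiplicativity of $g_{xy}$ to the generators $X^i$ and $\mcI_m[\tau]$, and the identification of $g_{xy}(\mcI_m[\tau])$ as a Taylor-type remainder controlled by the multilevel Schauder estimate — and all of these ingredients are exactly the ones used in the cited argument.
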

\begin{proof}
The fact that $|\Pi_x\tau (y)|\lesssim |x-y|_\s^{|\tau|_\s}$ follows readily from the multiplicative definition $\Pi_x\tau\sigma=(\Pi_x\tau)(\Pi_x\sigma)$
and the definition~\eqref{eq:Pi_x_Taylor_def}.
The bounds on $\Gamma_{xy}$ follow from the argument in~\cite[p.~436]{Hairer14}.
\end{proof}

\subsection{Admissible maps, pre-models, and models}
\label{subsec:models}

\begin{definition}\label{def:admissible}
A linear map $\bfPi\colon\mcT\to C^\infty$ is called \textit{admissible} if there exists $\xi\in C^\infty$ such that, for all $\tau\in\mfT_\circ$ and $k,l,m\in\N^d$
\begin{equs}
\bfPi X^k \tau &=\cdot^k\bfPi\tau\;,&\qquad
\bfPi\bone &= 1\;,\label{eq:admiss_prop_1}
\\
\bfPi \Xi_l &= D^l \xi\;,&\qquad \bfPi\mcI_m[\tau] &= D^mK*\bfPi\tau\;. \label{eq:admiss_prop_2}
\end{equs}
\end{definition}
The canonical lift of any $\xi\in C^\infty$ defined by~\eqref{eq:canonical1}-\eqref{eq:canonical2} is admissible.
The key difference between canonical lifts and admissible maps is that the former are multiplicative in the sense that $\bfPi\tau\sigma=(\bfPi\tau)(\bfPi\sigma)$, while no such condition is imposed on admissible maps.

Given any linear map $\bfPi\colon\CT\to C^\infty$ and $x\in \R^d$, we can define the linear map $\Pi_x\colon\CT\to C^\infty$ and $f_x\in G_+$ exactly as before by setting\label{Pi_x_page_ref}
\[
\Pi_x \eqdef \bfPi F_x\;,\qquad F_x \eqdef \Gamma_{f_x}\eqdef (\id\otimes f_x)\Deltap\;,
\]
where $f_x(X^i)=-x_i$
and $f_x(\mcI_m\tau)$ is given by~\eqref{eq:pos_char_def}.

\begin{definition}
We call $\{\Pi_x\}_{x\in \R^d}$ the family of \textit{recentred maps} and $\{f_x\}_{x\in \R^d}$ the family of \textit{positive characters} associated with $\bfPi$.
We denote by $\mcZ(\bfPi) = (\Pi,\Gamma)$ the families $\{\Pi_x\}_{x\in \R^d}$ and $\{\Gamma_{xy}\}_{x,y\in\R^d}$ where $\Gamma_{xy}\eqdef  F_x^{-1}F_y = \Gamma_{f_x^{-1}\circ f_y}$.
\end{definition}

\begin{remark}\label{rem:models_well_defined}
It may now seem like we have a circular definition: to define $\Pi_x$
we need $f_x$, but we define $f_x$ in terms of $\Pi_x$.
Luckily this is not a problem since
to define $f_x(\mcI_m\tau)$, we need to know $\Pi_x\tau$, which in turn depends on $f_x$ only through trees $\sigma\in\mfT_+$ for which $\fancynorm{\sigma}\leq\fancynorm{\tau}$.
Hence $\bfPi$ uniquely determines $f_x$ and $\Pi_x$ for every $x\in\R^d$.

Building on this observation, if $\bfPi$ is admissible, it is furthermore possible to write $f_x$
as an explicit function of $\bfPi$ using a `twisted antipode', which is the approach taken in~\cite[Sec.~6.1]{BHZ19},
but we will not use this construction here.
\end{remark}
We emphasise that $\bfPi$ and $\{f_x\}_{x\in \R^d}$ can be recovered from the recentred map $\{\Pi_x\}_{x\in \R^d}$ (and thus from $\mcZ (\bfPi)$).
This is clear for $f_x$,
while for $\bfPi$ we have
$\bfPi = \Pi_x F_x^{-1}$, where $F_x^{-1}=(1\otimes f_x^{-1})\Deltap$, which
depends only $\Pi_x$.
Therefore, knowledge of the recentred maps $\{\Pi_x\}_{x\in \R^d}$ is equivalent to that of $\bfPi$.
\begin{definition}\label{def:pre_model}
An admissible map $\bfPi$ is called a \textit{pre-model}
if $\mcZ(\bfPi)$ is a model in the sense of~\cite[Def.~2.17]{Hairer14},
i.e. for all $\tau\in\mfT_\circ$
\begin{equ}
\Big|\int_{\R^d}(\Pi_x\tau)(z)\phi^\lambda_x(z)\mrd z\Big| \lesssim \lambda^{|\tau|_\s}
\end{equ}
and
\begin{equ}
|\Gamma_{x,y} \tau|_{\gamma} \lesssim |x-y|_\s^{|\tau|_\s-\gamma}
\end{equ}
uniformly in $\gamma\leq |\tau|_\s$, $\lambda\in (0,1]$, $x,y$ in any compact set of $\R^d$,
and
\begin{equ}
\phi \in \mcB^r \eqdef \{\phi\in C^\infty(\R^d)\,:\,\supp(\phi)\subset B_0(1)\,,\, |\phi|_{C^r}\leq 1\}\;,
\end{equ}
where $r > 1-\min\{A\}$
and
\begin{equ}
\phi^\lambda_x(z) \eqdef \lambda^{-\s_1-\ldots-\s_d}(\lambda^{-\s_1}(z_1-x_1),\ldots, \lambda^{-\s_d}(z_d-x_d))\;.
\end{equ}
\end{definition}
We saw in Theorem~\ref{thm:canonical_model} that the admissible model which is multiplicative (the canonical lift) is a pre-model.

\subsection{Global description of \texorpdfstring{$\Deltap$}{Delta+}}\label{subsec:global_pos}

We conclude this section with a `global' (non-recursive) characterisation of $\Deltap$ that
complements its recursive definition.
This global definition is the one primarily used in~\cite{BHZ19}.

\begin{proposition}\label{prop:Deltap_global}
Consider $T^{\mfn}_{\mfe}\in\mfT_\circ$ and $\Deltap\colon \mcT\to\mcT\otimes\mcT_+$.
Then 
\begin{equs}[eq:Deltap_global]
 \Deltap  T^{\mfn}_{\mfe} = 
 \sum_{A \subset T} \sum_{\mfe_A,\mfn_A}  \frac1{\mfe_A!}
\binom{\mfn}{\mfn_A}
 &(A,\mfn_A+\pi\mfe_A, \mfe \restr E_A) 
\\
&\otimes \pi_+(T/A, [\mfn - \mfn_A]_A, \mfe + \mfe_A)\;, 
\end{equs}
with notation as follows.
\begin{itemize}
\item $\pi_+\colon\mfT\to\mcT_+$ is the projection which sends to $0$ every tree in $\mfT\setminus\mfT_+$.
\item For $ C \subset D $ and $ f \colon D \rightarrow \N^d $, we denote by $ f \restr C$ the restriction of $ f $ to $ C $.
We denote $f! = \prod_{x\in D}f(x)!$ where $n!=(n_1!)\ldots(n_d!)$ for $n\in\N^d$ as usual.
Furthermore, for $g\colon C\to \N^d$, we denote
\begin{equ}
\binom{f}{g} = \prod_{x\in C} \binom{f(x)}{g(x)}\;,
\end{equ}
where $\binom{n}{k} = \binom{n_1}{k_1}\ldots\binom{n_d}{k_d}$ for $k,n\in\N^d$.
\item The first sum in~\eqref{eq:Deltap_global}
runs over all non-empty subtrees $A$ of
$T$ containing the root. The second sum runs over all  $ \mfn_A \colon N_A \rightarrow \N^{d}  $ with $\mfn_A\leq \mfn$ and $ \mfe_{A} \colon \partial(A,T) \rightarrow \N^{d} $, where 
\begin{equ}\label{eq:boundary_def}
\partial(A,T) = \{(x,y)\in E_T\setminus E_A\,:\, x\leq y\,,\, x\in A\}
\end{equ}
is called the boundary of $A$ in $T$
and where we recall that $x\leq y$ means that $x$ is in the unique path connecting $y$ to the root.
The sum $\mfe+\mfe_A\colon E_T\to\mcE=\mfL\times \N^d$ is taken in the second component only.

\item  We write $T/A$ for the tree obtained by contracting $A$ to a new root $\bar \rho$. For $ f \colon N_T \rightarrow \N^{d}$, we define $[f]_A\colon N_{T/A}\to\N^d$ as $[f]_A(\bar\rho)=\sum_{x\in N_A} f(x)$
and $[f]_A(y)=f(y)$ for all $y\in N_{T/A}\setminus\{\bar\rho\}$.
\item For $ f \colon E_T  \rightarrow \N^{d}  $, we set for every $ x \in N_T $, $ (\pi f)(x) = \sum_{(x,y) \in E_T} f(x,y)$.
\end{itemize}
\end{proposition}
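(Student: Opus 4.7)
The plan is to prove the formula by structural induction, comparing the right-hand side of~\eqref{eq:Deltap_global} against the recursive definition of $\Deltap$ given by~\eqref{eq:Deltap_base_case}, \eqref{eq:Deltap_multiplicative}, and~\eqref{eq:Deltap_def}. Since every $\tau\in\mfT_\circ$ is generated from $\bone$, $X^i$, and $\Xi_l$ by iterated tree products and applications of $\mcI_m$, it suffices to verify three things: (i) the global formula agrees with the recursive formula on the generators $\bone$, $X^i$, $\Xi_l$; (ii) if the formula holds for $\tau$ and $\sigma$ with $\tau\sigma\in\mfT_\circ$, then it holds for $\tau\sigma$; (iii) if it holds for $\sigma\in\mfT_\circ$ with $\mcI_m[\sigma]\in\mfT_\circ$, then it holds for $\mcI_m[\sigma]$. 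The natural induction parameter is $\fancynorm{\tau}$ together with $|N_\tau|$ as a tiebreaker.

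For the base cases, the subtree sum is trivial and reduces to the required expressions; for $\Xi_l$ in particular, one checks that the only $A$ producing a non-zero second factor under $\pi_+$ is $A=T$ (with $\mfn_A=\mfe_A=0$) since any $A=\{\rho\}$ leaves a noise edge incident on the new root of $T/A$. For the product step, I would use the fact that subtrees of $\tau\sigma$ containing the merged root are in bijection with pairs $(A_\tau,A_\sigma)$ of subtrees of $\tau,\sigma$ containing their respective roots, with $\partial(A,\tau\sigma) = \partial(A_\tau,\tau)\sqcup\partial(A_\sigma,\sigma)$. The factorial $\mfe_A!$ splits multiplicatively, and the key combinatorial input is the Vandermonde identity
\begin{equation*}
\binom{\mfn_\tau(\rho_\tau)+\mfn_\sigma(\rho_\sigma)}{k_1+k_2} = \sum \binom{\mfn_\tau(\rho_\tau)}{k_1}\binom{\mfn_\sigma(\rho_\sigma)}{k_2}
\end{equation*}
which allows the root-decoration binomial $\binom{\mfn}{\mfn_A}$ to factor appropriately between the two tensor legs, giving multiplicativity of the global formula. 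For the $\mcI_m[\sigma]$ step, write $T$ as $T'$ with a new root $\rho$ attached via edge $e_0$ of decoration $(\mcI,m)$, and split the sum over $A$ according to whether $A=\{\rho\}$ or $A$ contains $e_0$. The first case yields $\sum_k \frac{1}{k!}X^k\otimes \pi_+\mcI_{m+k}\sigma$ (the summation being finitely supported via $\pi_+$, matching the constraint $|k|_\s<|\mcI_m\sigma|_\s$), and the second case reproduces $(\mcI_m\otimes\id)\Deltap\sigma$ by the inductive hypothesis since subtrees of $T$ containing $e_0$ correspond to subtrees of $T'$ containing $\rho'$, with the same boundary data and decorations, and the new edge decoration $m$ is untouched by $\mfe_A$.

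The main obstacle will be the multiplicativity step, where all the combinatorial bookkeeping collides at once: the factorization of the binomial coefficient at the merged root, the distribution of boundary edges and their $\mfe_A$-decorations between $\tau$ and $\sigma$, and the compatibility of $\pi\mfe_A$ with the splitting of the boundary. A clean way to organise this is to recognise that the global formula is manifestly multiplicative once one verifies the bijection on subtrees and the three combinatorial decompositions (of $\mfe_A!$, $\binom{\mfn}{\mfn_A}$ via Vandermonde, and of $\pi_+$ acting on products of basis elements of $\mfT_+$). By contrast, the $\mcI_m$ step is largely a relabelling exercise, modulo noticing that the sum over $k=\mfe_A(e_0)$ is automatically truncated by $\pi_+$ exactly at the threshold $|k|_\s<|\mcI_m\sigma|_\s$ that appears in~\eqref{eq:Deltap_def}.
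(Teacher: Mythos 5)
Your proposal is correct and takes essentially the same approach as the paper: both verify that the right-hand side of~\eqref{eq:Deltap_global} satisfies the recursive characterisation of $\Deltap$ given by~\eqref{eq:Deltap_base_case}, \eqref{eq:Deltap_multiplicative} and~\eqref{eq:Deltap_def}, using the same three ingredients (base cases, multiplicativity via the subtree bijection and Vandermonde's identity, and the $\mcI_m$ step split on whether $A=\{\rho\}$ or $A\supsetneq\{\rho\}$, with the truncation $|k|_\s<|\mcI_m\sigma|_\s$ supplied automatically by $\pi_+$).
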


\begin{example}
For $\Deltap\colon \mcT\to\mcT\otimes\mcT_+$,
\begin{equs}
\Deltap
\begin{tikzpicture}  [scale=0.8,baseline=-0.3cm]
    \node [dot,label=90:{\scriptsize $X^b$}] (uu) at (-2, 0) {};
    \node [dot,label=-90:{\scriptsize $X^a$}] (m) at (-2, -1) {};
    \node [dot] (ml) at (-1, 0) {};
    \draw (m) to node[sloped, midway,below]{\scriptsize $\Xi_\beta$} (ml);
\draw (m) to node[midway,left]{\scriptsize $\mcI_p$} (uu);
\node [dot] (uur) at (-1,1) {};
\draw (uu) to node[sloped, midway,below]{\scriptsize $\Xi_\gamma$} (uur);
\node [dot,label=0:{\scriptsize $X^c$}] (uul) at (-3,1) {};
\draw (uu) to node[sloped, midway,below]{\scriptsize $\mcI_q$} (uul);
\end{tikzpicture}
\quad &= \sum_{\bar a \leq a} \sum_{k\in\N^d} \frac{1}{k!}\binom{a}{\bar a}
\begin{tikzpicture}  [scale=0.8,baseline=-0.3cm]
    \node [dot,label=-90:{\scriptsize $X^{\bar a+k}$}] (m) at (-2, -1) {};
\node [dot] (ml) at (-1, 0) {};
    \draw (m) to node[sloped, midway,below]{\scriptsize $\Xi_\beta$} (ml);
\end{tikzpicture}
\otimes
\pi_+ \Big(
\begin{tikzpicture}  [scale=0.8,baseline=-0.3cm]
    \node [dot,label=90:{\scriptsize $X^b$}] (uu) at (-2, 0) {};
    \node [dot,label=-90:{\scriptsize $X^{a-\bar a}$}] (m) at (-2, -1) {};
\draw (m) to node[midway,right]{\scriptsize $\mcI_{p+k}$} (uu);
\node [dot] (uur) at (-1,1) {};
\draw (uu) to node[sloped, midway,below]{\scriptsize $\Xi_\gamma$} (uur);
\node [dot,label=0:{\scriptsize $X^c$}] (uul) at (-3,1) {};
\draw (uu) to node[sloped, midway,below]{\scriptsize $\mcI_q$} (uul);
\end{tikzpicture}
\Big)
\\
&+\sum_{\bar a \leq a} \sum_{\bar b \leq b}
\sum_{k\in\N^d} \frac{1}{k!}
\binom{a}{\bar a}\binom{b}{\bar b}
\begin{tikzpicture}  [scale=0.8,baseline=-0.3cm]
    \node [dot,label=90:{\scriptsize $X^{\bar b}$}] (uu) at (-2, 0) {};
    \node [dot,label=-90:{\scriptsize $X^{\bar a}$}] (m) at (-2, -1) {};
    \node [dot] (ml) at (-1, 0) {};
    \draw (m) to node[sloped, midway,below]{\scriptsize $\Xi_\beta$} (ml);
\draw (m) to node[midway,left]{\scriptsize $\mcI_p$} (uu);
\node [dot] (uur) at (-1,1) {};
\draw (uu) to node[sloped, midway,below]{\scriptsize $\Xi_\gamma$} (uur);
\end{tikzpicture}
\otimes 
\pi_+\Big(
\begin{tikzpicture}  [scale=0.8,baseline=-0.3cm]
    \node [dot,label=-90:{\scriptsize $X^{a-\bar a + b-\bar b+k}$}] (m) at (-2, -1) {};
\node [dot,label=0:{\scriptsize $X^c$}] (ml) at (-3, 0) {};
    \draw (m) to node[sloped, midway,below]{\scriptsize $\mcI_{q+k}$} (ml);
\end{tikzpicture}
\Big)
\\
&+
\sum_{\bar a \leq a}\sum_{\bar b \leq b
} \sum_{\bar c\leq c}
\binom{a}{\bar a}\binom{b}{\bar b}\binom{c}{\bar c}
\begin{tikzpicture}  [scale=0.8,baseline=-0.3cm]
    \node [dot,label=90:{\scriptsize $X^{\bar b}$}] (uu) at (-2, 0) {};
    \node [dot,label=-90:{\scriptsize $X^{\bar a}$}] (m) at (-2, -1) {};
    \node [dot] (ml) at (-1, 0) {};
    \draw (m) to node[sloped, midway,below]{\scriptsize $\Xi_\beta$} (ml);
\draw (m) to node[midway,left]{\scriptsize $\mcI_p$} (uu);
\node [dot] (uur) at (-1,1) {};
\draw (uu) to node[sloped, midway,below]{\scriptsize $\Xi_\gamma$} (uur);
\node [dot,label=0:{\scriptsize $X^{\bar c}$}] (uul) at (-3,1) {};
\draw (uu) to node[sloped, midway,below]{\scriptsize $\mcI_q$} (uul);
\end{tikzpicture}
\otimes X^{a-\bar a + b- \bar b + c -\bar c}\;.
\end{equs}
\end{example}

\begin{proof}[of Proposition~\ref{prop:Deltap_global}]
Let us write $\bar\Delta \colon \mcT\to\mcT\otimes\mcT_+$ for the linear map that sends $T^\mfn_\mfe$ to the right-hand side of~\eqref{eq:Deltap_global}.
It is immediate that $\bar\Delta$ agrees with $\Deltap$ on $\bone$, $X^i$ for $i=1,\ldots, d$, and $\Xi_l$ for $l\in\N^d$.
Furthermore $\bar\Delta(\tau\bar\tau)=\bar\Delta(\tau)\bar\Delta(\bar\tau)$,
which follows from the fact that, for any finite $I\subset\mfT$, the product $\prod_{\tau\in I} \tau$ is in $\mfT_+$ if and only if $I\subset\mfT_+$, and from Vandermonde's identity $\binom{k+\ell}{c} = \sum_{a+b=c}\binom{k}{a}\binom{\ell}{b}$ to handle the sum of polynomial decorations at the root.

To conclude that $\bar \Delta=\Deltap$,
it remains to verify that for all $\mcI_m\tau\in\mfT_\circ$
\begin{equ}
\bar\Delta(\mcI_m\tau) = (\mcI_m\otimes \id)\bar\Delta\tau
+ \sum_{|k|_\s<|\mcI_m\tau|_\s} \frac{X^k}{k!}\otimes \mcI_{m+k}\tau\;.
\end{equ}
Let $\rho$ be the root of $T^\mfn_\mfe\eqdef\mcI_m\tau$.
Observe that $\mfn_A(\rho)=0$ in the sum defining $\bar\Delta$ since $\mfn(\rho)=0$.
Therefore, the terms in the sum in which $A$ contains at least two nodes (which necessary includes the root)
sum to $(\mcI_m\otimes \id)\bar\Delta \tau$.
For the remaining terms, where $A$ contains only the root $\rho$,
the boundary is a singleton $\partial(A,T)=\{e\}$ with only one edge $e=(\rho,x)$ (the single edge incident on $\rho$).
Therefore the corresponding right factor becomes $\mcI_{m+\mfe_A(e)}[\tau]$
and we take the sum over all $\mfe_A(e)=k\in\N^d$ such that $|\mcI_{m+k}\tau|_\s>0$, i.e. $|k|_\s < |\mcI_m\tau|_\s$.
The left factor thus becomes $\frac{1}{\mfe_A(e)!}X^{\mfe_A(e)}=\frac{1}{k!}X^k$.
Hence, the terms where $A$ contains only $\rho$ sum to
\begin{equ}
\sum_{|k|_\s<|\mcI_m\tau|_\s}\frac{X^k}{k!}\otimes \mcI_{m+k}\tau\;.
\end{equ}
This matches exactly the recursive definition of $\Deltap$, completing the proof.
\end{proof}

\begin{exercise}
Verify that, for $\Deltap\colon\mcT_+\to\mcT_+\otimes\mcT_+$ and $T^\mfn_\mfe\in\mfT_+$,
the same expression~\eqref{eq:Deltap_global} holds with $(A,\mfn_A+\pi\mfe_A, \mfe \restr E_A)$ replaced by $\pi_+(A,\mfn_A+\pi\mfe_A, \mfe \restr E_A)$.
\end{exercise}

\section{Negative renormalisation}
\label{sec:neg_renorm}

The reason (pre-)models (Definition~\ref{def:pre_model}) are of interest
is that models can be used as `input' to solve PDEs.
Furthermore, the solutions are stable under a metric on the space of models.
This metric (which we do not discuss here, but see~\cite[Eq.~(2.17)]{Hairer14}) is best understood using models (vs. pre-models) and is the reason we are interested in the recentred maps $(\Pi_x)_{x\in\R^d}$ and transformations $\{\Gamma_{x,y}\}_{x,y\in\R^d}$
rather than only admissible maps / pre-models $\bfPi$.
(See Section~\ref{sec:renorm_SPDE} and the discussion around~\eqref{eq:fixed_point}
for an indication of how one solves a PDE with a model $(\Pi,\Gamma)$ as input.)

Consider now a mollification $\xi_\eps = \xi * \delta_\eps$ at scale $\eps>0$ of a random distribution $\xi$.
Let $(\Pi^\eps,\Gamma^\eps)$ be the `canonical model' built from $\xi_\eps$ using the procedure in Section~\ref{sec:pos_renorm}.
Much like in Sections~\ref{subsubsec:DPD} and~\ref{subsubsec:d4},
we can try to define the `solution' to the singular SPDE
\begin{equ}\label{eq:SPDE2}
(\partial_t -\mcL) u = F(u,\nabla u,\ldots,\xi,\nabla\xi,\ldots)
\end{equ}
as the limit $u \eqdef \lim_{\eps\to 0} u_\eps$ of the solutions $u_\eps$ of the corresponding PDE with models $(\Pi^\eps,\Gamma^\eps)$ as input.
Showing that the SPDE~\eqref{eq:SPDE2} is well-posed therefore boils down to showing convergence of the models $(\Pi^\eps,\Gamma^\eps)$.

It turns out that for many random distributions $\xi$ of interest, this procedure works.
A notable case is when $\xi\colon\R\to\R^n$ is the derivative of an $n$-dimensional Brownian motion $W$ and $F(u,\xi)=F(u)\xi$, so that we are in the setting of stochastic \textit{ordinary} differential equations.
In this case the model converges to the so-called \textit{canonical rough path lift} of Brownian motion
and the limiting solution solves the Stratonovich SDE $\mrd u = F(u)\circ\mrd W$.
More generally, this holds for $W$ from a large class of Gaussian and Markovian processes~\cite{CQ02, FV08_Markov, FV10} (including fractional Brownian motion with Hurst parameter $H>\frac14$).
The interpretation is that, while products of the form
$(\Pi^\eps_x\tau)(\Pi^\eps_x\sigma)$ may be deterministically unstable, e.g. the integral $\int_0^T W_t^i \mrd W_t^j$ of one Wiener process against another is analytically ill-defined (see Remark~\ref{rem:integral_ill-defined}),
they can still be made sense of through smooth approximations due to probabilistic cancellations.

Unfortunately, for many interesting singular stochastic \textit{partial} differential equations,
the models $(\Pi^\eps,\Gamma^\eps)$ fail to converge as $\eps\to 0$.
The problem stems from the fact that products $(\Pi^\eps_x\tau)(\Pi^\eps_x\sigma)$ are not only analytically ill-defined, but they \textit{diverge} as distributions (cf. the Brownian motion case).
We already encountered such divergences in Proposition~\ref{prop:Wick}.

To cure these divergences, we introduce another type of renormalisation, called `negative renormalisation' and developed systematically in~\cite{BHZ19},
which redefines $\Pi_x\tau\sigma$ in order to restore convergence.

\begin{remark}
`Positive renormalisation' happened pathwise: given a realisation of $\xi_\eps$, we constructed $\Pi^\eps$ without caring which distribution $\xi_\eps$ came from.
In contrast, `negative renormalisation' happens \textit{in law}:
the way we choose how the renormalisation group acts on $\Pi^\eps$ will typically not depend on the specific realisation of $\xi_\eps$, but on its \textit{law}.
\end{remark}
Denote $\mfF \eqdef \N^\mfT$,\label{forests_page_ref} which we call the set of \textit{forests} (multisets of trees).
We will identify each forest $\tau\in\mfF$ with a decorated graph $F^\mfn_\mfe$,
where $\mfn\colon N_F\to\N^d$ and $\mfe\colon E_F\to\mcE$, which is an unordered collection of trees in $\mfT$ (hence the term forest).
In this way we think of $\mfT\subset\mfF$.

We denote by $\fprod$ the commutative and associative `forest product' on $\mfF$
given by $(F\fprod H) (\tau)= F(\tau)+H(\tau)$ for all $F,H\in\N^\mfT$ and $\tau\in\mfT$.
Note that this simply corresponds to combining the trees in $F$ and $H$ into a single forest.
The unit under $\fprod$ is the empty forest denoted by $\e$,\label{empty_forest_page_ref} for which $\e(\tau)=0$ for all $\tau\in\mfT$.
Note that $\e\neq\bone$, where $\bone\in\mfT$ is the unit under the tree product.

\begin{definition}\label{def:neg_trees}
We say a tree $\tau\in\mfT$ is \textit{planted} if either $\tau=\Xi_l$ for some $l\in \N^d$ or if there exists $m\in\N^d$ and $\sigma\in\mfT$
such that $\tau=\mcI_m\sigma$.
Define the set of trees $\mfT_-\subset \mfT$ by
%\footnote{There is a small discrepancy between our definition of $\mfT_-$ and that in~\cite{BCCH21}.}
\begin{equ}
\mfT_- \eqdef \{\tau=T^\mfn_\mfe \in\mfT_\circ\,:\, |\tau|_\s<0 \,,\,
% \mfn(\rho)=0\,,\,
\mfn(\rho_\tau)=0\,,\,
\tau \textnormal{ is not planted}
\}\;.
\end{equ}
In other words, $\mfT_-$ consists of all unplanted trees of negative degree with zero polynomial decoration at the root.
We furthermore define the set of forests $\mfF_-\subset\mfF$
\begin{equ}
\mfF_- \eqdef \{ \tau_1\fprod\ldots\fprod \tau_n \,:\, n\geq 0\,,\,
\tau_i\in\mfT_-\}\;.
\end{equ}
The case $n=0$ above corresponds to $\tau_1\fprod\ldots\fprod \tau_n=\e$.

Define $\mcT_- =\Span_\R\mfT_-$\label{mcT_minus_pageref}
and $\mcF_- = \Span_\R \mfF_-$.\label{mcF_minus_pageref}
Let $G_-$\label{G_minus_page_ref} denote the set of \textit{characters} on $\mcF_-$, i.e. the collection of linear maps $g\colon\mcF_-\to \R$ such that $g(\e)=1$ and \begin{equation*}%\label{eq:neg_char_mult}
g(\tau\fprod\sigma)=g(\tau)g(\sigma)\;.
\end{equation*}
\end{definition}
Note that we have a canonical inclusion $\mcT_-\hookrightarrow\mcF_-$ and that $\mcF_-$ is a commutative algebra freely generated by $\mfT_-$.

We next define a linear map $\Deltam\colon \mcT\to \mcF_-\otimes\mcT$\label{Delta_minus_page_ref}
with a similar form as the `global' definition of $\Deltap$
in Proposition~\ref{prop:Deltap_global}.

\begin{definition}\label{def:Deltam}
Writing $F^\mfn_\mfe=(F,\mfn,\mfe)\in\mfF$,
we define for $T^{\mfn}_{\mfe}\in\mfT_\circ$
\begin{equs}[eq:Deltam_def]
 \Deltam  T^{\mfn}_{\mfe} = 
 \sum_{A \subset T} \sum_{\mfe_A,\mfn_A}  \frac1{\mfe_A!}
\binom{\mfn}{\mfn_A}
 &\pi_-(A,\mfn_A+\pi\mfe_A, \mfe \restr E_A) 
\\
&\otimes( T/A, [\mfn - \mfn_A]_A, \mfe + \mfe_A)\;, 
\end{equs}
with notation the same as in Proposition~\ref{prop:Deltap_global}
except the following.
\begin{itemize}
\item $\pi_-\colon\mfF\to\mcF_-$ is the projection which sends to $0$ every forest $A\in\mfF\setminus\mfF_-$.
\item The first sum runs over all subgraphs $ A  $ of
$ T $ ($ A $ may be empty), which we identify with a forest of rooted trees. The second sum, as in Proposition~\ref{prop:Deltap_global},
runs over all  $\mfn_A \colon N_A \rightarrow \N^{d}$ with $\mfn_A\leq \mfn$ and $ \mfe_{A} \colon \partial(A,T) \rightarrow \N^{d} $ with $ \partial(A,T) $ defined by~\eqref{eq:boundary_def}.
% where 
%\[
%\partial(A,T) = \{(x,y)\in E_T\setminus E_A\,:\, x\leq y\,,\, x\in A\}
%\]
%is called the boundary of $A$ in $F$
%and where $x\leq y$ means that $x$ is in the unique path connecting $y$ to the root.
%The sum $\mfe+\mfe_A\colon E_T\to\mfL\times \N^d$ is taken in the second component only.

\item  We write $T/A$ for the tree obtained by contracting the connected components $A_1,\ldots, A_\ell$ of $A$ to nodes $x_1,\ldots,x_\ell$ respectively. For $f \colon N_T \to \N^{d}$, we define $[f]_A\colon N_{T/A} \to \N^d$ by $[f]_A(x_i)=\sum_{y\in N_{A_i}} f(y)$
and $[f]_A(x)=f(x)$ for $x\in N_{T/A}\setminus\{x_1,\ldots,x_\ell\}$.
%\item For $ f \colon E_T  \rightarrow \N^{d}  $, we set for every $ x \in N_T $, $ (\pi f)(x) = \sum_{e=(x,y) \in E_T} f(e)$.
\end{itemize}
\end{definition}

\begin{example}\label{ex:Deltam}
Consider the rule $R$ and scaling $\s=(1,1,1,2)$ associated to the $\Phi^4_3$ equation as in Examples~\ref{ex:Phi43_rule} and~\ref{ex:Phi43_subcritical}.
Denoting $\Xi=\Xi_0$ and $\mcI=\mcI_0$ as usual, consider
\begin{equ}
\Psi\eqdef\mcI[\Xi] = \she \in\mfT_\circ\;,
\end{equ}
and the tree
\begin{equ}
\tau\eqdef \mcI[\Psi^3] = \fork
\in\mfT_\circ
\end{equ}
($\tau$ is sometimes called the `pitchfork' or `chicken foot').
Above, straight lines denote edges with labels $\mcI$ and zigzag lines denote edges with decoration $\Xi$.
All nodes have zero polynomial decoration $X^0$.
Then
\begin{equs}
|\Psi|_\s
&=
|\she|_\s=-\frac12-\;,\quad |\Psi^2|_\s
=
|\shesq|_\s=-1-\;,\quad
|\Psi^3|_\s
=
|\shecube|_\s
=-\frac32-\;,
\\
|\tau|_\s
&=
\Big|
\fork
\Big|_\s=\frac12-\;.
\end{equs}
Therefore
\begin{equ}
\Deltam
\fork =\e\otimes 
\fork + 3\,
\shesq
\otimes
\intshe
 +
\shecube
\otimes
\intone\;.
\end{equ}
\end{example}
For another (even simpler) example of how $\Deltam$ operates, see the proof of Proposition~\ref{prop:direct_renorm_base_case} below.
\begin{remark}
We saw that $\Deltap$ admits a recursive definition and a global one (Proposition~\ref{prop:Deltap_global}).
It turns out that $\Deltam$ also admits a recursive definition, albeit not as simple as that of $\Deltap$, see~\cite{Bruned18}.
\end{remark}

\begin{remark}\label{rem:complete}
As stated, the fact that $\Deltam$ has right factor in $\mcT$ is not always true.
The technical condition on a rule which ensures $\Deltam$ maps $\CT\to\mcF_-\otimes\CT$
is called \textit{completeness} in~\cite[Sec.~5.3]{BHZ19};
it is somewhat lengthy to state, so we do not repeat it here.
Luckily, for any subcritical rule $R$, there exists another subcritical rule $\bar R$ which enlarges $R$ in the sense that $R\subset \bar R$ and under which $\Deltam$ indeed maps $\CT\to\mcF_-\otimes\CT$, see~\cite[Prop.~5.21]{BHZ19}.
The following exercise shows that, if a rule is built naively from an equation, it may happen that $\Deltam$ does not map $\CT\to\mcF_-\otimes\CT$, i.e. the rule may fail to be complete. 
\end{remark}

\begin{exercise}
Consider the rule $R$ defined in Example~\eqref{ex:Phi43_rule} associated to the $\Phi^4_{d-1}$ equation
with scalings $|\Xi|_\s=-\frac d2 - \frac12 -\kappa$ and $|\mcI|_\s=2$.
If $|\Xi|_\s < -\frac{11}{4}$, which corresponds to white noise in fractional dimension
$d\geq 9/2$, show that there exists $\tau\in\mcT$
such that $\Deltam \tau\notin\mfF_-\otimes\mcT$.

[Hint: following notation from Example~\ref{ex:Deltam}, consider the tree
\begin{equ}
\tau=\Psi^2\mcI[\Psi^3] = \bigtree\;.
\end{equ}
Find a negative tree subtree $A\subset \tau$ with non-empty boundary onto which we can place one derivative.]
\end{exercise}
We assume henceforth that $R$ is complete so that $\Deltam$ maps $\CT$ to $\mcF_-\otimes\mcT$.
Given any map $\bfPi\colon\mcT\to C^\infty$
and $g\in G_-$,
it is natural to define
\begin{equ}
\bfPi^g \eqdef (g\otimes \bfPi)\Deltam.
\end{equ}
The following is the main result on renormalisation of models.
\begin{theorem}\label{thm:pre-model_renorm}
Suppose that $g\in G_-$.
Then, if $\bfPi$ is a pre-model,
so is $\bfPi^g$.
\end{theorem}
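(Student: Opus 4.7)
My strategy is to verify the claim in two stages: first, that $\bfPi^g$ is admissible so that $\mcZ(\bfPi^g) = (\Pi^g, \Gamma^g)$ is defined; second, that the model bounds transfer from $(\Pi, \Gamma)$ to $(\Pi^g, \Gamma^g)$.

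For admissibility, I would first establish the four structural identities
\begin{equation*}
\Deltam\bone = \e\otimes\bone,\quad \Deltam\Xi_l = \e\otimes\Xi_l,\quad \Deltam(\mcI_m\tau) = (\id\otimes\mcI_m)\Deltam\tau,\quad \Deltam(X^k\tau) = (\id\otimes X^k\cdot)\Deltam\tau,
\end{equation*}
each by direct inspection of Definition~\ref{def:Deltam}. The projection $\pi_-$ kills any subgraph $A\subset T$ whose root component is planted or carries a nonzero polynomial decoration at its root, which rules out all terms touching the top edge of $\mcI_m\tau$ or the $X^k$ decoration at the root of $X^k\tau$. Admissibility of $\bfPi^g = (g\otimes\bfPi)\Deltam$ then follows by one-line computations: for example, $\bfPi^g\mcI_m[\tau] = (g\otimes\bfPi)(\id\otimes\mcI_m)\Deltam\tau = D^mK*\bfPi^g\tau$, and the same $\xi\in C^\infty$ that witnesses admissibility of $\bfPi$ also works for $\bfPi^g$ since $\bfPi^g\Xi_l = g(\e)\bfPi\Xi_l = D^l\xi$.

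For the analytic bounds, the crucial intermediate step is the identity
\begin{equation*}
\Pi^g_x = (g\otimes\Pi_x)\Deltam\;,
\end{equation*}
expressing that negative renormalisation and positive recentring commute up to the coaction of $g$. Once in hand, writing $\Deltam\tau = \sum_i \sigma_i\otimes\tau_i$ and using linearity gives
\begin{equation*}
\bigl|\langle\Pi^g_x\tau,\phi^\lambda_x\rangle\bigr| \leq \sum_i |g(\sigma_i)|\,|\langle\Pi_x\tau_i,\phi^\lambda_x\rangle| \lesssim \sum_i |g(\sigma_i)|\lambda^{|\tau_i|_\s} \lesssim \lambda^{|\tau|_\s},
\end{equation*}
where the last step uses degree preservation $|\sigma_i|_\s + |\tau_i|_\s = |\tau|_\s$ together with $|\sigma_i|_\s \leq 0$ (a forest in $\mfF_-$ is a $\fprod$-product of strictly negative degree trees, the empty forest $\e$ being the unique degree-zero element), so $|\tau_i|_\s \geq |\tau|_\s$ and each summand is controlled by $\lambda^{|\tau|_\s}$ for $\lambda\in(0,1]$. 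The bound on $\Gamma^g_{xy}$ is obtained in parallel by identifying the analogous expression for $\Gamma^g_{xy}$ through a compatible coaction on $\mcT_+$ and invoking the corresponding bound on $\Gamma_{xy}$.

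The main obstacle is establishing $\Pi^g_x = (g\otimes\Pi_x)\Deltam$. This is the content of a cointeraction identity between $\Deltam$ and $\Deltap$ (the interacting-Hopf-algebra structure of~\cite{BHZ19}), together with an inductive argument showing that the positive characters $f^g_x$ associated with $\bfPi^g$ factor as $(g\otimes f_x)$ applied to a suitable coaction $\mcT_+\to\mcF_-\otimes\mcT_+$. The delicate point is commuting the Taylor-jet subtractions defining $\Pi_x$ (and $f_x$) with the subtree extractions defining $\Deltam$, which requires careful bookkeeping of polynomial decorations and boundary edges at the root; this is the algebraic core of the theory and carries the full strength of the interacting-Hopf-algebra framework.
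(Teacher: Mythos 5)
The admissibility half of your argument is correct and essentially reproduces Lemma~\ref{lem:admissible}, which indeed does not rely on extended decorations. The analytic half, however, rests on the identity $\Pi^g_x = (g\otimes\Pi_x)\Deltam$, and this identity is \emph{false} on the reduced structure.

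The identity would follow from the naive cointeraction $\mcM_{(13)(2)(4)}(\Deltam\otimes\Deltam)\Deltap = (\id\otimes\Deltap)\Deltam$, but the paper shows explicitly in Example~\ref{ex:cointeract_fails} that this cointeraction fails already for the pitchfork tree in $\Phi^4_3$. The failure is not a matter of careful bookkeeping: when $\Deltam$ extracts a negative subtree $\sigma_i$ from $\tau$, the degree of the quotient $\tau_i$ jumps up to $|\tau_i|_\s = |\tau|_\s - |\sigma_i|_\s > |\tau|_\s$, so $\Pi_x\tau_i$ subtracts a \emph{longer} Taylor jet (up to level $|\tau_i|_\s$) than the jet of level $|\tau|_\s$ that $\Pi^g_x\tau$ subtracts. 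Concretely, for $\tau = \mcI[\Psi^3]$ one finds $(g\otimes\Pi_x)\Deltam\tau - \Pi^g_x\tau = -3g(\Psi^2)\,(\cdot-x)\!\cdot\! DK*\bfPi\Psi(x) - g(\Psi^3)\,(\cdot-x)\!\cdot\! D(K*1)(x) \neq 0$: the extra linear polynomial terms come exactly from the over-subtraction in $\Pi_x\mcI[\Psi]$ and $\Pi_x\mcI[\bone]$. (You do correctly observe $|\sigma_i|_\s + |\tau_i|_\s = |\tau|_\s$ and $|\sigma_i|_\s\leq 0$, but this degree shift is precisely the source of the mismatch, not a benign inequality you can exploit.) For the same reason, the comodule structure $\Deltam\colon\mcT_+\to\mcF_-\otimes\mcT_+$ that would control $\Gamma^g_{xy}$ exists but is useless here, as Remark~\ref{rem:useless_comodule} notes.

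The missing idea is the extended decoration $\mfo$: the point of $\mfT^\ex_\circ$, $\Deltam_\ex$, $\Deltap_\ex$ and the $|\cdot|_+$-degree is that the contracted tree "remembers" the degree of what was extracted via~\eqref{eq:preserve_+}, which is exactly what restores the cointeraction (Theorem~\ref{thm:cointerct_ex}) and hence the identity $\Pi^g_x = \Pi_x M_g$ (Lemma~\ref{lem:recentred_simple}) on the extended structure. The paper then proves Theorem~\ref{thm:pre-model_ex_renorm} there and transports the conclusion back to the reduced structure via the projection $\mcQ$ and the relations $\CQ M_{\CQ^*g}\iota = M_g$ and $\mcZ(\bfPi\iota)=\mcZ^\ex(\bfPi)\restr_{\iota\mcT}$. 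Without this detour through $\mcT^\ex$, your argument cannot be completed as written.
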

We give the proof of Theorem~\ref{thm:pre-model_renorm} at the end of Section~\ref{subsec:reduced}.
In the rest of this section, we will present the algebraic framework and results necessary for its proof.
\begin{remark}\label{rem:renorm_cont}
Another important fact is that the map $\mcZ(\bfPi)\mapsto \mcZ(\bfPi^g)$ is continuous for every $g\in G_-$.
Since we have not discussed the topology on models, we will not elaborate on this point.
\end{remark}

\begin{remark}
There is a corresponding analytic result~\cite{CH16}
which states that, for a large class of random distributions $\xi$,
there exists a choice of (deterministic) elements $\{g_\eps\}_{\eps>0}\subset G_-$ such that the model $\mcZ((\bfPi^{\eps})^{g_\eps})$
converges in probability to a limiting random model
as $\eps\downarrow 0$,
where $\bfPi^{\eps}$ is the canonical lift of $\xi_\eps=\xi*\delta_\eps$ and $\delta_\eps$ is a mollifier.
\end{remark}
To try to prove Theorem~\ref{thm:pre-model_renorm}, it is natural to understand better the algebraic properties of the map $\Deltam$ and its interaction with $\Deltap$.
\begin{definition}\label{def:Deltam_CH}
We extend the domain and range of $\Deltam$ in the following two ways.
\begin{itemize}
\item We define a linear map $\Deltam \colon \mcT_-\to \mcF_-\otimes \mcT_-$.\label{Delta_minus_page_ref_2}
by the same expression as~\eqref{eq:Deltam_def} but with $(T/A, [\mfn - \mfn_A]_A, \mfe + \mfe_A)$ replaced by $\pi_-( T/A, [\mfn - \mfn_A]_A, \mfe + \mfe_A)$,
and then extend this to an algebra morphism $\Deltam \colon \mcF_-\to \mcF_-\otimes \mcF_-$.

\item We define an algebra morphism $\Deltam \colon\mcT_+ \to \mcF_-\otimes\mcT_+$ given by the same expression as~\eqref{eq:Deltam_def}
for every \textit{planted} tree $T^\mfn_\mfe = \mcI_m\tau\in\mfT_+$ (in particular the extracted subgraphs $A$ in~\eqref{eq:Deltam_def} cannot contain the root because $\mfT_-$ does not contain planted trees)
and then extended multiplicatively.
\end{itemize}
\end{definition}

\begin{remark}
Similar to Remark~\ref{rem:abuse_positive}, $\Deltam$ now corresponds to three different maps $\CH\to \CF_-\otimes \CH$ with domains $\CH\in \{\mcT,\mcT_+,\mcF_-\}$.
We will always specify which domain we are working with, thus avoiding any confusion.
\end{remark}

\begin{remark}
Readers familiar with~\cite{BHZ19} may notice that we make no use of `colours' here.
This is partly due to our abuse of notation which forces us to specify the domains of $\Deltam$ and $\Deltap$, and partly due to our definition of $\Deltam$ on $\tau\in\mcT_+$ which automatically does not extract the root of $\tau$.
\end{remark}
These maps have the following nice properties analogous to Theorem~\ref{thm:pos_Hop_alg}.
\begin{theorem}\label{thm:negative_Hopf}
There exists an involutory algebra morphism $\mcA_-\colon\mcF_-\to\mcF_-$
such that $(\mcF_-,\fprod, \Deltam, \mcA_-)$ is a Hopf algebra with antipode $\mcA_-$, unit $\e$, counit $\e^*\colon \mcF_-\to\R$, which is the linear map with $\e^*(\e)=1$ and $\e(\tau)=0$ for $\tau\in\mfF_-\setminus\{\e\}$.
Furthermore, $\mcT$ and $\mcT_+$ are comodules over $\mcF_-$.
In other words,
\begin{equ}
(\id\otimes \Deltam)\Deltam = (\Deltam\otimes \id)\Deltam
\end{equ}
as maps $\mcH\to \mcF_-\otimes\mcF_-\otimes\mcH$ where $\mcH\in\{\mcT,\mcT_+,\mcF_-\}$,
and
\begin{equ}%\label{eq:antipode_neg}
\mcM_-(\id\otimes \mcA_-)\Deltam = \mcM_-(\mcA_-\otimes \id)\Deltam = \e\e^*
\end{equ}
as maps $\mcF_-\to \mcF_-$, where $\mcM_-\colon\mcF_-\otimes\mcF_- \to \mcF_-$ maps $\tau\otimes\bar\tau\mapsto \tau\fprod \bar\tau$.
\end{theorem}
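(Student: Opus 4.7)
The strategy mirrors the proof of Theorem~\ref{thm:pos_Hop_alg}, now using the global formula~\eqref{eq:Deltam_def} for $\Deltam$ in place of a recursion. The work splits into coassociativity for all three choices $\mcH\in\{\mcT,\mcT_+,\mcF_-\}$, followed by the inductive construction of the antipode.

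For coassociativity on $\mcH=\mcT$, I would expand both sides by~\eqref{eq:Deltam_def} applied to $T^\mfn_\mfe\in\mfT_\circ$. Both sides index pairs of nested subgraph extractions from $T$: $(\id\otimes\Deltam)\Deltam$ first extracts $A\subset T$ and then a subgraph $B'\subset T/A$, while $(\Deltam\otimes\id)\Deltam$ first extracts $\bar A\subset T$ and then an inner subgraph $A\subset\bar A$. The natural bijection is $\bar A=A\sqcup B$, where $B\subset T$ is the preimage of $B'$ under the quotient $T\to T/A$; under this bijection $T/\bar A$ matches $(T/A)/B'$ as decorated graphs. The combinatorial prefactors match via the Vandermonde-type identity
\[
\binom{\mfn}{\mfn_A}\binom{\mfn-\mfn_A}{\mfn_B} \;=\; \binom{\mfn}{\mfn_A+\mfn_B}\binom{\mfn_A+\mfn_B}{\mfn_A}
\]
together with a compatible splitting of the boundary-edge decorations $\mfe_A,\mfe_B$. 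Coassociativity on $\mcT_+$ and on $\mcF_-$ then follows by multiplicativity of $\Deltam$ in those settings together with the combinatorial identity applied to single (planted, resp.\ negative) trees.

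For the antipode I grade $\mcF_-$ by total edge count. Since the edge sets of $A$ and $T/A$ partition $E_T$, $\Deltam$ is homogeneous of degree $0$; moreover every tree in $\mfT_-$ has at least two edges (being unplanted and of non-zero degree), so $\mcF_-^{(0)}=\R\e$ and $\mcF_-$ is a connected graded bialgebra (with finite-dimensional graded pieces, thanks to Lemma~\ref{lem:finite_trees}). The triangular decomposition reads $\Deltam\tau=\e\otimes\tau+\tau\otimes\e+\sum_i c_i\tau_i^{(1)}\otimes\tau_i^{(2)}$ with $0<|E_{\tau_i^{(j)}}|<|E_\tau|$, the $\tau\otimes\e$ term coming from $A=T$ under the identification of the trivial one-node residue with the unit~$\e$. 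The analogue of Exercise~\ref{ex:antipode_plus} then applies verbatim: set $\mcA_-\e=\e$, extend multiplicatively, and define $\mcA_-\tau$ inductively through $\mcM_-(\mcA_-\otimes\id)\Deltam\tau=0$; the monoid-inverse argument yields both antipode identities, and involutivity follows from commutativity of $\mcF_-$ (cf.~\cite[Prop.~4.0.1]{Sweedler}).

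The main obstacle is the combinatorial verification of coassociativity on $\mcT$: whereas $\Deltap$ had a clean recursive definition permitting induction on $\fancynorm{\cdot}$, the extractions defining $\Deltam$ range over arbitrary (possibly disconnected) subgraphs, so I must match the two iterated-extraction expressions term-by-term, including the fractional prefactors $1/\mfe_A!$, the binomials in the polynomial decorations, and the rearrangement of boundary-edge decorations under the two nested quotients. A secondary subtlety, akin to Remark~\ref{rem:abuse_positive}, is that the three incarnations of $\Deltam$ differ in where $\pi_-$ is applied; one must check that this projection does not disrupt the nested-extraction bijection, which holds because $\pi_-$ kills only terms whose contracted factor lies outside $\mcF_-$, a condition preserved under iteration.
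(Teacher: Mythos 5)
The paper does not prove this theorem: it remarks that the argument is more involved than for Theorem~\ref{thm:pos_Hop_alg}, refers to~\cite{BHZ19} for the proof, and only notes that without decorations $\Deltam$ is the extraction-contraction coproduct of~\cite{CEFM11}. So you are supplying content the paper omits. Your overall strategy is the natural one: nested extractions for coassociativity, and a connected graded bialgebra argument for the antipode based on the correct observation that every tree in $\mfT_-$ has at least two edges (so $\mcF_-^{(0)}=\R\e$ in the edge grading, under which $\Deltam$ is homogeneous). You also correctly flag that the fully-contracted residue $T/T$ must be identified with $\e$ to obtain the $\tau\otimes\e$ term — a convention the paper itself glosses over (as written, $\pi_-(\bone)=0$, which would break the counit axiom).

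Where the proposal falls short is exactly where you relegate the difficulty to a ``secondary subtlety''. The sentence ``$\pi_-$ kills only terms whose contracted factor lies outside $\mcF_-$, a condition preserved under iteration'' restates the goal rather than proving it. Concretely, a term of $(\Deltam\otimes\id)\Deltam$ indexed by $A\subset\bar A\subset T$ carries \emph{three} $\pi_-$-constraints ($\bar A$, $A$ and $\bar A/A$ must lie in $\mcF_-$), whereas the matching term of $(\id\otimes\Deltam)\Deltam$, indexed by $A\subset T$ and $B'\subset T/A$, carries only two ($A$ and $B'$). For the bijection of non-vanishing terms to hold you must show that $A\in\mfF_-$ and $\bar A/A\in\mfF_-$, with the decorations dictated by the correspondence, force $\bar A\in\mfF_-$. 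Degree is additive ($|\bar A|_\s=|A|_\s+|\bar A/A|_\s$) and strong conformity follows from normality, and ``unplanted'' follows since the root of each component of $\bar A$ is the root either of a component of $A$ or of a component of $\bar A/A$; but the requirement that the root of each component of $\bar A$ have vanishing total decoration $\mfn_{\bar A}+\pi\mfe_{\bar A}$ is genuinely delicate, since the boundary $\partial(\bar A,T)$ splits between $\partial(A,T)$ and the preimage of $\partial(B',T/A)$, while some edges of $\partial(A,T)$ become internal edges of $\bar A$, and the decorations $\mfe_A,\mfe_{B'},\mfn_{B'}$ must be reassembled compatibly. This bookkeeping is precisely what the coloured-forest formalism of~\cite{BHZ19} is designed to handle; it is a real missing step in your argument, not a side remark. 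Finally, a small correction: $\bar A=A\sqcup B$ should read $\bar A=A\cup\pi^{-1}(B')$, since if a component of $B'$ contains a contracted node $x_i$ its preimage contains all of $A_i$, so the union is not disjoint on nodes (only on edges).
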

The proof of Theorem~\ref{thm:negative_Hopf} is somewhat more involved than that of Theorem~\ref{thm:pos_Hop_alg},
and we do not go over the details.
We mention however that in the absence of decorations, $\Deltam$ is the extraction-contraction coproduct of~\cite{CEFM11}, for which coassociativity is straightforward to show.

The following exercise follows in the same way as Exercise~\ref{ex:prod_G_+}.
\begin{exercise}\label{ex:prod_G_-}
Show that $G_-$ is a group under the product $g\bar g \eqdef (\bar g\otimes g)\Deltam$
and that, for $\CH\in \{\CT,\CT_+\}$, the map \begin{equ}
G_-\ni g\mapsto M_g\eqdef (g\otimes \id)\Deltam\in \GL(\CH)
\end{equ}
is a group homomorphism.
\end{exercise}
Theorem~\ref{thm:negative_Hopf} makes no link between $\Deltam$ and $\Deltap$.
The key to the proof of Theorem~\ref{thm:pre-model_renorm} turns out to be a \textit{cointeraction} between $\Deltam$ and $\Deltap$, which is similar to the one discovered in~\cite{CEFM11}.
The simplest cointeraction property one could hope for is
\begin{equation}\label{eq:cointeraction}
\mcM_{(13)(2)(4)} (\Deltam\otimes\Deltam)\Deltap = (\id\otimes\Deltap)\Deltam
\end{equation}
as maps $\mcH\to \mcF_-\otimes\mcH\otimes\mcT_+$,
where $\mcH\in\{\mcT,\mcT_+\}$ and
\begin{equ}
\mcM_{(13)(2)(4)}\colon \mcF_-\otimes\mcH\otimes \mcF_-\otimes\mcT_+\to \mcF_-\otimes\mcH\otimes\mcT_+
\end{equ}
is given by 
\begin{equ}\label{eq:mcM_def}
\mcM_{(13)(2)(4)}(g\otimes \tau\otimes h\otimes f)=g\fprod h\otimes \tau\otimes f\;.
\end{equ}
Unfortunately,~\eqref{eq:cointeraction} is \textit{false} as we will see in Example~\ref{ex:cointeract_fails}.
The importance of this type of cointeraction will be revealed in Section~\ref{subsec:extended_struct},
where we will see, amongst other things, that it allows us to define an action of $G_-$ on $G_+$ through group homomorphisms.
We will also see later how we can augment the above constructions in order to recover~\eqref{eq:cointeraction} in a larger space (see Theorem~\ref{thm:cointerct_ex}).

Before progressing, we give some hope for~\eqref{eq:cointeraction} by showing that it does hold for trees with no kernel edges.
This highlights that~\eqref{eq:cointeraction} fails because of kernel edges.
Although the proof is almost trivial, we show the computation as a basic example of how $\Deltam$ and $\Deltap$ operate.

\begin{proposition}\label{prop:direct_renorm_base_case}
Consider $k\in\N^d$ and $L\in\mfN$ such that $\tau \eqdef X^k\Xi_L \in\mfT_\circ$.
Then
\begin{equ}\label{eq:cointer_base_case}
\mcM_{(13)(2)(4)} (\Deltam\otimes\Deltam)\Deltap\tau = (\id\otimes\Deltap)\Deltam \tau\;.
\end{equ}
\end{proposition}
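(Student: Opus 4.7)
The plan is to verify the identity by direct computation, exploiting the fact that $\tau = X^k \Xi_L$ has no kernel edges, so both $\Deltap$ and $\Deltam$ simplify dramatically. The work splits into computing $\Deltap \tau$, computing $\Deltam$ on each of the relevant arguments, and then substituting into both sides of~\eqref{eq:cointer_base_case}.

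First I would compute $\Deltap(X^k \Xi_L) \in \mcT \otimes \mcT_+$. Multiplicativity of $\Deltap$ together with $\Deltap X^i = X^i \otimes \bone + \bone \otimes X^i$ and $\Deltap \Xi_l = \Xi_l \otimes \bone$ (the latter because $\pi_+$ kills any tree with a noise edge at its root) gives
\begin{equ}
\Deltap(X^k \Xi_L) = \sum_{j \leq k} \binom{k}{j}\, X^j \Xi_L \otimes X^{k-j}\;.
\end{equ}
Next I would unpack $\Deltam$ from Definition~\ref{def:Deltam}. For any $X^i \in \mcT_+$ (or $\mcT$) the only subgraph contributing is $A = \emptyset$, since a single-node subgraph yields a polynomial tree of nonnegative degree and $\pi_-$ kills it; hence $\Deltam X^i = \e \otimes X^i$, and multiplicativity gives $\Deltam X^m = \e \otimes X^m$ for all $m \in \N^d$. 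For $X^k \Xi_L$, the subgraphs $A \subset T$ that survive $\pi_-$ are precisely (i) $A = \emptyset$, and (ii) subtrees consisting of $\rho$ together with a submultiset $L' \subset L$ of noise edges and their leaves, with $|L'| \geq 2$ (so the extracted component $\Xi_{L'}$ is not planted) and with $\mfn_A(\rho) = 0$ together with $\mfe_A \equiv 0$ on the boundary (so the extracted root decoration vanishes and the component lies in $\mfT_-$). All other choices either create a polynomial component of nonnegative degree or a planted $\Xi_l$. This yields
\begin{equ}
\Deltam(X^k \Xi_L) = \e \otimes X^k \Xi_L + \sum_{\substack{L' \subset L\\ |L'|\geq 2}} \Xi_{L'} \otimes X^k \Xi_{L \setminus L'}\;,
\end{equ}
with all combinatorial factors $\binom{\mfn}{\mfn_A}/\mfe_A!$ equal to $1$.

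Finally I would substitute. For the LHS, apply $(\Deltam \otimes \Deltam)$ to the formula for $\Deltap \tau$, using $\Deltam X^{k-j} = \e \otimes X^{k-j}$ on the right tensorand; then apply $\mcM_{(13)(2)(4)}$ from~\eqref{eq:mcM_def} to obtain
\begin{equ}
\sum_{j \leq k}\binom{k}{j}\Big[\e \otimes X^j \Xi_L \otimes X^{k-j} + \!\!\sum_{|L'|\geq 2}\!\! \Xi_{L'} \otimes X^j \Xi_{L \setminus L'} \otimes X^{k-j}\Big]\;.
\end{equ}
For the RHS, apply $(\id \otimes \Deltap)$ to $\Deltam \tau$ and use the formula for $\Deltap$ on each second factor $X^k \Xi_L$ and $X^k \Xi_{L \setminus L'}$; this produces exactly the same expression. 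The two sides then agree termwise.

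The only real obstacle is the bookkeeping of which subgraphs survive the projections $\pi_-$ and $\pi_+$; once this is done correctly, the combinatorics of the binomial coefficients match trivially and no coassociativity or cointeraction argument is needed. The proposition thus serves as a base case showing that cointeraction~\eqref{eq:cointeraction} holds whenever kernel edges (and the associated Taylor-remainder terms in $\Deltap\mcI_m$) are absent, pinpointing kernel edges as the source of the failure of~\eqref{eq:cointeraction} on all of $\mcT$.
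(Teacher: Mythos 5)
Your proof is correct and follows essentially the same direct computation as the paper: both compute $\Deltap(X^k\Xi_L)$ and $\Deltam(X^k\Xi_L)$ explicitly, note $\Deltam X^m = \e\otimes X^m$, and verify the two sides of~\eqref{eq:cointer_base_case} match termwise. Your bookkeeping of which subgraphs $A$ survive $\pi_-$ (empty, or $\rho$ together with at least two noise leaves, with $\mfn_A = 0$ and $\mfe_A = 0$) is slightly more explicit than the paper's presentation but amounts to the same argument.
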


\begin{proof}
Observe that
\begin{equ}
\Deltap (X^k\Xi_L)
= \sum_{n \leq k}\binom kn X^n\Xi_L\otimes X^{k-n}\;.
\end{equ}
On the other hand,
\begin{equ}
\Deltam (X^k\Xi_L) = \sum_{\substack{A\subset L\\ \#A\neq 1}} \Xi_A\otimes X^{k}\Xi_{L\setminus A}\;,
\end{equ}
where the sum is over all multisubsets $A\subset L$ which are not singletons (since $\mfT_-$ does not contain planted trees).
In particular $\Deltam X^k = \e\otimes X^k$.
Consequently,
\begin{equs} (\Deltam\otimes\Deltam)\Deltap\tau = \sum_{n\leq k} \sum_{\substack{A\subset L\\ \#A\neq 1}}\binom{k}{n}\Xi_A\otimes X^{n}\Xi_{L\setminus A} \otimes \e \otimes X^{k-n}\;,
\end{equs}
while
\begin{equs}
(\id\otimes\Deltap)\Deltam \tau
= \sum_{\substack{A\subset L\\ \#A\neq 1}}\sum_{n \leq k}\binom{k}{n}
\Xi_A\otimes X^{n}\Xi_{L\setminus A} \otimes
X^{k-n}\;.
\end{equs}
The two expressions clearly match, concluding the proof.
\end{proof}
We now show that~\eqref{eq:cointeraction} fails in general, even for the algebraic structures arising from the $\Phi^4_3$ equation.
\begin{example}\label{ex:cointeract_fails}
Let us continue with Example~\ref{ex:Deltam}, recalling the pitchfork tree
\begin{equ}
\tau = \fork \in\mfT_\circ\;,
\end{equ}
and that
\begin{equ}\label{eq:tau_extract_Phi43}
\Deltam
\fork =\e\otimes 
\fork + 3\,
\shesq
\otimes
\intshe
 +
\shecube
\otimes
\intone\;.
\end{equ}
Note that $\Deltap
\she=\she\otimes\bone$, hence
$\Deltap\shecube = \shecube\otimes \bone$,
and therefore
\[
\Deltap
\fork
 =
\fork
\otimes \bone + \bone\otimes 
\fork
\;,
\]
while
\begin{equs}
\Deltap
\intshe
&= 
\intshe
\otimes \bone + \sum_{|k|_\s\leq 1} \frac{X^k}{k!} \otimes
\intshek\;,\label{eq:Deltap1}
\\
\Deltap \intone
&= \intone \otimes \bone +
\sum_{|k|_\s\leq 1}\frac{X^k}{k!}
\otimes
\intonek
\;,\label{eq:Deltap2}
\end{equs}
where `$k$' in the diagrams indicates an edge with decoration $\mcI_k$.
Recalling~\eqref{eq:tau_extract_Phi43}, we obtain
\begin{equs}[eq:Deltap_after_Deltam]
(\id\otimes\Deltap)\Deltam\fork
&= (\id\otimes\Deltap)(\e \otimes \fork + 3\,\shesq\otimes\intshe+\shecube\otimes \intone)
\\
&= \e
\otimes(\fork\otimes \bone + \bone\otimes \fork)
\\
&\qquad+ 3\,\shesq\otimes\Big(\intshe\otimes \bone + \sum_{|k|_\s\leq 1} \frac{X^k}{k!} \otimes\intshek\Big)
\\
&\qquad
+ \shecube\otimes
\Big(
\intone\otimes \bone +
\sum_{|k|_\s\leq 1}\frac{X^k}{k!}\otimes\intonek
\Big)\;.
\end{equs}
On the other hand
\begin{equs}[eq:Deltam_after_Deltap]
(\Deltam\otimes\Deltam)\Deltap\fork
&=
(\Deltam\otimes\Deltam)(\fork\otimes \bone + \bone\otimes \fork)
\\
&=(\e\otimes \fork + 3\,\shesq\otimes\intshe+ \shecube\otimes \intone)\otimes \e \otimes \bone
\\
&\qquad \qquad +
(\e \otimes \bone)\otimes (\e \otimes \fork + 3\,\shesq\otimes\intshe
+ \shecube\otimes \intone)\;.
\end{equs}
Therefore $(\id\otimes\Deltap)\Deltam\tau$ and $\mcM_{(13)(2)(4)} (\Deltam\otimes\Deltam)\Deltap\tau$ 
are almost the same except for the extra terms
\begin{equ}
3\, \shesq\otimes X^k\otimes \intshek
\quad\text{and}\quad
\shecube\otimes X^k\otimes \intonek
\end{equ}
with $|k|_\s=1$ in $(\id\otimes\Deltap)\Deltam\tau$.
\end{example}
What we see in Example~\ref{ex:cointeract_fails} is that,
when $\Deltam$ extracts the trees $\shesq$ and $\shecube$ in~\eqref{eq:tau_extract_Phi43},
it increases the degree of the right factor (the one in $\mcT$) from $|\tau|_\s=\frac12-$ to
\begin{equ}
|\mcI[\Psi]|_\s = \Big|\intshe\Big|_\s =\frac32- \quad \text{and} \quad |\mcI[\bone]|_\s=|\intone|_\s=2\;.
\end{equ}
Then applying $\Deltap$ in~\eqref{eq:Deltap_after_Deltam}, we obtain polynomial decorations $X^k$ with $|k|_\s=1$ because $|\mcI[\Psi]|_\s, |\mcI[\bone]|_\s\in(1,2]$.
On the other hand,
first applying $\Deltap$ in~\eqref{eq:Deltam_after_Deltap}, we never see $X^k$ with $|k|_\s=1$ since $|\tau|_\s\in(0,1]$, and subsequently they are not created by $\Deltam$ in~\eqref{eq:Deltam_after_Deltap}.

Note that the situation would have been different if $\mcI[\Psi]$ and $\mcI[\bone]$ in~\eqref{eq:tau_extract_Phi43} had the same degree as $\tau$,
that is, if $|\mcI[\Psi]|_s,|\mcI[\bone]|_\s\in(0,1]$.
This is because we would then have no $X^k$ with $|k|_\s=1$ in~\eqref{eq:Deltap1}-\eqref{eq:Deltap2}-\eqref{eq:Deltap_after_Deltam},
leading to an equality between $(\id\otimes\Deltap)\Deltam\tau$ and $\mcM_{(13)(2)(4)} (\Deltam\otimes\Deltam)\Deltap\tau$.
In essence, to see the cointeraction~\eqref{eq:cointeraction},
we would like $\mcI[\Psi]$ and $\mcI[\bone]$ in~\eqref{eq:tau_extract_Phi43}
to `remember' that we extracted the trees $\shesq$ and $\shecube$ respectively to ensure that $\Deltap$ in~\eqref{eq:Deltap1}-\eqref{eq:Deltap2}-\eqref{eq:Deltap_after_Deltam}
expands these trees to the correct level, i.e. to $|k|_\s=0$ instead of $|k|_\s\leq 1$.

It turns out that one can implement this idea of `remembering' extracted subtrees to good effect.
% is an elegant way to resolve this issue by keeping track of the degrees of all negative subtrees which are extracted by $\Deltam$.
The idea is to introduce a new `extended decoration'
$\mfo\colon N_T\to \R$ on nodes which changes under applications of $\Deltam$
in order to keep track of the degree of negative subtrees that are extracted.
We will see that this allows us to recover the desired cointeraction~\eqref{eq:cointeraction} in a larger space.

\subsection{Extended structures}\label{subsec:extended_struct}

We now explain the construction outlined above in detail, albeit omitting some proofs.
We define a new set of rooted decorated trees $\mfT^\ex$\label{mfT^ex page ref}
exactly like $\mfT$ in Definition~\ref{def:mfT}, except every tree $\tau = T^{\mfn,\mfo}_\mfe = (T,\mfn,\mfo,\mfe)\in\mfT^\ex$
has an extra decoration $\mfo\colon N_T\to \R$ that vanishes on noise nodes.
Every $\tau\in\mfT^\ex$ therefore has a unique representation as
\begin{equ}\label{eq:general_tree_ext}
\tau = \bone^a X^k \Xi_L \prod_{j\in J} \mcI_{m_j} [\tau_j]\;,
\end{equ}
where $a\eqdef \mfo(\rho_\tau)\in\R$ with $\rho_\tau$ the root of $\tau$, and
where $\tau_j\in\mfT^\ex$.
The tree product extends to $\mfT^\ex$ by defining
\begin{equ}\label{eq:mfT^ex_prod}
\Big(\bone^a X^k \Xi_L \prod_{j\in J} \mcI_{m_j} [\tau_j]\Big)
\Big(\bone^{\bar a} X^{\bar k} \Xi_{\bar L} \prod_{j\in \bar J} \mcI_{m_j} [\tau_j]\Big)
\eqdef \bone^{a+\bar a}X^{k+\bar k} \Xi_{L+\bar L}\prod_{j\in J\sqcup\bar J} \mcI_{m_j} [\tau_j]
\end{equ}
where $J,\bar J$ are disjoint.
That is, $\tau\bar\tau$ is obtained by joining the roots of $\tau$ and $\bar\tau$ and adding the polynomial and extended decorations at the roots.
We in particular identify $\bone^0 =\bone$
and will interpret $\mfT$ as the subset of $\mfT^\ex$ consisting of all trees with $\mfo=0$; in Section~\ref{subsec:reduced} we will give a complete description of how our previous construction fits into this extended one.

We define a projection map $\mcQ\colon \mfT^\ex\to\mfT$ by $\mcQ\colon T^{\mfn,\mfo}_\mfe \mapsto T^{\mfn,0}_\mfe = T^{\mfn}_\mfe$
that sets every extended decoration to $0$.
We extend the degree map $|\cdot|_\s$ to $\mfT^\ex$ by setting $|\tau|_\s\eqdef |\mcQ\tau|_\s$, i.e. by ignoring the extended decoration $\mfo$.

We next define $\mfT^\ex_\circ$, which will determine our model space $\mcT^\ex = \Span_\R(\mfT^\ex_\circ)$.
In terms of the polynomial and edge decorations, there will be no difference between $\mfT^\ex_\circ$ and $\mfT_\circ$, that is, $\mfT_\circ\subset \mfT^\ex_\circ$ and $\mcQ\mfT^\ex_\circ = \mfT_\circ$.
Therefore, the main part in the definition of $\mfT^\ex_\circ$ is determining which extended decorations are allowed and which nodes.

For $\tau = T^{\mfn,\mfo}_\mfe\in \mfT^\ex$,
a subforest $A\subset T$ with connected components $A_1,\ldots, A_\ell$, and $\mfn_A \colon N_A \rightarrow \N^{d}$ with $\mfn_A\leq \mfn$ and $ \mfe_{A} \colon \partial(A,T) \rightarrow \N^{d}$,
we define the \textit{contracted tree} $\mcC(\tau;A,\mfn_A,\mfe_A) \in \mfT^\ex$ by
\begin{equ}\label{eq:contract_def}
\mcC(\tau;A,\mfn_A,\mfe_A) \eqdef (T/A, [\mfn - \mfn_A]_A, \mfo(A)+[\mfn_A + \pi \mfe_A ]_A  , \mfe + \mfe_A)\;,
\end{equ}
where notations are exactly as in Definition~\ref{def:Deltam} and Proposition~\ref{prop:Deltap_global}
and $\mfo(A) \colon N_{T/A}\to\R$ is defined by
\begin{equation*}
 \mfo(A)(x_i) = \sum_{y\in N_{A_i}}\mfo(y) + \sum_{e \in E_{A_i}} |\mfe(e)|_\s\;,
\end{equation*}
and $\mfo(A)(x)=\mfo(x)$ for $x\in N_{T/A}\setminus\{x_1,\ldots, x_\ell\}$.
We further define the \textit{extracted forest} $\mcE(\tau;A,\mfn_A,\mfe_A)\in\mfF^\ex \eqdef \N^{\mfT^\ex}$, interpreted as a multiset of trees in $\mfT^\ex$ as usual, by
\begin{equ}
\mcE(\tau;A,\mfn_A,\mfe_A)\eqdef (A,\mfn_A+\pi\mfe_A, \mfo \restr N_A,\mfe \restr E_A)\;.
\end{equ}

\begin{remark}
If $\mfo=0$ and $A$ has only one connected component $A_1$,
then
\begin{equ}
\mcE(\tau;A,\mfn_A,\mfe_A)\in\mfT_\circ
\end{equ}
and
\begin{equ}\label{eq:remember}
(\mfo(A)+[\mfn_A+\pi\mfe_A]_A)(x_1)=|\mcE(\tau;A,\mfn_A,\mfe_A)|_\s\;.
\end{equ}
A similar statement holds if $A$ has multiple connected components, provided we restrict the right-hand side of~\eqref{eq:remember} to the correct component.
This is what we mean above that $\mfo$ remembers the degree of the extracted subtrees.
\end{remark}

\begin{example}
In the notation of Example~\ref{ex:Deltam},
consider the tree
\begin{equ}
\sigma = T^\mfn_\mfe = X^a\bar \sigma = \bigtreecolour \in \mfT_\circ\;,
\end{equ}
and a subforest $A \eqdef \{\shesqred,\shesqgreen\}$ with two (isomorphic) connected components indicated in red and green.
All extended decorations $\mfo$ in $\sigma$ are zero.
The boundary $\partial(A,T)=\{e\}$ consists of a single edge (coloured black) joining $\noise$ to the root of $\shesqgreen$.
Suppose $\mfe_A(e)=m$ and $\mfn_A(\rho)=b \leq a$, where $\rho$ is the root of $\sigma$, and $\mfn_A(x)=0$ for all $x\in N_A\setminus\{\rho\}$.
Then
\begin{equ}
\mcE(\sigma;A,\mfn_A,\mfe_A) = \shesqredb \;\fprod \shesqgreenm \in\mfF \subset \mfF^\ex\;.
\end{equ}
Furthermore
\begin{equ}
\mcC(\sigma;A,\mfn_A,\mfe_A) = \intshepa \in \mfT^\ex\;,
\end{equ}
where `$m$' indicates a kernel edge with decoration $\mcI_m$, and $\bone^p$ and $\bone^q$ indicate extended decorations at the corresponding nodes with
\begin{equ}
p = \Big|\shesqredb\Big|_\s\;\quad \text{and} \quad
 q= \Big|\shesqgreenm\Big|_\s\;.
\end{equ}
\end{example}

%Observe that $\mcQ$ extends to a map $\mfF^\ex\to\mfF$ again by setting all extended decorations to $0$.

\begin{definition}\label{def:mfT^ex_circ}
Define $\mfT^\ex_\circ\subset \mfT^\ex$ as the set of all trees of the form
\begin{equ}
\mcC(\tau;A,\mfn_A,\mfe_A)
\end{equ}
where $\tau=T^{\mfn,0}_\mfe\in\mfT_\circ\subset \mfT^\ex$,
$A\subset T$ is a subforest,
$\mfn_A \colon N_A \rightarrow \N^{d}$ with $\mfn_A\leq \mfn$ and $ \mfe_{A} \colon \partial(A,T) \rightarrow \N^{d}$,
such that
\begin{equ}
\mcE(\tau;A,\mfn_A,\mfe) \in \mfF_-\;.
\end{equ}
We denote $\mcT^\ex = \Span_\R(\mfT^\ex_\circ)$.
\end{definition}
In other words, $\mfT^\ex_\circ$ is the set of all trees obtained by extracting negative subforests from trees in $\mfT_\circ$ and adding extended decorations according to the procedure in~\eqref{eq:contract_def}.
In particular, $\mfo\leq0$ for all $T^{\mfn,\mfo}_\mfe\in\mfT_\circ^\ex$.
\begin{remark}
Since $R$ is complete (see Remark~\ref{rem:complete}), we indeed have
$\mfT_\circ\subset \mfT^\ex_\circ$ and $\mcQ\mfT^\ex_\circ = \mfT_\circ$ as mentioned before.
\end{remark}
While Definition~\ref{def:mfT^ex_circ} is relatively straightforward, it is not immediately obvious how to verify if a given tree $\tau\in\mfT^\ex$ is in $\mfT^\ex_\circ$.
The following exercise makes this task easier by showing that the property $\tau\in\mfT^\ex_\circ$ is essentially `local' and can be formulated in a way similar to the definition of strongly conforming to $R$ (Definition~\ref{def:conform}).
This characterisation of $\mfT^\ex_\circ$ is similar to the definition in~\cite[Def.~5.26]{BHZ19} (see the set $B_\circ$ therein).

\begin{exercise}\label{ex:trees_in_mfT^ex_circ}
For $\tau\in\mfT^\ex$ and $x\in N_\tau$, we define $\CN(x)$ exactly as in Definition~\ref{def:conform}.
For $\tau$ of the form~\eqref{eq:general_tree_ext} we further define $
\mcO(\tau) = a
$
and recall the notation
\begin{equ}
\CN(\tau) \eqdef \CN(\rho_\tau) = \{\Xi_l \,:\, l\in L\} \sqcup \{\CI_{m_j}\,:\, j\in J\} \in \N^\CE\;.
\end{equ}
Finally, define the function
\begin{equ}
\tilde R\colon \R\to \CP(\N^\CE)\;,\quad
\tilde R(a) = \{\CN(\tau)\,:\, \tau \in \mfT^\ex_\circ\,,\, \CO(\tau)=a\}\;,
\end{equ}
where $\CP(\Omega)$ denotes the power set of $\Omega$.
That is, $\tilde R(a)$ is the collection of all edge decorations that leave the root of any tree in $\mfT^\ex_\circ$ with extended decoration $a$ at the root.

Show that, for $\tau=T^{\mfn,\mfe}_\mfo\in\mfT^\ex$, the following statements are equivalent.

\begin{itemize}
\item $\tau \in \mfT^\ex_\circ$.

\item For any $x\in N_T$, one has $\CN(x) \in \tilde R (\mfo(x))$.
\end{itemize}
[Hint: the direction $\Rightarrow$ is easy.
For the converse, proceed by induction on the number of edges and use the fact that strongly conforming to $R$ is a local property.]
\end{exercise}
%
%\begin{proof}
%The direction $\Rightarrow$ is clear.
%To prove the converse, we proceed by induction on the number of edge in $\tau$.
%Suppose $\CN(x) \in \tilde R (\mfo(x))$ for all $x\in N_T$, where $\tau=T^{\mfn,\mfo}_\mfe$ is of the form~\eqref{eq:general_tree_ext}.
%Then, by induction, $\tau_j\in\mfT^\ex_\circ$ for all $j\in J$, and therefore
%there exist $\sigma_j \in \mfT_\circ$ and subforests $A_j\subset \sigma_j$ with functions $\mfn_{A_j},\mfe_{A_j}$
%such that
%$\tau_j = \mcC(\sigma_j;A_j,\mfn_{A_j},\mfe_{A_j})$.
%Furthermore, since $\CN(\tau)\in \tilde R(\CO(\tau))$,
%there exists $\tilde \tau \in\mfT^\ex_\circ$ of the form
%\begin{equ}
%\tilde \tau = \bone^a \Xi_L \prod_{j\in J} \mcI_{m_j}[\tilde\tau_j]
%\end{equ}
%and therefore there exists $\tilde \sigma\in\mfT_\circ$ with $\tilde A\subset \tilde \sigma$ such that $\tilde \tau = \CC(\tilde\sigma;\tilde A,\ldots)$.
%Let $\tilde A_1 \subset \tilde A$ be the connected component of $\tilde A$ that contains the root.
%Observe that the edge leaving $\tilde A_1$ have decorations precisely $\CN(\tilde\tau) = \CN(\tau)$.
%Since conforming to $R$ is a local property,
%we can substitute all the trees with roots at the end of these edges by $\sigma_j$ and the resulting tree is still in $\mfT_\circ$.
%Finally, we see that $\tau\in\mfT^\ex_\circ$ because we can extract $\tilde A_1$ together with $\{A_j\}_{j\in J}$ and the extracted forest is in $\mfF_-$.
%\end{proof}
%
By analogy to Definition~\ref{def:neg_trees},
we define\footnote{Our definition of $\mfT^\ex_-$ differs from the analogous space of~\cite[Eq.~(5.23)]{BHZ19} in three ways:
(i) we do not allow polynomial decorations at the root,
(ii) we do not allow trees of the form $\Xi_l$, i.e. trees made of a single noise edge, and
(iii) we do not allow trees of the form $\bone^p\mcI_m\tau$ with $p\neq 0$.
The fact that these possibilities were allowed in~\cite{BHZ19} is a typo and leads to minor issues in some proofs,
namely (i)-(ii) are needed for Lemma~\ref{lem:admissible} and (iii) is needed for Exercise~\ref{ex:reduced_identity}.
These two results correspond to the first statement of~\cite[Thm.~6.16]{BHZ19} and to~\cite[Eq.~(6.34)]{BHZ19} respectively.}
\begin{equ}
\mfT_-^\ex \eqdef \{\tau=F^{\mfn,\mfo}_\mfe \in\mfT_\circ^\ex\,:\, |\tau|_\s<0 \,,\,
\mfn(\rho_\tau)=0\,,\,
%\mfo(\rho_\tau)=0\,,\,
\tau \textnormal{ is not planted}
\}\;,
\end{equ}
where we say that $\tau\in\mfT^\ex$ is planted if there exists $p\in\R$ such that either $\tau=\bone^p\Xi_l$ for some $l\in\N^d$ or $\tau=\bone^p\mcI_m\sigma$ for some $m\in\N^d$ and $\sigma\in\mfT^\ex$.
That is, $\tau$ is planted if and only if $\mcQ\tau$ is planted.

We then define $\mcT_-^\ex\eqdef\Span_\R(\mcT_-^\ex)$ and $\mfF_-^\ex$, $\mcF_-^\ex$, and $G_-^\ex$ exactly as in Definition~\ref{def:neg_trees}
upon replacing $\mfT_-$ by $\mfT^\ex_-$.
In particular, $\mcF_-^\ex$ is the free commutative algebra generated by $\mfT_-^\ex$
and $G_-^\ex$ is the set of characters on $\mcF^\ex_-$.
\begin{definition}
For $\tau=T^{\mfn,\mfo}_{\mfe}\in\mfT_\circ^\ex$, we define $\Deltam_\ex  \tau \in\mcF_-^\ex\otimes \mcT^\ex$
\begin{equ}\label{eq:Deltam_ex}
 \Deltam_\ex \tau = 
 \sum_{A \subset T} \sum_{\mfe_A,\mfn_A}  \frac1{\mfe_A!}
\binom{\mfn}{\mfn_A}
 \pi_-^\ex\mcE(\tau;A,\mfn_A,\mfe_A)\otimes\mcC(\tau;A,\mfn_A,\mfe_A)\;,
\end{equ}
where $\pi_-^\ex\colon\mfF^\ex\to \mcF^\ex_-$ annihilates every forest in $\mfF^\ex\setminus \mfF^\ex_-$.
We extend $\Deltam_\ex$ to a linear map $\Deltam_\ex\colon\mcT^\ex\to \mcF_-^\ex\otimes \mcT^\ex$.
\end{definition}

\begin{example}\label{ex:Deltam_ex}
Following Example~\ref{ex:cointeract_fails},
consider again the pitchfork tree
\begin{equ}
\tau = \fork \in \mfT_\circ \subset \mfT^\ex_\circ\;.
\end{equ}
Then
\begin{equ}%\label{eq:tau_extract_extended}
\Deltam_\ex
\fork =\e\otimes 
\fork + 3\,
\shesq
\otimes
\intsheext
 +
\shecube
\otimes
\intoneext\;,
\end{equ}
where
\begin{equ}
p = \Big| \shesq \Big|_\s = -1-\;,\qquad q = \Big| \shecube \Big|_\s = -\frac32-\;.
\end{equ}
\end{example}

\begin{remark}
It is not obvious that $\Deltam_\ex$ indeed maps $\mcT^\ex$ to $\mcF_-^\ex\otimes \mcT^\ex$. Luckily,
this turns out to be the case because our rule $R$ is complete (see Remark~\ref{rem:complete}).
\end{remark}
The following exercise shows that $\Deltam$ and $\Deltam_\ex$ differ only in that $\Deltam_\ex$ adds extended decorations. 
\begin{exercise}\label{ex:reduced_identity}
Show that, for all $\tau\in\mfT^\ex_\circ$,
\begin{equ}
\Deltam\mcQ =  (\mcQ\otimes\mcQ)\Deltam_\ex\;.
\end{equ}
\end{exercise}
We now proceed to define the structure group $G_+^\ex$.
We define a new notion
of degree $|\cdot|_+$ on $\mfT^\ex$ by\label{deg_ex_page_ref}
\begin{equ}
|\tau|_+ \eqdef |\tau|_\s+\sum_{x\in N_\tau} \mfo(x)\;. 
\end{equ}
Note that $|\cdot|_+=|\cdot|_\s$ on $\mfT$, but in general $|\cdot|_+\leq |\cdot|_\s$ on $\mfT_\circ^\ex$.
Furthermore, and of crucial importance for the sequel,
$\Deltam_\ex$ preserves the $|\cdot|_+$-degree on the right factor, i.e. for all $A,\mfn_A,\mfe_A$ appearing in the sum~\eqref{eq:Deltam_ex},
\begin{equ}\label{eq:preserve_+}
|\mcC(\tau;A,\mfn_A,\mfe_A)|_+ = |\tau|_+\;.
\end{equ}
Similar to Definition~\ref{def:mfT_plus},
we define $\mfT_+^\ex\subset\mfT^\ex$ as the set of all trees $\tau\in\mfT^\ex$ of the form
\begin{equ}
\tau = X^k\prod_{j\in J}\mcI_{m_j}[\tau_j]\;,
\end{equ}
where now $\tau_j\in\mfT^\ex_\circ$ and $|\mcI_{m_j}\tau_j|_+>0$ for all $j\in J$.
We define the commutative algebra $\mcT_+^\ex=\Span_\R(\mfT^\ex_+)$ and characters $G_+^\ex$ exactly as in Definition~\ref{def:mfT_plus}.
\begin{definition}\label{def:Deltap_ex}
Define $\Deltap_\ex\colon \CH \to \CH\otimes \CT^\ex_+$ for $\CH\in\{\CT^\ex,\CT_+^\ex\}$ precisely as before using~\eqref{eq:Deltap_base_case}+\eqref{eq:Deltap_multiplicative}+\eqref{eq:Deltap_def}, replacing $\CT,\CT_+$ by $\CT^\ex,\CT_+^\ex$ and $|\cdot|_\s$ by $|\cdot|_+$,
and making the additional definition
\begin{equ}
\Deltap_\ex\bone^p = \bone^p\otimes \bone\;.
\end{equ}
\end{definition}
Effectively the only change these definitions entail is that the number of terms in the sums in~\eqref{eq:Deltap_def} and~\eqref{eq:Deltap_Tp} will now depend on $|\mcI_m\tau|_+$ instead of $|\mcI_m\tau|_\s$.
As far as $\Deltap_\ex$ is concerned, the extended decoration $\bone^p\in\mfT^\ex_\circ$ is a static variable treated like a noise (vs. $\Deltam_\ex$ for which $\mfo$ is a dynamic variable, see~\eqref{eq:contract_def} and~\eqref{eq:Deltam_ex}).

\begin{example}\label{ex:Deltap_ex}
Following Example~\ref{ex:Deltam_ex},
\begin{equs}
(\id\otimes\Deltap_\ex)\Deltam_\ex\fork
&= (\id\otimes\Deltap_\ex)(\e\otimes 
\fork + 3\,
\shesq
\otimes
\intsheext
 +
\shecube
\otimes
\intoneext)
\\
&= \e
\otimes(\fork\otimes \bone + \bone\otimes \fork)
\\
&\qquad+ 3\,\shesq\otimes\Big(\intsheext\otimes \bone + \bone \otimes\intsheext\Big)
\\
&\qquad
+ \shecube\otimes
\Big(
\intoneext\otimes \bone +
\bone\otimes\intoneext
\Big)\;.
\end{equs}
Nodes without labels, as usual, have extended and polynomial decoration $\bone^0X^0$.
Comparing this to~\eqref{eq:Deltap_after_Deltam}, we see that no polynomials $X^k$ with $|k|_\s=1$ appear because
\begin{equ}
\Big|\intsheext\Big|_+
=
\Big|\intoneext\Big|_+
=
\Big|\fork\Big|_+ \in (0,1]\;.
\end{equ}
Remark how we achieved our goal highlighted after Example~\ref{ex:cointeract_fails}.
See also Exercise~\ref{ex:cointeract_example}.
\end{example}
With these definitions, the statement (and proof) of Theorem~\ref{thm:pos_Hop_alg} remains true upon replacing $\{\CT,\CT_+,\Deltap\}$ by $\{\CT^\ex,\CT_+^\ex,\Deltap_\ex\}$.
In particular $\mcT_+^\ex$ is a commutative Hopf algebra with coproduct $\Deltap_\ex$ and $\mcT^\ex$ is a comodule over $\mcT^\ex_+$.
The following is the `extended' version of Definition~\ref{def:reduced_RS}.
\begin{definition}\label{def:extended_structure}
The triple $(A^\ex,\mcT^\ex,G^\ex_+)$ is called the \textit{extended regularity structure},
where $A^\ex = \{|\tau|_+\,:\,\tau\in\mfT_\circ^\ex\}$ is the index set,
$\mcT^\ex$ is the model space, and $G^\ex_+$ is the structure group, which acts on $\mcT^\ex$ by \begin{equ}
\mcT^\ex\times G^\ex_+ \ni (\tau,f)\mapsto (\id\otimes f)\Deltap_\ex\tau \in \mcT^\ex\;.
\end{equ}
\end{definition}
\begin{remark}
The regularity structure $(A,\mcT,G_+)$ (see Definition~\ref{def:reduced_RS}) is contained in the extended structure $(A^\ex,\mcT^\ex,G^\ex_+)$ in the sense of~\cite[Sec.~2.1]{Hairer14}.
In particular, the index set $A$ from Definition~\ref{def:mcT} is contained in $A^\ex$, and $A^\ex$ is locally finite and bounded from below by subcriticality of $R$ and the definition of $\mfT^\ex_\circ$.
We will describe the relation between the two structures further in Section~\ref{subsubsec:pos_ext_reduced}.
\end{remark}
Finally, we extend the domain and range of $\Deltam_\ex$ to $\CH\to \mcF_-^\ex\otimes\CH$ for $\CH\in \{\mcF_-^\ex,\mcT_+^\ex\}$ exactly like in Definition~\ref{def:Deltam_CH}
except replacing expression~\eqref{eq:Deltam_def}
by~\eqref{eq:Deltam_ex}.
In particular, $\Deltam_\ex$ is multiplicative on both $\mcT^\ex_+$ and $\mcF_-^\ex$.

With these definitions (and at this point unsurprisingly) the `extended' version of Theorem~\ref{thm:negative_Hopf} holds where we replace every $\CH\in\{\CT,\CT_+,\mcF_-\}$ by $\CH^\ex$ and $\Deltam$ by $\Deltam_\ex$.

With all of these notations and preliminaries in place, we can state one of the main algebraic results of~\cite{BHZ19}, which is the desired cointeraction.
\begin{theorem}\label{thm:cointerct_ex}[Theorem~5.37 of~\cite{BHZ19}]
Consider $\mcH\in\{\mcT^\ex,\mcT_+^\ex\}$ and define
\begin{equ}
\mcM_{(13)(2)(4)}\colon \mcF_-^\ex\otimes\mcH\otimes \mcF_-^\ex\otimes\mcT_+^\ex\to \mcF_-^\ex\otimes\mcH\otimes\mcT_+^\ex
\end{equ}
by the expression~\eqref{eq:mcM_def}.
Then
\begin{equation}\label{eq:cointeraction_ex}
\mcM_{(13)(2)(4)} (\Deltam_\ex\otimes\Deltam_\ex)\Deltap_\ex = (\id\otimes\Deltap_\ex)\Deltam_\ex
\end{equation}
as maps $\mcH\to \mcF_-^\ex\otimes\mcH\otimes\mcT_+^\ex$.
\end{theorem}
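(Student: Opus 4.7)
The plan is to establish the identity first on $\mcH = \mcT^\ex$ for each basis tree $\tau = T^{\mfn,\mfo}_\mfe\in\mfT_\circ^\ex$ by expanding both sides using the global formula~\eqref{eq:Deltam_ex} for $\Deltam_\ex$ and the extension to $\Deltap_\ex$ of Proposition~\ref{prop:Deltap_global}, then matching summands through an explicit combinatorial bijection. The case $\mcH = \mcT_+^\ex$ will reduce to the $\mcT^\ex$ case by multiplicativity: $\Deltap_\ex$, $\Deltam_\ex$ on $\mcT_+^\ex$, and $\mcM_{(13)(2)(4)}$ all respect the tree product, so both sides of~\eqref{eq:cointeraction_ex} define algebra morphisms out of $\mcT_+^\ex$; since $\mcT_+^\ex$ is generated by $\bone$, $X^i$, and planted trees $\mcI_m[\sigma]$ with $\sigma\in\mfT_\circ^\ex$ and $|\mcI_m[\sigma]|_+>0$, the identity reduces on each such planted generator to the $\mcT^\ex$ identity applied to $\sigma$, while $\bone$ and $X^i$ are verified directly.

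For the main case, the LHS will be indexed by triples $(A_+, B_1, B_2)$ where $A_+\subset T$ is a root-containing subtree (from $\Deltap_\ex$), $B_1\subset A_+$ is a subforest (from $\Deltam_\ex$ on the first factor) and $B_2\subset T\setminus A_+$ is a subforest (from $\Deltam_\ex$ on the second factor, noting that $\Deltam_\ex$ on $\mcT_+^\ex$ does not extract the contracted root of $T/A_+$), together with refining polynomial and boundary-edge decorations. The RHS will be indexed by pairs $(C, D_+)$ where $C\subset T$ is a subforest and $D_+\subset T/C$ is a root-containing subtree, with analogous refining data. The bijection I would set up is $C \eqdef B_1 \fprod B_2$ with $D_+$ the image of $A_+$ in $T/C$; conversely, given $(C, D_+)$, take $A_+$ as the preimage of $D_+$ under $T\to T/C$, into which each component of $C$ is either entirely contained (yielding $B_1$) or disjoint (yielding $B_2$). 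The refining data correspond naturally, with Vandermonde's identity governing the splits of polynomial decorations at nodes of $A_+$ where components of $C$ are absorbed during contraction.

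The crucial reason the identity holds with extended decorations — and fails without them, as Example~\ref{ex:cointeract_fails} shows — is the preservation property~\eqref{eq:preserve_+}: since $\Deltam_\ex$ preserves $|\cdot|_+$ on the right factor, the Taylor-truncation range $|k|_\s < |\mcI_m[\sigma]|_+$ used by $\Deltap_\ex$ is the same whether or not the $\Deltam_\ex$-contraction inside $\sigma$ has already been applied, the extended decoration $\mfo$ at each contracted node recording by~\eqref{eq:remember} exactly the degree absorbed. The main obstacle is the bookkeeping of all the decoration data under the bijection: besides Vandermonde splits for polynomial decorations, one must verify that the boundary-edge labels $\mfe_{A_+}, \mfe_{B_1}, \mfe_{B_2}$ on the LHS reassemble into $\mfe_C, \mfe_{D_+}$ on the RHS, that the extended decorations produced by the contraction rule~\eqref{eq:contract_def} agree summand by summand (in particular that the degrees absorbed by the two contraction steps on the LHS correctly sum to those absorbed by the single contraction on the RHS), and that the combinatorial weights $\frac{1}{\mfe!}\binom{\mfn}{\cdot}$ multiply correctly. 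This is essentially local verification at each node and boundary edge; the full details are carried out in~\cite[Thm.~5.37]{BHZ19}.
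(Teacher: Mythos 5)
The paper does not actually prove this statement: just after Exercise~\ref{ex:cointeract_example} it says explicitly ``We do not prove Theorem~\ref{thm:cointerct_ex}'' and points to~\cite[Thm.~5.37]{BHZ19} for the full proof and to~\cite[Thm.~8]{CEFM11} for the undecorated analogue, so there is no in-paper argument to compare against. Your sketch is the argument those references carry out: the bijection $(A_+,B_1,B_2)\leftrightarrow(C,D_+)$ with $C=B_1\fprod B_2$ and $A_+$ the preimage of $D_+$ is the right combinatorial skeleton, you correctly identify the preservation property~\eqref{eq:preserve_+} (together with~\eqref{eq:remember}) as the one feature of the extended decorations that makes the Taylor-truncation ranges on the two sides coincide, and the reduction of the $\CT^\ex_+$ case to $\CT^\ex$ by multiplicativity on generators is valid (one could equally apply the global formula for $\Deltap_\ex$ on $\CT^\ex_+$, with its extra $\pi_+$ on the left leg, and argue both cases uniformly).

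One place where your description of the bookkeeping is slightly misleading and should be made precise: the boundary-edge labels do not simply ``reassemble''. For an edge $e=(x,y)$ with $x\in B_1$ and $y\notin A_+\cup B_2$, on the left-hand side only $\mfe_{A_+}(e)$ acts (one checks $e\notin\partial(B_1,A_+)$ since $e\notin E_{A_+}$, and $e\notin\partial(B_2,T/A_+)$ since $x\notin B_2$), whereas on the right-hand side $e$ lies in \emph{both} $\partial(C,T)$ and $\partial(D_+,T/C)$, so it receives two independent bumps $\mfe_C(e)$ and $\mfe_{D_+}(e)$. What makes the two sides agree is not a one-to-one matching of decoration maps but a resummation: choosing $\mfe_{A_+}(e)=k$ (weight $1/k!$) feeds $k$ into the polynomial decoration at $x$ in the first factor via $\pi\mfe_{A_+}$, and the subsequent $\Deltam_\ex$-split of that decoration between the extracted $B_1$ and the quotient node comes with a binomial so that $\frac{1}{k!}\binom{k}{a}=\frac{1}{a!(k-a)!}$ reproduces the right-hand weight $\frac{1}{\mfe_C(e)!\,\mfe_{D_+}(e)!}$ with $(a,k-a)=(\mfe_C(e),\mfe_{D_+}(e))$. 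Spelling this out (and the analogous check that the extended decorations produced by~\eqref{eq:contract_def} are additive under the two-step versus one-step contraction) is exactly the work you defer to~\cite{BHZ19}; as a sketch this is sound, but it is a sketch, and the paper itself makes the same choice to omit it.
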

\begin{exercise}\label{ex:cointeract_example}
Verify directly that, for $\tau\in\mfT_\circ$ as in Examples~\ref{ex:Deltam_ex},
\begin{equ}
\mcM_{(13)(2)(4)} (\Deltam_\ex\otimes\Deltam_\ex)\Deltap_\ex \tau = (\id\otimes\Deltap_\ex)\Deltam_\ex \tau\;.
\end{equ}
[Hint: follow the computation in Example~\ref{ex:cointeract_fails}, making the relevant changes as in Example~\ref{ex:Deltap_ex}. Note in particular that $\Deltap_\ex \tau = \tau\otimes \bone + \bone\otimes \tau$.]
\end{exercise}
We do not prove Theorem~\ref{thm:cointerct_ex} (see, however,~\cite[Thm.~8]{CEFM11} for a proof of essentially the same property in a simpler setting with no decorations or projections),
but we will explain how to derive Theorem~\ref{thm:pre-model_renorm} from it.
First, we will state and prove the `extended' version of Theorem~\ref{thm:pre-model_renorm}.

\subsection{Renormalisation of extended pre-models}
For $f\in G_+^\ex$, we let $\Gamma^\ex_f\in\GL(\mcT^\ex)$ be defined by
\begin{equ}
\Gamma^\ex_f = (\id\otimes f) \Deltap_\ex\;.
\end{equ}
By a simple extension of Exercise~\ref{ex:prod_G_+},
$G_+^\ex$ is a group with product $f\circ \bar f = (f\otimes\bar f)\Deltap_\ex$
and $f\mapsto \Gamma^\ex_f \in\GL(\mcT^\ex)$ is a group homomorphism.
For any linear map $\bfPi\colon \mcT^\ex\to C^\infty$ and $x\in \R^d$, we define
$\Pi_x \eqdef \bfPi \Gamma^\ex_{f_x}$,
where $f_x$ is defined as in Section~\ref{sec:pos_renorm} upon replacing $\Deltap$ by $\Deltap_\ex$
(which is well-defined by the same reasoning as in Remark~\ref{rem:models_well_defined}).
As before, we call $\{\Pi_x\}_{x\in\R^d}$ the family of recentred maps and $\{f_x\}_{x\in\R^d}\subset G_+^\ex$ the family of positive characters of $\bfPi$.

We denote by $\mcZ^\ex(\bfPi)=(\Pi,\Gamma)$ the families $\{\Pi_x\}_{x\in\R^d}$ and $\{\Gamma_{xy}\}_{x,y\in\R^d}$ where $\Gamma_{xy}\eqdef \Gamma^\ex_{f_x^{-1}\circ f_y}$ for $x,y\in\R^d$.
We make the following analogue of Definitions~\ref{def:admissible} and~\ref{def:pre_model}.
\begin{definition}\label{def:admissible_ex}
An \textit{extended admissible map} is a linear map $\bfPi\colon\mcT^\ex\to C^\infty$ that
satisfies~\eqref{eq:admiss_prop_1}-\eqref{eq:admiss_prop_2} for all $\tau\in\mfT_\circ^\ex$ and $k,l,m\in\N^d$.
An extended admissible map $\bfPi$ is called an \textit{extended pre-model} if $\mcZ^\ex(\bfPi)$ is a model on the extended regularity structure $(A^\ex,\mcT^\ex,G_+^\ex)$.
\end{definition}
Note that we do not yet make any assumption on the value of $\bfPi$ on trees of the form $\bone^a\tau$ for $a\neq 0$.
However, because $\bone^a$ is a `bookkeeping' decoration only seen by $\Deltap_\ex$, it is natural for it to not impact the value of $\bfPi$.
We will later call admissible maps for which $\bfPi\bone^a\tau=\bfPi\tau$ \textit{reduced}.

\begin{example}\label{ex:reduced_lift}
Suppose $\bfPi\colon\mcT\to C^\infty$ is an admissible map.
Then $\bfPi^\ex\tau \eqdef \bfPi\mcQ\tau$  defines an extended admissible map $\bfPi^\ex\colon\CT^\ex\to C^\infty$. 
Furthermore, if $\bfPi^\ex$ is an extended pre-model, then clearly $\bfPi$ is a pre-model because $\mcZ(\bfPi)$ is the restriction of $\mcZ^\ex(\bfPi^\ex)$ to the reduced regularity structure.
\end{example}
\begin{remark}\label{rem:pre-model}
The converse of the final statement of Example~\ref{ex:reduced_lift}, namely that if $\bfPi$ is a pre-model then $\bfPi^\ex$ is an extended pre-model, is also true, although this is less obvious, see~\cite[Thm.~6.32]{BHZ19}.
\end{remark}
The following is an analogue of Theorem~\ref{thm:pre-model_renorm}.
\begin{theorem}\label{thm:pre-model_ex_renorm}[Theorem~6.16 of~\cite{BHZ19}]
Suppose $g\in G_-^\ex$ and that $\bfPi$ is an extended pre-model.
Then $\bfPi^g \eqdef (g\otimes \bfPi)\Deltam_\ex$ is also an extended pre-model.
\end{theorem}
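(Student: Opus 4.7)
The plan is to reduce the claim to the pre-model bounds for $\bfPi$ via the master identity
\begin{equ}
\Pi^g_x = (g \otimes \Pi_x)\Deltam_\ex \qquad \text{as maps } \mcT^\ex \to C^\infty\;,
\end{equ}
where $\Pi^g_x$ denotes the family of recentred maps associated with $\bfPi^g$. Once this is established, the bounds are immediate: writing $\Deltam_\ex \tau = \sum_i \sigma_i \otimes \tau_i$, the degree-preservation property~\eqref{eq:preserve_+} forces $|\tau_i|_+ = |\tau|_+$ for every $i$, so
\begin{equ}
\Bigl|\int \Pi^g_x\tau(z)\phi^\lambda_x(z)\mrd z\Bigr| \leq \sum_i |g(\sigma_i)|\Bigl|\int\Pi_x\tau_i(z)\phi^\lambda_x(z)\mrd z\Bigr| \lesssim \lambda^{|\tau|_+}\;.
\end{equ}
A parallel argument, exploiting the analogue of the master identity for $\Gamma^g_{xy}$ together with the group structures of $G^\ex_+$ and $G^\ex_-$, handles the second bound in Definition~\ref{def:pre_model}.

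Before attacking the master identity, I would verify extended admissibility of $\bfPi^g$, which reduces to three structural identities: $\Deltam_\ex\Xi_l = \e\otimes\Xi_l$, $\Deltam_\ex\mcI_m[\tau] = (\id\otimes\mcI_m)\Deltam_\ex\tau$ whenever $\mcI_m[\tau]\in\mfT^\ex_\circ$, and $\Deltam_\ex(X^k\tau) = X^k\cdot\Deltam_\ex\tau$ with $X^k$ multiplying the root polynomial of the right factor. Each follows by inspecting~\eqref{eq:Deltam_ex}: since $\mfT^\ex_-$ contains no planted trees and requires zero polynomial decoration at the root of any constituent, every extraction $A$ contributing nontrivially must avoid the distinguished root edges and must force $\mfn_A(\rho)+\pi\mfe_A(\rho) = 0$ whenever $\rho$ lies in $A$, so that multiplication by $X^k$ and the noise/integration operators pull through $\Deltam_\ex$ cleanly.

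The core algebraic step is the master identity, which I would prove jointly with the companion character identity
\begin{equ}
f^g_x = (g\otimes f_x)\Deltam_\ex \qquad \text{in } G^\ex_+\;,
\end{equ}
where $f^g_x$ is the positive character of $\bfPi^g$. Both sides of the character identity are multiplicative, so it suffices to check it on the generators $\bone$ and $X^i$ (trivial) and on planted trees $\mcI_m[\tau]$ with $|\mcI_m[\tau]|_+ > 0$; for the latter, one combines the admissibility identity $\Deltam_\ex\mcI_m[\tau] = (\id\otimes\mcI_m)\Deltam_\ex\tau$ with the recursive formula~\eqref{eq:pos_char_def} (in the extended setting) and an inductive hypothesis on the master identity applied to $\tau$. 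Granted the character identity, the cointeraction~\eqref{eq:cointeraction_ex} drives the main computation:
\begin{equs}
\Pi^g_x
&= \bfPi^g\,(\id\otimes f^g_x)\Deltap_\ex
= (g\otimes\bfPi\otimes g\otimes f_x)(\Deltam_\ex\otimes\Deltam_\ex)\Deltap_\ex \\
&= (g\otimes\bfPi\otimes f_x)\,\mcM_{(13)(2)(4)}(\Deltam_\ex\otimes\Deltam_\ex)\Deltap_\ex \\
&= (g\otimes\bfPi\otimes f_x)(\id\otimes\Deltap_\ex)\Deltam_\ex
= (g\otimes\Pi_x)\Deltam_\ex\;,
\end{equs}
where the second line uses multiplicativity of $g$ on $\mcF^\ex_-$ to collapse the two $g$-factors through $\mcM_{(13)(2)(4)}$, and the third line is precisely~\eqref{eq:cointeraction_ex}.

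The main obstacle is orchestrating the joint induction without circularity, since the master identity and the character identity depend on each other much as $\Pi_x$ and $f_x$ do in Remark~\ref{rem:models_well_defined}. The resolution is to induct on $\fancynorm{\tau}$ in lockstep: at level $N$, one first proves the character identity on planted trees $\mcI_m[\tilde\tau]$ with $\fancynorm{\mcI_m[\tilde\tau]} = N$ using the master identity at level $N-1$, and then invokes the cointeraction to derive the master identity at level $N$, noting that every $\tau^{(2)}\in\mfT^\ex_+$ appearing in $\Deltap_\ex\tau$ satisfies $\fancynorm{\tau^{(2)}}\leq N$ so that the character identity is already available on such terms. Once this scaffolding is in place, the cointeraction does all the algebraic work and the required analytic estimates come essentially for free.
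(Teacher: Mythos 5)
Your proposal follows essentially the same route as the paper's own proof: you reduce to the master identity $\Pi^g_x = \Pi_x M_g$ and the companion character identity $f^g_x = f_x M_g$ (these are the paper's Lemma~\ref{lem:recentred_simple}), you prove admissibility of $\bfPi^g$ from the same three structural identities on $\Deltam_\ex$, you invoke the cointeraction~\eqref{eq:cointeraction_ex} in the same way, you use the degree preservation~\eqref{eq:preserve_+} to get the $\Pi$-bound, and you handle the apparent circularity by the same joint induction on $\fancynorm{\cdot}$, establishing the character identity on planted trees at each level from the master identity one level below. The only ingredient you gloss over is the paper's Lemma~\ref{lem:G_m_action}, which certifies that $f_xM_g\in G_+^\ex$ (so that ``both sides are multiplicative'' is actually justified and agreement on generators implies agreement everywhere); and the $\Gamma$-bound deserves the explicit identity $\Gamma^g_{xy}=M_{g^{-1}}\Gamma_{xy}M_g$ rather than a gesture at a ``parallel argument,'' since it relies additionally on the commutation relation~\eqref{eq:M_g_commute}.
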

As in Remark~\ref{rem:renorm_cont}, the map $\mcZ^\ex(\bfPi) \mapsto \mcZ^\ex(\bfPi^g)$ is furthermore continuous.
In the rest of this subsection, we prove Theorem~\ref{thm:pre-model_ex_renorm}.
The main ingredient in the proof is Lemma~\ref{lem:recentred_simple} which provides a simple relationship between the recentred maps and positive characters of $\bfPi^g$ and those of $\bfPi$,
which in turn relies on the cointeraction~\eqref{eq:cointeraction_ex}.
We will emphasise throughout the proof which statements do not rely on the extended structure,
i.e. the statements which hold with all spaces $\CH^\ex$ and maps $\Deltapm_\ex$ replaced by $\CH$ and $\Deltapm$ respectively.

For the rest of the subsection, let us fix $g\in G_-^\ex$ and a linear map $\bfPi\colon\CT^\ex\to C^\infty$.
For $\CH\in\{\CT^\ex,\CT^\ex_+\}$, let $M_g\colon\CH\to \CH$ denote the map\label{M_g_page_ref}
\begin{equ}
M_g = (g\otimes \id) \Deltam_\ex\;,
\end{equ}
so that $\bfPi^g = \bfPi M_g$.
One of the key properties implied by~\eqref{eq:cointeraction_ex} is that $M_g$ defines a group homomorphism on $G_+^\ex$ as seen in the following lemma.

\begin{lemma}\label{lem:G_m_action}
Let $f,\bar f \in G_+^\ex$. Then $f M_g\in G_+^\ex$ and
\begin{equ}\label{eq:G_m_action}
(f\circ \bar f)M_g = (f M_g)\circ (\bar f M_g)\;.
\end{equ}
\end{lemma}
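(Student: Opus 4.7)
The plan is to deduce both statements from the cointeraction identity~\eqref{eq:cointeraction_ex} applied on $\mcH = \mcT_+^\ex$, combined with the fact that $g$ is a character.

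First I would verify that $fM_g \in G_+^\ex$. Recall that $\Deltam_\ex$ was extended to a multiplicative (algebra) morphism $\mcT_+^\ex \to \mcF_-^\ex \otimes \mcT_+^\ex$ in Definition~\ref{def:Deltap_ex} (and the paragraph preceding Theorem~\ref{thm:cointerct_ex}), and $g\in G^\ex_-$ is a character on $\mcF_-^\ex$. Hence $M_g = (g\otimes \id)\Deltam_\ex \colon \mcT_+^\ex\to\mcT_+^\ex$ is an algebra morphism. Since $\Deltam_\ex \bone = \e\otimes\bone$, we get $M_g\bone = \bone$, so $fM_g\colon \mcT_+^\ex\to\R$ is a character, i.e.\ an element of $G_+^\ex$.

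For the homomorphism identity, my plan is to apply the linear form $g\otimes f\otimes \bar f\colon \mcF_-^\ex\otimes\mcT_+^\ex\otimes\mcT_+^\ex\to\R$ to both sides of~\eqref{eq:cointeraction_ex}. On the right-hand side of~\eqref{eq:cointeraction_ex} this yields, for every $\tau\in\mcT_+^\ex$,
\begin{equs}
(g\otimes f\otimes \bar f)(\id\otimes\Deltap_\ex)\Deltam_\ex\tau
&= (f\otimes \bar f)\Deltap_\ex (g\otimes \id)\Deltam_\ex\tau \\
&= (f\otimes \bar f)\Deltap_\ex M_g\tau
= (f\circ \bar f)(M_g\tau)\;.
\end{equs}
On the left-hand side of~\eqref{eq:cointeraction_ex}, I use that $g$ is multiplicative with respect to $\fprod$: for any $a,c\in\mcF_-^\ex$ and $b,d\in\mcT_+^\ex$,
\begin{equ}
(g\otimes f\otimes \bar f)\mcM_{(13)(2)(4)}(a\otimes b\otimes c\otimes d) = g(a\fprod c)f(b)\bar f(d) = g(a)g(c)f(b)\bar f(d)\;,
\end{equ}
so that $(g\otimes f\otimes\bar f)\mcM_{(13)(2)(4)} = g\otimes f\otimes g\otimes \bar f$ as linear forms on $\mcF_-^\ex\otimes\mcT_+^\ex\otimes\mcF_-^\ex\otimes\mcT_+^\ex$. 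Applying this to the left-hand side of~\eqref{eq:cointeraction_ex} gives
\begin{equs}
(g\otimes f\otimes g\otimes \bar f)(\Deltam_\ex\otimes\Deltam_\ex)\Deltap_\ex\tau
&= \bigl((f M_g)\otimes (\bar f M_g)\bigr)\Deltap_\ex\tau \\
&= \bigl((fM_g)\circ(\bar f M_g)\bigr)(\tau)\;.
\end{equs}
Equating the two expressions yields~\eqref{eq:G_m_action}.

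The only real ingredient beyond unpacking definitions is the cointeraction~\eqref{eq:cointeraction_ex}, and the one subtle point is the reshuffling of factors encoded in $\mcM_{(13)(2)(4)}$, which is precisely where the multiplicativity of $g$ on $\mcF_-^\ex$ is needed to turn the merged factor $g(a\fprod c)$ into the product $g(a)g(c)$ that allows the right-hand side to factor as $(fM_g)\otimes(\bar fM_g)$. No further difficulty is expected.
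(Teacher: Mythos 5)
Your proof is correct and follows essentially the same route as the paper: both establish that $fM_g\in G_+^\ex$ from the multiplicativity of $\Deltam_\ex$ on $\CT_+^\ex$ and of $g$ on $\CF_-^\ex$, and both derive~\eqref{eq:G_m_action} by applying $g\otimes f\otimes\bar f$ to the cointeraction~\eqref{eq:cointeraction_ex} with $\CH=\CT_+^\ex$, using the multiplicativity of $g$ to turn the merged factor $g(a\fprod c)$ into $g(a)g(c)$ so that the result factors as $(fM_g)\otimes(\bar fM_g)$. The paper writes this as a one-line chain of identities rather than evaluating both sides separately, but the content is identical.
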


\begin{proof}
The fact that $f M_g\in G_+^\ex$ (which does not rely on the extended structure), follows from writing $fM_g=(g\otimes f)\Deltam_\ex$ and using the facts that $\Deltam_\ex$ and $f$ are multiplicative on $\mcT_+^\ex$ and that $g$ is multiplicative on $\CF_-^\ex$.
To prove~\eqref{eq:G_m_action},
\begin{equs}
(f\circ\bar f) M_g &= (f\otimes \bar f)\Deltap_\ex(g\otimes \id)\Deltam_\ex = (g\otimes f\otimes\bar f)(\id \otimes\Deltap_\ex)\Deltam_\ex
\\
&= (g\otimes f\otimes g\otimes \bar f)(\Deltam_\ex\otimes\Deltam_\ex)\Deltap_\ex
= (f M_g\otimes \bar f M_g)\Deltap_\ex
\\
&= (f M_g)\circ (\bar f M_g) \;,
\end{equs}
where we used~\eqref{eq:cointeraction_ex} with $\CH=\mcT^\ex_+$ in the third equality together with multiplicativity of $g$ on $\CF_-^\ex$.
\end{proof}
The following exercise reveals another useful consequence of~\eqref{eq:cointeraction_ex}.
\begin{exercise}
Show that
\begin{equ}\label{eq:M_g_commute}
\Gamma^\ex_f M_g = M_g \Gamma^\ex_{f M_g}\;.
\end{equ}
\end{exercise}
The following lemma does not rely on the extended structure.
\begin{lemma}\label{lem:admissible}
Suppose $\bfPi$ is an (extended) admissible map.
Then
$\bfPi^g$ is also an (extended) admissible map.
\end{lemma}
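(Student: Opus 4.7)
The plan is to verify each of the admissibility conditions from Definition~\ref{def:admissible_ex} for $\bfPi^g = \bfPi M_g$. In every case the strategy is the same: establish a commutation identity $M_g(\mcO \tau) = \mcO(M_g \tau)$ for the relevant tree-building operation $\mcO$, after which admissibility of $\bfPi^g$ follows by chaining $M_g$ through and applying admissibility of $\bfPi$ to $M_g \tau$.

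I would first dispose of the three elementary cases. For $\bone$: directly $\Deltam_\ex \bone = \e \otimes \bone$, so $M_g \bone = \bone$ and $\bfPi^g \bone = 1$. For $\Xi_l$: inspecting the subforest sum~\eqref{eq:Deltam_ex} on the two-node tree $\Xi_l$, every candidate extracted piece is either $\bone$ (of degree $0$) or the planted tree $\Xi_l$ itself, both annihilated by $\pi_-^\ex$, so $M_g \Xi_l = \Xi_l$. For $X^k \tau$: the constraint $\mfn_A(\rho_\tau) + \pi\mfe_A(\rho_\tau) = 0$ needed for the extracted tree to lie in $\mfT^\ex_-$ forces $\mfn_A(\rho_\tau) = 0$ whenever $\rho_\tau \in A$, so $\binom{\mfn_\tau(\rho_\tau) + k}{0} = 1$ and the extra $k$ is transported through $[\mfn - \mfn_A]_A$ onto the merged root of the contracted tree; when $\rho_\tau \notin A$, the $k$ sits at the unchanged root of the contracted tree. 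In either case each surviving term in $\Deltam_\ex(X^k \tau)$ corresponds to a term of $\Deltam_\ex \tau$ with identical coefficient and extracted piece and contracted tree multiplied by $X^k$ at its root, so $M_g(X^k \tau) = X^k M_g \tau$ and hence $\bfPi^g(X^k \tau) = \cdot^k \bfPi^g \tau$.

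The main step is the integration identity
\begin{equ}\label{eq:Delta_m_I_m_plan}
\Deltam_\ex \mcI_m[\tau] = (\id \otimes \mcI_m)\Deltam_\ex \tau\;,
\end{equ}
where on the right $\mcI_m$ denotes the linear operator $\sigma \mapsto \mcI_m[\sigma]$ on $\mcT^\ex$. The combinatorial content is that no surviving extraction $A$ of $\mcI_m[\tau]$ can contain the new root $\rho'$: if $A = \{\rho'\}$, the extracted piece is a single node with polynomial decoration $\mfe_A(e^*)$, where $e^* = (\rho', \rho_\tau)$ is the unique edge at $\rho'$, and to lie in $\mfT^\ex_-$ this must equal $\bone$, of degree $0$; if $A$ additionally contains $e^*$ and $\rho_\tau$, the component of $A$ rooted at $\rho'$ has a kernel edge and is planted, again killed by $\pi_-^\ex$. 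Consequently $\rho' \notin A$, and the boundary convention (which requires the lower endpoint of an edge to lie in $A$) then gives $e^* \notin \partial(A, \mcI_m[\tau])$, so $\mfe_A$ never shifts the decoration of $e^*$. The surviving terms of $\Deltam_\ex \mcI_m[\tau]$ are therefore in bijection with those of $\Deltam_\ex \tau$ through identical data $(A \subset N_\tau, \mfn_A, \mfe_A)$, and the contracted tree of $\mcI_m[\tau]$ is precisely $\mcI_m$ applied to the contracted tree of $\tau$, yielding~\eqref{eq:Delta_m_I_m_plan}.

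Granting~\eqref{eq:Delta_m_I_m_plan}, one obtains $M_g \mcI_m[\tau] = \mcI_m[M_g \tau]$ and then
\begin{equ}
\bfPi^g \mcI_m[\tau] = \bfPi \mcI_m[M_g \tau] = D^m K * \bfPi M_g \tau = D^m K * \bfPi^g \tau
\end{equ}
by admissibility of $\bfPi$. The main obstacle in~\eqref{eq:Delta_m_I_m_plan} is the bookkeeping of extended decorations: one must verify that the extended decoration at the merged root, assembled from $\mfo(A)$ and $[\mfn_A + \pi\mfe_A]_A$, agrees on the two sides, and that membership of the contracted tree in $\mfT^\ex_\circ$ transfers correctly after prepending the kernel edge $\mcI_m$ above the merged root. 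Both points reduce to the fact that $\rho' \notin A$, which ensures all decorations are computed entirely within $\tau$. The same argument works verbatim with $\Deltam_\ex$ and $\mcT^\ex$ replaced by $\Deltam$ and $\mcT$, which covers the non-extended case.
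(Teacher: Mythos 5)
Your proposal is correct and takes essentially the same route as the paper: both proofs reduce admissibility of $\bfPi^g=\bfPi M_g$ to the four commutation identities $M_g\bone=\bone$, $M_g\Xi_l=\Xi_l$, $M_g(X^k\tau)=X^kM_g\tau$, and $M_g\mcI_m[\tau]=\mcI_m[M_g\tau]$, each justified by the same structural facts about $\mfT_-^\ex$ (it contains no planted trees, no single-node trees, and no trees with nonzero polynomial decoration at the root). You simply spell out the combinatorial bijections behind each identity in more detail than the paper, which states them as one-liners; the only small imprecision is in the $\Xi_l$ case, where the candidate extracted pieces are not only $\bone$ and $\Xi_l$ but also single nodes $X^n$ with $n\neq 0$ (from the boundary edge decoration $\mfe_A$) and disconnected two-node forests — all of which are nevertheless annihilated by $\pi_-^\ex$ for the same reasons, so the conclusion stands.
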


\begin{proof}
Clearly $\bfPi^g\bone=1$ since $\bfPi\bone=1$ and $\Deltam_\ex\bone = \e\otimes \bone$.
Furthermore, $\Deltam_\ex X^k\tau = (\id\otimes X^k)\Deltam_\ex\tau$ for any $X^k\tau\in\mfT_\circ^\ex$
because $\mfT_-^\ex$ does not contain trees with non-zero polynomial decorations at the root.
Hence $\bfPi^gX^k\tau = \cdot^k \bfPi^g\tau$, which proves~\eqref{eq:admiss_prop_1} for $\bfPi^g$.

Next, for any $l\in\N^d$, we have $\Deltam_\ex \Xi_l = \e\otimes\Xi_l$ because $\mfT_-^\ex$ does not contain planted trees or trees with only one node.
Therefore $\bfPi^g\Xi_l = \bfPi \Xi_l = D^l \xi$ by admissibility of $\bfPi$.

It remains to prove the second identity in~\eqref{eq:admiss_prop_2} for $\bfPi^g$.
Consider $T^{\mfn,\mfo}_\mfe=\mcI_m\tau\in\mfT^\ex_\circ$.
Since $\mfT_-^\ex$ does not contain planted trees or trees with only one node, $\pi_-$ in the sum~\eqref{eq:Deltam_def}
will annihilate every term in which $A$ contains the root, and thus
\begin{equ}\label{eq:Deltam_planted}
\Deltam_\ex\mcI_m\tau = (\id\otimes\mcI_m)\Deltam_\ex\tau\;.
\end{equ}
The second identity in~\eqref{eq:admiss_prop_2} for $\bfPi^g$ then follows from
\begin{equ}
\bfPi^g \mcI_m\tau = (g\otimes \bfPi)\Deltam_\ex \mcI_m\tau = (g\otimes\bfPi\mcI_m)\Deltam_\ex \tau = D^m K* (g\otimes\bfPi)\Deltam_\ex \tau = D^m K* \bfPi^g\;,
\end{equ}
where we used admissibility of $\bfPi$ in the third equality.
Hence $\bfPi^g$ is admissible.
\end{proof}
We next show that there is a simple relationship between $\mcZ^\ex(\bfPi)$ and $\mcZ^\ex(\bfPi^g)$.
We henceforth denote $\mcZ^\ex(\bfPi) = (\Pi,\Gamma)$
and $\mcZ^\ex(\bfPi^g) = (\Pi^g,\Gamma^g)$ and let $f_x, f_x^g\in G_+^\ex$ be the positive characters of $\bfPi$ and $\bfPi^g$ for $x\in \R^d$.
The following lemma is essentially a consequence of the cointeraction property of~\eqref{eq:cointeraction_ex} together with the fact that $\Deltam_\ex$ preserves the $|\cdot|_+$-degree on the right factor due to~\eqref{eq:preserve_+}.

\begin{lemma}\label{lem:recentred_simple}
For all $\tau\in\mfT_\circ^\ex$
\begin{equ}\label{eq:direct_form_Pi_x}
\Pi^g_x \tau = \Pi_x M_g \tau\;,
\end{equ}
and for all $\tau\in\mfT_+^\ex$
\begin{equ}\label{eq:direct_form_f_x}
f^g_x \tau = f_x M_g \tau\;.
\end{equ}
\end{lemma}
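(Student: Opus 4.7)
The plan is to prove the two identities jointly by strong induction on the number of kernel edges $\fancynorm{\tau}$, exploiting their tight coupling: the commutation relation~\eqref{eq:M_g_commute} allows~\eqref{eq:direct_form_f_x} at levels $\leq n$ to imply~\eqref{eq:direct_form_Pi_x} at levels $\leq n$, while the recursive definition~\eqref{eq:pos_char_def} of $f_x(\mcI_m \cdot)$ allows~\eqref{eq:direct_form_Pi_x} at level $n$ to imply~\eqref{eq:direct_form_f_x} at level $n+1$. The base case $\fancynorm{\tau}=0$ is immediate: the only relevant trees are $X^k$ in $\mfT^\ex_+$ and $\bone^a X^k \Xi_L$ in $\mfT^\ex_\circ$, and on these all of $f_x$, $f^g_x$, and $f_x M_g$ agree (since $M_g X^k = X^k$, the only surviving term of $\Deltam_\ex X^k$ being $\e\otimes X^k$), which via~\eqref{eq:M_g_commute} forces~\eqref{eq:direct_form_Pi_x} as well.

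For the inductive step of~\eqref{eq:direct_form_f_x}, I would first note that $M_g$ is an algebra morphism on $\mcT^\ex_+$ (because $\Deltam_\ex$ is multiplicative on $\mcT^\ex_+$ and $g$ is a character on $\mcF^\ex_-$), so both $f^g_x$ and $f_x M_g$ lie in $G^\ex_+$ and it suffices to match them on generators. The $X^i$ case is trivial. For a planted generator $\mcI_m \sigma$ with $\sigma\in\mfT^\ex_\circ$ and $\fancynorm{\sigma}=n$, write $M_g\sigma = \sum_i c_i \sigma_i$; by the $|\cdot|_+$-preservation property~\eqref{eq:preserve_+}, each $|\sigma_i|_+ = |\sigma|_+$, so the summation range $|\ell|_+ < |\mcI_m\sigma_i|_+$ appearing in~\eqref{eq:pos_char_def} is independent of $i$. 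Substituting the induction hypothesis $\Pi^g_x\sigma = \Pi_x M_g\sigma$ into the defining sum for $f^g_x(\mcI_m\sigma)$ and using linearity of $\Pi_x$, the sum reassembles into $\sum_i c_i f_x(\mcI_m\sigma_i) = f_x(\mcI_m M_g\sigma)$; finally, $\mcI_m M_g = M_g \mcI_m$ (a direct consequence of~\eqref{eq:Deltam_planted}) gives the desired $f^g_x(\mcI_m\sigma) = (f_x M_g)(\mcI_m\sigma)$.

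For the inductive step of~\eqref{eq:direct_form_Pi_x} at level $n$, Exercise~\ref{ex:triangular}\ref{pt:Deltap_decompose_mfT_circ} guarantees that every right factor of $\Deltap_\ex\tau$ lies in $\mcT^\ex_+$ with at most $n$ kernel edges, so the newly established~\eqref{eq:direct_form_f_x} at level $\leq n$ yields $\Gamma^\ex_{f^g_x}\tau = \Gamma^\ex_{f_x M_g}\tau$. Equation~\eqref{eq:M_g_commute} then gives
\begin{equ}
\Pi^g_x\tau = \bfPi M_g\,\Gamma^\ex_{f^g_x}\tau = \bfPi M_g\,\Gamma^\ex_{f_x M_g}\tau = \bfPi\,\Gamma^\ex_{f_x} M_g\tau = \Pi_x M_g\tau\;,
\end{equ}
completing the induction.

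The main obstacle is orchestrating the two recursive unwindings so that the cutoff $|\ell|_+ < |\mcI_m\sigma|_+$ in~\eqref{eq:pos_char_def} is preserved after applying $M_g$; this is exactly where the extended-degree preservation~\eqref{eq:preserve_+} enters decisively, and is one of the chief reasons the extended decoration $\mfo$ was introduced in Section~\ref{subsec:extended_struct}. The analogous argument for the reduced degree $|\cdot|_\s$ would collapse, since the cointeraction~\eqref{eq:cointeraction}---and consequently~\eqref{eq:M_g_commute}---fails in that setting, as Example~\ref{ex:cointeract_fails} demonstrates.
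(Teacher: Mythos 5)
Your proposal is correct and takes essentially the same approach as the paper's proof: a joint induction on $\fancynorm{\tau}$, where the recursive definition~\eqref{eq:pos_char_def} together with the $|\cdot|_+$-preservation~\eqref{eq:preserve_+} drives the $f^g_x$ step, and the cointeraction drives the $\Pi^g_x$ step. The only cosmetic difference is that you invoke the packaged relation $\Gamma^\ex_f M_g = M_g\Gamma^\ex_{f M_g}$ from~\eqref{eq:M_g_commute}, whereas the paper unwinds~\eqref{eq:cointeraction_ex} explicitly in its chain of equalities; both rest on the identical cointeraction input.
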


\begin{proof}
We first claim that if $\tau\in\mfT_+^\ex$ with $\fancynorm{\tau}=0$, then $M_g\tau=\tau$ and $f^g_x(\tau)=f_x(\tau)$.
Indeed, in this case $\tau=X^k$ for some $k\in\N^d$, and clearly $M_gX^k = X^k$ and $f_x(X^k)=f^g_x(X^k)$ by definition.

We next claim that, if $\tau\in\mfT_\circ^\ex$ with $\fancynorm{\tau}=0$, then
$\Pi^g_x \tau = \Pi_x M_g \tau$.
(This claim, as the previous one, does not rely on the extended structure -- see Proposition~\ref{prop:direct_renorm_base_case}.)
Indeed, in this case there exist $a\leq 0$, $k\in\N^d$, and $L\in\mfN$ such that $\tau = \bone^aX^k\Xi_L$.
Hence
\begin{equ}\label{eq:Deltap_ex_base_case}
\Deltap_\ex \tau
= \sum_{n \leq k}\binom kn \bone^a X^n\Xi_L\otimes X^{k-n}\;,
\end{equ}
and therefore
\begin{equs}
\Pi_xM_g \tau
&= (g\otimes \bfPi \otimes f_x)(\id\otimes \Deltap_\ex)\Deltam_\ex\tau = (g\otimes \bfPi \otimes g\otimes f_x)(\Deltam_\ex\otimes \Deltam_\ex)\Deltap_\ex\tau
\\
&= (g\otimes \bfPi \otimes f^g_x)(\Deltam_\ex\otimes \id)\Deltap_\ex\tau = \bfPi^g  F^g_x\tau = \Pi_x^g\tau\;,
\end{equs}
where we used~\eqref{eq:cointeraction_ex} in the second equality and~\eqref{eq:Deltap_ex_base_case} together with
$(g\otimes f_x)\Deltam_\ex X^n = f_x(X^n)=f^g_x(X^n)$ for all $n\in\N^d$ in the third equality.

We now prove~\eqref{eq:direct_form_Pi_x}-\eqref{eq:direct_form_f_x} by induction.
Suppose $n\geq 0$ and that~\eqref{eq:direct_form_Pi_x} and~\eqref{eq:direct_form_f_x} hold for all $\tau\in\mfT_\circ^\ex$ and $\tau\in\mfT^\ex_+$ respectively with $\fancynorm{\tau}\leq n$.

Consider any $\tau\in\mfT_\circ^\ex$ with $\fancynorm{\tau}\leq n$.
We first show that
\begin{equ}
f_x^g(\mcI_m\tau) = f_xM_g\mcI_m\tau\;.
\end{equ}
Indeed, by definition of $f_x^g$ (i.e.~\eqref{eq:pos_char_def} with $|\cdot|_\s$ replaced by $|\cdot|_+$)
we have
\[
f_x^g(\mcI_m\tau) = -\sum_{|\ell|_\s<|\mcI_m\tau|_+}
\frac{(-x)^\ell}{\ell!} D^{m+\ell} K*\Pi^g_x\tau(x)\;.
\]
By the induction hypothesis, $\Pi^g_x\tau=(g\otimes \Pi_x)\Deltam_\ex\tau$,
and therefore
\begin{equ}\label{eq:f^g_x_1}
f^g_x(\mcI_m\tau) =  -\sum_{|\ell|_\s<|\mcI_m\tau|_+}
\frac{(-x)^\ell}{\ell!} (g\otimes D^{m+\ell} K*\Pi_x)(\Deltam_\ex\tau)(x)\;.
\end{equ}
On the other hand, using~\eqref{eq:Deltam_planted}, we have $M_g\mcI_m\tau = \mcI_m M_g\tau$,
and thus
\begin{equ}\label{eq:M_g_I_commute}
f_xM_g \mcI_m\tau = f_x \mcI_m M_g\tau\;.
\end{equ}
Writing $\Deltam_\ex \tau = \sum\tau_1\otimes \tau_2$ in Sweedler notation,
it follows from the fact that
$\Deltam_\ex$ preserves the $|\cdot|_+$ degree of the right factor, i.e. $|\tau_2|_+=|\tau|_+$ due to~\eqref{eq:preserve_+}, that
\begin{equ}\label{eq:f_x_tau_2}
f_x(\mcI_m\tau_2) = -\sum_{|\ell|_\s<|\mcI_m\tau|_+}
\frac{(-x)^\ell}{\ell!} (D^{m+\ell} K*\Pi_x\tau_2)(x)\;.
\end{equ}
Comparing~\eqref{eq:M_g_I_commute}-\eqref{eq:f_x_tau_2} with~\eqref{eq:f^g_x_1}, we obtain $f_x^g(\mcI_m\tau)= f_x M_g\mcI_m\tau$ as claimed.

Next, since $f^g_x$ and $f_xM_g$ are both characters (Lemma~\ref{lem:G_m_action}) and agree on planted trees with at most $n+1$ kernel edges,
it follows that
\begin{equ}\label{eq:f^g_x_strong_induction}
f^g_x\tau = f_xM_g\tau \quad \forall \tau\in\mfT^\ex_+\,,\, \fancynorm{\tau}\leq n+1\;.
\end{equ}
Finally, consider $\tau\in\mfT^\ex_\circ$ with $\fancynorm{\tau}\leq n+1$.
We show that $\Pi^g_x \tau= \Pi_xM_g\tau$.
Indeed,
\begin{equ}
\Pi^g_x \tau = (\bfPi^g\otimes f^g_x)\Deltap_\ex\tau
=(g\otimes \bfPi \otimes g \otimes f_x)(\Deltam_\ex\otimes\Deltam_\ex)\Deltap_\ex\tau\;,
\end{equ}
where the first equality follows from definitions and, in the second equality, we used~\eqref{eq:f^g_x_strong_induction} and the fact that, if we write $\Deltap_\ex\mcI_m\tau = \sum_i c_i \tau^{(1)}_i\otimes\tau^{(2)}_i$,
then $\fancynorm{\tau_i^{(2)}}\leq n+1$.
Then, by the cointeraction property~\eqref{eq:cointeraction_ex}
\begin{equ}
\Pi^g_x \tau=(g\otimes \bfPi \otimes f_x)
(\id\otimes \Deltap_\ex)\Deltam_\ex\tau=(g\otimes \Pi_x)\Deltam_\ex\tau = \Pi_x M_g\tau
\end{equ}
as desired. By induction, we obtain~\eqref{eq:direct_form_Pi_x}-\eqref{eq:direct_form_f_x} for all $\tau$.
\end{proof}
The proof of Theorem~\ref{thm:pre-model_ex_renorm} is now straightforward.
\begin{proof}[of Theorem~\ref{thm:pre-model_ex_renorm}]
Recall the notation
$\mcZ^\ex(\bfPi) = (\Pi,\Gamma)$
and $\mcZ^\ex(\bfPi^g) = (\Pi^g,\Gamma^g)$.
The fact that $\bfPi^g$ is admissible follows from Lemma~\ref{lem:admissible}.
To show that $\mcZ^\ex(\bfPi^g)$ is a model, we use~\eqref{eq:direct_form_Pi_x}-\eqref{eq:direct_form_f_x}.
Indeed, writing $\Deltam_\ex\tau = \sum_i \tau^{(1)}_i\otimes \tau^{(2)}_i$ in Sweedler notation, we have
\begin{equ}
\Pi^g_x \tau = \sum_ig(\tau^{(1)}_i)\Pi_x\tau^{(2)}_i\;.
\end{equ}
Since $|\tau^{(2)}_i|_+ = |\tau|_+$ due to~\eqref{eq:preserve_+},
the required bounds on $\Pi^g_x$ follow at once.
Next, recall that $\Gamma^g_{xy} = \Gamma^\ex_{(f^g_x)^{-1}\circ f_y^g}$ by definition.
Therefore
\begin{equ}
M_g\Gamma^g_{xy} = M_g\Gamma^\ex_{(f^g_x)^{-1}\circ f_y^g} = M_g \Gamma^\ex_{(f_x^{-1}\circ f_y)M_g}
= \Gamma^\ex_{f_x^{-1}\circ f_y} M_g = \Gamma_{xy}M_g\;,
\end{equ}
where the first and last equalities follow by definition, the second equality follows from~\eqref{eq:direct_form_f_x} together with~\eqref{eq:G_m_action}, and the third equality follows from~\eqref{eq:M_g_commute}.
Therefore
\begin{equ}
\Gamma^g_{xy} = M_{g^{-1}}\Gamma_{xy}M_g\;,
\end{equ}
where $g^{-1}$ is the inverse of $g$ for the product on $G^\ex_-$ defined by $g\bar g \eqdef (\bar g\otimes g)\Deltam_\ex$ (see Exercise~\ref{ex:prod_G_-}, which carries over to the extended spaces).
The necessary bounds on $\Gamma_{xy}^g$ therefore again follow from $|\tau^{(2)}_i|_+ = |\tau|_+$.
\end{proof}

\subsection{Reduced regularity structure}
\label{subsec:reduced}

In this subsection, we describe the link between the extended regularity structure $(A^\ex,\CT^\ex,G_+^\ex)$ of Definition~\ref{def:extended_structure} with renormalisation group $G_-^\ex$
and the regularity structure $(A,\CT,G_+)$ with renormalisation group $G_-$ described in Section~\ref{sec:pos_renorm} and the beginning of this section.
In particular, we will prove Theorem~\ref{thm:pre-model_renorm}.
To clarify which structure we speak about, we call $(A,\CT,G_+)$ the \textit{reduced} regularity structure.
It is important to understand the link between the two structures because, although the extended structure has nicer algebraic properties, it is the reduced structure which is often used in practice.

\subsubsection{Positive Hopf algebra}
\label{subsubsec:pos_ext_reduced}

For the `positive' Hopf algebra, the natural relationship between the reduced and extended structures is given by
the injection $\iota \colon\mfT\hookrightarrow \mfT^\ex,
\iota\colon T^\mfn_\mfe\mapsto T^{\mfn,0}_\mfe$.
This injection is the way we implicitly treated $\mfT$ as a subset of $\mfT^\ex$.
We further obtain injections
\begin{equ}
\iota\colon\CT\hookrightarrow\CT^\ex\;,\qquad\iota\colon\CT_+\hookrightarrow\CT_+^\ex\;.
\end{equ}
These embeddings furthermore preserve degrees, i.e. $|\iota\tau|_+=|\tau|_\s$.
Because $\mfo$ is a static variable under $\Deltap_\ex$, $\iota$ is a Hopf algebra (resp. comodule) morphism on $\CT_+$ (resp. $\CT$), i.e.
\begin{equ}
\Deltap_\ex\iota = (\iota\otimes \iota)\Deltap
\end{equ}
as maps $\CH\to \CH^\ex\otimes\CT^\ex_+$ for $\CH\in\{\CT,\CT_+\}$.
%Therefore, the structure $(\CT,\CT_+)$ can be thought of as a substructure of $(\CT^\ex,\CT^\ex_+)$.
The embedding $\iota\colon\CT_+\hookrightarrow\CT^\ex_+$ induces a surjective group homomorphism
\begin{equ}
\iota^*\colon G^\ex_+\twoheadrightarrow G_+\;,\qquad
\iota^* f \eqdef f\iota\;,
\end{equ}
and one has for all $f\in G_+^\ex$
\begin{equ}
\Gamma^\ex_{f} \iota = \iota\Gamma_{\iota^*f}\;,
\end{equ}
where we recall that
\begin{equ}
\Gamma^\ex_f=(\id\otimes f)\Deltap_\ex \in \GL(\CT^\ex)\;,\qquad
\Gamma_f=(\id\otimes f)\Deltap \in \GL(\CT)\;.
\end{equ}
In particular, the action of $G_+^\ex$ on $\CT^\ex$ given by $\Gamma^\ex$ leaves $\iota\CT$ invariant and agrees with the action of $G_+$ on $\CT$ we saw in Section~\ref{sec:pos_renorm}.
In summary, the reduced regularity structure is contained in the extended structure.

Furthermore, consider any linear map  $\bfPi\colon\CT^\ex\to C^\infty$ with recentred maps $\Pi_x\colon\CT^\ex\to C^\infty$ and positive characters $f_x\in G_+^\ex$.
Then, because $|\cdot|_\s=|\cdot|_+$ on $\iota\mfT$,
the recentred maps and positive characters of $\bfPi\iota\colon \CT\to C^\infty$
are $\Pi_x\iota\colon \CT\to C^\infty$ and $\iota^*f_x\in G_+$ respectively.
In particular,
\begin{equ}\label{eq:bfPi_restriction}
\mcZ(\bfPi\iota) = \mcZ^\ex(\bfPi)\restr_{\iota\CT}
\end{equ}
(cf. Example~\ref{ex:reduced_lift}).
We can partially summarise these relationships in the following commutative diagram:
 \[
  \begin{tikzcd}[row sep=1.5cm]
\arrow[loop above]{l}{\Gamma_{\iota^* f}}
\mcT \arrow[r,hook,"\iota"]
\arrow[dr,"\bfPi\iota",dashed,swap] &
\arrow[loop above]{l}{\Gamma^\ex_{f}}\mcT^\ex \arrow[d,"\bfPi"]  \\
& C^\infty
 \end{tikzcd}
\qquad\qquad
\begin{tikzcd}[row sep=0.7cm]
G_+ \ni \iota^*f & \arrow[l,"\iota^*",two heads] f\in G_+^\ex \\
\CT_+ \arrow[r,"\iota",hook] &  \CT_+^\ex
 \end{tikzcd}
\]

\subsubsection{Negative Hopf algebra}
\label{subsubsec:neg_Hopf_reduced}

For the `negative' Hopf algebra, while there is an injection $\mcF_-\hookrightarrow\mcF^\ex_-$ given by the same mapping $T^\mfn_\mfe\mapsto T^{\mfn,0}_\mfe$,
this injection is \textit{not} a coalgebra morphism (and thus not a Hopf algebra morphism) because $\mfo$ is no longer static under $\Deltam_\ex$.
The natural relationship between $\CF_-$ and $\CF^\ex_-$ is given instead by the projections \begin{equ}
\mcQ\colon\CT^\ex \twoheadrightarrow\CT\;,\qquad \mcQ\colon\mcF_-^\ex \twoheadrightarrow \mcF_-\;.
\end{equ}
By Exercise~\ref{ex:reduced_identity}, the first projection is a comodule morphism and the second is a Hopf algebra morphism, i.e.
\begin{equ}\label{eq:Q_morphism}
\Deltam\mcQ = (\mcQ\otimes\mcQ)\Deltam_\ex
\end{equ}
as maps $\CH^\ex \to\CF_- \otimes\CH$ for $\CH\in\{\CT,\CF_-\}$.
(The same holds for $\mcQ\colon\mcT^\ex_+\to\mcT_+$,
but this is not important, see Remark~\ref{rem:useless_comodule}.)
The projection $\mcQ\colon\CT^\ex\twoheadrightarrow\CT$ is a left inverse of $\iota\colon\CT\hookrightarrow\CT^\ex$, i.e. $\CQ\iota=\id$.
Furthermore, the projection $\mcQ\colon\mcF_-^\ex \twoheadrightarrow \mcF_-$ induces an embedding of groups \begin{equ}
\mcQ^*\colon G_-\hookrightarrow G_-^\ex\;,\qquad \mcQ^*g \eqdef g\mcQ\;.
\end{equ}
To understand how the maps $\mcQ$ and $\mcQ^*$ interact with linear maps $\bfPi\colon\CT^\ex\to C^\infty$, we require the notion of reduced maps.
\begin{definition}
A linear map $\bfPi\colon \CT^\ex \to C^\infty$ is called \textit{reduced} if $\bfPi\iota\mcQ=\bfPi$, i.e. $\bfPi$ does not depend on the extended decoration $\mfo$.
\end{definition}
Clearly admissible maps on $\CT$ are in bijection with reduced admissible maps on $\CT^\ex$ through the mapping $\bfPi\mapsto \bfPi^\ex\eqdef \bfPi\mcQ$ of Example~\ref{ex:reduced_lift}.
Furthermore, and less trivially, this bijection restricts to a bijection between pre-models on $\CT$ and reduced pre-models on $\CT^\ex$ (see Remark~\ref{rem:pre-model}).

An important consequence of the morphism property~\eqref{eq:Q_morphism} together with $\CQ\iota=\id$
is that, for all $g\in G_-$,
\begin{equ}
\CQ M_{\CQ^* g}\iota = M_g
\end{equ}
as maps in $\GL(\CT)$.
Consequently, if $\bfPi^\ex\colon\CT^\ex\to C^\infty$ is a reduced linear map, then for all $g\in G_-$
\begin{equ}\label{eq:reduced_M_g}
\bfPi^\ex M_{\CQ^* g}\iota = \bfPi^\ex\iota M_g\;.
\end{equ}
Although we do not use it later, the following exercise shows that reduced maps are closed under the action of $\CQ^*G_-$.
\begin{exercise}\label{ex:reduced_preserve}
Show that $M_g\CQ = \CQ M_{\CQ^* g}$ for all $g\in G_-$, and therefore, if $\bfPi^\ex \colon \CT^\ex\to C^\infty$ is a reduced map, then so is $\bfPi^\ex M_{\CQ^* g}$.
\end{exercise}
These relationships are again summarised in the following (partially) commutative diagram:

\[
  \begin{tikzcd}[row sep=1.5cm]
\arrow[loop above]{l}{M_{g}}
\mcT \arrow[r,hook,"\iota", shift right=4pt,swap]
\arrow[dr,"\bfPi",swap] &
\arrow[loop above]{l}{M_{\mcQ^* g}}\mcT^\ex \arrow[d,"\bfPi^\ex=\bfPi\CQ",dashed] \arrow[l,two heads,"\mcQ",shift right=3pt,swap] \\
& C^\infty
 \end{tikzcd}
\qquad\qquad
\begin{tikzcd}[row sep=0.7cm]
G_- \ni g \arrow[r,"\CQ^*",hook] &  \CQ^* g\in G_-^\ex \\
\CF_-  &  \arrow[l,"\CQ",two heads] \CF_-^\ex
 \end{tikzcd}
\]

\begin{proof}[of Theorem~\ref{thm:pre-model_renorm}]
Let $g\in G_-$ and $\bfPi\colon\CT\to C^\infty$ a pre-model.
Define the reduced admissible map $\bfPi^\ex \eqdef \bfPi\mcQ$.
By Remark~\ref{rem:pre-model}, $\bfPi^\ex$ is a reduced pre-model.
Therefore, by Theorem~\ref{thm:pre-model_ex_renorm}, $\bfPi^\ex M_{\CQ^*g}$ is an extended pre-model,
i.e. $\mcZ^\ex(\bfPi^\ex M_{\CQ^*g})$ is a model on the extended regularity structure.
% ($\bfPi^\ex M_{\CQ^*g}$ is furthermore reduced by Exercise~\ref{ex:reduced_preserve}, but we do not use this).
Finally
\begin{equ}
\mcZ(\bfPi^g) = \mcZ(\bfPi^\ex \iota M_g)
= \mcZ (\bfPi^\ex M_{\CQ^* g} \iota)
= \mcZ^\ex(\bfPi^\ex M_{\CQ^*g})\restr_{\iota\CT}\;,
\end{equ}
where we used $\CQ\iota=\id$ in the first equality,~\eqref{eq:reduced_M_g} in the second equality, and~\eqref{eq:bfPi_restriction} in the third equality.
Since the restriction $\mcZ^\ex(\bfPi^\ex M_{\CQ^*g})\restr_{\iota\CT}$ is again a model, $\mcZ(\bfPi^g)$ is a model on the reduced regularity structure as desired.
\end{proof}

\begin{remark}\label{rem:useless_comodule}
All the results for the reduced spaces in Section~\ref{sec:pos_renorm} and the start of Section~\ref{sec:neg_renorm}
are used in the statement and proof of Theorem~\ref{thm:pre-model_renorm} \textit{except} the fact that $\mcT_+$ is a comodule over $\mcF_-$ in Theorem~\ref{thm:negative_Hopf}
-- this property turned out useless because the naive cointeraction~\eqref{eq:cointeraction} failed.
On the other hand, the fact that $\CT_+^\ex$ is a comodule over $\CF_-^\ex$ is crucial.
\end{remark}

\begin{remark}
To summarise, in our proof of Theorem~\ref{thm:pre-model_renorm}, the statements that we did not prove here are Theorem~\ref{thm:negative_Hopf}, the `extended' versions of Theorems~\ref{thm:pos_Hop_alg} and~\ref{thm:negative_Hopf} (see the discussion before Theorem~\ref{thm:cointerct_ex}),
the result of Remark~\ref{rem:pre-model}, and the cointeraction property Theorem~\ref{thm:cointerct_ex}.
The interested reader can find proofs of these statements in~\cite{BHZ19}.
\end{remark}

\section{Renormalised SPDEs}
\label{sec:renorm_SPDE}

We saw in Section~\ref{sec:neg_renorm} that the renormalisation group $G_-^\ex$ acts on extended (pre-)models (Theorem~\ref{thm:pre-model_renorm}).
We now derive the final missing piece in order to understand renormalised SPDEs, which is the adjoint action of the renormalisation group on non-linearities identified in~\cite{BCCH21}.
This action is given in Theorem~\ref{thm:MU_coherent} below.

Recall that $\mfL=\{\Xi,\mcI\}$ is the set of type decorations, $\mcE = \mfL\times \N^d$
is the set of edge types,
and that a non-linearity is a smooth map $F\colon\R^\mcE\to\R$ which depends on only finitely many components.
As before, we fix a normal, complete, subcritical rule $R$ and recall the corresponding spaces $\mcT^\ex,\mcT_+^\ex,\mcF_-^\ex$ and groups $G_+^\ex$, $G_-^\ex$.

Throughout this section, as at the end of the previous one, we work on the extended regularity structure (see, however, Remark~\ref{rem:Upsilon_reduced_structure} about the reduced structure).

To state the main result of this section,
we recall several algebraic aspects of the fixed point maps considered in regularity structures.
A \textit{jet} is a formal sum of elements of the regularity structure
\begin{equ}\label{eq:jet}
U = U^P + U^R\;,
\qquad U^P = \sum_{k \in\N^d} u_k\frac{X^k}{k!}\;,
\qquad U^R = \sum_{\tau\in\mfT_*^\ex} u_\tau\frac{\tau}{\tau!}\;,
\end{equ}
where $\mfT_*^\ex \eqdef \mfT^\ex\setminus\{X^k\,:\,k\in\N^d\}$
is the set of non-polynomial trees.\label{mfT_star_pageref}
In~\eqref{eq:jet}, $u_k$ and $u_\tau$ are real numbers
and the series $U^P$ and $U^R$ are called the \textit{polynomial}
and \textit{non-polynomial part} of $U$ respectively.
The combinatorial factor $\tau!$ is defined as follows.
Every $\tau\in\mfT^\ex$ can be written uniquely in the form
\begin{equ}
\tau = X^k \bone^a \prod_{i=1}^I \Xi_{l_i}^{\alpha_i}\prod_{j=1}^J \mcI_{m_j}[\tau_j]^{\beta_j}\;,
\end{equ}
where $I,J\geq 0$, and $l_1,\ldots, l_I\in \N^d$ are \textit{distinct},
$(m_1,\tau_1),\ldots,(m_J,\tau_J)\in \N^d\times\mfT^\ex$
are \textit{distinct}, and $\alpha_i,\beta_j \geq 1$.
We then define $\tau!$ inductively by
\begin{equ}\label{eq:tau_factorial}
\tau! = k! \prod_{i=1}^I \alpha_i! \prod_{j=1}^J (\tau_j!)^{\beta_j}\beta_j!\;.
\end{equ}

A central object in regularity structures is a \textit{modelled distribution},\footnote{The
space of modelled distributions is always defined with respect a fixed underlying model $(\hat\Pi,\hat\Gamma)$ and involves important analytic considerations and an upper bound on the number of coefficients $u_k,u_\tau$ that are allowed to be non-zero, see~\cite[Def.~3.1]{Hairer14}.
As before, we do not discuss these analytic aspects here.} which is a space-time dependent jet $U$, i.e. we allow $u_k, u_\tau\colon\R^d\to \R$ in~\eqref{eq:jet}.
Consider a non-linearity $F$ that obeys $R$ (see Definition~\ref{def:obey}).
An important feature is that we can rewrite the SPDE~\eqref{eq:SPDE} as an equation for $U$ of the form
\begin{equ}\label{eq:fixed_point}
U = \mcP(\F(U)) + Gu_0\;.
\end{equ}
Here, $Gu_0$ is the lift of the harmonic extension of the initial condition $u_0$ to the space of polynomials $Gu_0 = \sum_{k\in\N^d} a_k X^k/k!$.
The operator $\mcP$ encodes convolution with $G$, the Green's function of $\partial_t -\mcL$, and $\F$ is a lift of $F$ to the space of jets.
Before presenting the precise form of $\mcP$ and $\F$, we explain how $U$ relates to a classical PDE.

The way one recovers a distribution $u$ on $\R^d$ from a modelled distribution $U$ is through a reconstruction operator $\mcR\colon U\to u$, which is continuous in both $U$ and the underlying model $(\hat\Pi,\hat\Gamma)$.
When the underlying model is smooth (as is always the case for us)
we simply have $\CR U(x)= [\hat\Pi_x U(x)](x)$.

A non-trivial fact is that, if the underlying model is of the form
$(\hat\Pi,\hat\Gamma)=\mcZ(\bfPi^g)$, where $g\in G_-^\ex$ and $\bfPi$ is the canonical lift of a smooth function $\xi$, and if $U$ solves~\eqref{eq:fixed_point}, then $\CR U$ solves a classical PDE.
This PDE will be of a form similar to~\eqref{eq:SPDE} but with additional `counterterms' on the right-hand side that depend explicitly on $g$ and $F$.

To understand this, we note that the operator $\CP$ satisfies $\CR\CP U = G*\CR U$.
To find the equation solved by $u\eqdef \CR U$, we therefore write
\begin{equ}
u = G * (\CR \F(U)) + G u_0 = G*[(\hat\Pi_\cdot \F(U)(\cdot))(\cdot)] + Gu_0\;.
\end{equ}
The goal is now to express the term inside the parentheses $[\ldots]$ in terms of $u$.
Denoting by $\Pi$ the recentred map associated to $\bfPi$, recall that
\begin{equ}\label{eq:nice_M_g}
\hat\Pi_x  = \Pi_x M^g \;,
\end{equ}
which is where we use that we are working on the extended structure.
Observe that $M^g \F(U)$ will in general contain trees with extended decorations (in contrast to $U$ which in practice does not -- see Remark~\ref{rem:U_reduced}).

It turns out, and this is the content of Theorem~\ref{thm:MU_coherent} below,
that there exists a new \textit{family} of non-linearities $\{F^g_a\}_{a\leq 0}$, with $F^g_0 = F$, of which only finitely many are non-zero,
and which satisfy the `covariance' property with $M_g$
\begin{equ}\label{eq:covariance}
M_g \F(U) = \sum_{a\leq 0}\F^g_a(M_g U) \bone^a\;.
\end{equ}
(The reason $a\leq 0$ appears here instead of the more natural index set $\mfT_-$ is that $M_g$ discards all information of the negative extracted subtrees except for their degrees.)
Combining~\eqref{eq:nice_M_g} and~\eqref{eq:covariance}, we obtain
\begin{equ}
\hat\Pi_x \F(U) = \Pi_x \sum_{a\leq 0} \F_a^g(M^g U)\;.
\end{equ}
The final ingredient, which is easy to show from the fact that $\Pi_x$ is multiplicative, is that
\begin{equ}
\Pi_x \F^g_a(U) = F^g_a(u,\nabla u,\ldots, \xi,\nabla \xi,\ldots)
\end{equ}
(see~\cite[Lem.~5.3]{BCCH21}).
In conclusion, we obtain
\begin{equ}
u = G*\Big(\sum_{a\leq 0} F_a^g(u,\nabla u,\ldots, \xi,\nabla \xi,\ldots)\Big) + G u_0\;,
\end{equ}
which, recalling that $F^g_0=F$,
is the integral form of the PDE
\begin{equ}\label{eq:renorm_PDE}
(\partial_t -\mcL )u = F(u,\nabla u,\ldots, \xi,\nabla \xi,\ldots) + \sum_{a<0} F^g_a(u,\nabla u,\ldots, \xi,\nabla \xi,\ldots)\;.
\end{equ}
The terms in the final sum over $a<0$ are the aforementioned counterterms in the renormalised PDE.

With this motivation in mind, we now describe in more detail the terms in~\eqref{eq:fixed_point}.
While the precise definition of the `integration' operator $\mcP$ takes some time to write down, see~\cite[Sections 5, 6.6]{Hairer14},
it has the very simple \textit{algebraic} form
\[
\mcP U = \sum_{k\in \N^d} v_k\frac{X^k}{k!}  + \mcI[U]\;.
\]
Here, $v_k$ are functions that are derived from analytic formulae and that depend non-locally on $U$.
The operator $\mcI$ on the other hand maps every tree $\tau\in\CT^\ex$ to $\mcI_0[\tau]$
and extends to an operator on jets.
The coefficient appearing in front of $\mcI[\tau]$ for every $\tau\in\mfT_\circ^\ex$
is thus $u_\tau/\tau!$,
and is thus entirely local in $U$.
Therefore, if $U$ satisfies~\eqref{eq:fixed_point},
then it also satisfies the purely \textit{algebraic} fixed point problem
\begin{equ}\label{eq:fixed_point_single_F}
U = U^P +
\mcI[\F(U)]
\end{equ}
(but generally not vice versa).
To define the function $\F$ from a non-linearity $F$,
we identify the polynomial part $U^P$
with the function $(U^P,0)\in\R^{\mcE}$ which maps $(\Xi,k)\mapsto 0$ and $(\mcI,k)\mapsto u_k$.
The function $\F$ is then the `lift' of $F$ to jets, given through a Taylor expansion
\begin{equ}\label{eq:F_def}
\F(U) = \sum_{\alpha\in \N^\mcE} \frac{D^\alpha F(U^P,0)}{\alpha!}(U-u,\Xi)^\alpha\;.
\end{equ}
Above we use the suggestive notation $(U-u,\Xi)^{(\Xi,k)}\eqdef\Xi_k$,
and $(U-u,\Xi)^{(\mcI,k)}\eqdef\mcD^k U-u_k\bone$,
where $\mcD^k$ is the natural lift of the derivative operator which, for a jet $U$
of the form
\begin{equ}\label{eq:planted_nonpoly}
U = U^P + U^R\;,
\qquad U^P = \sum_{k \in\N^d} u_k\frac{X^k}{k!}\;,
\qquad
U^R = \sum_{\tau\in \mfT^\ex} u_{\mcI [\tau]} \frac{\mcI[\tau]}{\tau!}\;,
\end{equ}
is given by
\begin{equ}
\mcD^k U = \sum_{m \geq k} u_{m} \frac{X^{m-k}}{(m-k)!} + \sum_{\tau\in\mfT^\ex} u_{\mcI[\tau]} \frac{\mcI_k[\tau]}{\tau!}\;.
\end{equ}
Then $(U-u,\Xi)^{\alpha}$ for general $\alpha\in\N^\mcE$ is defined by multiplicativity;
in particular, $(U-u,\Xi)^0=\bone$.
Remark that $\F(U)$ is well-defined as a jet.

Due to the appearance of a family of non-linearities $\{F_a\}_{a\leq 0}$  in~\eqref{eq:covariance},
it is natural to make the following definition.
\begin{definition}\label{def:ext_non-linear}
An \emph{extended non-linearity} is a family of non-linearities $\cbF=\{F_a\}_{a\leq 0}$, indexed by the non-positive real numbers,
of which at most finitely many are non-zero.
We say that $\cbF$ is a \textit{reduced non-linearity} if $F_a=0$ for all $a<0$.
\end{definition}
Note that every non-linearity $F$ gives rise to a reduced non-linearity $\cbF=\{F_a\}_{a\leq 0}$ by setting $F_0=F$ and $F_a=0$ for $a<0$,
and that this correspondence gives a bijection between non-linearities and reduced non-linearities.
\begin{remark}\label{rem:BCCH21_diff_2}
In~\cite{BCCH21}, `drivers' (instead of number $a\leq 0$) are used to index
non-linearities,
and the non-linearities therein do not depend on the noise component $\{\Xi_l\}_{l\in\N^d}$ in $\CE$ (see Remark~\ref{rem:BCCH21_diff_1}),
i.e. they are functions $\R^{\N^d} \to \R$ in the present context.
Each `driver' is then a pair $(a,L)\in (-\infty,0]\times \mfN$, where the component $L\in\mfN$ encodes a non-linearity of the form $F(u,\nabla u,\ldots)\prod_{l\in L}D^l\xi$.
The differences between our setting and that in~\cite{BCCH21} are mostly notational,
but our setting allows for several cleaner statements, e.g. that of Theorem~\ref{thm:MU_coherent}.
\end{remark}
For an extended non-linearity $\cbF= \{F_a\}_{a\leq 0}$ and a jet $U$, we denote
\begin{equ}\label{eq:def_cbF(U)}
\cbF(U) = \sum_{a\leq 0} \F_a(U)\bone^a\;.
\end{equ}
It is now natural to consider the algebraic fixed point problem
\begin{equ}\label{eq:fixed_point_simple}
U = U^P +
\mcI[\cbF(U)]\;,
\end{equ}
which generalises~\eqref{eq:fixed_point_single_F}.
Remark that, if $U$ satisfies~\eqref{eq:fixed_point_simple},
then $U$ necessarily takes the form~\eqref{eq:planted_nonpoly} and, in particular, the non-polynomial part of $U$ consists entirely of planted trees.
Remark also that $\F_a(U)$ is another jet but which is not in general of the form~\eqref{eq:planted_nonpoly}.

%\begin{remark}
%If $F$ obeys $R$, then all the trees appearing in $\F(U)$ are in $\mfT_\circ^\ex$, see~\cite[Prop.~3.13]{BCCH21}.
%\end{remark}

\begin{remark}\label{rem:triangular}
Suppose $\tau\in\mfT_\circ^\ex$ with $\fancynorm{\tau}=n$, i.e. $\tau$ has $n$ kernel edges.
It follows readily from the definition
that the coefficient of $\tau$ in $\F_a(U)$
is determined by $U^P$ and the coefficients $u_\sigma$ with $\fancynorm{\sigma} \leq n$.
\end{remark}
While the definition of $\CP$ is non-local,
the identity~\eqref{eq:fixed_point_simple}
\textit{is} local since $\mcI$ and $\F_a$ operate on jets (vs. modelled distributions).
Therefore, since we are interested in the algebraic aspects,
we will henceforth consider only jets rather than
modelled distributions.
Furthermore, the triangular structure of the fixed point problem~\eqref{eq:fixed_point_simple} (Remark~\ref{rem:triangular}) implies that the entire jet can be recovered from the polynomial part.

To summarise the above discussion, we state the following proposition.
\begin{proposition}\label{prop:polys_determine}
Suppose $\cbF=\{F_a\}_{a\leq 0}$ is an extended non-linearity
and let $U^P = \sum_{k\in\N^d} u_k X^k/k!$ be a jet taking values in the polynomials.
Then there exists a unique jet $U$ with polynomial part $U^P$ and which satisfies~\eqref{eq:fixed_point_simple}.
\end{proposition}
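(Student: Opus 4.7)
The plan is to observe that any jet $U$ solving~\eqref{eq:fixed_point_simple} is automatically of the form~\eqref{eq:planted_nonpoly}: its polynomial part equals the given $U^P$, while its non-polynomial part $\mcI[\cbF(U)]$ consists exclusively of planted trees $\mcI_0[\tau]$. Hence the only unknowns are the scalars $\{u_{\mcI[\tau]}\}_{\tau\in\mfT^\ex}$, and~\eqref{eq:fixed_point_simple} reduces to the family of scalar identities
\begin{equ}
u_{\mcI[\tau]} \;=\; \tau!\cdot c_\tau(U)\;, \qquad \tau\in\mfT^\ex\;,
\end{equ}
where $c_\tau(U)$ denotes the coefficient of $\tau/\tau!$ in the jet $\cbF(U)=\sum_{a\leq 0}\F_a(U)\bone^a$ given by~\eqref{eq:def_cbF(U)}.

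The key ingredient is the triangular structure identified in Remark~\ref{rem:triangular}: the quantity $c_\tau(U)$ is a polynomial expression in $U^P$ and in those coefficients $u_\sigma$ for which $\sigma$ enters $U$ and $\fancynorm{\sigma}\leq\fancynorm{\tau}$. Since every non-polynomial tree $\sigma$ appearing in $U$ is planted, i.e.\ of the form $\sigma=\mcI[\sigma']$ with $\fancynorm{\sigma'}=\fancynorm{\sigma}-1$, the right-hand side above depends only on $U^P$ and on those $u_{\mcI[\sigma']}$ satisfying $\fancynorm{\mcI[\sigma']}\leq\fancynorm{\tau}<\fancynorm{\mcI[\tau]}$. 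This strict inequality is the crux of the argument.

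With this in hand, I will argue by induction on $N\eqdef\fancynorm{\mcI[\tau]}\geq 1$. For the base case $N=1$ (equivalently $\fancynorm{\tau}=0$), $c_\tau(U)$ is a function of the prescribed data $U^P$ alone, uniquely determining $u_{\mcI[\tau]}$. For the inductive step, assuming $u_{\mcI[\sigma']}$ has been uniquely constructed whenever $\fancynorm{\mcI[\sigma']}<N$, the right-hand side of the identity corresponding to $\fancynorm{\mcI[\tau]}=N$ is an explicit real number, which we must adopt as the definition of $u_{\mcI[\tau]}$. Existence and uniqueness of the full jet $U$ follow simultaneously.

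The only subtlety is carefully confirming the strict inequality $\fancynorm{\mcI[\sigma']}<\fancynorm{\mcI[\tau]}$ that drives the induction; this rests solely on Remark~\ref{rem:triangular} together with the trivial observation that applying $\mcI$ adds exactly one kernel edge, with no analytic hypotheses entering. The finite-support hypothesis from Definition~\ref{def:ext_non-linear} guarantees that, at each level $N$, only finitely many of the $F_a$ contribute to $\cbF(U)$, so each recursive equation is a finite algebraic expression in finitely many previously constructed coefficients, and the procedure is genuinely unambiguous.
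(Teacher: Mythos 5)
Your proof is correct and follows essentially the same route as the paper: you reduce the fixed-point problem to a family of scalar equations for the coefficients $u_{\mcI[\tau]}$, invoke the triangular structure of Remark~\ref{rem:triangular} to show the coefficient of $\tau$ in $\cbF(U)$ depends only on $U^P$ and the $u_\sigma$ with $\fancynorm{\sigma}\leq\fancynorm{\tau}$, note that applying $\mcI$ strictly increases the kernel-edge count by one, and close the argument by induction on $\fancynorm{\mcI[\tau]}$. The paper's proof (see the proof of Proposition~\ref{prop:polys_determine}) runs the identical induction, phrased as a ``level-by-level'' recursion rather than with an explicit induction variable. The brief remark you add about the finite-support hypothesis in Definition~\ref{def:ext_non-linear} is a sensible sanity check, though it can be stated more sharply: since every tree appearing in $\F_a(U)$ carries zero extended decoration at its root, a tree $\tau$ with root decoration $a$ can only receive a contribution from the single term $\F_a(U)\bone^a$, so each recursive equation involves just one $F_a$.
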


\begin{proof}
We will show that $U^R = \sum_{\tau\in\mfT^\ex}u_{\mcI[\tau]}\tau/\tau!$ can be defined recursively from $U^P$ and $\cbF$ to satisfy $U^R = \mcI[\cbF(U)]$.
Consider first any $\tau\in\mfT^\ex$ with $\fancynorm{\tau}=0$ (so only noise edges).
Then, for any $a\leq 0$, obviously the coefficient of $\tau$ in $\F_a(U)\bone^a$ is determined by $U^P$ and $F_a$.
By~\eqref{eq:planted_nonpoly}-\eqref{eq:fixed_point_simple}, we see that $u_\sigma$ with $\fancynorm{\sigma}\leq 1$ 
is determined by $U^P$ and $\cbF$.
Continuing, the coefficients in $\F_a(U)$ of every tree $\tau$ with $\fancynorm{\tau}\leq 1$ is determined by $U^P$ and $\cbF$;
after applying $\mcI$, we see that $u_\sigma$ with $\fancynorm{\sigma}\leq 2$
is determined by $U^P$ and $\cbF$.
The proof follows from induction.
\end{proof}
%
%\begin{remark}
%At this stage, we have not made any claims about whether we can solve the fixed point problem~\eqref{eq:fixed_point_simple}.
%We will do this soon in Lemma~\ref{...}.
%\end{remark}
%
We now give a completely explicit description of $u_\tau$ in terms of $U^P$ for every $U$ satisfying~\eqref{eq:fixed_point_simple}.
\begin{definition}\label{def:Upsilon}
Given an extended non-linearity $\cbF=\{F_a\}_{a\leq 0}$ and a tree $\tau\in\mfT^\ex$ of the form~\eqref{eq:general_tree_ext}, we define a new non-linearity $\Upsilon^\cbF[\tau]$ inductively by
\begin{equ}\label{eq:Upsilon_def}
\Upsilon^\cbF [\tau] = \Big(\prod_{j\in J}
\Upsilon^\cbF[\tau_j]\Big)
\cdot
\Big(
\partial^k \prod_{j\in J} D_{(\mcI,m_j)} \prod_{l\in L} D_{(\Xi,l)}
\Big) F_a\;,
\end{equ}
where $\partial^i F$ for $i \in [d]$ is defined by
\begin{equ}\label{eq:partial_def}
\partial^i F = \sum_{p\in\N^d} \mcY_{(\mcI,p+e_i)} D_{(\mcI,p)} F\;,
\end{equ}
where $\mcY_o\colon \CE\to \R$ for $o\in\mcE$ is the non-linearity given by evaluation of the $o$-component,\label{mcY_page_ref}
and $\partial^k$ for $k\in\N^d$ is given by composition.
We extend $\Upsilon^\cbF$ to a linear map on $\CV^\ex\eqdef \Span_\R(\mfT^\ex)$.
We call $\Upsilon^\cbF$ 
the \emph{coherence map} of $\cbF$.
\end{definition}
Note that $\partial^i,\partial^j$ commute, hence $\partial^k$ is well-defined for $k\in\N^d$.
On the other hand, $\partial^i$ and $D_o$ for $o\in\mcE$ do \textit{not} commute.
Remark also that $\Upsilon^\cbF[\tau]=0$ whenever $\tau$ has a strictly positive extended decoration -- this is due to our choice in Definition~\ref{def:ext_non-linear} to index extended non-linearities by non-positive numbers.
\begin{definition}\label{def:coherent}
A jet $U$
is called \textit{coherent} with an extended non-linearity $\cbF$
if it is of the form~\eqref{eq:planted_nonpoly}
and for all $\tau\in\mfT^\ex$
\begin{equ}\label{eq:coherent_def}
u_{\mcI[\tau]} = \Upsilon^{\cbF}[\tau](U^P,0)\;.
\end{equ}
\end{definition}
\begin{remark}\label{rem:U_reduced}
If $F=\cbF$ is a reduced non-linearity and $U$ is coherent with $F$, then the expansion of $U$ in~\eqref{eq:planted_nonpoly} contains no trees with non-zero extended decorations.
\end{remark}
The motivation behind Definitions~\ref{def:Upsilon} and~\ref{def:coherent} is the following explicit version of Proposition~\ref{prop:polys_determine},
the proof of which is outlined in Section~\ref{subsec:coherent_alt_def}.
\begin{lemma}\label{lem:coherent_alt_def}
Let $U$ be a jet and $\cbF$ an extended non-linearity.
Then the following two statement are equivalent.
\begin{itemize}
\item $U$ satisfies~\eqref{eq:fixed_point_simple}.
\item $U$ is coherent with $\cbF$.
\end{itemize}
\end{lemma}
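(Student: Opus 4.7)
The plan is to combine Proposition~\ref{prop:polys_determine} with a direct combinatorial identity. Both conditions --- satisfying~\eqref{eq:fixed_point_simple} and being coherent with $\cbF$ --- uniquely determine $U$ from $U^P$: the fixed-point side by Proposition~\ref{prop:polys_determine}, and the coherent side trivially, since $u_{\mcI[\tau]} = \Upsilon^\cbF[\tau](U^P,0)$ is an explicit formula. Hence it suffices to prove one implication. I will show that every $U$ satisfying~\eqref{eq:fixed_point_simple} is coherent, by strong induction on $n = \fancynorm{\tau}$, exploiting Remark~\ref{rem:triangular}, which guarantees that the coefficient of any $\tau' \in \mfT^\ex$ in $\F_a(U)$ only involves $U^P$ and $u_{\mcI[\sigma]}$ with $\fancynorm{\sigma} < \fancynorm{\tau'}$.

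Fix $\tau \in \mfT^\ex$, write $\tau = \bone^a \tau'$ with $\tau' = X^k \Xi_L \prod_{j\in J}\mcI_{m_j}[\tau_j]$ as in~\eqref{eq:general_tree_ext}, and note $\fancynorm{\tau_j} < n$ for each $j \in J$. Extracting the coefficient of $\mcI[\tau]$ on both sides of~\eqref{eq:fixed_point_simple} reduces the coherence identity to
\[
[\tau' \text{ in } \F_a(U)]\ =\ \frac{1}{\tau'!}\,\Upsilon^\cbF[\tau](U^P,0)\,.
\]
Expanding $\F_a(U)$ via the Taylor formula~\eqref{eq:F_def}: a multi-index $\beta$ contributes to the coefficient of $\tau'$ precisely when $\beta(\Xi,l) = L(l)$ for every $l$ (producing $\Xi_L$); its $\mcI$-components contain a $(\mcI,m_j)$-slot for each $j \in J$ whose $\mcI$-part of $\mcD^{m_j}U - u_{m_j}\bone$ is selected to give $\mcI_{m_j}[\tau_j]$ with weight $u_{\mcI[\tau_j]}/\tau_j!$, which by the inductive hypothesis equals $\Upsilon^\cbF[\tau_j](U^P,0)/\tau_j!$; and its remaining $(\mcI,p)$-slots contribute $X$-parts $\sum_{q>0}u_{p+q}X^q/q!$ whose product equals $X^k$.

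Summing over all such choices and collecting symmetry factors --- $\beta!$ from the Taylor denominator, $\beta_j!$ for assigning $\beta_j$ indistinguishable slots to copies of the repeated subtree $\mcI_{m_j}[\tau_j]$, and $L(l)!$ for repeated noise edges --- factorises the coefficient of $\tau'$ as
\[
\frac{1}{\tau'!}\ \prod_{j \in J} \Upsilon^\cbF[\tau_j](U^P,0)\cdot \bigl[\partial^k\textstyle\prod_{j \in J}D_{(\mcI,m_j)}\prod_{l \in L}D_{(\Xi,l)}F_a\bigr](U^P,0)\,.
\]
Here $\partial^k$ emerges from enumerating the ways to assemble $X^k$ from $X$-parts: each extra slot $(\mcI,p)$ contributing $X^q$ gives a factor $u_{p+q}/q!$ and a derivative $D_{(\mcI,p)}$, precisely matching the definition~\eqref{eq:partial_def} of $\partial^i$ upon iteration (the commutation rule $[D_{(\mcI,q)},\partial^i] = \mathbf{1}_{q \geq e_i}D_{(\mcI,q-e_i)}$ is responsible for the iterated sums agreeing, and is also what makes $\partial^i$ and $\partial^j$ commute). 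Comparing with the recursion~\eqref{eq:Upsilon_def} then closes the induction.

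The main obstacle is the bookkeeping of these combinatorial factors --- one must verify that every ordered $\beta$ together with its $X$- versus $\mcI$-part assignment is counted exactly once, and that the factorials assemble into the prescribed $\tau'! = k!\prod_i \alpha_i! \prod_j (\tau_j!)^{\beta_j}\beta_j!$ of~\eqref{eq:tau_factorial}. This is a standard B-series-type computation, but requires careful separation of the contributions coming from repeated subtrees, repeated noise edges, and the polynomial assembly at the root.
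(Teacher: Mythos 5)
Your reduction to a single implication via the two uniqueness statements is valid (Proposition~\ref{prop:polys_determine} for the fixed point side, the explicit formula~\eqref{eq:coherent_def} for the coherence side), and your choice to prove fixed point $\Rightarrow$ coherent by induction on $\fancynorm{\tau}$ is legitimate; the paper makes the same reduction but proves the opposite implication (coherent $\Rightarrow$ fixed point) directly and delegates the converse to an exercise. The real divergence is in how the substantive direction is organized. The paper lifts the computation to the enlarged tree space $\CB$ and the lifted coherence map $\mathring\Upsilon$ via Lemma~\ref{lem:Upsilon_Q}: the decorations $\CJ_{s}[X^{t}]$ remember which derivative $\CD^s U$ each polynomial factor comes from, so that exactly one multi-index $\alpha$ in the Taylor expansion~\eqref{eq:F_def} contributes to any fixed $\sigma\in\mfB$, making the lifted fixed point~\eqref{eq:fixed_point_lifted} nearly tautological; the Fa\`a di Bruno bookkeeping is then isolated into the single translation step $Q^*$ between $\CB$ and $\CT^\ex$. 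Your proposal instead expands $\F_a(U)$ directly on $\mfT^\ex$, and so must simultaneously account for the several $\alpha$'s contributing to the same $\tau'$, the assembly of $X^k$ at the root from multiple $(\mcI,p)$-slots, and the symmetry factors from repeated subtrees and noise edges; your observations about $\partial^k$ and $[D_{(\mcI,q)},\partial^i]$ are the right ingredients, and this amounts to running the multivariate Fa\`a di Bruno argument in-line rather than factoring it through $\CB$. Neither route is wrong; the paper's detour buys a clean conceptual split (trivial fixed point in $\CB$, combinatorics concentrated in $Q^*$), while your direct route avoids introducing $\CB$ and $\mathring\Upsilon$ at the cost of the bookkeeping you yourself flag as the main obstacle --- which your sketch, like the paper's, does not carry out in full.
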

%
%Note that, up to now, we have not used the rule $R$ or any conditions relating $F$ to $R$.
%Our next goal is to 
%We see that the unique $U$ which satisfies~\eqref{eq:fixed_point_simple} takes values in $\mfT^\ex_\circ$
%whenever $\Upsilon^{\cbF}[\tau]=0$ for all $\tau$.
%
We next show how $\Upsilon$ allows us to define the non-linearities $\{F^g_a\}_{a\leq 0}$ appearing in~\eqref{eq:covariance}.
Recall that every $g\in G_-^\ex$ defines a linear map $M_g\colon\CT^\ex \to\CT^\ex$, $M_g = (g\otimes \id)\Deltam_\ex$.
Moreover, equipping $G_-^\ex$ with the product
$gh \eqdef (h\otimes g)\Deltam_\ex$, we obtain a group representation
$G_-\ni g\mapsto M_g\in \GL(\CT^\ex)$.

We further define an adjoint map $M_g^*\in\GL(\CT^\ex)$ given by
\begin{equ}\label{eq:adjoint_M_g}
\scal{M_g^*\sigma,\tau} = \scal{\sigma,M_g\tau}
\end{equ}
where $\scal{\cdot,\cdot}$ is the inner product on $\CV^\ex$ for which the elements $\tau\in\mfT^\ex$ are orthogonal with $\scal{\tau,\tau}=\tau!$.
In particular,
\begin{equ}
M_g^\ast \sigma = \sum_{\tau\in\mfT_\circ^\ex} \frac{\scal{\sigma,M_g\tau}}{\tau!}\tau\;.
\end{equ}
We naturally extend $M_g^*\colon \CV^\ex\to\CT^\ex$ by $M_g^*(\sigma)=0$ for all $\sigma\in\mfT^\ex\setminus\mfT^\ex_\circ$.
(This extension becomes natural once we view $\CT^\ex\subset\CV^\ex$ and
$M_g$ as a map $\CT^\ex\to \CV^\ex$. Then $M_g^*$ is the unique linear map $\CV^\ex\to \CT^\ex$ satisfying~\eqref{eq:adjoint_M_g}.)

\begin{remark}
Since $M_g$ maps $\tau$ to a linear sum $\sum_i c_i \tau_i$ where $|\tau_i|_\s\geq |\tau|_\s$,
it follows that $M_g^*\sigma$ is well-defined for every $\sigma\in\mfT^\ex_\circ$ and 
takes the form $M_h^*\sigma=\sum_j b_j \sigma_j \in\mcT^\ex$ where $|\sigma_j|_\s\leq|\sigma|_\s$ and the sum is finite.
\end{remark}
\begin{remark}\label{rem:coh_inner_prod}
Using the inner product $\scal{\cdot,\cdot}$ on $\CT^\ex$, we can rewrite the condition~\eqref{eq:coherent_def}
as
\begin{equ}
\scal{U, \mcI[\tau]} = \Upsilon^\cbF[\tau](U^P,0)\;,
\quad \forall \tau\in\mfT^\ex_\circ\;.
\end{equ}
\end{remark}
For an extended non-linearity $\cbF=\{F_a\}_{a\leq 0}$ and a character $g\in G_-^\ex$, we now define
another extended non-linearity $\cbF^g = \{F^g_a\}_{a\leq 0}$ by
\begin{equ}\label{eq:F^g_def}
F^g_a \eqdef \Upsilon^\cbF[M_g^*\bone^a] = \sum_{\tau\in\mfT_\circ}\frac{\scal{M_g\tau,\bone^a}}{\tau!}\Upsilon^\cbF[\tau]\;.
\end{equ}
Note that $M_g^*\bone^0 = \bone^0$ and therefore $F^g_0 = F_0$.

\begin{definition}\label{def:F_orbit}
For a non-linearity $F$, we define the \textit{orbit} of $F$ as
the set of extended non-linearities
\begin{equ}
F^{G^\ex_-} \eqdef \{F^g \,:\, g\in G^\ex_-\}\;,
\end{equ}
where we treat $F$ as a reduced non-linearity on the right-hand side.
\end{definition}
Remark that $F_0=F$ for all $\cbF=\{F_a\}_{a\leq 0} \in F^{G^\ex_-}$.
The following lemma gives an explicit formula for the coherence map of every $\cbF\in F^{G^\ex_-}$.
Note that this lemma is the first time where we use that $F$ obeys $R$ (see Definition~\ref{def:obey}).
%
%\begin{remark}\label{rem:Upsilon_vanish_trees}
%It is obvious from the definition of $\mfT_\circ^\ex\subset\mfT^\ex$ and from the definition of non-linearities obeying $R$ (Definitions~\ref{def:conform} and~\ref{def:obey}) that if $\tau\in\mfT^\ex\setminus\mfT_\circ^\ex$,
%then $\Upsilon^\cbF[\tau]=0$ for all $\cbF\in F^{G^\ex_-}$.
%\end{remark}
%
\begin{lemma}\label{lem:Upsilon_MF}
Suppose $F$ obeys $R$.
Then for $g\in G_-^\ex$, $\cbF\in F^{G_-^\ex}$, and $\tau\in\mfT_\circ^\ex$,
\begin{equ}
\Upsilon^{\cbF^g}[\tau] = \Upsilon^\cbF[M^*_g\tau]\;.
\end{equ}
\end{lemma}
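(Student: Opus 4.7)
My approach is induction on the number of kernel edges $\fancynorm{\tau}$, exploiting the recursive formula~\eqref{eq:Upsilon_def} for $\Upsilon^\cbF$ alongside an analogous recursive decomposition of $M_g^*\tau$ read off from $\Deltam_\ex$. Throughout, one keeps in mind that $\cbF^g = \{F_a^g\}_{a\leq 0}$ is defined precisely so that the identity holds for $\tau=\bone^a$, namely $F_a^g = \Upsilon^\cbF[M_g^*\bone^a]$; everything else must be bootstrapped from this.

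Base case ($\fancynorm{\tau}=0$). Here $\tau=\bone^a X^k\Xi_L$. Because $\mfT^\ex_-$ excludes planted trees, single-noise trees, and trees with non-zero polynomial decoration at the root (see the footnote defining $\mfT^\ex_-$), every subforest $A\subset\tau$ contributing to $\Deltam_\ex\tau$ consists of non-singleton noise-bundles attached to $\rho_\tau$. A direct computation in the spirit of Proposition~\ref{prop:direct_renorm_base_case} yields an explicit expression for $M_g^*\tau$ as a linear combination of trees $\bone^{a'} X^k \Xi_{L'}$ with $L'\subseteq L$, the new extended decoration $a'$ bookkeeping via~\eqref{eq:contract_def} the degree of the extracted noise-bundle. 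Expanding $\Upsilon^\cbF$ on this combination and using that the differential operator $\partial^k D_{L'}$ commutes with scalar coefficients, both sides reduce to $\partial^k D_L F_a^g$.

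Inductive step ($\fancynorm{\tau}>0$). Write $\tau=\bone^a X^k\Xi_L \prod_{j\in J}\mcI_{m_j}[\tau_j]$, so each $\tau_j$ has strictly fewer kernel edges. By~\eqref{eq:Upsilon_def}, the inductive hypothesis applied to each $\tau_j$, and the definition of $F_a^g$,
\begin{equ}
\Upsilon^{\cbF^g}[\tau] = \Big(\prod_{j\in J}\Upsilon^{\cbF}[M_g^*\tau_j]\Big)\cdot\Big(\partial^k\prod_{j\in J}D_{(\mcI,m_j)}\prod_{l\in L}D_{(\Xi,l)}\Big)\Upsilon^\cbF[M_g^*\bone^a].
\end{equ}
The task is to recognise the right-hand side as $\Upsilon^\cbF[M_g^*\tau]$. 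The plan is to split each subforest $A$ contributing to $\Deltam_\ex\tau$ into its root-containing component $A_0$ (if any; forced to branch at $\rho_\tau$ since $\mfT^\ex_-$ contains no planted trees) and the remaining components, which must lie entirely inside the subtrees $\tau_j$. Pairing under $\langle\cdot,\cdot\rangle$ and assembling via the multiplicativity $\Upsilon^\cbF[\sigma_1\cdots\sigma_n]=\prod_i\Upsilon^\cbF[\sigma_i]$ that is built into~\eqref{eq:Upsilon_def} gives the required equality.

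The hard part is the root-containing component $A_0$, which can simultaneously involve several of the edges $\mcI_{m_j}$ and arbitrary portions of the corresponding $\tau_j$, so the decomposition is not strictly multiplicative over the $\tau_j$ and spurious cross-terms threaten to appear. This is exactly where the hypothesis that $F$ obeys $R$ is used: each cross-term involves a mixed derivative of some $F_{a'}$ whose multi-index is determined by the edges of $A_0$ incident to $\rho_\tau$; if that multi-index is not in $R$, the derivative vanishes by Definition~\ref{def:obey}, killing the term. Tracking the boundary decorations $\mfe_A$ on $\partial(A_0,\tau)$—those landing on the edges $\mcI_{m_j}$ (creating derivatives of $F_{a'}$) versus those landing inside some $\tau_j$ (feeding the induction hypothesis on $\tau_j$)—and using that the extended decoration at the new root of $T/A$ is exactly $a+|A_0|_\s$ by~\eqref{eq:contract_def} (so the correct $F_{a'}$ is selected by $\Upsilon^\cbF$), the combinatorics close. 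This bookkeeping amounts essentially to the calculation carried out in~\cite[Sec.~5]{BCCH21}, and the main effort in writing out a full proof would be verifying that the weighting by the combinatorial factors $1/\mfe_A!$ and $\binom{\mfn}{\mfn_A}$ in~\eqref{eq:Deltam_ex} matches precisely the Leibniz/Faà~di~Bruno factors produced by $\partial^k\prod D_{(\mcI,m_j)}\prod D_{(\Xi,l)}$ acting on~$F_{a'}$.
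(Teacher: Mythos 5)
There is a genuine gap: your base case rests on a mischaracterisation of $M_g^*$. You claim that for $\tau = \bone^a X^k\Xi_L$ a ``direct computation in the spirit of Proposition~\ref{prop:direct_renorm_base_case} yields an explicit expression for $M_g^*\tau$ as a linear combination of trees $\bone^{a'} X^k\Xi_{L'}$ with $L'\subseteq L$.'' That description is correct for $M_g\tau$, but not for its adjoint $M_g^*\tau$. By definition $M_g^*\sigma=\sum_{\tau}\langle\sigma,M_g\tau\rangle\tau/\tau!$, so $M_g^*(\bone^a X^k\Xi_L)$ sums over \emph{all} trees $\tau\in\mfT^\ex_\circ$ that contract, after extraction of a negative subforest, to $\bone^a X^k\Xi_L$. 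Such $\tau$ can contain arbitrarily many kernel edges (already for $\Phi^4_3$, $M_g^*\bone^a$ has a $\Psi^2$-term when $a=|\Psi^2|_\s$). Consequently your base case is not a base case at all: establishing $\Upsilon^\cbF[M_g^*(\bone^a X^k\Xi_L)]=\partial^k D_LF_a^g$ already amounts to proving the full lemma on a non-trivial family of trees, and cannot be shrugged off as a direct computation. In the paper's proof this step is precisely the ``agreement on generators'' calculation~\eqref{eq:agree_on_G}, which is non-trivial: it invokes the raising operator $\hat\uparrow_k$, the identity~\eqref{eq:hat_uparrow_Upsilon}, the fact that $M_g^*$ commutes with $\hat\uparrow_k$ (Lemma~\ref{lem:M_pre-Lie_morph}, itself resting on the cointeraction Theorem~\ref{thm:cointerct_ex}), and the $\square$-grafting argument from Proposition~\ref{prop:obey_extended} to absorb the $\Xi_L$ factor.

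Beyond that, your inductive step is a sketch that you yourself flag as incomplete (the Faà~di~Bruno / combinatorial bookkeeping is left open), and the appeal to Definition~\ref{def:obey} to kill ``cross-terms'' is too weak: for $\cbF^g$ with $a'<0$ you would need the extended vanishing of Proposition~\ref{prop:obey_extended} with $\tilde R(a')$, not the raw rule $R$. More structurally, the paper avoids the entire combinatorial analysis that you are attempting. It establishes (i) that $\Upsilon^{\cbF^g}$ and $\Upsilon^F\circ M_g^*$ are both pre-Lie morphisms for the projected grafting $\bgraft_p$ — via Corollary~\ref{cor:vanish_bad_trees}, identity~\eqref{eq:Upsilon_morph}, and Lemma~\ref{lem:M_pre-Lie_morph} — and (ii) that $\CT^\ex$ is generated by $\mfG_\circ$ under $\{\bgraft_p\}$ (Exercise~\ref{exercise:pi_morph}, from Lemma~\ref{lem:free_gen}). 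The lemma then follows once the two maps agree on $\mfG_\circ$, and that reduction is what makes the proof tractable. Your induction on $\fancynorm{\tau}$ does not give you this leverage, because $M_g^*$ does not respect $\fancynorm{\cdot}$ — it sends trees with few kernel edges to sums of trees with many.
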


We will give the proof of Lemma~\ref{lem:Upsilon_MF} in Section~\ref{subsec:Upsilon_MF}.
We can now state and prove the main algebraic result of~\cite{BCCH21}.

\begin{theorem}\label{thm:MU_coherent}[Theorem~3.25 of~\cite{BCCH21}]
Suppose that $F$ obeys $R$, that $\cbF\in F^{G_-^\ex}$, and that $U$ solves the fixed point problem~\eqref{eq:fixed_point_simple}.
% satisfies~\eqref{eq:fixed_point_single_F}.
Then, for all $g\in G_-^\ex$,
$M_gU$ is coherent with $\cbF^g$
and
\begin{equ}\label{eq:MU_coherent}
M_g \cbF(U) = \cbF^g (M_g U)\;.
\end{equ}
In particular, $V\eqdef M_g U$ solves the fixed point problem
\begin{equ}
V = V^P + \mcI[\cbF^g(V)]\;.
\end{equ}
\end{theorem}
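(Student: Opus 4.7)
The plan is to reduce the claim to Lemma~\ref{lem:coherent_alt_def} and Lemma~\ref{lem:Upsilon_MF} by exploiting the commutation of $M_g$ with the integration operator $\mcI$, both on $\mcT^\ex$ and, in a suitable sense, on the adjoint side. First, since $\mfT_-^\ex$ contains no planted trees and no trees with non-zero polynomial decoration at the root, the argument in the proof of Lemma~\ref{lem:admissible} carries over verbatim to $\Deltam_\ex$ and gives $\Deltam_\ex \mcI_m\tau = (\id\otimes \mcI_m)\Deltam_\ex\tau$ and $\Deltam_\ex X^k = \e\otimes X^k$, whence
\begin{equ}\label{eq:MUcoherent_plan_commute}
M_g\mcI[\sigma] = \mcI[M_g\sigma]\;,\qquad M_g X^k = X^k\;,
\end{equ}
for all $\sigma\in\mfT^\ex$ and $k\in\N^d$. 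Using that $\scal{\mcI[\sigma_1],\mcI[\sigma_2]} = \scal{\sigma_1,\sigma_2}$ (since $\mcI[\sigma]! = \sigma!$), together with the observation that $M_g\sigma$ contains a tree of the form $\mcI_0[\cdot]$ with nonzero coefficient only when $\sigma = \mcI_0[\sigma']$ is itself of that form, a direct computation then yields the adjoint identity
\begin{equ}\label{eq:MUcoherent_plan_adjoint}
M_g^*\mcI[\tau] = \mcI[M_g^*\tau] \qquad \text{for every } \tau\in\mfT_\circ^\ex\;.
\end{equ}

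Next, I would check that $V\eqdef M_gU$ is coherent with $\cbF^g$. Since $U$ solves~\eqref{eq:fixed_point_simple}, its non-polynomial part consists of planted trees $\mcI[\cdot]$, so~\eqref{eq:MUcoherent_plan_commute} gives
\begin{equ}\label{eq:MUcoherent_plan_MU_decomp}
M_gU = U^P + \mcI[M_g\cbF(U)]\;,
\end{equ}
which is again of the form~\eqref{eq:planted_nonpoly} and satisfies $(M_gU)^P = U^P$. Moreover, Lemma~\ref{lem:coherent_alt_def} tells us that $U$ is coherent with $\cbF$, so by Remark~\ref{rem:coh_inner_prod} one has $\scal{U,\mcI[\sigma]} = \Upsilon^\cbF[\sigma](U^P,0)$ for all $\sigma\in\mfT_\circ^\ex$, an identity that I would extend linearly to $\sigma\in\mcT^\ex$. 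Using in turn~\eqref{eq:MUcoherent_plan_adjoint}, this extended coherence identity applied to $M_g^*\tau$, and Lemma~\ref{lem:Upsilon_MF}, I obtain for every $\tau\in\mfT_\circ^\ex$
\begin{equ}
\scal{M_gU,\mcI[\tau]} = \scal{U,\mcI[M_g^*\tau]} = \Upsilon^{\cbF}[M_g^*\tau](U^P,0) = \Upsilon^{\cbF^g}[\tau]\bigl((M_gU)^P,0\bigr)\;,
\end{equ}
which, again by Remark~\ref{rem:coh_inner_prod}, is precisely the coherence of $M_gU$ with $\cbF^g$.

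Finally, the remaining assertions follow by assembly. Lemma~\ref{lem:coherent_alt_def} applied in the reverse direction to $M_gU$ and $\cbF^g$ gives
\begin{equ}\label{eq:MUcoherent_plan_finalFP}
M_gU = U^P + \mcI\bigl[\cbF^g(M_gU)\bigr]\;,
\end{equ}
which is already the fixed point problem claimed in the last sentence of the theorem. Comparing~\eqref{eq:MUcoherent_plan_finalFP} with~\eqref{eq:MUcoherent_plan_MU_decomp} and noting that $\mcI\colon\tau\mapsto\mcI_0[\tau]$ is injective on jets then yields $M_g\cbF(U) = \cbF^g(M_gU)$, which is~\eqref{eq:MU_coherent}. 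The main obstacle will be the careful verification of~\eqref{eq:MUcoherent_plan_adjoint}: one has to argue that the only $\sigma\in\mfT_\circ^\ex$ for which $M_g\sigma$ contains a tree of the form $\mcI_0[\tau]$ with nonzero coefficient are the planted $\sigma = \mcI_0[\sigma']$, and then track the factorial conventions $\mcI[\sigma']! = \sigma'!$ through the identity $\scal{\mcI[\tau],\mcI[M_g\sigma']} = \scal{\tau,M_g\sigma'}$; once that is in place, everything else reduces to the assembly of Lemmas~\ref{lem:coherent_alt_def} and~\ref{lem:Upsilon_MF} with the basic commutation~\eqref{eq:MUcoherent_plan_commute}.
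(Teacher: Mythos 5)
Your proposal is essentially the paper's proof.  Both use the commutation $M_g\mcI[\sigma]=\mcI[M_g\sigma]$, express coherence via the inner product (Remark~\ref{rem:coh_inner_prod}), and verify coherence of $M_gU$ with $\cbF^g$ through the chain $\scal{M_gU,\mcI[\tau]}=\scal{U,\mcI[M_g^*\tau]}=\Upsilon^\cbF[M_g^*\tau](U^P,0)=\Upsilon^{\cbF^g}[\tau](U^P,0)$, invoking Lemma~\ref{lem:coherent_alt_def} to obtain \eqref{eq:MU_coherent}.

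The one place you put more weight than the paper does is the adjoint identity $M_g^*\mcI[\tau]=\mcI[M_g^*\tau]$: the paper uses $\scal{U,M_g^*\mcI[\tau]}=\scal{U,\mcI[M_g^*\tau]}$ without comment, and you rightly flag that one must check that $M_g\sigma$ never produces a tree of the form $\mcI_0[\,\cdot\,]$ when $\sigma$ is not itself of that form.  The reasoning you sketch is sound: if the root of $\sigma$ is not in the extracted subforest $A$, the root structure (and its decorations) survives unchanged; if the root lies in a component $A_1\subset A$ that is not annihilated by $\pi_-^\ex$, then $A_1\in\mfT_-^\ex$ has negative degree, so the new root picks up the nonzero extended decoration $|A_1|_\s<0$ and the contracted tree cannot be $\mcI_0[\,\cdot\,]$.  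Together with $\mcI[\sigma]!=\sigma!$ this gives the adjoint identity, so the gap you identify is indeed fillable and your argument is complete.
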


\begin{proof}
Since $M_g$ acts trivially on the polynomials, we have $M_gU^P = U^P$.
Furthermore $U$ is of the form~\eqref{eq:planted_nonpoly} and \begin{equ}\label{eq:M_I_commute}
M_g\mcI[\tau] = \mcI[M_g\tau]\;.
\end{equ}
Therefore, by Lemma~\ref{lem:coherent_alt_def},~\eqref{eq:MU_coherent} follows from the coherence of $M_gU$ with $\cbF^g$, which we now prove.
Since $M_g U$ is also of the form~\eqref{eq:planted_nonpoly} due to~\eqref{eq:M_I_commute},
it suffices by Remark~\ref{rem:coh_inner_prod} to show
\begin{equ}\label{eq:MU_coherent2}
\scal{M_g U,\mcI[\tau]} = \Upsilon^{\cbF^g}[\tau](M_g U^P,0)\;.
\end{equ}
To this end, observe that
\begin{equs}
\scal{M_g U,\mcI[\tau]}
&= \scal{U,M_g^*\mcI[\tau]}
=
\scal{U,\mcI[M_g^*\tau]}
\\
&=
\Upsilon^{\cbF}[M_g^*\tau](U^P,0)
=
\Upsilon^{\cbF^g}[\tau](U^P,0)\;.
\end{equs}
In the third equality above we used that $U$ is coherent with $\cbF$ by Lemma~\ref{lem:coherent_alt_def} together with Remark~\ref{rem:coh_inner_prod},
and in the fourth equality we used Lemma~\ref{lem:Upsilon_MF}.
Since $U^P = M_gU^P$,
we obtain~\eqref{eq:MU_coherent2}.
\end{proof}
As we see above, Theorem~\ref{thm:MU_coherent} is a consequence of Lemmas~\ref{lem:coherent_alt_def} and~\ref{lem:Upsilon_MF}.
These two results are stated and proved as~\cite[Lem.~4.6]{BCCH21}
and~\cite[Lem.~3.22 \&~3.23]{BCCH21}.
In the remainder of these lecture notes, we
sketch the proofs of these results.

\begin{remark}\label{rem:Upsilon_reduced_structure}
A similar and simpler result holds on the reduced structure.
That is, for each $g\in G_-$, we can define a single non-linearity $M_g F = \Upsilon^F[M_g^*\bone]$, where we now treat $M_g\colon\CT\to\CT$ as a map on the reduced structure.
In this case one obtains an action of $G_-$ on the space of non-linearities $F$ together with a simpler covariance property
\begin{equ}
M_g \F(U) = \F^g (M_gU)\;.
\end{equ}
Unfortunately the argument outlined
at the start of this section to derive the renormalised PDE~\eqref{eq:renorm_PDE} breaks down if working on the reduced structure (specifically,~\eqref{eq:nice_M_g} does not hold).
\end{remark}

\subsection{An enlarged space of trees}

The main idea behind both proofs of Lemmas~\ref{lem:coherent_alt_def} and~\ref{lem:Upsilon_MF} is to enlarge the space of trees in order to keep track of
which derivative of the solution every polynomial term comes from (in the case of a system of SPDEs where the solution has multiple components,
this would also keep track of polynomial terms coming from different components).

The new set of trees will be denoted $\mfB$ and we define $\mfB$ first using symbols.
For $n\geq0$, consider the set of trees $\mfB^n$ defined in symbols by $\mfB^0=\emptyset$ and for $n\geq 1$ by\label{CJ_page_ref}
\begin{multline*}
\mfB^n = \Big\{ \bone^a
\prod_{k\in K} \CJ_{s_k}[X^{t_k}]
\Xi_L
\prod_{j \in J} \mcI_{m_j}[\sigma_j] \,:\,
a\in \R\,,\,
\# J,\#K<\infty\,,\,
L \in\mfN\,,
\\
m_j,s_k,t_k \in \N^d\,,\,
s_k\leq t_k\,,\, s_k\neq t_k\,,\,
\sigma_j \in \mfB^{n-1}\Big\} \cup \mfB^{n-1}\;.
\end{multline*}
We then set\label{mfB_pag_ref}
\begin{equ}
\mfB =
%\Big[
\bigcup_{n\geq 0}\mfB^n
%\Big]
%\sqcup \{X^k\,:\,k \neq 0\}
\end{equ}
and let $\CB=\Span_\R(\mfB)$.\label{CB_pag_ref}
Every $\sigma\in\mfB$ is of the form
\begin{equ}\label{eq:tree_in_mfB}
\sigma = Y\prod_{j\in J} \mcI_{m_j}[\sigma_j]
\eqdef
\bone^a\prod_{k\in K} \CJ_{s_k}[X^{t_k}]
\Xi_L \prod_{j\in J} \mcI_{m_j}[\sigma_j] 
\end{equ}
where $J,K$ are finite index sets, $a\in \R$, $t_k\geq s_k$, $t_k\neq s_k$, $L\in\mfN$, and $\sigma_j \in \mfB$. This representation is unique up to reordering factors
and induces an associated and commutative product $\mfB\times\mfB\to\mfB$ exactly as in~\eqref{eq:mfT^ex_prod}.

The interpretation of $\sigma\in\mfB$ as a tree follows the same logic as the explanation around~\eqref{eq:general_tree}.
The term $\prod_{k\in K} \CJ_{s_k}[X^{t_k}]$ is treated as a decoration at each node (although it is helpful to think of $\CJ_{s_k}$ as an edge of the same nature as $\mcI_{s_k}$).
The only difference between trees in $\mfB$ and those in $\mfT^\ex$
is that the polynomial decorations $\prod_{k\in K} \CJ_{s_k}[X^{t_k}]$ come with extra information.
We further introduce a surjective `projection' 
map\label{Q_page_ref}
\begin{equ}
Q\colon \mfB\to \mfT^\ex\;,
\end{equ}
which forgets this extra information and is defined, for $\sigma\in\mfB$ of the form~\eqref{eq:tree_in_mfB}, recursively by
% $Q(X^k)=0$ for $k\neq 0$ and, for $\sigma\in\mfB$ of the form~\eqref{eq:tree_in_mfB_polys}, by
\begin{equ}
Q(\sigma) = \bone^a\prod_{k\in K} X^{t_k-s_k}
\Xi_L
\prod_{j\in J} \mcI_{m_j}[Q \sigma_j]\;.
\end{equ}
We extend $Q$ to a linear map $Q\colon\CB\to\CV^\ex$.

We let $\bone\in\mfB$ denote the tree with a single node and zero decorations, i.e. the case $L=\emptyset$, $a=0$, and $\#K=\#J=0$ in~\eqref{eq:tree_in_mfB}, which we note is the unit for the product on $\mfB$.\label{bone_CB_page_ref}

Finally, for $\sigma\in\mfB$ of the form~\eqref{eq:tree_in_mfB} and extended non-linearity $\cbF=\{F_a\}_{a\leq 0}$, we define a new non-linearity $\mathring\Upsilon^{\cbF}[\sigma]$ inductively by
\begin{equ}\label{eq:mathring_Upsilon_def}
\mathring\Upsilon^{\cbF} [\sigma] =
\prod_{k\in K} \CY_{(\mcI,t_k)}
\prod_{j\in J}
\mathring\Upsilon^{\cbF}[\sigma_j]
\cdot
\Big(
\prod_{k\in K} D_{(\mcI,s_k)}
\prod_{j\in J} D_{(\mcI,m_j)} \prod_{l \in L} D_{(\Xi,l)}
\Big) F_a\;.
\end{equ}
While operating on a seemingly more complicated space,
it turns out that the lifted coherence map $\mathring\Upsilon^\cbF$ on the set of trees $\mfB$ is significantly simpler to work with.

First, we give a relationship between $\mathring\Upsilon$ and $\Upsilon$.

\begin{lemma}\label{lem:Upsilon_Q}
For any $\tau\in\mfT^\ex$ and extended non-linearity $\cbF$,
\begin{equ}
\Upsilon^{\cbF}[\tau] = \mathring \Upsilon^{\cbF}[Q^*\tau]\;.
\end{equ}
\end{lemma}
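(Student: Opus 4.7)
The plan is to proceed by structural induction on $\tau \in \mfT^\ex$, matching the recursive definitions of $\Upsilon^\cbF$ in~\eqref{eq:Upsilon_def} and $\mathring\Upsilon^\cbF$ in~\eqref{eq:mathring_Upsilon_def}. First I would unpack $Q^*$: equipping $\CB$ with the inner product analogous to the one on $\CV^\ex$ (with $\langle\sigma,\sigma\rangle = \sigma!$, where $\sigma!$ now also contains factors $t_k!$ and symmetry factors for the multiplicities of the decorations $\mathcal{J}_{s_k}[X^{t_k}]$), the adjoint $Q^*\tau$ becomes a sum over the fibre $Q^{-1}(\tau) \subset \mfB$ with coefficients $\tau!/\sigma!$. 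So the claim to prove is the explicit identity
\[
\Upsilon^\cbF[\tau] = \sum_{\sigma\in Q^{-1}(\tau)} \frac{\tau!}{\sigma!}\, \mathring\Upsilon^\cbF[\sigma].
\]

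For the base case $\tau = \bone^a X^k \Xi_L$ (no kernel edges), $\Upsilon^\cbF[\tau] = \partial^k G$ with $G = \prod_{l\in L}D_{(\Xi,l)}F_a$. The fibre $Q^{-1}(\tau)$ consists of trees $\bone^a\Xi_L\prod_{r\in K}\mathcal{J}_{s_r}[X^{t_r}]$ with $t_r\geq s_r$, $t_r\neq s_r$, and $\sum_{r\in K}(t_r-s_r)=k$, on which $\mathring\Upsilon^\cbF$ evaluates to $\prod_r\mcY_{(\mcI,t_r)}\prod_r D_{(\mcI,s_r)}G$. The identity reduces to the combinatorial fact that iterating the derivation $\partial^{e_i} = \sum_p \mcY_{(\mcI,p+e_i)}D_{(\mcI,p)}$ and applying Leibniz produces exactly this sum: at each step, $D_{(\mcI,p')}$ either hits an existing factor $\mcY_{(\mcI,t_r)}$ (increasing $t_r$ by $e_i$ via the Kronecker delta, corresponding to redistributing a unit of $k$) or creates a new pair $(p',p'+e_i)\in K$, and the combinatorial factor $\tau!/\sigma! = k!/\prod_r(t_r-s_r)!$ (divided by the multiplicities of repeated $(s_r,t_r)$) is precisely what the Leibniz bookkeeping produces. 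I would verify this by induction on $|k|$ separately.

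For the inductive step $\tau = \bone^a X^k \Xi_L \prod_{j\in J}\mcI_{m_j}[\tau_j]$, note that $Q$ acts identity-on-$\mcI$-edges and on the $\bone^a\Xi_L$ factor, so $Q^{-1}(\tau)$ is in bijection with tuples consisting of a decomposition of $X^k$ into a multiset $\{(s_r,t_r)\}_{r\in K}$ as above and an independent choice of $\sigma_j\in Q^{-1}(\tau_j)$ for each $j\in J$; moreover the combinatorial factors factorise as $\sigma!/\tau! = (\prod_r t_r!\cdot(\text{mult. symmetries}))/k! \cdot \prod_j \sigma_j!/\tau_j!$. Combining the inductive hypothesis $\Upsilon^\cbF[\tau_j]=\mathring\Upsilon^\cbF[Q^*\tau_j]$, the multiplicativity of the leading $\prod_{j}(\cdot)$ prefactor in both $\Upsilon$ and $\mathring\Upsilon$, and the base-case identity applied now to $G = \prod_j D_{(\mcI,m_j)}\prod_l D_{(\Xi,l)}F_a$, gives the required equality.

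The main obstacle will be the combinatorial bookkeeping of $\tau!/\sigma!$, in particular the multiplicity/symmetry factors arising when several $(s_r,t_r)$ or several $\tau_j$ coincide, and their interaction with the $\prod \beta_j!$ in the definition~\eqref{eq:tau_factorial} of $\tau!$. I expect this to come out correctly because $Q^*$ is defined via the inner product, so overcounting is compensated by design; nevertheless, to avoid notational friction it is cleanest to \emph{define} $Q^*\tau \eqdef \sum_{\sigma\in Q^{-1}(\tau)}(\tau!/\sigma!)\sigma$ directly and verify the above identity by induction, deferring the interpretation as an adjoint to the natural pairing to a short separate check.
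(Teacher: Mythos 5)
Your proposal is correct and takes essentially the same approach as the paper: structural induction on the tree, with the polynomial decoration handled by a multivariate Fa\`a di Bruno / iterated-Leibniz identity for the derivations $\partial^{e_i}$. The paper does not spell this argument out beyond noting that the proof proceeds by induction and rests on a multi-variable Fa\`a di Bruno formula, which is exactly the plan you describe, including the key observation that the symmetry factors $\tau!/\sigma!$ coming from the adjoint normalisation of $Q^*$ absorb the multiplicity bookkeeping for repeated pairs $(s_r,t_r)$ and repeated subtrees $\tau_j$.
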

The map $Q^*$ above is the adjoint of $Q$ under the inner product $\scal{\cdot,\cdot}$ on $\CV^\ex$ and the inner product $\scal{\cdot,\cdot}_{\CB}$ on $\CB$ for which the elements $\sigma\in \mfB$ are orthogonal with $\scal{\sigma,\sigma}_{\CB}=\sigma!$.
Here $\sigma!$ is defined analogously to~\eqref{eq:tau_factorial}: let us write $\sigma\in\mfB$ uniquely as
\begin{equ}
\sigma = \bone^a \prod_{k=1}^K\CJ_{s_k}[X^{t_k}]^{\gamma_k}
\prod_{i=1}^I \Xi_{l_i}^{\alpha_i}
\prod_{j=1}^J \mcI_{m_j}[\sigma_j]^{\beta_j}
\end{equ}
where $I,J,K\geq 0$, and $(s_1,t_1),\ldots, (s_K,t_K)\in \N^d\times\N^d$ are distinct, $l_1,\ldots,l_I\in \N^d$ are distinct, $(m_1,\sigma_1),\ldots,(m_J,\sigma_J)\in \N^d\times\mfB$
are distinct, and $\alpha_i,\beta_j,\gamma_k \geq 1$.
We then define inductively
\begin{equ}
\sigma! = \prod_{k=1}^K (t_k-s_k)!^{\gamma_k}\gamma_k! \prod_{i=1}^I \alpha_i!\prod_{j=1}^J (\sigma_j!)^{\beta_j}\beta_j!\;.
\end{equ}
Lemma~\ref{lem:Upsilon_Q} demonstrates that $\mathring \Upsilon^\cbF$ can be seen as a decomposition of $\Upsilon^\cbF$.
The proof of this lemma, which proceeds by induction, rests on a multi-variable Fa\`a Di Bruno formula (see~\cite[Appendix~A.2]{BCCH21}).

\subsection{Proof of Lemma~\ref{lem:coherent_alt_def}}
\label{subsec:coherent_alt_def}

We can now give a brief sketch the proof of Lemma~\ref{lem:coherent_alt_def}.
Suppose first that $U$ is coherent with $\cbF=\{F_a\}_{a\leq 0}$; we aim to prove~\eqref{eq:fixed_point_simple}.
By Lemma~\ref{lem:Upsilon_Q},
\begin{equ}
\CD^s U = u_s\bone+
\sum_{\substack{t\geq s\\t\neq s}}u_t \frac{X^{t-s}}{(t-s)!}
+
Q\sum_{\sigma\in\mfB}\mathring \Upsilon^\cbF[\sigma] \frac{\mcI_s[\sigma]}{\sigma!}\;.
\end{equ}
The above expression suggests we should look at a new `jet' $\CD^s\hat U$ which is a formal series of trees in $\mfB$ and is defined by
\begin{equ}
\CD^s \hat U = u_s\bone +
\sum_{\substack{t \geq s \\ t\neq s}}u_t\frac{\CJ_s [X^t]}{(t-s)!}
+
\sum_{\sigma\in\mfB}\mathring \Upsilon^F[\sigma] \frac{\mcI_s[\sigma]}{\sigma!}\;.
\end{equ}
We should think of this new jet as living above $\CD^s U$ since $Q\CD^s\hat U = \CD^s U$.
What is now easy to see is that $\hat U\eqdef \CD^0 \hat U$ solves the fixed point problem
\begin{equ}\label{eq:fixed_point_lifted}
\hat U = u_0\bone + \sum_{\substack{t\in\N^d\\ t\neq 0}} u_t \frac{\CJ_0[X^t]}{t!} + \mcI[\cbF(\hat U)]\;,
\end{equ}
where $\cbF(\hat U)$ is defined analogously to~\eqref{eq:F_def}-\eqref{eq:def_cbF(U)}.
This identity is not difficult to show
because, when we consider $\scal{\F_a(\hat U),\sigma}$ for $\sigma$ of the form~\eqref{eq:tree_in_mfB},
the only terms in the expansion~\eqref{eq:F_def} of $\F_a(\hat U)$ which do not vanish are those for the \textit{single} multi-index $\alpha\in\N^\CE$
given by
\begin{equ}
\alpha(\mcI,q) = \sum_{k\in K}\delta_{s_k,q} + \sum_{j\in J} \delta_{m_j,q}
\end{equ}
and
\begin{equ}
\alpha(\Xi,q) = \sum_{l\in L} \delta_{l,q}\;,
\end{equ}
where $\delta$ is the Kronecker delta.
That is, we simply count the instances of $\mcI_q$, $\CJ_q$, and $\Xi_q$ appearing at the root of $\sigma$ for each $q\in\N^d$ (here $\mcI_q$ and $\CJ_q$ combine towards the same total).
We see here the point of the extra decorations $\CJ_{s_k}[X^{t_k}]$:
an analogous statement for $\F_a(U)$ is more complicated
because polynomials of the different derivatives $\CD^s U$ are not distinguished
and different multi-indexes can contribute to the coefficient of one tree.
With this remark in hand, it is simple to verify~\eqref{eq:fixed_point_lifted}.
To pass from this `lifted' fixed point problem to the original one~\eqref{eq:fixed_point_simple}, one uses Lemma~\ref{lem:Upsilon_Q}
together with (somewhat tedious) manipulations of multi-indexes (see~\cite[Lem.~4.6]{BCCH21}).

The reverse implication, that~\eqref{eq:fixed_point_simple} implies that $U$ is coherent with $\cbF$, follows from the following exercise.
\begin{exercise}
Suppose $U$ solves~\eqref{eq:fixed_point_simple} and write $U$ as in~\eqref{eq:planted_nonpoly}.
Let $V$ be the unique jet such that $V^P=U^P$ and $V$ is coherent with $\cbF$.
Prove that $u_{\mcI[\tau]} = v_{\mcI[\tau]}$ for all $\tau\in\mfT^\ex$.
[Hint: proceed by induction on $\fancynorm{\tau}$
and use that $V$ also solves~\eqref{eq:fixed_point_simple} due to the first implication.]
\end{exercise}
This completes the sketch proof of Lemma~\ref{lem:coherent_alt_def}.

\subsection{Proof of Lemma~\ref{lem:Upsilon_MF}}
\label{subsec:Upsilon_MF}

We now turn to the proof of Lemma~\ref{lem:Upsilon_MF}.
It turns out that the proof of this lemma rests on a pre-Lie morphism property of the adjoint renormalisation map $M_g^*$.

Recall that a pre-Lie algebra is a vector space $V$ together with a pre-Lie product $\triangleleft$, i.e. a bilinear map $\triangleleft \colon V\times V\to V$
which satisfies the pre-Lie identity
\begin{equ}\label{eq:pre-Lie}
x\triangleleft (y\triangleleft z) - (x \triangleleft y)\triangleleft z
= y\triangleleft (x\triangleleft z) - (y\triangleleft x) \triangleleft z\;.
\end{equ}
Notice that if $\triangleleft$ is associative, then the pre-Lie identity is trivially satisfied with both the left- and right-hand sides of~\eqref{eq:pre-Lie} equal to $0$.
Pre-Lie algebras are therefore generalisations of associative algebras where the condition $x\triangleleft (y\triangleleft z) = (x \triangleleft y)\triangleleft z$
is relaxed to only imposing that the associator $x\triangleleft (y\triangleleft z) - (x \triangleleft y)\triangleleft z$ is symmetry in $x$ and $y$.
See~\cite{Manchon} for a short survey on pre-Lie algebras.

The name `pre-Lie' comes from the fact that $[x,y] \eqdef x\triangleleft y - y\triangleleft x$ is a Lie bracket (which is a simple exercise to verify).
We give two prototypical examples of pre-Lie algebras which do \textit{not} come from an associative product.

\begin{example}
The space $C^\infty(\R,\R)$ 
equipped with $\triangleleft \colon (f,g) \mapsto f D g$
is a pre-Lie algebra.
Indeed, the associator $f\triangleleft (g\triangleleft h) - (f \triangleleft g)\triangleleft h$ is equal to $fg D h$,
which is symmetric in $f,g$ as required. 
The associated Lie bracket is the familiar  Lie bracket of vector fields 
$[f,g]=f D g - g D f$.
\end{example}

\begin{example}\label{ex:graft_trees}
The space $(\CH,\curvearrowright)$ is a pre-Lie algebra, where $\CH$ is the span of combinatorial rooted trees with no decorations
and $\curvearrowright$ is the grafting operator,
i.e. $\tau \curvearrowright \sigma$ is the sum of trees obtained by grafting $\tau$ to every node of $\sigma$ using a new edge.
For example,
\begin{equ}
\intonesq \graft \intone = \graftone \;+ \grafttwo\;,
\end{equ}
where we colour the grafted tree in red to make the operation easier to follow.
Both the left- and right-hand sides of the pre-Lie identity~\eqref{eq:pre-Lie} are then equal to grafting $x$ and $y$ to nodes of $z$ using new edges (but \textit{not} grafting $x$ to $y$ or vice versa), which is clearly symmetric in $x$ and $y$.

It was shown in~\cite{Chapoton01} (see also~\cite{Dzhumadildaev02})
that $(\CH,\graft)$ is the \textit{free} pre-Lie algebra generated by $\bone$ (the tree with a single node).
That is, for any other pre-Lie algebra $(V,\triangleleft)$ and element $v\in V$,
there exists a unique linear map $\phi\colon\CH\to V$ such that $\phi(\bone)=v$ and such that $\phi$ is a pre-Lie morphism,
i.e. $\phi(\tau\graft\sigma)=\phi(\tau)\triangleleft \phi(\sigma)$.
\end{example}
Both examples above will be relevant to the proof of Lemma~\ref{lem:Upsilon_MF}.
In fact, we will generalise these examples as follows.
The space of non-linearities comes with a \textit{family} of natural pre-Lie products $\{\triangleleft_p\}_{p\in\N^d}$
defined by\label{pre_Lie_page_ref}
\begin{equ}
F \triangleleft_p G = F D_{(\mcI,p)} G\;.
\end{equ}
These operators satisfy, for all $p,q\in\N^d$,
\begin{equ}
F \triangleleft_p  (G \triangleleft_q H) -  (F \triangleleft_p G) \triangleleft_q H = G \triangleleft_q (F \triangleleft_p H) - (G \triangleleft_q F) \triangleleft_p H\;,
\end{equ}
which is a generalisation of the pre-Lie identity~\eqref{eq:pre-Lie}.

On the side of trees, it is again
easier to define analogous grafting operators on the space of trees $\CB$ with enriched polynomial decorations.
Before proceeding, we introduce a piece of notation that will simplify the definitions of several operators below.
\begin{definition}\label{def:mcL_lift}
Consider a linear operator $\Phi\colon\CB\to\CB$.
Let $\mcL(\Phi) \colon \CB\to\CB$ be the linear operator defined for $\sigma$ of the form~\eqref{eq:tree_in_mfB} by
\begin{equ}
\mcL(\Phi) \sigma = Y\sum_{i \in J} \mcI_{m_i}[\Phi\sigma_i]\prod_{\substack{j\in J\\ j\neq i}} \mcI_{m_j}[\sigma_j]\;.
\end{equ}
We make an analogous definition for linear operators $\Phi\colon\CV^\ex\to\CV^\ex$.
\end{definition}
Intuitively, the operator $\mcL(\Phi)$ is a `lift' of $\Phi$ which acts separately on the branches of $\sigma$ and leaves the root untouched.
This construction allows us to define certain operator through recursion using the heuristic formula
\begin{equ}
\Phi = \text{[action at the root] + [lift the operator to the branches],}
\end{equ}
where the first term does not involve $\Phi$ and the second term is $\mcL(\Phi)$.
The next definition illustrates how this works.

For $p\in \N^d$, we define the linear `grafting' operator
\begin{equ}
\graft_p\colon \CB \otimes \CB \to \CB\;, \quad \graft_p\colon \tilde\sigma\otimes \sigma \mapsto \tilde\sigma \graft_p\sigma
\end{equ}
as follows.
Consider $\tilde \sigma,\sigma \in \mfB$
with  $\sigma$ of the form~\eqref{eq:tree_in_mfB}.
Then we define inductively\label{graft_page_ref}
\begin{equs}
\tilde \sigma \graft_p \sigma
&\eqdef \tilde \sigma \graft_p^{\root} \sigma  + \tilde \sigma \graft_p^{\poly} \sigma
+ \tilde \sigma \graft_p^{\nonroot} \sigma
\\
&\eqdef \mcI_p[\tilde \sigma] \sigma
+ \bone^a\Big(
\sum_{i \in K}
\delta_{p,t_i} \CI_{s_{i}}[\tilde \sigma]
\prod_{\substack{k \in K\\ k\neq i}} \CJ_{s_k}[X^{t_k}]
\Big)
\Xi_L
\prod_{j\in J} \mcI_{m_j}[\sigma_j]
\\
&\quad + \mcL(\tilde\sigma\graft_p) \sigma\;,
\end{equs}
where $\delta$ is the Kronecker delta.
The two terms $\tilde \sigma \graft_p^{\root} \sigma  + \tilde \sigma \graft_p^{\poly} \sigma$
correspond to the action of $\tilde\sigma \graft_p$ at the root, while $\tilde \sigma \graft_p^{\nonroot} \sigma = \mcL(\tilde \sigma \graft_p )(\sigma)$
is the lift of $\tilde \sigma \graft_p$ to the branches of $\sigma$,
i.e. for $\sigma$ of the form~\eqref{eq:tree_in_mfB},
\begin{equ}
\mcL(\tilde\sigma\graft_p) \sigma = Y
\Big(
\sum_{i \in J} \mcI_{m_{i}}[\tilde \sigma \graft_p
\sigma_{i}]
\prod_{\substack{j \in J\\ j\neq i}} \mcI_{m_j}[\sigma_j]
\Big)\;.
\end{equ}
\begin{example}
Suppose $p,q\in\N^d$ are distinct. Then, for any $\tilde\sigma\in \mfB$,
\begin{equs}
\tilde \sigma \graft_p
\begin{tikzpicture}  [scale=0.8,baseline=-0.3cm] 
    \node [dot,label=-90:{\scriptsize $\bone^a\CJ_{\alpha}[X^{p}]\CJ_{\beta}[X^{q}]$}] (m) at (-2, -1) {};
    \node [dot,label=90:{\scriptsize $\bone^b\CJ_{\gamma}[X^p]$}] (ml) at (-1, 0) {};
    \node [dot] (new) at (-3, 0) {};
    \draw (m) to node[sloped, midway,below]{\scriptsize $\CI_q$} (ml);
    \draw (m) to node[sloped, midway,below] {\scriptsize $\Xi_l$} (new);
\end{tikzpicture}
=
\begin{tikzpicture} [
roundnode/.style={circle, draw=red!60, fill=red!5, thick, minimum size=7mm},
scale=0.8,baseline=-0.3cm] 
    \node [dot,label=-90:{\scriptsize $\bone^a\CJ_{\alpha}[X^{p}]\CJ_{\beta}[X^{q}]$}] (m) at (-2, -1) {};
    \node [dot,label=90:{\scriptsize $\bone^b\CJ_{\gamma}[X^p]$}] (ml) at (0, 0) {};
    \node [dot] (new) at (-3, 0) {};
\node [roundnode] (new1) at (-1.5, 0) {$\tilde\sigma$};
\draw (m) to node[midway,left,yshift=1mm,xshift=1mm]{\scriptsize $\CI_p$} (new1);
    \draw (m) to node[sloped, midway,below]{\scriptsize $\CI_q$} (ml);
    \draw (m) to node[sloped, midway,below] {\scriptsize $\Xi_l$} (new);
\end{tikzpicture}
&+
\begin{tikzpicture} [
roundnode/.style={circle, draw=red!60, fill=red!5, thick, minimum size=7mm},
scale=0.8,baseline=-0.3cm] 
    \node [dot,label=-90:{\scriptsize $\bone^a\CJ_{\beta}[X^{q}]$}] (m) at (-2, -1) {};
    \node [dot,label=90:{\scriptsize $\bone^b\CJ_{\gamma}[X^p]$}] (ml) at (0, 0) {};
    \node [dot] (new) at (-3, 0) {};
\node [roundnode] (new1) at (-1.5, 0) {$\tilde\sigma$};
\draw (m) to node[midway,left,yshift=1mm,xshift=1mm]{\scriptsize $\CI_\alpha$} (new1);
    \draw (m) to node[sloped, midway,below]{\scriptsize $\CI_q$} (ml);
    \draw (m) to node[sloped, midway,below] {\scriptsize $\Xi_l$} (new);
\end{tikzpicture}
\\
+
\begin{tikzpicture} [
roundnode/.style={circle, draw=red!60, fill=red!5, thick, minimum size=7mm},
scale=0.8,baseline=-0.3cm] 
    \node [dot,label=-90:{\scriptsize $\bone^a\CJ_{\alpha}[X^{p}]\CJ_{\beta}[X^{q}]$}] (m) at (-2, -1) {};
    \node [dot,label=0:{\scriptsize $\bone^b\CJ_{\gamma}[X^p]$}] (ml) at (-1, 0) {};
    \node [dot] (new) at (-3, 0) {};
\node [roundnode] (new1) at (-2, 1) {$\tilde\sigma$};
\draw (ml) to node[midway,sloped,below]{\scriptsize $\CI_p$} (new1);
    \draw (m) to node[sloped, midway,below]{\scriptsize $\CI_q$} (ml);
    \draw (m) to node[sloped, midway,below] {\scriptsize $\Xi_l$} (new);
\end{tikzpicture}
&+
\begin{tikzpicture} [
roundnode/.style={circle, draw=red!60, fill=red!5, thick, minimum size=7mm},
scale=0.8,baseline=-0.3cm] 
    \node [dot,label=-90:{\scriptsize $\bone^a\CJ_{\alpha}[X^{p}]\CJ_{\beta}[X^{q}]$}] (m) at (-2, -1) {};
    \node [dot,label=0:{\scriptsize $\bone^b$}] (ml) at (-1, 0) {};
    \node [dot] (new) at (-3, 0) {};
\node [roundnode] (new1) at (-2, 1) {$\tilde\sigma$};
\draw (ml) to node[midway,sloped,below]{\scriptsize $\CI_{\gamma}$} (new1);
    \draw (m) to node[sloped, midway,below]{\scriptsize $\CI_q$} (ml);
    \draw (m) to node[sloped, midway,below] {\scriptsize $\Xi_l$} (new);
\end{tikzpicture}
\;.
\end{equs}
The first two terms on the right-hand side above correspond to $\tilde\sigma\graft_p^{\root} \sigma$ and $\tilde\sigma\graft_p^{\poly} \sigma$ respectively, while the final two terms correspond to $\tilde\sigma\graft_p^{\nonroot}\sigma$.
Red circles with $\tilde\sigma$ indicate that the root of $\tilde\sigma$ is attached to the corresponding incoming edge.
\end{example}
\begin{remark}
One should think of $\graft_p$ as related to the adjoint of the linearisation of $\Deltap_\ex$ (see~\eqref{eq:adjoint_bgraft} for a precise statement) and in particular does not alter the extended decorations.
\end{remark}
For an extended non-linearity $\cbF$,
it follows almost immediately from the definition that $\mathring\Upsilon^\cbF$
is a pre-Lie morphism in the sense that, for all $p\in\N^d$,
\begin{equ}\label{eq:pre-Lie_morph}
\mathring\Upsilon^\cbF[\tilde \sigma \graft_p
\sigma] =
\mathring\Upsilon^\cbF[\tilde \sigma]
\triangleleft_p \mathring\Upsilon^\cbF[\sigma]\;.
\end{equ}
To see this, we simply apply the Leibniz rule to the expression~\eqref{eq:mathring_Upsilon_def}
to obtain three terms;
the two terms obtained by $D_{(\mcI,p)}$ hitting $F_a$ and $\prod_{k\in K} \CY_{(\mcI,t_k)}$
match precisely
$\mathring\Upsilon^\cbF[\tilde\sigma\graft_p^{\root}\sigma]$ and
$\mathring\Upsilon^\cbF[\tilde\sigma\graft_p^{\poly}\sigma]$ respectively,
while the third term obtained
by $D_{(\mcI,p)}$ hitting $\prod_{j\in J}
\mathring\Upsilon^\cbF[\sigma_j]$
matches
$\mathring\Upsilon^\cbF[\tilde\sigma\graft_p^{\nonroot}\sigma]$ by an induction.

Finally, for $b\in[d]$,
we define a `raising' operator $\uparrow_{b}$ which maps every $\sigma\in \mfB$ of the form~\eqref{eq:tree_in_mfB} to a \textit{series} of trees (so not strictly an element of $\CB$ according to our definition)
given inductively by\label{uparrow_page_ref}
\begin{equs}
\uparrow_{b} \sigma
&= \sum_{p\in\N^d}\CJ_p[X^{p+e_b}] \sigma
+
\bone^a\Big(
\sum_{i \in K}
\CJ_{s_{i}}[X^{t_i+e_b}]
\prod_{\substack{k \in K\\ k\neq i}} \CJ_{s_k}[X^{t_k}]
\Big)
\Xi_L
\prod_{j\in J} \mcI_{m_j}[\sigma_j]
\\
&\quad
+ \mcL(\uparrow_b) \sigma
\;.
\end{equs}
\begin{example}
\begin{equs}
\uparrow_{b}
\begin{tikzpicture}  [scale=0.8,baseline=-0.3cm] 
    \node [dot,label=-90:{\scriptsize $\bone^a\CJ_{\alpha}[X^{r}]\CJ_{\beta}[X^{q}]$}] (m) at (-2, -1) {};
    \node [dot,label=90:{\scriptsize $\bone^b\CJ_{\gamma}[X^n]$}] (ml) at (-1, 0) {};
    \node [dot] (new) at (-3, 0) {};
    \draw (m) to node[sloped, midway,below]{\scriptsize $\CI_q$} (ml);
    \draw (m) to node[sloped, midway,below] {\scriptsize $\Xi_l$} (new);
\end{tikzpicture}
&=
\sum_{p\in\N^d}
\begin{tikzpicture} [
scale=0.8,baseline=-0.3cm] 
    \node [dot,label=-90:{\scriptsize $\bone^a\CJ_{p}[X^{p+e_b}]\CJ_{\alpha}[X^{r}]\CJ_{\beta}[X^{q}]$}] (m) at (-2, -1) {};
    \node [dot,label=90:{\scriptsize $\bone^b\CJ_{\gamma}[X^n]$}] (ml) at (-1, 0) {};
    \node [dot] (new) at (-3, 0) {};
    \draw (m) to node[sloped, midway,below]{\scriptsize $\CI_q$} (ml);
    \draw (m) to node[sloped, midway,below] {\scriptsize $\Xi_l$} (new);
\end{tikzpicture}
+
\begin{tikzpicture} [
scale=0.8,baseline=-0.3cm] 
    \node [dot,label=-90:{\scriptsize $\bone^a\CJ_{\alpha}[X^{r+e_b}]\CJ_{\beta}[X^{q}]$}] (m) at (-2, -1) {};
    \node [dot,label=90:{\scriptsize $\bone^b\CJ_{\gamma}[X^n]$}] (ml) at (-1, 0) {};
    \node [dot] (new) at (-3, 0) {};
    \draw (m) to node[sloped, midway,below]{\scriptsize $\CI_q$} (ml);
    \draw (m) to node[sloped, midway,below] {\scriptsize $\Xi_l$} (new);
\end{tikzpicture}
\\
&\quad +
\begin{tikzpicture} [
scale=0.8,baseline=-0.3cm] 
    \node [dot,label=-90:{\scriptsize $\bone^a\CJ_{\alpha}[X^{r}]\CJ_{\beta}[X^{q+e_b}]$}] (m) at (-2, -1) {};
    \node [dot,label=90:{\scriptsize $\bone^b\CJ_{\gamma}[X^n]$}] (ml) at (-1, 0) {};
    \node [dot] (new) at (-3, 0) {};
    \draw (m) to node[sloped, midway,below]{\scriptsize $\CI_q$} (ml);
    \draw (m) to node[sloped, midway,below] {\scriptsize $\Xi_l$} (new);
\end{tikzpicture}
+\sum_{p\in\N^d}
\begin{tikzpicture} [
scale=0.8,baseline=-0.3cm] 
    \node [dot,label=-90:{\scriptsize $\bone^a\CJ_{\alpha}[X^{r}]\CJ_{\beta}[X^{q}]$}] (m) at (-2, -1) {};
    \node [dot,label=90:{\scriptsize $\bone^b\CJ_{p}[X^{p+e_b}]\CJ_{\gamma}[X^n]$}] (ml) at (-1, 0) {};
    \node [dot] (new) at (-3, 0) {};
    \draw (m) to node[sloped, midway,below]{\scriptsize $\CI_q$} (ml);
    \draw (m) to node[sloped, midway,below] {\scriptsize $\Xi_l$} (new);
\end{tikzpicture}
\\
&\quad +
\begin{tikzpicture} [
scale=0.8,baseline=-0.3cm] 
    \node [dot,label=-90:{\scriptsize $\bone^a\CJ_{\alpha}[X^{r}]\CJ_{\beta}[X^{q}]$}] (m) at (-2, -1) {};
    \node [dot,label=90:{\scriptsize $\bone^b\CJ_{\gamma}[X^{n+e_b}]$}] (ml) at (-1, 0) {};
    \node [dot] (new) at (-3, 0) {};
    \draw (m) to node[sloped, midway,below]{\scriptsize $\CI_q$} (ml);
    \draw (m) to node[sloped, midway,below] {\scriptsize $\Xi_l$} (new);
\end{tikzpicture}
\end{equs}
\end{example}
Recalling the operator $\partial^b$ defined by~\eqref{eq:partial_def}, an inductive argument easily shows that
\begin{equ}\label{eq:uparrow_morph}
\mathring{\Upsilon}^\cbF[\uparrow_b\sigma] = \partial^b \mathring{\Upsilon}^\cbF[\sigma]\;.
\end{equ}
\begin{remark}\label{rem:redefine_B}
One should really think of $\uparrow_{b} \sigma$
as
$\sum_{p\in\N^d} X^{p+e_b} \graft_p \sigma$.
The trouble with the latter is that, according to our definition, the tree $X^{p+e_b}$ is not in $\mfB$.
By enlarging $\mfB$ and extending the definition of $\mathring{\Upsilon}^\cbF$
in a natural way (i.e. setting $\mathring{\Upsilon}^\cbF[X^p]=\CY_{(\mcI,p)}$),
we could avoid a separate definition for $\uparrow_{b}$.
We stick with our current definitions
partly because $\mfB$ was the natural space to use in the proof of Lemma~\ref{lem:coherent_alt_def}
and partly to be consistent with~\cite{BCCH21}.
\end{remark}
The next step in the proof of Lemma~\ref{lem:Upsilon_MF}
is to relate the identities~\eqref{eq:pre-Lie_morph} and~\eqref{eq:uparrow_morph}
to the space of trees $\mfT_\circ^\ex$ appearing in the lemma statement.
To this end, it becomes useful to describe the grafting and raising operators $\graft_p$ and $\uparrow_b$ in terms of their adjoints,
which is the purpose of the next exercise.

For an operator $\Phi \colon\CB\to\CB\otimes \CB$, let $\mcL(\Phi)\colon \CB\to\CB\otimes\CB$
be the linear operator defined for $\sigma$ of the form~\eqref{eq:tree_in_mfB} by\label{mcL_tensor_page_ref}
\begin{equ}
\mcL(\Phi) \sigma = \sum_{i \in J} (\id\otimes\mcI_{m_i})[\Phi\sigma_i]\Big(\bone\otimes Y\prod_{\substack{j \in J\\ j\neq i}} \mcI_{m_j}[\sigma_j]\Big)\;.
\end{equ}
That is, $\mcL(\Phi)$ lifts $\Phi$ to every branch $\sigma_i$ but adds the edge $\mcI_{m_i}$ only to the right factor of $\Phi \sigma_i$.
We make the same definition for $\Phi\colon\CV^\ex\to\CV^\ex\otimes\CV^\ex$.

\begin{exercise}
Let $\graft_p^*$ be the adjoint of $\graft_p$ for the inner products $\scal{\cdot,\cdot}_\CB$ and $\scal{\cdot,\cdot}_{\CB\otimes\CB}$.
For $\sigma\in\mfB$ of the form~\eqref{eq:tree_in_mfB}, show that $\graft_p^*\sigma \in \CB\otimes \CB$ and is given by
\begin{equs}
\graft_p^*\sigma
&= \sum_{0 \leq b \leq p} \sum_{i \in J} \frac{\delta_{k,m_i}}{(p-b)!}\sigma_i\otimes
Y \CJ_b[X^p] \prod_{\substack{j \in J\\ j\neq i}} \mcI_{m_j}[\sigma_j]
+ \mcL(\graft_p^*)\sigma\;,
%\sum_{\jmath =1}^m \sum\sigma_{\jmath}^{(1)}\otimes Y \mcI_{p_\jmath}[\sigma_\jmath^{(2)}] \prod_{\substack{1\leq j \leq m\\ j\neq \jmath}} \mcI_{p_j}[\sigma_j]
\end{equs}
where $\CJ_p[X^p]\eqdef \bone$.
%where we used the Sweedler-type notation $\graft_p^* \sigma_\jmath = \sum \sigma_\jmath^{(1)}\otimes \sigma_\jmath^{(2)}$.
Similarly, show that $\uparrow_b^*$, the adjoint of $\uparrow_b$, maps $\CB$ to $\CB$ and is given for
$\sigma\in\mfB$ of the form~\eqref{eq:tree_in_mfB} by
\begin{equs}
\uparrow_b^*\sigma &= \sum_{\substack{i \in K\\ t_i[b]> s_i[b]}}
(t_{i}[b]-s_{i}[b]) \CJ_{s_i}[X^{t_i-e_b}]
\Xi_L
\prod_{\substack{k \in K\\ k\neq i}} \CJ_{s_k}[X^{t_k}] \prod_{j\in J}\mcI_{m_j}[\sigma_j]
+ \mcL(\uparrow_b^*)\sigma \;,
\end{equs}
where again $\CJ_p[X^p]\eqdef \bone$.
\end{exercise}
\begin{remark}
The operator $\graft_p^*$ has a natural interpretation as a sum over cuts of edges, see~\cite[Lem.~4.10]{BCCH21} -- the tree above the cut is placed on the left of the original tree.
\end{remark}
We now define the linear `grafting' operator $\hgraft_p \colon \CV^\ex \otimes \CV^\ex \to \CV^\ex$
for $\tau\in\mfT^\ex$ of the form~\eqref{eq:general_tree_ext} and $\bar \tau\in\mfT^\ex$ inductively by\label{hgraft_page_ref}
\begin{equs}\label{eq:hgraft_def}
\bar \tau \hgraft_p \tau
&= \sum_{0\leq r \leq k} \binom{k}{r} X^{k-r}\bone^a
\Xi_L
\mcI_{p-r}[\bar\tau] \prod_{j\in J}\mcI_{m_j}[\tau_j]
+\mcL(\bar\tau\hgraft_p)\tau\;,
\end{equs}
where $\CI_{p-r}[\bar\tau]\eqdef 0$ if $p-r\notin\N^d$.
It is straightforward to verify that the adjoint operator $\hgraft_p^* \colon \CV^\ex\to\CV^\ex\otimes\CV^\ex$ is given by
\begin{equ}
\hgraft_p^* \tau = \sum_{0\leq n \leq p} \sum_{i\in J} \tau_i
\otimes \frac{\delta_{m_i,n}}{(p-n)!}
X^{p-n} \bone^a\Xi_L \prod_{\substack{j\in J\\ j\neq i}}\mcI_{m_j}[\tau_j] + \mcL(\hgraft_p^*)\tau\;,
\end{equ}
which again can be written as a sum over cuts, see~\cite[Def.~4.11]{BCCH21}.
The point behind this definition is the readily verified identity
\begin{equ}\label{eq:Q_morph_graft}
(Q\otimes Q) \graft_p^* = \hgraft_p^* Q
\end{equ}
(see~\cite[Lem.~4.14]{BCCH21}).

We similarly define for $b\in[d]$ the operator $\hat\uparrow_b \colon \CV^\ex\to\CV^\ex$ by\label{hat_uparrow_page_ref}
\begin{equ}
\hat\uparrow_b \tau = X^{e_b} \tau + \mcL(\hat\uparrow_b)\tau\;.
\end{equ}
The adjoint $\hat\uparrow_b^*\colon\CV^\ex\to\CV^\ex$ of $\hat\uparrow_b$ is given for $\tau$ of the form~\eqref{eq:general_tree_ext} by $\hat\uparrow_b^* \tau=0$ if $k[b]=0$ and, if $k[b]>0$,
\begin{equ}
\hat\uparrow_b^* \tau = k[b] X^{-e_b} \tau + \mcL(\hat\uparrow_b^*)\tau\;,
\end{equ}
where $X^{-e_b} \tau$ carries the obvious meaning (the polynomial decoration changes from $k$ to $k-e_b$).
As before, it is easy to see the morphism identity
\begin{equ}\label{eq:Q_morph_raise}
\hat\uparrow^*_b Q = Q \uparrow^*_b\;.
\end{equ}
Combining Lemma~\ref{lem:Upsilon_Q} with~\eqref{eq:pre-Lie_morph} and~\eqref{eq:Q_morph_graft}, we obtain for any extended non-linearity $\cbF$
\begin{equ}\label{eq:Upsilon_morph}
\Upsilon^\cbF[\bar \tau \hgraft_p \tau] = \Upsilon^\cbF [\bar \tau] \triangleleft_p \Upsilon^\cbF [\tau]\;,
\end{equ}
which is an analogue of~\eqref{eq:pre-Lie_morph} for trees in $\mfT^\ex$.
Similarly, combining Lemma~\ref{lem:Upsilon_Q} with~\eqref{eq:uparrow_morph} and~\eqref{eq:Q_morph_raise}, we obtain
\begin{equ}\label{eq:hat_uparrow_Upsilon}
\Upsilon^\cbF[\hat\uparrow_b\tau] = \partial^b\Upsilon^\cbF[\tau]\;.
\end{equ}

We now come to an important universality property of $(\CV^\ex,\{\hgraft_p\}_{p\in\N^d})$.
Define the set\label{mfG_page_ref}
\begin{equ}
\mfG = \{\bone^a X^k \Xi_L \,:\,
a\in \R\,,\, 
k\in\N^d\,,\,
L\in\mfN\}
\subset \mfT^\ex\;.
\end{equ}
We think of $\mfG$ as the set of \textit{generators} of $\CV^\ex$ for the following reason.

\begin{lemma}\label{lem:free_gen}
The space $\CV^\ex$ is freely generated by $\mfG$ and the operators $\{\hgraft_p\}_{p\in\N^d}$ in the following sense.
Consider a vector space $V$ with a family of bilinear operators $\{\triangleleft_\alpha\}_{\alpha\in A}$,
$\triangleleft_\alpha\colon V\times V\to V$ which satisfies the pre-Lie identity
\begin{equ}\label{eq:mult_pre_Lie}
x \triangleleft_\alpha( y \triangleleft_\beta z)
- (x \triangleleft_\alpha y) \triangleleft_\beta z
=
y \triangleleft_\beta (x \triangleleft_\alpha z)
- (y \triangleleft_\beta x) \triangleleft_\alpha z
\end{equ}
for all $\alpha,\beta\in A$ and $x,y,z\in V$.
Then, for any map $\Phi\colon\mfG\to V$
and $\Psi\colon \N^d \to A$, there exists a unique extension of $\Phi$ to a linear map $\Phi \colon \CV^\ex\to V$
which satisfies the pre-Lie morphism identity
\begin{equ}
\Phi(\bar\tau \hgraft_p \tau) = (\Phi\bar\tau) \triangleleft_{\Psi(p)} (\Phi\tau)
\end{equ}
for all $\tau,\bar\tau\in\mfT^\ex$ and $p\in\N^d$.
\end{lemma}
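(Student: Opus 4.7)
The plan is to adapt the Chapoton--Livernet strategy (cf.~Example~\ref{ex:graft_trees}) to this decorated setting. First I would verify that $(\CV^\ex,\{\hgraft_p\}_{p\in\N^d})$ itself satisfies the multi-parametric pre-Lie identity~\eqref{eq:mult_pre_Lie}, and then construct the unique extension $\Phi$ by a recursive ``peeling'' procedure from the root, using~\eqref{eq:mult_pre_Lie} in $V$ to absorb the ambiguity of that procedure.

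For the pre-Lie identity on $\CV^\ex$, I would expand both $\bar\sigma\hgraft_p(\bar\tau\hgraft_q\tau)$ and $(\bar\sigma\hgraft_p\bar\tau)\hgraft_q\tau$ using the recursive formula~\eqref{eq:hgraft_def} and classify the contributions by where $\bar\sigma$ and $\bar\tau$ attach relative to $\tau$. Contributions in which $\bar\sigma$ and $\bar\tau$ land on distinct nodes of $\tau$, or jointly at the same node of $\tau$, are manifestly symmetric under $(\bar\sigma,p)\leftrightarrow(\bar\tau,q)$; the asymmetric piece where $\bar\sigma$ is grafted onto $\bar\tau$ cancels between the two terms. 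The polynomial-absorption sums over $r$ in~\eqref{eq:hgraft_def} are reconciled by Vandermonde-type binomial identities, in analogy with the combinatorics in the proof of Proposition~\ref{prop:Deltap_global}.

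To construct $\Phi$, I would argue by recursion. Given $\tau = \bone^a X^k \Xi_L \prod_{j\in J} \mcI_{m_j}[\tau_j]$ with $J\neq\emptyset$, pick any $i_0\in J$ and set $\tilde\tau = \bone^a X^k \Xi_L \prod_{j\in J\setminus\{i_0\}}\mcI_{m_j}[\tau_j]$. Solving~\eqref{eq:hgraft_def} for the leading term yields
\begin{equation*}
\tau = \tau_{i_0}\hgraft_{m_{i_0}}\tilde\tau - \sum_{0<r\leq k}\binom{k}{r} \bone^a X^{k-r}\Xi_L \mcI_{m_{i_0}-r}[\tau_{i_0}]\prod_{j\neq i_0}\mcI_{m_j}[\tau_j] - \mcL(\tau_{i_0}\hgraft_{m_{i_0}})\tilde\tau,
\end{equation*}
which, together with the morphism identity, dictates the value of $\Phi\tau$ in terms of $\Phi\tau_{i_0}$, $\Phi\tilde\tau$ and $\Phi$ on the ``other terms'' on the right. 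The first family of other terms has strictly smaller polynomial weight $|k|$ at the root, while the $\mcL$ family replaces the $i_0$-branch by a grafting of $\tau_{i_0}$ into a sibling subtree. A suitable well-founded induction measure (see below) ensures that iterating this procedure reduces every $\tau$ to elements of $\mfG$. Well-definedness --- that the resulting $\Phi\tau$ is independent of the choice of $i_0\in J$ and of the order in which the other terms are further reduced --- is then a direct consequence of~\eqref{eq:mult_pre_Lie} in $V$, which exactly matches the associator-type differences between two peelings, combined with the same Vandermonde identities used in the first step. The morphism identity $\Phi(\bar\tau\hgraft_p\tau)=\Phi\bar\tau\triangleleft_{\Psi(p)}\Phi\tau$ is then verified by one more induction along the same measure.

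The hard part will be pinning down the correct well-founded measure on $\mfT^\ex$. Neither $\fancynorm{\tau}$ nor the tree depth decreases in the $\mcL$ contributions, but the total edge count is preserved by $\hgraft_p$, while $|k|$ strictly decreases in the $r>0$ contributions and the depth at which $\tau_{i_0}$ sits strictly increases in the $\mcL$ contributions. I expect a lexicographic combination --- for instance the tuple of kernel-edge multiplicities at each depth together with the root polynomial weight $|k|$ --- to give a well-order bounded by the conserved total edge count, under which every reduction strictly descends. With such a measure in hand, the remainder of the argument is the (somewhat tedious) bookkeeping that the two peeling orders $(i_0,i_1)$ and $(i_1,i_0)$ produce expressions differing by precisely the terms reconciled by~\eqref{eq:mult_pre_Lie}, after the binomial corrections from the polynomial decorations in~\eqref{eq:hgraft_def} are absorbed.
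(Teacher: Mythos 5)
Your plan is a rewriting (``peeling'') argument in the style of Dzhumadildaev--L\"ofwall~\cite{Dzhumadildaev02}; the paper itself gives no proof, deferring to~\cite[App.~A.4]{BCCH21}, which is stated to follow Chapoton's strategy~\cite{Chapoton01}. Your route is thus genuinely different in flavour from what the paper cites, and is in principle valid. Your candidate well-founded measure is sound: in the $r>0$ corrections the root polynomial weight $|k|$ drops while the depth-$d$ kernel-edge multiplicities $n_d$ are unchanged, whereas in the $\mcL$ corrections $n_1$ strictly decreases (the $i_0$-branch is pushed at least one level deeper in every summand of $\tau_{i_0}\hgraft_{m_{i_0}}\tau_j$), and both are bounded by the conserved total edge count, so a lexicographic order does descend strictly. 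This should be verified rather than merely ``expected''.

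The genuine gap is the confluence step, which you rightly flag as the hard part but then characterise as ``a direct consequence'' of~\eqref{eq:mult_pre_Lie} --- it is not. Peeling $\tau$ via $i_0$ then $i_1$, versus the reverse, produces two expressions for $\Phi\tau$ that differ by far more than a single associator in $V$: each nested peeling generates its own $r>0$ and $\mcL$ correction families, each such correction is itself defined by further peelings, and the two expansions must be matched term-by-term after applying~\eqref{eq:mult_pre_Lie} and inducting along the measure. This is the crux of the local-confluence step of the diamond lemma and requires careful interplay between the pre-Lie identity in $V$ and the binomial combinatorics built into~\eqref{eq:hgraft_def}. Likewise, the verification that $\CV^\ex$ itself satisfies~\eqref{eq:mult_pre_Lie} --- which is what makes the diamond lemma applicable at all --- is announced but not carried out. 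You have identified the right inductive framework and the right ingredients, but the two steps on which the whole argument rests are sketched rather than executed.
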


\begin{remark}
Lemma~\ref{lem:free_gen} extends the results of~\cite{Chapoton01,Dzhumadildaev02} discussed in Example~\ref{ex:graft_trees}.
The proof of this result is given in~\cite[App.~A.4]{BCCH21} and follows the strategy of~\cite{Chapoton01}.

A similar form of Lemma~\ref{lem:free_gen} holds for the space $(\CB,\{\graft_p\}_{p\in\N^d})$, although we do not state it here because it is the universality property of $\CV^\ex$ that will be used.
If one redefines $\mfB$ as in Remark~\ref{rem:redefine_B},
the set of generators simply becomes $\{X^k\,:\,k\in\N^d\} \sqcup \{\bone^a\Xi_L \,:\, a\in\R\,,\,L\in\mfN\}$.
\end{remark}

\begin{remark}
In the rest of the argument, we will exploit the \textit{generation} property of $\CV^\ex$ in Lemma~\ref{lem:free_gen} rather than the \textit{freeness} property, i.e.
it is the uniqueness of the extension of $\Phi$ that will be important below, rather than its existence.
\end{remark}

Remark that the definition of $\hgraft_p$, though very natural in light of~\eqref{eq:Q_morph_graft} and~\eqref{eq:Upsilon_morph},
is not expected to interact well with the negative renormalisation group $G_-^\ex$ because, even for $\bar\tau,\tau\in\mfT_\circ^\ex$, it is not always true that $\bar\tau\hgraft_p\tau$ is an element of $\CT^\ex = \Span_\R(\mfT^\ex_\circ) \subset \CV^\ex$.
On the other hand, recall that we aim to prove Lemma~\ref{lem:Upsilon_MF} only for $F$ that obey $R$ and for extended non-linearities $\cbF\in F^{G_-^\ex}$.
The following proposition and its corollary show that we can effectively restrict attention to trees in $\mfT^\ex_\circ$ in this case.

Recall from Exercise~\ref{ex:trees_in_mfT^ex_circ},
that for $\tau= T^{\mfn,\mfo}_\mfe \in\mfT^\ex$ it holds that $\tau\in\mfT^\ex_\circ$ if and only if $\CN(x)\in \tilde R(\mfo(x))$
where $\tilde R\colon \R\to \CP(\N^\CE)$.
The following proposition shows how the notion of \textit{obey} from Definition~\ref{def:obey} extends to $F^{G^\ex_-}$.

\begin{proposition}\label{prop:obey_extended}
Suppose $F$ obeys $R$ and let $\cbF\in F^{G^\ex_-}$.
Let $O = \{o_n\}_{n=1}^N \in \N^\CE$ be a multiset
and $a\in \R$
such that $O \notin\tilde R(a)$.
Then $\big(\prod_{o\in O} D_o\big) F_a = 0$.
\end{proposition}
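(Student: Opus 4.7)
My strategy is to make $M_g^*\bone^a$ fully explicit, dispatch the case $a=0$ via the identification $\tilde R(0)=R$, and reduce the harder case $a<0$ to an inductive Leibniz-type argument controlled by the pre-Lie morphism property of $\Upsilon^F$.

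First I would compute $M_g^*\bone^a = \sum_{\tau\in\mfT^\ex_\circ}\frac{\scal{\bone^a,M_g\tau}}{\tau!}\tau$. Inspecting~\eqref{eq:Deltam_ex}, the contracted tree $\mcC(\tau;A,\mfn_A,\mfe_A)$ equals $\bone^a$ only when $A=T$ and $\mfn_A=\mfn$ (so that $T/A$ is a single node with vanishing polynomial decoration); this forces $\mfe_A=0$, makes the extracted forest equal to $\tau$ itself, and gives extended decoration $|\tau|_+$ at the new root. The only other contribution is the trivial choice $A=\emptyset$, which occurs precisely when $\tau=\bone^a$. Since $\cbF$ is reduced, $\Upsilon^F[\sigma]=0$ whenever $\mfo(\rho_\sigma)<0$, so
\begin{equ}
F^g_a \;=\; \delta_{a,0}\,F \;+\; \sum_{\substack{\tau\in\mfT^\ex_-,\,|\tau|_+=a\\ \mfo(\rho_\tau)=0}}\frac{g(\tau)}{\tau!}\,\Upsilon^F[\tau]\,.
\end{equ}

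Next I would record that $\tilde R(0)=R$. The inclusion $\tilde R(0)\subseteq R$ is immediate from Definition~\ref{def:mfT^ex_circ}: any $\tau\in\mfT^\ex_\circ$ with $\mfo(\rho_\tau)=0$ arises from some $\tilde\tau\in\mfT_\circ$ by extracting a subforest that does not touch the root, hence $\CN(\rho_\tau)=\CN(\rho_{\tilde\tau})\in R$. For the reverse, given $O\in R$ one builds $\tau\in\mfT_\circ$ with $\CN(\rho_\tau)=O$ by attaching below $\rho_\tau$ a trivial tree $\bone$ (or a bare noise leaf) along each edge in $O$, which is admissible since normality and non-emptiness of $R$ yield $\emptyset\in R$. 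The case $a=0$ then follows at once: in the display above the sum is empty (as $|\tau|_+<0$ on $\mfT^\ex_-$), so $F^g_0=F$ obeys $R=\tilde R(0)$ by hypothesis.

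For $a<0$ I would proceed by induction on $\fancynorm{\tau}$. Since $\mfn(\rho_\tau)=0$ for $\tau\in\mfT^\ex_-$, the recursion collapses to
\begin{equ}
\Upsilon^F[\tau] \;=\; \Big(\prod_{j\in J}\Upsilon^F[\tau_j]\Big)\cdot D_\tau F,\qquad D_\tau\eqdef\prod_{j\in J}D_{(\mcI,m_j)}\prod_{l\in L}D_{(\Xi,l)}.
\end{equ}
Applying Leibniz to $\big(\prod_{o\in O}D_o\big)$ produces a sum indexed by ordered partitions $O=O_*\sqcup\bigsqcup_j O_j$, each term carrying a factor $\big(\prod_{o\in O_*}D_o\big)D_\tau F$; because $F$ obeys $R$ this factor vanishes unless $\CN(\rho_\tau)\sqcup O_*\in R$, while the inductive hypothesis applied to each $\tau_j$ controls the factors $\big(\prod_{o\in O_j}D_o\big)\Upsilon^F[\tau_j]$.

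The hard part will be to match each non-vanishing Leibniz contribution with an explicit witness to $O\in\tilde R(a)$. The interpretation I would formalise is that such a contribution corresponds to attaching the edges of $O_*$ at $\rho_\tau$---producing an enlarged tree $\tilde\sigma\in\mfT_\circ$ with $\CN(\rho_{\tilde\sigma})=\CN(\rho_\tau)\sqcup O_*\in R$ and $|\tilde\sigma|_\s=|\tau|_+=a$---followed by contracting $\tilde\sigma$ as a single negative component, yielding $\sigma\in\mfT^\ex_\circ$ with $\mfo(\rho_\sigma)=a$ and $\CN(\rho_\sigma)=O_*$. Analogous attachments at subtrees (encoded via the $O_j$) are organised through the pre-Lie morphism identity~\eqref{eq:Upsilon_morph} and the raising identity~\eqref{eq:hat_uparrow_Upsilon}, with Lemma~\ref{lem:free_gen} (universality of $(\mcV^\ex,\{\hgraft_p\})$) guaranteeing that the resulting tree-level description is unambiguous. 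The real technical delicacy lies in the interplay between $D_{(\mcI,p)}$ and the $\partial^k$ factors that arise inside $\Upsilon^F[\tau_j]$ via the commutation $D_{(\mcI,p)}\partial^i = \partial^i D_{(\mcI,p)}+D_{(\mcI,p-e_i)}$, which is precisely where the raising operator $\hat\uparrow_b$ becomes essential.
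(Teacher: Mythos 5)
Your opening steps are sound and essentially match the paper's: you correctly note that $F_a = 0$ unless $a$ lies in the finite set of degrees of trees in $\mfT_\circ$, you correctly reduce $F^g_a$ to a finite sum $\sum_i c_i\,\Upsilon^F[\tau_i]$ with $\tau_i\in\mfT_\circ$ and $|\tau_i|_\s=a$ (the paper states this as~\eqref{eq:bone^a_Upsilon}), and your identification $\tilde R(0)=R$ (using normality to get $\emptyset\in R$, and the observation that the root cannot be extracted when $\mfo(\rho)=0$ since extracted components have strictly negative degree) is correct and handles $a=0$.

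However, for $a<0$ — which is the whole content of the proposition — your proposal has a genuine gap that you yourself flag as ``the hard part.'' The sketched Leibniz argument does not close. Concretely: (i) the witness tree you construct by attaching $O_*$ at the root and contracting does not have $\CN(\rho_\sigma)=O$; it has $\CN(\rho_\sigma)=O_*$, so it is not a witness to $O\in\tilde R(a)$ but only to $O_*\in\tilde R(a)$; (ii) the asserted equality $|\tilde\sigma|_\s=|\tau|_+=a$ is false — attaching edges of $O_*$ to $\tau$ changes the degree; (iii) the ``inductive hypothesis applied to each $\tau_j$'' is never stated, and the quantity it would have to control ($(\prod_{o\in O_j}D_o)\Upsilon^F[\tau_j]$) is not of the same shape as the statement being proved, so it is unclear what the induction is running over; and (iv) the interaction with $\partial^k$ via $D_{(\mcI,p)}\partial^i=\partial^iD_{(\mcI,p)}+D_{(\mcI,p-e_i)}$, which you correctly identify as delicate, is left untreated and is precisely where a na\"ive Leibniz bookkeeping fails.

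The paper resolves exactly this difficulty by a device absent from your proposal: introduce an abstract node symbol $\square$ with $\Upsilon^F[\square]=1$, form $\hat\tau=\square\hgraft_{o_1}(\cdots(\square\hgraft_{o_N}\tau_i))=\sum_j b_j\sigma_j$, and invoke the pre-Lie morphism identity to obtain $\Upsilon^F[\hat\tau]=(\prod_{o\in O}D_o)\Upsilon^F[\tau_i]$ in one stroke — this encodes the entire Leibniz expansion \emph{including} the polynomial-raising subtleties, because $\hgraft_p$ is built to absorb them (the $\sum_{r\le k}\binom{k}{r}X^{k-r}\mcI_{p-r}$ terms). The vanishing then follows from a clean dichotomy on each $\sigma_j$: if two $\square$'s are adjacent, a derivative hits the constant $1$ and $\Upsilon^F[\sigma_j]=0$; if no two are adjacent, then $O\notin\tilde R(a)$ forces $\CN(x)\notin R$ for some node $x$ of $\sigma_j$, so $\Upsilon^F[\sigma_j]=0$ because $F$ obeys $R$. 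Your sketch only contemplates the second branch of this dichotomy and does not see the first, and without the $\square$-grafting formalism you would have to reconstruct the morphism property by hand, which is exactly the unresolved difficulty you flag.
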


\begin{proof}
Let $g \in G^\ex_-$ such that $\cbF = F^g$, where we identify $F$ with the reduced non-linearity in $F^{G^\ex_-}$,
and recall that $\Upsilon^\cbF[\bone^a] = \Upsilon^F[M_g^* \bone^a]$.
Consider the set
\begin{equ}
A_-\eqdef (-\infty,0]\cap \{|\tau|_\s\,:\, \tau\in\mfT_\circ\}\;,
\end{equ}
which we recall is finite.
It follows from the definition of $\mfT^\ex_\circ$ that
$\bone^a\in\mfT_\circ^\ex$ only if $a\in A_-$.
Therefore $F_a = 0$ if $a\notin A_-$,
so it suffices to assume henceforth that $a\in A_-$.

By the definition of obey, we have $\Upsilon^F[\tau]=0$ for all $\tau\in \mfT^\ex \setminus \mfT_\circ$,
so we may write
\begin{equ}\label{eq:bone^a_Upsilon}
\Upsilon^\cbF[\bone^a] = \sum_{i} c_i\Upsilon^F[\tau_i]\;,
\end{equ}
where $c_i\in\R$ and $\tau_i\in\mfT_\circ$ with $|\tau_i|_\s = a$ and the sum is finite.
Let us fix one index $i$ in this sum.

Let us introduce a new distinguished node with decoration $\square$.
Consider the sum of trees $\hat\tau$ formed by grafting $N$ nodes with decoration $\square$ to $\tau_i$, i.e.
\begin{equ}\label{eq:grafting_new_nodes}
\hat\tau
\eqdef 
\square\hgraft_{o_1}(\square\hgraft_{o_2}(\ldots (\square\hgraft_{o_{N}}\tau_i)\ldots)) = \sum_{j} b_j\sigma_j\;.
\end{equ}
Here $b_j\in \R$ and the sum is finite.
The trees $\sigma_j$ are not strictly elements of $\mfT^\ex$ and $\square$ may appear as a decoration of a noise node.
The grafting operator $\square\hgraft_{o_n}\sigma$ for $o_n$ of the form $\Xi_q$ is defined exactly as~\eqref{eq:hgraft_def}
\textit{except} there is no summation over $r\leq k$, i.e. we simply graft the node with decoration $\square$ onto every node of $\sigma$ using the edge $\Xi_q$.

We define $\Upsilon^F[\square]=1$ and extend the domain of $\Upsilon^F$ using~\eqref{eq:Upsilon_def} to all the trees $\sigma_j$ in~\eqref{eq:grafting_new_nodes}.
By a simple extension of the pre-Lie morphism property~\eqref{eq:Upsilon_morph}, we obtain  \begin{equ}\label{eq:hat_tau_Upislon}
\Upsilon^F[\hat\tau] = \Big(\prod_{o\in O} D_o\Big) \Upsilon^F[\tau_i]\;.
\end{equ}
Furthermore, if $\sigma_j$ has two adjacent nodes with decorations $\square$, then by the definition $\Upsilon^{F}[\square]=1$, we have $\Upsilon^{F}[\sigma_j]=0$.
On the other hand, if $\sigma_j$ has no adjacent nodes with decoration $\square$,
then necessarily $\CN(x) \notin R$ for some node $x\in \sigma_j$ by the assumption that $O\notin\tilde R(a)$ and by the definition of $\tilde R$.
Since $F$ obeys $R$, we obtain $\Upsilon^F[\sigma_j]=0$,
and thus 
\begin{equ}
\Upsilon^F[\hat\tau] = \Big(\prod_{o\in O} D_o\Big) \Upsilon^F[\tau_i]=0\;.
\end{equ}
Since the holds for every $\tau_i$ in~\eqref{eq:bone^a_Upsilon}, the conclusion follows.
\end{proof}
The following is now an immediate corollary of Exercise~\ref{ex:trees_in_mfT^ex_circ}, the definition of $\Upsilon^\cbF$, and Proposition~\ref{prop:obey_extended}.
\begin{corollary}\label{cor:vanish_bad_trees}
Suppose $F$ obeys $R$ and let $\cbF\in F^{G^\ex_-}$.
Then $\Upsilon^{\cbF}[\tau]=0$ for all $\tau\in\mfT^\ex\setminus\mfT^\ex_\circ$.
\end{corollary}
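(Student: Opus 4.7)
The plan is to leverage the recursive definition~\eqref{eq:Upsilon_def} of $\Upsilon^\cbF$ in order to reduce to a local computation at a single ``bad'' node, and then apply Proposition~\ref{prop:obey_extended} at that node. By Exercise~\ref{ex:trees_in_mfT^ex_circ}, the hypothesis $\tau\in\mfT^\ex\setminus\mfT^\ex_\circ$ produces some node $x\in N_\tau$ with $\mcN(x)\notin\tilde R(\mfo(x))$. I would then proceed by induction on the number of edges of $\tau$, splitting on whether the bad node $x$ is the root.

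If the bad node $x$ is not the root of $\tau$, write $\tau=\bone^a X^k\Xi_L\prod_{j\in J}\mcI_{m_j}[\tau_j]$; then $x$ sits inside some subtree $\tau_j$. Since both $\mcN(x)$ and $\mfo(x)$ are intrinsic to the subtree rooted at $x$, another application of Exercise~\ref{ex:trees_in_mfT^ex_circ} gives $\tau_j\notin\mfT^\ex_\circ$. The inductive hypothesis yields $\Upsilon^\cbF[\tau_j]=0$, and~\eqref{eq:Upsilon_def} then forces $\Upsilon^\cbF[\tau]=0$ since $\Upsilon^\cbF[\tau_j]$ appears as an explicit factor.

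It remains to handle the case $x=\rho_\tau$. The factor at the root in~\eqref{eq:Upsilon_def} is
\[
G \eqdef \Bigl(\partial^k \prod_{j\in J}D_{(\mcI,m_j)}\prod_{l\in L}D_{(\Xi,l)}\Bigr)F_a\,,\qquad a=\mfo(\rho_\tau)\,.
\]
Expanding $\partial^k$ using~\eqref{eq:partial_def} and the commutativity of all the $D_o$'s, I would rewrite $G$ as a finite sum (finite because $F_a$ depends on only finitely many components) of terms of the form
\[
\Bigl(\textstyle\prod \mcY_{(\mcI,\cdot)}\Bigr) \cdot \Bigl(\prod_{o\in \mcN(\rho_\tau)\cup P} D_o\Bigr)F_a\,,
\]
where $P$ ranges over multisets of $|k|_1$ kernel edge types $(\mcI,p)$. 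By Proposition~\ref{prop:obey_extended}, each such differential-operator factor vanishes whenever $\mcN(\rho_\tau)\cup P\notin\tilde R(a)$.

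The main obstacle is therefore a downward-closure property of $\tilde R$: if $O\notin \tilde R(a)$ and $O\subset O'$, then $O'\notin\tilde R(a)$. I expect this to follow from the normality of $R$ together with the extraction-contraction description of $\mfT^\ex_\circ$ in Definition~\ref{def:mfT^ex_circ}. Concretely, any realiser $\tau''=\mcC(\hat\tau;A,\mfn_A,\mfe_A)\in\mfT^\ex_\circ$ of $O'\in\tilde R(a)$ should admit a truncation at the root: remove the edges of $\hat\tau$ corresponding to $O'\setminus O$ together with their attached subtrees, and drop the connected components of $A$ contained in those subtrees. Normality of $R$ guarantees the pruned tree still strongly conforms to $R$, while the root's extended decoration --- determined only by the component of $A$ containing $\rho_{\hat\tau}$ --- remains equal to $a$. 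The resulting contracted tree then witnesses $O\in\tilde R(a)$, proving the contrapositive. Granted this closure, every term in the expansion of $G$ vanishes by Proposition~\ref{prop:obey_extended}, so $G=0$ and hence $\Upsilon^\cbF[\tau]=0$, completing the induction.
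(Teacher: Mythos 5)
Your induction on the number of edges, with the case split on whether the bad node is the root, matches the shape of the paper's (admittedly terse) argument, and your non-root case is handled correctly. However, your treatment of the root case introduces an unnecessary detour that also contains a genuine gap.

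The key point you miss is that in the definition~\eqref{eq:Upsilon_def}, the operators $\prod_{j\in J}D_{(\mcI,m_j)}\prod_{l\in L}D_{(\Xi,l)}$ are applied to $F_a$ \emph{first}, and $\partial^k$ is applied \emph{afterwards}. The inner factor equals $\bigl(\prod_{o\in\mcN(\rho_\tau)}D_o\bigr)F_a$, which is already $0$ by Proposition~\ref{prop:obey_extended} since $\mcN(\rho_\tau)\notin\tilde R(a)$. Since $\partial^k$ is a linear operator (a finite sum of compositions of multiplications by $\mcY_{(\mcI,\cdot)}$ and derivatives $D_{(\mcI,\cdot)}$), we get $\partial^k(0)=0$ directly. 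No expansion of $\partial^k$, and no property of $\tilde R$ beyond Proposition~\ref{prop:obey_extended}, is required — this is why the paper calls the corollary ``immediate''.

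Instead you expand $\partial^k$ and reduce the problem to a downward-closure claim, that $O\subset O'$ and $O'\in\tilde R(a)$ force $O\in\tilde R(a)$. This is a non-trivial auxiliary statement which you do not prove, and the truncation you sketch does not establish it. The difficulty: when the root of the original tree $\hat\tau$ lies in the contracted subforest (i.e.\ $a<0$), the edges $O'\setminus O$ you wish to prune may be boundary edges $e\in\partial(A,T)$ carrying a nonzero $\mfe_A(e)$. Discarding such an edge changes $\pi\mfe_A$ at its source node, hence changes the degree of the extracted tree $\mcE(\hat\tau;A,\mfn_A,\mfe_A)$ by $-|\mfe_A(e)|_\s$, hence changes the extended decoration at the root to some $a''<a$. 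So your truncated tree does not witness $O\in\tilde R(a)$ — it witnesses $O\in\tilde R(a'')$ for a different $a''$. There is no obvious way to compensate (you cannot in general increase $\mfn_A$ since $\mfn_A\leq\mfn$). This is a real gap, not a routine detail. I recommend dropping the expansion of $\partial^k$ altogether and using the linearity argument above.
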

It is now natural to introduce one final grafting operator $\bgraft_p \colon \mcT^\ex\otimes\mcT^\ex \to \mcT^\ex$
defined by\label{bgraft_page_ref}
\begin{equ}
\bar\tau\bgraft_p \tau \eqdef \pi_R (\bar\tau \hgraft_p \tau)\;,
\end{equ}
where $\pi_R \colon\CV^\ex\to\CT^\ex$\label{pi_R_page_ref} is the canonical projection that annihilates every tree in $\mfT^\ex\setminus\mfT_\circ^\ex$.
The following exercise
shows that $\pi_R$ is a pre-Lie morphism
and that $(\CT^\ex,\{\bgraft_p\}_{p\in\N^d})$ is generated by\label{mfG_circ_page_ref}
\begin{equ}
\mfG_\circ \eqdef \mfG\cap \mfT_\circ^\ex
\end{equ}
(though not freely).
\begin{exercise}\label{exercise:pi_morph}
Show that, for all $\tau,\bar\tau\in\CV^\ex$ and $p\in\N^d$,
\begin{equ}
\pi_R(\bar\tau \hgraft_p \tau ) = 
(\pi_R \bar\tau) \bgraft_p (\pi_R\tau) \;.
\end{equ}
Conclude that the same statement as Lemma~\ref{lem:free_gen} holds with $(\CV^\ex,\mfG,\hgraft_p)$ replaced by $(\CT^\ex,\mfG_\circ,\bgraft_p)$
and ``there exists a unique extension'' replaced by ``there exists at most one extension''.
\end{exercise} 
The final ingredient we require for the proof of Lemma~\ref{lem:Upsilon_MF}
is the fact that $M_g^*$ for every $g\in G_-^\ex$ is a pre-Lie morphism on $\CT^\ex$ and commutes with $\hat\uparrow_b$.
\begin{lemma}\label{lem:M_pre-Lie_morph}
For all $g\in G_-^\ex$,  $\tau,\bar\tau\in\CT^\ex$, and $p\in\N^d$,
\begin{equ}\label{eq:M*_morphism}
(M_g^*\bar\tau) \bgraft_p (M_g^*\tau) = M_g^*(\bar\tau\bgraft_p\tau)\;.
\end{equ}
Moreover, for every $b\in[d]$,
\begin{equ}\label{eq:M*_commute_raise}
M^*_g\hat\uparrow_b = \hat\uparrow_b M_g^*\;.
\end{equ}
\end{lemma}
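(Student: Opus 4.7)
The plan is to establish~\eqref{eq:M*_morphism} by a careful combinatorial analysis of the global formula~\eqref{eq:Deltam_ex} defining $\Deltam_\ex$, and to derive~\eqref{eq:M*_commute_raise} by direct inspection of the definitions. In both cases the proof proceeds via the dual identity. For~\eqref{eq:M*_morphism} this means showing, for all $\tau \in \mcT^\ex$,
\[
(\Deltam_\ex \otimes \Deltam_\ex)\hgraft_p^*\tau \;=\; (\mcM_-^{(13)} \otimes \hgraft_p^{\phantom{*}})\,\hgraft_p^{*,(24)}\,\Deltam_\ex \tau
\]
(with the obvious meaning of the tensor positions and of the forest product $\mcM_-$), followed by applying $g \in G_-^\ex$ and using its multiplicativity on $\mcF_-^\ex$; the projection $\pi_R$ is then handled by Exercise~\ref{exercise:pi_morph}.

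The key combinatorial observation, which will be where most of the work lies, is that negative trees in $\mfT_-^\ex$ are by definition \emph{unplanted}. When one computes $\Deltam_\ex(\bar\tau \hgraft_p \tau)$ using~\eqref{eq:Deltam_ex}, every extracted subforest $A$ is a union of subforests of $\bar\tau$ and subforests of $\tau$, because the single new kernel edge $\mcI_p$ joining the root of $\bar\tau$ to some node of $\tau$ cannot lie inside a connected component of $A$: such a component would be planted, contradicting $A \in \mfF_-^\ex$. Once this is established, the summations over $A$, $\mfn_A$, $\mfe_A$ in $\Deltam_\ex(\bar\tau \hgraft_p \tau)$ factorise as a product of the corresponding summations for $\bar\tau$ and $\tau$, and the contracted tree $\mcC(\bar\tau\hgraft_p\tau; A, \mfn_A, \mfe_A)$ is precisely the graft of the two individual contractions along the same edge $\mcI_p$. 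The first identity~\eqref{eq:M*_morphism} follows by dualising and invoking multiplicativity of $g$.

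For~\eqref{eq:M*_commute_raise} I would prove instead the dual statement $\Deltam_\ex \hat\uparrow_b^* = (\id \otimes \hat\uparrow_b^*)\Deltam_\ex$. The operator $\hat\uparrow_b^*$ selects a node with nonzero polynomial decoration $k$ and decreases it by $e_b$ with a coefficient $k[b]$. In the formula~\eqref{eq:Deltam_ex} the polynomial decoration at a node in the right factor arises either as $\mfn - \mfn_A$ (if the node sits outside $A$) or as $[\mfn - \mfn_A]_A$ at a contracted node. In both cases, combining Vandermonde's identity $k[b]\binom{\mfn}{\mfn_A} = (\mfn - \mfn_A)[b]\binom{\mfn - e_b}{\mfn_A} + \mfn_A[b]\binom{\mfn}{\mfn_A - e_b}$ with the observation that reducing $\mfn_A$ by $e_b$ corresponds to performing $\hat\uparrow_b^*$ \emph{inside} the extracted piece (which, since $\mfT_-^\ex$ forbids nonzero polynomial decorations at the root, never affects the root of an extracted tree) yields the commutation after standard bookkeeping.

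The main obstacle will be the decoration bookkeeping in the first identity: one must verify that the extended decoration $\mfo(A) + [\mfn_A + \pi\mfe_A]_A$ produced by contracting $A$ inside $\bar\tau \hgraft_p \tau$ agrees at every new node with the decoration produced by contracting the corresponding pieces of $A$ inside $\bar\tau$ and $\tau$ separately; this uses additivity of $\mfo(A)$ over connected components of $A$ together with the fact that no component straddles the new $\mcI_p$ edge. A secondary subtlety is that $\bar\tau \hgraft_p \tau$ may lie outside $\mcT^\ex$, so one must factor the argument through $\hgraft_p$ on $\CV^\ex$ and apply $\pi_R$ at the end; here the pre-Lie morphism property of $\pi_R$ from Exercise~\ref{exercise:pi_morph} ensures nothing is lost.
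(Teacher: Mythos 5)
Your high-level plan — dualize the statement to a cointeraction between $\Deltam_\ex$ and the cut/raising operators, apply the character $g$ using multiplicativity, and handle $\pi_R$ via Exercise~\ref{exercise:pi_morph} — is the right skeleton, and is indeed how Theorem~\ref{thm:MU_coherent} ultimately uses this lemma. However, the paper does not prove the cointeraction by a bare-hands combinatorial analysis of~\eqref{eq:Deltam_ex}: instead it identifies $\bgraft_p^*$ and $\hat\uparrow_b^*$ as linearisations of the untruncated coproduct $\Delta_2$ (formulas~\eqref{eq:adjoint_bgraft} and~\eqref{eq:adjoint_hat_uparrow}) and derives the required commutations from the general cointeraction~\eqref{eq:general_cointeract} imported from~\cite[Prop.~3.27]{BHZ19}. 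So the route you propose is genuinely different, and in principle it could work; unfortunately both of the concrete combinatorial claims on which you rest it are incorrect.

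The key observation for~\eqref{eq:M*_morphism} is wrong. You claim that when computing $\Deltam_\ex(\bar\tau \hgraft_p \tau)$, no extracted component of $A$ can contain the grafting edge because such a component would be planted. This is false: plantedness forbids a tree with a \emph{single} edge incident on its root, and nothing prevents a connected component of $A$ from containing the grafting edge together with other edges incident on the same node. Concretely, for $\tau = \Xi_0\Xi_1$ and $\bar\tau = \Xi_0$ the grafted tree is $\Xi_0\Xi_1\mcI_0[\Xi_0]$, and the whole tree is an admissible (unplanted, zero polynomial at root, negative for a suitable rule) extraction that contains the grafting edge. More to the point, you are analysing the \emph{wrong} operation: the dual identity that actually needs proving is $\CM_{13}(\Deltam_\ex\otimes\Deltam_\ex)\bgraft_p^* = (\id\otimes\bgraft_p^*)\Deltam_\ex$, and there the relevant fact is that the edge being cut in $\sigma/A$ corresponds to an edge of $\sigma$ \emph{not in $A$} simply because edges of $A$ vanish under contraction; from this $A$ splits automatically into the two sides of the cut. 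Extractions that do contain the cut edge (such as the example above) are not excluded from $\Deltam_\ex\sigma$; they simply contribute zero after applying $\bgraft_p^*$ because the edge is no longer present in the contracted tree. Your argument, as written, would instead assert a factorisation of $\Deltam_\ex$ applied to a grafted tree that does not hold.

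The Vandermonde identity for~\eqref{eq:M*_commute_raise} is also wrong. Restricted to a single node and a single coordinate, your identity reads $n\binom{n}{m} = (n-m)\binom{n-1}{m} + m\binom{n}{m-1}$, which fails already for $n=2$, $m=1$ (LHS $=4$, RHS $=2$). The single-node identity that actually makes $\Deltam_\ex\hat\uparrow_b^* = (\id\otimes\hat\uparrow_b^*)\Deltam_\ex$ work is $n\binom{n-1}{m} = (n-m)\binom{n}{m}$, i.e. $\mfn(x)[b]\,\binom{\mfn(x)-e_b}{\mfn_A(x)} = (\mfn(x)-\mfn_A(x))[b]\,\binom{\mfn(x)}{\mfn_A(x)}$, summed over the nodes of the contracted component. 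There is no term corresponding to lowering $\mfn_A$ inside the extracted piece: the cointeraction only raises/lowers on the right tensor factor, consistent with the form of~\eqref{eq:M*_commute_raise}. Your intuition that ``reducing $\mfn_A$ corresponds to $\hat\uparrow_b^*$ inside the extracted piece'' would be appropriate if one were proving a coderivation property of $\hat\uparrow_b^*$, but that is a strictly stronger (and false) statement. Finally, the displayed dual identity in your first paragraph is not well-formed as a composition of tensor maps (the positions $(24)$ refer to a factor that does not exist at that stage), which is symptomatic of these confusions; the correct formulations are exactly the two displayed equations in the paper's exercise following~\eqref{eq:general_cointeract}.
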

To prove Lemma~\ref{lem:M_pre-Lie_morph},
we identify $\bgraft_p^*$ with the linearisation of a coproduct similar to $\Deltap_\ex$,
where $\bgraft_p^*\colon \CT^\ex\to \CT^\ex\otimes \CT^\ex$ is the adjoint of $\bgraft_p^*$.
This is a version of the classical link between the Connes--Kreimer coproduct and the grafting operation on trees from Example~\ref{ex:graft_trees}.
Indeed,
one can verify that
\begin{equ}\label{eq:adjoint_bgraft}
\bgraft_p^* = \CM_{12} (\id \otimes \mfp_p) \Delta_2\;,
\end{equ}
where
\begin{itemize}
\item $\CM_{12}(\tau_1\otimes \tau_2) = \tau_2\otimes \tau_1$,

\item $\mfp_p \colon \CT_+^\ex\to\CT^\ex$ is the linear function that maps $\mcI_p[\tau]\in\mfT_+^\ex$ to $\tau\in\mfT_\circ^\ex$ and annihilates every tree in $\mfT_+^\ex$ not of the form $\mcI_p[\tau]$, and

\item $\Delta_2 \colon \CT^\ex \to \CT^\ex\hat\otimes \CV^\ex$ is defined identically to $\Deltap_\ex$ in Definition~\ref{def:Deltap_ex} except that in~\eqref{eq:Deltap_def} we take the sum over all $k\in\N^d$, not just $|k|_\s < |\mcI_p\tau|_+$,
and $\CT^\ex\hat\otimes \CV^\ex$ denotes an appropriate space of tensor series.
\end{itemize}
\begin{remark}
Although $\Delta_2 \tau$ is a series of tensor products, $(\id\otimes \mfp_p)\Delta_2 \tau$ is a finite sum.
\end{remark}
We similarly have
\begin{equ}\label{eq:adjoint_hat_uparrow}
\hat\uparrow_{b}^* = \CM_1(\id\otimes \mfp_{X^b})\Delta_2\;,
\end{equ}
where
\begin{itemize}
\item $\CM_1\colon \CT^\ex\otimes \CT_+^\ex\to\CT^\ex$ is the linear map defined for two trees $(\tau_1,\tau_2)\in \mfT^\ex\times \mfT_+^\ex$ by $\CM_1(\tau_1\otimes\tau_2)=\tau_1$, and

\item $\mfp_{X^b} \colon \CT_+^\ex\to\CT_+^\ex$ is the projection which maps $X^b$ to itself and annihilates all trees $\tau\neq X^b$.
\end{itemize}
A general cointeraction identity proved in~\cite[Prop.~3.27]{BHZ19} is\footnote{This cointeraction does not rely on the extended decorations because
$\Delta_2$ does not truncate the sum in~\eqref{eq:Deltap_def}. This is why the analogue of Theorem~\ref{thm:MU_coherent} holds on the reduced structure as mentioned in Remark~\ref{rem:Upsilon_reduced_structure}.}
\begin{equ}\label{eq:general_cointeract}
\mcM_{(13)(2)(4)} (\Deltam_\ex\otimes\Deltam_\ex)\Delta_2 = (\id\otimes\Delta_2)\Deltam_\ex\;.
\end{equ}
\begin{exercise}
Use~\eqref{eq:adjoint_bgraft} and~\eqref{eq:general_cointeract} to show that
\begin{equ}
\CM_{13}(\Deltam_\ex\otimes\Deltam_\ex)\bgraft_p^* = (\id\otimes \bgraft_p^*)\Deltam_\ex\;.
\end{equ}
Similarly, use~\eqref{eq:adjoint_hat_uparrow} and~\eqref{eq:general_cointeract} to show that
\begin{equ}
(\id\otimes\hat\uparrow_b^*)\Deltam_\ex = \Deltam_\ex \hat\uparrow_b^*\;.
\end{equ}
Hence prove
Lemma~\ref{lem:M_pre-Lie_morph}.
\end{exercise}
We are finally in a position to give the proof of Lemma~\ref{lem:Upsilon_MF}.
\begin{proof}[of Lemma~\ref{lem:Upsilon_MF}]
By the fact that $M_{g h}= M_{g}M_h$ and thus $M_{g h}^* = M_h^*M_g^*$, it suffices to consider the case that $\cbF=F$ is the (unique) reduced non-linearity in $F^{G^\ex_-}$.
Observe that, by Corollary~\ref{cor:vanish_bad_trees} and the identity~\eqref{eq:Upsilon_morph}, $\Upsilon^{\cbF^g}$ is a
pre-Lie morphism from $\CT^\ex$ to the space of non-linearities which takes the operator $\bgraft_p$ to $\triangleleft_p$ for every $p\in\N^d$,
i.e.
\begin{equ}
\Upsilon^{\cbF^g}[\bar\tau\bgraft_p\tau] = \Upsilon^{\cbF^g}[\bar\tau] \triangleleft_p \Upsilon^{\cbF^g} [\tau]\;.
\end{equ}
On the other hand, by Lemma~\ref{lem:M_pre-Lie_morph} and the identity~\eqref{eq:Upsilon_morph},
$\Upsilon^F \circ M_g^*$ is also a pre-Lie morphism,
i.e.
\begin{equ}
\Upsilon^{F}[M^*_g(\bar\tau\bgraft_p \tau)] = \Upsilon^{F}[M_g^*\bar\tau] \triangleleft_p \Upsilon^{F} [M_g^* \tau]\;.
\end{equ}
We now claim that, for all $\bone^a X^k\Xi_L\in\mfG_\circ$,
\begin{equ}\label{eq:agree_on_G}
\Upsilon^{\cbF^g}[\bone^a X^k\Xi_L] = \Upsilon^F M_g^* (\bone^a X^k\Xi_L)\;.
\end{equ}
Indeed, observe that $\hat\uparrow_b$ and $\hat\uparrow_{c}$ commute for all $b,c\in[d]$,
and thus $\hat\uparrow_k$ makes sense for all $k\in\N^d$ by composition.
Furthermore $\hat\uparrow_k$ commutes with $M_g^*$ by~\eqref{eq:M*_commute_raise}.
Hence, for every $\bone^a X^k\Xi_L\in\mfG_\circ$,
\begin{equs}
\Upsilon^{\cbF^g}[\bone^a X^k\Xi_L] &=\Upsilon^{\cbF^g}[ \hat\uparrow_k\bone^a\Xi_L]
= \partial^k\Upsilon^{\cbF^g}[ \bone^a \Xi_L] = \partial^k \Big(\prod_{l\in L}D_{(\Xi,l)}\Big)\Upsilon^{\cbF^g}[\bone^a]
\\
&=
\partial^k\Big(\prod_{l\in L}D_{(\Xi,l)}\Big)\Upsilon^{F}[M_g^*\bone^a]
= \partial^k\Upsilon^F[\hat\uparrow_k M_g^*(\bone^a\Xi_L)]
\\
&= \Upsilon^F [\hat\uparrow_kM_g^*(\bone^a \Xi_L)]
=\Upsilon^F M_g^*[\hat\uparrow_k \bone^a \Xi_L]
= \Upsilon^F M_g^*[\bone^a X^k \Xi_L]
\;,
\end{equs}
where we used~\eqref{eq:hat_uparrow_Upsilon} in the second and sixth equalities,
the definition~\eqref{eq:F^g_def} in the fourth equality,
and the fact that $M_g^*$ commutes with $\hat\uparrow_k$ in the seventh equality.
Furthermore, in the fifth equality, we used the same argument as in proof of Proposition~\ref{prop:obey_extended} (see~\eqref{eq:bone^a_Upsilon}-\eqref{eq:hat_tau_Upislon}) to obtain
\begin{equ}
\Big(\prod_{l\in L}D_{(\Xi,l)}\Big)\Upsilon^{F}[M_g^*\bone^a]
= \Upsilon^F[M_g^*(\bone^a\Xi_L)]\;.
\end{equ}
We have thus shown~\eqref{eq:agree_on_G}.
To conclude the proof, observe that $\Upsilon^{\cbF^g}$ and $\Upsilon^F \circ M_g^*$ are both pre-Lie morphisms on $\CT^\ex$ which agree on the generating set $\mfG_\circ$.
Hence, by Exercise~\ref{exercise:pi_morph}, $\Upsilon^{\cbF}=\Upsilon^F \circ M_g^*$ as claimed.
\end{proof}
\begin{remark}
Presumably, with enough work, one can rephrase the whole construction of the renormalisation groups $G_+^\ex,G_-^\ex$ and the associated Hopf algebras using the enlarged space of trees $\CB$ in place of $\CV^\ex$.
This would avoid having to come back to $\CT^\ex$ in the final step above and defining the rather complicated grafting operators $\hgraft$,
but would likely require reworking much of the theory from~\cite{BHZ19}.
\end{remark}
\begin{remark}
The role of pre-Lie products was recognised in a rough path context in~\cite{BCFP19}.
The situation therein is simpler due to the absence of polynomial decorations and derivatives in the trees.
Nonetheless, the rough path case serves as an instructive starting point in which the importance of the fact that $\CV^\ex$
is generated by the grafting operators is made clear.
\end{remark}

\appendix

\section{Symbolic index}

We collect in this appendix commonly used symbols of the article together with their meaning and, if relevant, the page where they first occur.
We do not specify the meaning of many `extended' spaces as they are defined analogously to the `reduced' spaces -- see Section~\ref{subsec:extended_struct}.

 \begin{center}
\renewcommand{\arraystretch}{1.1}
\begin{longtable}{lll}
\toprule
Symbol & Meaning & Page\\
\midrule
\endfirsthead
\toprule
Symbol & Meaning & Page\\
\midrule
\endhead
\bottomrule
\endfoot
\bottomrule
\endlastfoot
$|\cdot|_\s$ & Degree map $|\cdot|_\s \colon \mfT\to\R$ & \pageref{deg_page_ref}\\
$|\cdot|_+$ & Degree map $|\cdot|_+ \colon \mfT^\ex\to\R$ including extended decorations & \pageref{deg_ex_page_ref}\\
$\fancynorm{\cdot}$ & Kernel edge counting function $\fancynorm{\cdot} \colon \mfT\to\N$ & \pageref{kernel_count_page_ref}\\
$\partial^k$ & Spatial derivative on non-linearities for $k\in\N^d$ &
 \pageref{eq:partial_def}\\
$\triangleleft_p$ & Pre-Lie product on non-linearities associated to $p\in\N^d$ &
\pageref{pre_Lie_page_ref}\\
$\graft_p$ & Pre-Lie (grafting) product on $\CB$ associated to $p\in\N^d$ &
\pageref{graft_page_ref}\\
$\hgraft_p$ & Pre-Lie (grafting) product on $\CV^\ex$ associated to $p\in\N^d$ &
\pageref{hgraft_page_ref}\\
$\bgraft_p$ & Pre-Lie (grafting) product on $\CT^\ex$, $\bgraft_p = \pi_R\circ\hgraft_p$  &
\pageref{bgraft_page_ref}\\
$\uparrow_{b}$ & Polynomial raising operator on $\CB$ for $b\in[d]$ &
\pageref{uparrow_page_ref}\\
$\hat\uparrow_{b}$ & Polynomial raising operator on $\CV^\ex$ for $b\in[d]$ &
\pageref{hat_uparrow_page_ref}\\
$\bone$ & Unit for product on $\CV$ and $\CB$ & \pageref{bone_page_ref},~\pageref{bone_CB_page_ref}\\
$A$ & Index set of the regularity structure $(A,\mcT,G_+)$ & \pageref{A_page_ref}\\
$\mfB$ & Enlarged space of trees with extra decorations on polynomials & \pageref{mfB_pag_ref} \\
$\CB$ & Span $\CB = \Span_\R(\mfB)$ & \pageref{CB_pag_ref} \\
$\Deltap$ & Positive coaction/coproduct $\Deltap\colon \CH \to \CH\otimes \CT_+$, $\CH=\CT,\CT_+$ & \pageref{Deltap_page_ref},~\pageref{Deltap_page_ref_2}
\\
$\Deltam$ & Negative coaction/coproduct $\Deltam\colon \CH \to \CF_-\otimes \CH$, $\CH=\CT,\CF_-$ & \pageref{Delta_minus_page_ref},~\pageref{Delta_minus_page_ref_2}\\
$[d]$ & $[d]=\{1,\ldots, d\}$ & \pageref{[d]_page_ref}\\
$\e$ & Empty forest $\e=0\in\N^\mfT$ & \pageref{empty_forest_page_ref}\\
$\mcE$ & Edge types $\mcE = \mcL\times\N^d$ & \pageref{def:types}\\
$F$ & Non-linearity $F\colon \R^\mcE \to \R$ &
\pageref{def:obey}\\
$\F$ & Lift of $F$ to non-linearity on jets &
\pageref{eq:F_def}\\
$\cbF$ & Extended non-linearity $\cbF=\{F_a\}_{a\leq 0}$ &
\pageref{def:ext_non-linear}\\
$F^{G^\ex_-}$ & Orbit of $F$ under action of $G^\ex_-$ &
\pageref{def:F_orbit}\\
$\mfF$ & Set of forests $\mfF = \N^\mfT$ &
\pageref{forests_page_ref}\\
$\mfF_-$ & Subset $\mfF_-\subset\mfF$ of forests with trees in $\mfT_-$ &
\pageref{def:neg_trees}\\
$\mcF_-$ & Span $\mcF_- = \Span_\R(\mfF_-)$, which is the `negative' Hopf algebra  &
\pageref{mcF_minus_pageref}\\
$f_x$ & Positive character $f_x\in G_+$ of $\bfPi$ at $x\in \R^d$ & \pageref{Pi_x_page_ref}\\
$F_x$ & Linear map $F_x\in\GL(\CT)$ given by $F_x=\Gamma_{f_x}$ & \pageref{Pi_x_page_ref}\\
$\Gamma_f$ & Representation $G_+\ni f\mapsto \Gamma_f = (\id\otimes f)\Deltap \in \GL(\CT)$ & \pageref{ex:prod_G_+}\\
$G_+$ & Characters on $\CT_+$, i.e. structure group of regularity structure  &
\pageref{def:mfT_plus}\\
$G_-$ & Characters on $\CF_-$ &
\pageref{G_minus_page_ref}\\
$\mfG$ & Set of generators of $\CV^\ex$ & \pageref{mfG_page_ref}\\
$\mfG_\circ$ & Set of generators of $\CT^\ex$ given by $\mfG_\circ = \mfG\cap \mfT_\circ$ &
\pageref{mfG_circ_page_ref}\\
$\mcI$ & Integration type $\mcI\in\mfL$ & \pageref{def:types}\\
$\CJ$ & Decoration added to polynomials in $\mfB$ & \pageref{CJ_page_ref}\\
$K$ & Kernel $K\colon \R^d\setminus\{0\}\to\R$ & \pageref{K_page_ref}\\
$\mfL$ & Set of all types $\mfL = \{\Xi,\mcI\}$ & \pageref{def:types}\\
$\mcL(\Phi)$ & Lift to branches of operator $\Phi$ & \pageref{def:mcL_lift},~\pageref{mcL_tensor_page_ref}\\
$M_g$ & Linear map  $M_g = (g\otimes \id)\Deltam \in \GL(\CH)$ with $\CH=\CT,\CT_+$ & \pageref{M_g_page_ref}\\ 
$\N$ & Non-negative integers $\N=\{0,1,\ldots\}$ & \pageref{mfN_page_ref}\\
$\mfN$ & Set of functions $\N^d\to\N$ with finite support & \pageref{mfN_page_ref}\\
$\mcN$ & Multiset of edges types at a node or root of tree & \pageref{eq:mcN}\\
$\bfPi$ & Linear map $\bfPi\colon\CT\to C^\infty$ &
\\
$\Pi_x$ & Recentred map $\Pi_x=\bfPi \Gamma_{f_x}$ at $x\in \R^d$ & \pageref{Pi_x_page_ref}\\
$\pi_R$ & Projection $\pi_R\colon\CV^\ex\to\CT^\ex$ annihilating trees in $\mfT^\ex\setminus\mfT^\ex_\circ$ & \pageref{pi_R_page_ref}\\
$Q$ & Projection $Q\colon\mfB\to\mfT^\ex$ & \pageref{Q_page_ref}\\
$R$ & Rule $R\subset \N^\mcE$ & 
\pageref{def:rule}\\
$\s$ & Scaling $\s=(\s_1,\ldots,\s_d)\in [1,\infty)^{d}$ & \pageref{scaling_page_ref}\\
$\mfT$ & Set of rooted, decorated trees & \pageref{def:mfT}\\
$\mfT^\ex$ & Set of rooted, decorated trees with extended decorations & \pageref{mfT^ex page ref}\\
$\mfT_\circ$ & Subset $\mfT_\circ\subset \mfT$ of trees strongly conforming to $R$ & \pageref{def:conform}\\
$\mfT_+$ & Free commutative monoid generated by planted `positive trees' & \pageref{def:mfT_plus}\\
$\mfT_-$ & Subset $\mfT_-\subset \mfT_\circ$ of negative, non-planted trees & \pageref{def:neg_trees}\\
%$\mfT^\ex_\ast$ & Subset $\mfT_\ast^\ex \subset \mfT^\ex$ of trees with more than one node & \pageref{mfT_star_pageref}\\
$\mcT$ & $\mcT=\Span_\R (\mfT_\circ)$, i.e. model space of regularity structure  &
\pageref{def:mcT}
\\
$\mcT_+$ & Span $\mcT_+=\Span_\R (\mfT_+)$, which is the `positive' Hopf algebra &
\pageref{def:mfT_plus}
\\
$\mcT_-$ & Span of negative, non-planted trees,  $\mcT_-=\Span_\R (\mfT_-)$ &
\pageref{mcT_minus_pageref}\\
$U$ & Jet &
\pageref{eq:jet}\\
$U^P$ & Polynomial part of $U$ &
\pageref{eq:jet}\\
$U^R$ & Non-polynomial part of $U$ &
\pageref{eq:jet}\\
$\Upsilon^\cbF$ & Coherence map of extended non-linearity $\cbF$ &
\pageref{def:Upsilon}\\
$\mathring\Upsilon^\cbF$ & Lifted coherence map on $\CB$ of $\cbF$ &
\pageref{eq:mathring_Upsilon_def}\\
$\CV$ & Span of all trees $\CV = \Span_\R(\mfT)$ &
\pageref{CV_page_ref}\\
$\CV^\ex$ & Span of all trees with extended decorations $\CV^\ex = \Span_\R(\mfT^\ex)$ &
\pageref{def:Upsilon}\\
$\Xi$ & Noise type $\Xi\in \mfL$& \pageref{def:types}\\
$\mcY_o$ & Non-linearity $\mcY_o\colon \R^\mcE\to\R$ evaluating the $o\in\mcE$ component &
\pageref{ex:Phi43_rule},~\pageref{mcY_page_ref}
\end{longtable}
 \end{center}

\endappendix

\subsection*{Acknowledgements}

{\small
The author is grateful to the ESI for its hospitality during the Master Class and Workshop ``Higher Structures Emerging from Renormalisation'' held on November 8 - 19, 2021.
The author thanks the participants of the workshop for questions and comments
and is grateful to Yvain Bruned for discussions and feedback on a first draft of this manuscript.}
	
	\bibliographystyle{Martin}
	\bibliography{refs}

\def\cprime{$'$} \def\polhk#1{\setbox0=\hbox{#1}{\ooalign{\hidewidth
  \lower1.5ex\hbox{`}\hidewidth\crcr\unhbox0}}}
\begin{thebibliography}{{Che}22b}
\def\myhref#1#2{\href{#2}{\nolinkurl{#1}}}

\bibitem[BB21a]{BB21b}
\textsc{I.~{Bailleul}} and \textsc{Y.~{Bruned}}.
\newblock {Locality for singular stochastic PDEs}.
\newblock \emph{arXiv e-prints} (2021).
\newblock \myhref{arXiv:2109.00399}{https://arxiv.org/abs/2109.00399}.

\bibitem[BB21b]{BB21a}
\textsc{I.~{Bailleul}} and \textsc{Y.~{Bruned}}.
\newblock {Renormalised singular stochastic PDEs}.
\newblock \emph{arXiv e-prints} (2021).
\newblock \myhref{arXiv:2101.11949}{https://arxiv.org/abs/2101.11949}.

\bibitem[BCCH21]{BCCH21}
\textsc{Y.~Bruned}, \textsc{A.~Chandra}, \textsc{I.~Chevyrev}, and
  \textsc{M.~Hairer}.
\newblock Renormalising {SPDE}s in regularity structures.
\newblock \emph{J. Eur. Math. Soc. (JEMS)} \textbf{23}, no.~3, (2021),
  869--947.
\newblock \myhref{doi:10.4171/jems/1025}{https://dx.doi.org/10.4171/jems/1025}.

\bibitem[BCFP19]{BCFP19}
\textsc{Y.~Bruned}, \textsc{I.~Chevyrev}, \textsc{P.~K. Friz}, and
  \textsc{R.~Prei\ss}.
\newblock A rough path perspective on renormalization.
\newblock \emph{J. Funct. Anal.} \textbf{277}, no.~11, (2019), 108283, 60.
\newblock
  \myhref{doi:10.1016/j.jfa.2019.108283}{https://dx.doi.org/10.1016/j.jfa.2019.108283}.

\bibitem[BGHZ21]{BGHZ21}
\textsc{Y.~Bruned}, \textsc{F.~Gabriel}, \textsc{M.~Hairer}, and
  \textsc{L.~Zambotti}.
\newblock Geometric stochastic heat equations.
\newblock \emph{J. Amer. Math. Soc.} \textbf{35}, no.~1, (2021), 1--80.
\newblock \myhref{doi:10.1090/jams/977}{https://dx.doi.org/10.1090/jams/977}.

\bibitem[BHZ19]{BHZ19}
\textsc{Y.~Bruned}, \textsc{M.~Hairer}, and \textsc{L.~Zambotti}.
\newblock Algebraic renormalisation of regularity structures.
\newblock \emph{Invent. Math.} \textbf{215}, no.~3, (2019), 1039--1156.
\newblock
  \myhref{doi:10.1007/s00222-018-0841-x}{https://dx.doi.org/10.1007/s00222-018-0841-x}.

\bibitem[BM20]{BM20}
\textsc{Y.~{Bruned}} and \textsc{D.~{Manchon}}.
\newblock {Algebraic deformation for (S)PDEs}.
\newblock \emph{arXiv e-prints} (2020).
\newblock To appear in the Journal of the Mathematical Society of Japan.
\newblock \myhref{arXiv:2011.05907}{https://arxiv.org/abs/2011.05907}.

\bibitem[Bru18]{Bruned18}
\textsc{Y.~Bruned}.
\newblock Recursive formulae in regularity structures.
\newblock \emph{Stoch. Partial Differ. Equ. Anal. Comput.} \textbf{6}, no.~4,
  (2018), 525--564.
\newblock
  \myhref{doi:10.1007/s40072-018-0115-z}{https://dx.doi.org/10.1007/s40072-018-0115-z}.

\bibitem[BS22]{Bruned_Schratz_22}
\textsc{Y.~Bruned} and \textsc{K.~Schratz}.
\newblock Resonance-based schemes for dispersive equations via decorated trees.
\newblock \emph{Forum Math. Pi} \textbf{10}, (2022), Paper No. e2, 76.
\newblock
  \myhref{doi:10.1017/fmp.2021.13}{https://dx.doi.org/10.1017/fmp.2021.13}.

\bibitem[But72]{Butcher72}
\textsc{J.~C. Butcher}.
\newblock An algebraic theory of integration methods.
\newblock \emph{Math. Comp.} \textbf{26}, (1972), 79--106.
\newblock \myhref{doi:10.2307/2004720}{https://dx.doi.org/10.2307/2004720}.

\bibitem[CCHS20]{CCHS20}
\textsc{A.~{Chandra}}, \textsc{I.~{Chevyrev}}, \textsc{M.~{Hairer}}, and
  \textsc{H.~{Shen}}.
\newblock {Langevin dynamic for the 2D Yang-Mills measure}.
\newblock \emph{arXiv e-prints} (2020).
\newblock To appear in Publications math{\'e}matiques de l'IH{\'E}S.
\newblock \myhref{arXiv:2006.04987}{https://arxiv.org/abs/2006.04987}.
\newblock
  \myhref{doi:10.1007/s10240-022-00132-0}{https://dx.doi.org/10.1007/s10240-022-00132-0}.

\bibitem[CCHS22]{CCHS22}
\textsc{A.~{Chandra}}, \textsc{I.~{Chevyrev}}, \textsc{M.~{Hairer}}, and
  \textsc{H.~{Shen}}.
\newblock {Stochastic quantisation of Yang-Mills-Higgs in 3D}.
\newblock \emph{arXiv e-prints} (2022).
\newblock \myhref{arXiv:2201.03487}{https://arxiv.org/abs/2201.03487}.

\bibitem[CEFM11]{CEFM11}
\textsc{D.~Calaque}, \textsc{K.~Ebrahimi-Fard}, and \textsc{D.~Manchon}.
\newblock Two interacting {H}opf algebras of trees: a {H}opf-algebraic approach
  to composition and substitution of {B}-series.
\newblock \emph{Adv. in Appl. Math.} \textbf{47}, no.~2, (2011), 282--308.
\newblock
  \myhref{doi:10.1016/j.aam.2009.08.003}{https://dx.doi.org/10.1016/j.aam.2009.08.003}.

\bibitem[CFG17]{CFG17}
\textsc{G.~Cannizzaro}, \textsc{P.~K. Friz}, and \textsc{P.~Gassiat}.
\newblock Malliavin calculus for regularity structures: the case of g{PAM}.
\newblock \emph{J. Funct. Anal.} \textbf{272}, no.~1, (2017), 363--419.
\newblock
  \myhref{doi:10.1016/j.jfa.2016.09.024}{https://dx.doi.org/10.1016/j.jfa.2016.09.024}.

\bibitem[CH16]{CH16}
\textsc{A.~{Chandra}} and \textsc{M.~{Hairer}}.
\newblock {An analytic BPHZ theorem for regularity structures}.
\newblock \emph{ArXiv e-prints} (2016).
\newblock \myhref{arXiv:1612.08138}{https://arxiv.org/abs/1612.08138}.

\bibitem[{Che}22a]{Chevyrev22Heat}
\textsc{I.~{Chevyrev}}.
\newblock {Norm inflation for a non-linear heat equation with Gaussian initial
  conditions}.
\newblock \emph{arXiv e-prints} (2022), arXiv:2205.14350.
\newblock \myhref{arXiv:2205.14350}{https://arxiv.org/abs/2205.14350}.

\bibitem[{Che}22b]{Chevyrev22}
\textsc{I.~{Chevyrev}}.
\newblock {Stochastic quantisation of Yang-Mills}.
\newblock \emph{arXiv e-prints} (2022).
\newblock \myhref{arXiv:2202.13359}{https://arxiv.org/abs/2202.13359}.

\bibitem[CHS18]{CHS18}
\textsc{A.~{Chandra}}, \textsc{M.~{Hairer}}, and \textsc{H.~{Shen}}.
\newblock {The dynamical sine-Gordon model in the full subcritical regime}.
\newblock \emph{arXiv e-prints} (2018).
\newblock \myhref{arXiv:1808.02594}{https://arxiv.org/abs/1808.02594}.

\bibitem[CHV10]{CHV10}
\textsc{P.~Chartier}, \textsc{E.~Hairer}, and \textsc{G.~Vilmart}.
\newblock Algebraic structures of {B}-series.
\newblock \emph{Found. Comput. Math.} \textbf{10}, no.~4, (2010), 407--427.
\newblock
  \myhref{doi:10.1007/s10208-010-9065-1}{https://dx.doi.org/10.1007/s10208-010-9065-1}.

\bibitem[CK98]{CK98}
\textsc{A.~Connes} and \textsc{D.~Kreimer}.
\newblock Hopf algebras, renormalization and noncommutative geometry.
\newblock \emph{Comm. Math. Phys.} \textbf{199}, no.~1, (1998), 203--242.
\newblock
  \myhref{doi:10.1007/s002200050499}{https://dx.doi.org/10.1007/s002200050499}.

\bibitem[CK00]{CK2000}
\textsc{A.~Connes} and \textsc{D.~Kreimer}.
\newblock Renormalization in quantum field theory and the {R}iemann-{H}ilbert
  problem. {I}. {T}he {H}opf algebra structure of graphs and the main theorem.
\newblock \emph{Comm. Math. Phys.} \textbf{210}, no.~1, (2000), 249--273.
\newblock
  \myhref{doi:10.1007/s002200050779}{https://dx.doi.org/10.1007/s002200050779}.

\bibitem[CK01]{CK2001}
\textsc{A.~Connes} and \textsc{D.~Kreimer}.
\newblock Renormalization in quantum field theory and the {R}iemann-{H}ilbert
  problem. {II}. {T}he {$\beta$}-function, diffeomorphisms and the
  renormalization group.
\newblock \emph{Comm. Math. Phys.} \textbf{216}, no.~1, (2001), 215--241.

\bibitem[CL01]{Chapoton01}
\textsc{F.~Chapoton} and \textsc{M.~Livernet}.
\newblock Pre-{L}ie algebras and the rooted trees operad.
\newblock \emph{Internat. Math. Res. Notices} \textbf{2001}, no.~8, (2001),
  395--408.
\newblock
  \myhref{doi:10.1155/S1073792801000198}{https://dx.doi.org/10.1155/S1073792801000198}.

\bibitem[CQ02]{CQ02}
\textsc{L.~Coutin} and \textsc{Z.~Qian}.
\newblock Stochastic analysis, rough path analysis and fractional {B}rownian
  motions.
\newblock \emph{Probab. Theory Related Fields} \textbf{122}, no.~1, (2002),
  108--140.
\newblock
  \myhref{doi:10.1007/s004400100158}{https://dx.doi.org/10.1007/s004400100158}.

\bibitem[CW17]{ChandraWeber17}
\textsc{A.~Chandra} and \textsc{H.~Weber}.
\newblock Stochastic {PDE}s, regularity structures, and interacting particle
  systems.
\newblock \emph{Ann. Fac. Sci. Toulouse Math. (6)} \textbf{26}, no.~4, (2017),
  847--909.
\newblock \myhref{doi:10.5802/afst.1555}{https://dx.doi.org/10.5802/afst.1555}.

\bibitem[DL02]{Dzhumadildaev02}
\textsc{A.~Dzhumadiľdaev} and \textsc{C.~Löfwall}.
\newblock {Trees, free right-symmetric algebras, free Novikov algebras and
  identities}.
\newblock \emph{Homology, Homotopy and Applications} \textbf{4}, no.~2, (2002),
  165 -- 190.
\newblock \myhref{doi:hha/1139852461}{https://dx.doi.org/hha/1139852461}.

\bibitem[DPD03]{DPD2}
\textsc{G.~Da~Prato} and \textsc{A.~Debussche}.
\newblock Strong solutions to the stochastic quantization equations.
\newblock \emph{Ann. Probab.} \textbf{31}, no.~4, (2003), 1900--1916.
\newblock
  \myhref{doi:10.1214/aop/1068646370}{https://dx.doi.org/10.1214/aop/1068646370}.

\bibitem[{Duc}21]{Duch21}
\textsc{P.~{Duch}}.
\newblock {Flow equation approach to singular stochastic PDEs}.
\newblock \emph{arXiv e-prints} (2021).
\newblock \myhref{arXiv:2109.11380}{https://arxiv.org/abs/2109.11380}.

\bibitem[{Duc}22]{Duch22}
\textsc{P.~{Duch}}.
\newblock {Renormalization of singular elliptic stochastic PDEs using flow
  equation}.
\newblock \emph{arXiv e-prints} (2022).
\newblock \myhref{arXiv:2201.05031}{https://arxiv.org/abs/2201.05031}.

\bibitem[FH20]{FrizHairer20}
\textsc{P.~K. Friz} and \textsc{M.~Hairer}.
\newblock \emph{A course on rough paths}.
\newblock Universitext. Springer, Cham, [2020] \copyright 2020,  xvi+346.
\newblock With an introduction to regularity structures, Second edition.
\newblock
  \myhref{doi:10.1007/978-3-030-41556-3}{https://dx.doi.org/10.1007/978-3-030-41556-3}.

\bibitem[FV08]{FV08_Markov}
\textsc{P.~Friz} and \textsc{N.~Victoir}.
\newblock On uniformly subelliptic operators and stochastic area.
\newblock \emph{Probab. Theory Related Fields} \textbf{142}, no. 3-4, (2008),
  475--523.
\newblock
  \myhref{doi:10.1007/s00440-007-0113-y}{https://dx.doi.org/10.1007/s00440-007-0113-y}.

\bibitem[FV10]{FV10}
\textsc{P.~K. Friz} and \textsc{N.~B. Victoir}.
\newblock \emph{Multidimensional stochastic processes as rough paths}, vol. 120
  of \emph{Cambridge Studies in Advanced Mathematics}.
\newblock Cambridge University Press, Cambridge, 2010,  xiv+656.
\newblock Theory and applications.
\newblock
  \myhref{doi:10.1017/CBO9780511845079}{https://dx.doi.org/10.1017/CBO9780511845079}.

\bibitem[Ger20]{Gerencser20}
\textsc{M.~Gerencs\'{e}r}.
\newblock Nondivergence form quasilinear heat equations driven by space-time
  white noise.
\newblock \emph{Ann. Inst. H. Poincar\'{e} Anal. Non Lin\'{e}aire} \textbf{37},
  no.~3, (2020), 663--682.
\newblock
  \myhref{doi:10.1016/j.anihpc.2020.01.003}{https://dx.doi.org/10.1016/j.anihpc.2020.01.003}.

\bibitem[GH19a]{MateMartin19}
\textsc{M.~Gerencs\'{e}r} and \textsc{M.~Hairer}.
\newblock Singular {SPDE}s in domains with boundaries.
\newblock \emph{Probab. Theory Related Fields} \textbf{173}, no. 3-4, (2019),
  697--758.
\newblock
  \myhref{doi:10.1007/s00440-018-0841-1}{https://dx.doi.org/10.1007/s00440-018-0841-1}.

\bibitem[GH19b]{GH19}
\textsc{M.~Gerencs\'{e}r} and \textsc{M.~Hairer}.
\newblock A solution theory for quasilinear singular {SPDE}s.
\newblock \emph{Comm. Pure Appl. Math.} \textbf{72}, no.~9, (2019), 1983--2005.
\newblock \myhref{doi:10.1002/cpa.21816}{https://dx.doi.org/10.1002/cpa.21816}.

\bibitem[GH21]{MateMartin21}
\textsc{M.~{Gerencs{\'e}r}} and \textsc{M.~{Hairer}}.
\newblock {Boundary renormalisation of SPDEs}.
\newblock \emph{arXiv e-prints} (2021).
\newblock \myhref{arXiv:2110.03656}{https://arxiv.org/abs/2110.03656}.

\bibitem[GHM22]{GHM22}
\textsc{A.~{Gerasimovics}}, \textsc{M.~{Hairer}}, and \textsc{K.~{Matetski}}.
\newblock {Directed mean curvature flow in noisy environment}.
\newblock \emph{arXiv e-prints} (2022).
\newblock \myhref{arXiv:2201.08807}{https://arxiv.org/abs/2201.08807}.

\bibitem[GIP15]{GIP15}
\textsc{M.~Gubinelli}, \textsc{P.~Imkeller}, and \textsc{N.~Perkowski}.
\newblock Paracontrolled distributions and singular {PDE}s.
\newblock \emph{Forum Math. Pi} \textbf{3}, (2015), e6, 75.
\newblock \myhref{arXiv:1210.2684v3}{https://arxiv.org/abs/1210.2684v3}.
\newblock
  \myhref{doi:10.1017/fmp.2015.2}{https://dx.doi.org/10.1017/fmp.2015.2}.

\bibitem[Hai13]{Hairer13}
\textsc{M.~Hairer}.
\newblock Solving the {KPZ} equation.
\newblock \emph{Ann. of Math. (2)} \textbf{178}, no.~2, (2013), 559--664.
\newblock \myhref{arXiv:1109.6811}{https://arxiv.org/abs/1109.6811}.
\newblock
  \myhref{doi:10.4007/annals.2013.178.2.4}{https://dx.doi.org/10.4007/annals.2013.178.2.4}.

\bibitem[Hai14]{Hairer14}
\textsc{M.~Hairer}.
\newblock A theory of regularity structures.
\newblock \emph{Invent. Math.} \textbf{198}, no.~2, (2014), 269--504.
\newblock \myhref{arXiv:1303.5113}{https://arxiv.org/abs/1303.5113}.
\newblock
  \myhref{doi:10.1007/s00222-014-0505-4}{https://dx.doi.org/10.1007/s00222-014-0505-4}.

\bibitem[HP21]{HairerPardoux21}
\textsc{M.~Hairer} and \textsc{E.~Pardoux}.
\newblock Fluctuations around a homogenised semilinear random {PDE}.
\newblock \emph{Arch. Ration. Mech. Anal.} \textbf{239}, no.~1, (2021),
  151--217.
\newblock \myhref{arXiv:1911.02865}{https://arxiv.org/abs/1911.02865}.
\newblock
  \myhref{doi:10.1007/s00205-020-01574-8}{https://dx.doi.org/10.1007/s00205-020-01574-8}.

\bibitem[HS16]{HairerShen16}
\textsc{M.~Hairer} and \textsc{H.~Shen}.
\newblock The dynamical sine-{G}ordon model.
\newblock \emph{Comm. Math. Phys.} \textbf{341}, no.~3, (2016), 933--989.
\newblock
  \myhref{doi:10.1007/s00220-015-2525-3}{https://dx.doi.org/10.1007/s00220-015-2525-3}.

\bibitem[HW74]{hairer74}
\textsc{E.~Hairer} and \textsc{G.~Wanner}.
\newblock On the {B}utcher group and general multi-value methods.
\newblock \emph{Computing (Arch. Elektron. Rechnen)} \textbf{13}, no.~1,
  (1974), 1--15.
\newblock
  \myhref{doi:10.1007/bf02268387}{https://dx.doi.org/10.1007/bf02268387}.

\bibitem[Kup16]{Kupiainen2016}
\textsc{A.~Kupiainen}.
\newblock Renormalization group and stochastic {PDE}s.
\newblock \emph{Annales Henri Poincar{\'e}} \textbf{17}, no.~3, (2016),
  497--535.
\newblock \myhref{arXiv:1410.3094}{https://arxiv.org/abs/1410.3094}.
\newblock
  \myhref{doi:10.1007/s00023-015-0408-y}{https://dx.doi.org/10.1007/s00023-015-0408-y}.

\bibitem[LCL07]{MR2314753}
\textsc{T.~J. Lyons}, \textsc{M.~Caruana}, and \textsc{T.~L{\'e}vy}.
\newblock \emph{Differential equations driven by rough paths}, vol. 1908 of
  \emph{Lecture Notes in Mathematics}.
\newblock Springer, Berlin, 2007,  xviii+109.
\newblock
  \myhref{doi:10.1007/978-3-540-71285-5}{https://dx.doi.org/10.1007/978-3-540-71285-5}.

\bibitem[LOT21]{Otto_21II}
\textsc{P.~{Linares}}, \textsc{F.~{Otto}}, and \textsc{M.~{Tempelmayr}}.
\newblock {The structure group for quasi-linear equations via universal
  enveloping algebras}.
\newblock \emph{arXiv e-prints} (2021).
\newblock \myhref{arXiv:2103.04187}{https://arxiv.org/abs/2103.04187}.

\bibitem[LOTT21]{Otto_22}
\textsc{P.~{Linares}}, \textsc{F.~{Otto}}, \textsc{M.~{Tempelmayr}}, and
  \textsc{P.~{Tsatsoulis}}.
\newblock {A diagram-free approach to the stochastic estimates in regularity
  structures}.
\newblock \emph{arXiv e-prints} (2021).
\newblock \myhref{arXiv:2112.10739}{https://arxiv.org/abs/2112.10739}.

\bibitem[Lyo91]{TerryPath}
\textsc{T.~Lyons}.
\newblock On the nonexistence of path integrals.
\newblock \emph{Proc. Roy. Soc. London Ser. A} \textbf{432}, no. 1885, (1991),
  281--290.
\newblock
  \myhref{doi:10.1098/rspa.1991.0017}{https://dx.doi.org/10.1098/rspa.1991.0017}.

\bibitem[Lyo94]{Lyons-Young}
\textsc{T.~Lyons}.
\newblock Differential equations driven by rough signals. {I}. {A}n extension
  of an inequality of {L}. {C}. {Y}oung.
\newblock \emph{Math. Res. Lett.} \textbf{1}, no.~4, (1994), 451--464.
\newblock
  \myhref{doi:10.4310/MRL.1994.v1.n4.a5}{https://dx.doi.org/10.4310/MRL.1994.v1.n4.a5}.

\bibitem[Lyo98]{Lyons}
\textsc{T.~J. Lyons}.
\newblock Differential equations driven by rough signals.
\newblock \emph{Rev. Mat. Iberoamericana} \textbf{14}, no.~2, (1998), 215--310.
\newblock \myhref{doi:10.4171/RMI/240}{https://dx.doi.org/10.4171/RMI/240}.

\bibitem[Man11]{Manchon}
\textsc{D.~Manchon}.
\newblock A short survey on pre-{L}ie algebras.
\newblock In \emph{Noncommutative geometry and physics: renormalisation,
  motives, index theory}, ESI Lect. Math. Phys.,  89--102. Eur. Math. Soc.,
  Z\"{u}rich, 2011.
\newblock \myhref{doi:10.4171/008-1/3}{https://dx.doi.org/10.4171/008-1/3}.

\bibitem[MW20]{MoinatWeber20}
\textsc{A.~Moinat} and \textsc{H.~Weber}.
\newblock Space-time localisation for the dynamic {$\Phi^4_3$} model.
\newblock \emph{Comm. Pure Appl. Math.} \textbf{73}, no.~12, (2020),
  2519--2555.
\newblock \myhref{doi:10.1002/cpa.21925}{https://dx.doi.org/10.1002/cpa.21925}.

\bibitem[OSSW18]{Otto_18}
\textsc{F.~{Otto}}, \textsc{J.~{Sauer}}, \textsc{S.~{Smith}}, and
  \textsc{H.~{Weber}}.
\newblock {Parabolic equations with rough coefficients and singular forcing}.
\newblock \emph{arXiv e-prints} (2018).
\newblock \myhref{arXiv:1803.07884}{https://arxiv.org/abs/1803.07884}.

\bibitem[OSSW21]{Otto_21}
\textsc{F.~{Otto}}, \textsc{J.~{Sauer}}, \textsc{S.~{Smith}}, and
  \textsc{H.~{Weber}}.
\newblock {A priori bounds for quasi-linear SPDEs in the full sub-critical
  regime}.
\newblock \emph{arXiv e-prints} (2021).
\newblock \myhref{arXiv:2103.11039}{https://arxiv.org/abs/2103.11039}.

\bibitem[OW19]{OW}
\textsc{F.~Otto} and \textsc{H.~Weber}.
\newblock Quasilinear {SPDE}s via rough paths.
\newblock \emph{Arch. Ration. Mech. Anal.} \textbf{232}, no.~2, (2019),
  873--950.
\newblock
  \myhref{doi:10.1007/s00205-018-01335-8}{https://dx.doi.org/10.1007/s00205-018-01335-8}.

\bibitem[Swe69]{Sweedler}
\textsc{M.~E. Sweedler}.
\newblock \emph{Hopf algebras}.
\newblock Mathematics Lecture Note Series. W. A. Benjamin, Inc., New York,
  1969,  vii+336.

\bibitem[You36]{Young36}
\textsc{L.~C. Young}.
\newblock An inequality of the {H}\"{o}lder type, connected with {S}tieltjes
  integration.
\newblock \emph{Acta Math.} \textbf{67}, no.~1, (1936), 251--282.
\newblock
  \myhref{doi:10.1007/BF02401743}{https://dx.doi.org/10.1007/BF02401743}.

\end{thebibliography}
	
\end{document}